\setlist{nosep}
\setlist[enumerate,1]{label = \textnormal{(\arabic*)}}
\AlgoDisplayGroupMarkers\SetAlgoBlockMarkers{ \{}{ \}\ }%
    \pgfpathrectanglecorners{\pgfpointorigin}{\pgfpoint{3cm}{3cm}}%
\tikzset{
	ddarr/.style={double distance=1.5pt, arrows={-Straight Barb[scale=0.6]}},
	circ/.style={dashed,circle,draw=black,minimum size=15pt,inner sep=0},
	equ/.style={double distance=1.5pt, inner sep=0pt, outer sep=10pt},
}
\newcommand{\StairGrid}[1]{
\draw[color=black] (0,0) grid (#1,#1);
\foreach \XX in {1,...,#1}
{
    \draw[very thick] (#1-\XX,\XX) |- ++ (+1,-1);
};
}
\newcommand{\Intv}[3][{}]{
    	\draw[Bracket-Bracket, line width=1.5pt] (#2, #3) -- (#2+1,#3);
    	\node[fill=white] at (#2+0.5, #3) {#1};
}
\newcommand{\Boxfill}[2]{%
	\fill [gray!40!white] ( #1,#2 ) rectangle (#1+1,#2+1) ;
}
\newcommand{\Recfill}[4]{%
	\fill [gray!40!white] ( #1,#2 ) rectangle (#3,#4) ;
}
\theoremstyle{plain}
\newtheorem{thm}{Theorem}[section]
\newtheorem{lem}[thm]{Lemma}
\newtheorem{prop}[thm]{Proposition}
\newtheorem{cor}[thm]{Corollary}
\theoremstyle{definition}
\newtheorem{defn}[thm]{Definition}
\newtheorem{example}[thm]{Example}
\newtheorem{conj}[thm]{Conjecture}
\newcommand{\continuation}{??}
\theoremstyle{remark}
\newtheorem*{rmk}{Remark}
\numberwithin{equation}{section}
\newcommand{\bN}{\mathbb{N}}
\newcommand{\bR}{\mathbb{R}}
\newcommand{\bZ}{\mathbb{Z}}
\newcommand{\cI}{\mathcal{I}}
\newcommand{\cJ}{\mathcal{J}}
\newcommand{\cL}{\mathcal{L}}
\newcommand{\cP}{\mathcal{P}}
\newcommand{\fA}{\mathfrak{A}}
\newcommand{\fC}{\mathfrak{C}}
\newcommand{\fb}{\mathfrak{b}}
\newcommand{\fd}{\mathfrak{d}}
\newcommand{\pushright}[1]{\ifmeasuring@#1\else\omit\hfill$\displaystyle#1$\fi\ignorespaces}
\newcommand{\alg}{\mathbf{Alg}}
\newcommand{\ccP}{{\widehat{\mathcal{P}}}}
\newcommand{\vup}{\raisebox{1.3ex}{$\vdots$}}
\newcommand{\und}[1]{\underline{#1}}
\newcommand{\hh}[1]{\widehat{#1}}
\newcommand{\pkm}{\stackrel{\cP}{\leftrightsquigarrow}}
\newcommand{\psim}{\sim_\cP}
\newcommand{\dr}{\dashrightarrow}
\newcommand{\dl}{\dashleftarrow}
\newcommand{\stair}{\textnormal{Stair}}
\newcommand{\vn}{\infty}
\newcommand{\cond}{Condition $(\pitchfork)$}
\newcommand{\csi}[1][]{\textnormal{Case I#1}}
\newcommand{\csii}[1][]{\textnormal{Case II#1}}
\newcommand{\csiii}[1][]{\textnormal{Case $\infty$#1}}
\newcommand{\pdr}{\dr_\cP\nobreak}
\newcommand{\pdl}{\dl_\cP\nobreak}
\newcommand{\pr}{\rightarrow_\cP\nobreak}
\newcommand{\pl}{\leftarrow_\cP\nobreak}
\newcommand{\pleq}{\prec_\cP\nobreak}
\newcommand{\pgeq}{\succ_\cP\nobreak}
\newcommand{\pp}{^\bullet}
\newcommand{\ppp}{^\circ}
\newcommand{\dg}{^\dagger}
\newcommand{\cpr}{\rightarrow_{\ccP}\nobreak}
\newcommand{\cpl}{\leftarrow_{\ccP}\nobreak}
\newcommand{\loe}{=\joinrel=\joinrel=}
\newcommand{\eee}[2]{\node at ($(#1)!0.5!(#2)$) {$\loe$};}
\DeclareMathOperator{\des}{\textnormal{des}}
\DeclareMathOperator{\ginv}{\textnormal{g-inv}}
\DeclareMathOperator{\ght}{\textnormal{g-ht}}
\DeclareMathOperator{\finv}{\textnormal{f-inv}}
\DeclareMathOperator{\dash}{\mbox{-\,-\,-}}
\DeclareMathOperator{\sym}{\mathfrak{S}}
\DeclareMathOperator{\SYT}{\textnormal{SYT}}
\DeclareMathOperator{\ptab}{\cP\textnormal{-Tab}}
\DeclareMathOperator{\rw}{\textnormal{read}}
\DeclareMathOperator{\prs}{\cP\textnormal{-RS}}
\DeclareMathOperator{\im}{\textup{im}}
\title{Robinson-Schensted correspondence for unit interval orders}
\author{Dongkwan Kim}
\address{School of Mathematics\\
  University of Minnesota Twin Cities\\
  Minneapolis, MN 55455\\
  U.S.A.}
\email{kim00657@umn.edu}
\author{Pavlo Pylyavskyy}
\address{School of Mathematics\\
  University of Minnesota Twin Cities\\
  Minneapolis, MN 55455\\
  U.S.A.}
\email{ppylyavs@umn.edu}
\date{\today}							
\begin{document}
\begin{abstract}  
The Stanley-Stembridge conjecture associates a symmetric function to each natural unit interval order $\mathcal{P}$. In this paper, we define relations \`a la Knuth on the symmetric group for each $\mathcal{P}$ and conjecture that the associated $\mathcal{P}$-Knuth equivalence classes are Schur-positive, refining theorems of Gasharov, Brosnan-Chow, Guay-Paquet, and Shareshian-Wachs. The resulting equivalence graphs fit into the framework of D graphs studied by Assaf. Furthermore, we conjecture that the Schur expansion is given by column-readings of $\mathcal{P}$-tableaux that occur in the equivalence class. We prove these conjectures for $\mathcal{P}$ avoiding two specific suborders by introducing $\mathcal{P}$-analog of Robinson-Schensted insertion, giving an answer to a long standing question of Chow. 
\end{abstract}

\setcounter{tocdepth}{1}
\maketitle

\renewcommand\contentsname{}
\tableofcontents

\setlength{\parindent}{15pt} 	
\setlength{\parskip}{5pt} 	

\section{Introduction}
Since its formulation in 1993, Stanley-Stembridge conjecture \cite[Conjecture 5.5]{stst93} has been one of the most intriguing problems in algebraic combinatorics. Interest in it was greatly strengthened when Shareshian and Wachs \cite{sw16} related the conjecture to Hessenberg varieties. The original conjecture was shown by Guay-Paquet \cite{g13} to be equivalent to saying that chromatic symmetric functions of incomparability graphs of unit interval orders are positive combinations of elementary symmetric functions. Shareshian and Wachs realized that essentially the same symmetric functions arise as Frobenius characters of actions of symmetric groups on cohomology rings of Hessenberg varieties, as studied by Tymoczko \cite{ty08}. Shareshian-Wachs conjecture was proved by Brosnan and Chow \cite{bc18}, and independently by Guay-Paquet \cite{g16}. On the combinatorial level the results of Brosnan-Chow and Guay-Paquet imply a graded refinement of the Schur positivity result of Gasharov \cite{gas96}. It also provides useful tools to understand combinatorics in terms of geometry, i.e. theory of perverse sheaves and geometric properties of (regular) Hessenberg varieties.

The original Stanley-Stembridge conjecture, nowadays usually stated in terms of positivity in complete homogenous symmetric functions, remains open except for special cases, see Gebhard-Sagan \cite{gs01}, Dahlberg-van Willigenburg \cite{dw18}, Harada-Precup \cite{hp19}, Cho-Huh \cite{ch19_2}, Cho-Hong \cite{ch19}, etc.

In an independent development, Assaf \cite{ass15, ass17} has introduced a beautiful theory of D graphs to address Schur positivity questions in symmetric functions, such as Macdonald polynomials, LLT polynomials, and $k$-Schur functions. While as shown by Blasiak \cite{bl16} getting exactly the right axiomatization to address those questions can be very challenging, Assaf's work provides a very useful framework. In particular her characterization of dual equivalence graphs has been used in a variety of contexts, see for example Chmutov \cite{chm15} and Roberts \cite{rob14}. Assaf's ideas were further developed by Blasiak-Fomin \cite{bf17} and others. 

In this paper we combine the two lines of research. Specifically, for each unit interval order $\cP$ we define an analog of Knuth moves. The resulting $\cP$-Knuth equivalence classes of permutations satisfy correct axioms to fit into the framework of D graphs. We conjecture that via the standard map from permutations to quasisymmetric functions the images of $\cP$-Knuth equivalence classes are symmetric and Schur positive. This is a refinement of results of Gasharov, Brosnan-Chow, and Guay-Paquet. Furthermore, we conjecture that the decomposition into Schur functions can be read off from column reading words of $\cP$-tableaux that occur in the equivalence class. 

We prove this Schur positivity conjecture for a special class of unit interval orders $\cP$. For that purpose we introduce an analog of Robinson-Schensted insertion that preserves descents, solving an open problem dating back to the works of Sundquist-Wagner-West and Chow. The 1997 work of Sundquist-Wagner-West \cite{sww97} constructs a version of Robinson-Schensted insertion for unit interval orders, however in general their algorithm does not preserve descents, and thus cannot be used to derive Schur positivity results. Chow \cite{ch99} proved that Sundquist-Wagner-West does preserve descents under a very restrictive condition --- in our terminology his condition is to avoid a suborder isomorphic to  $\cP_{(2,1),4}$. Chow implicitly states in his paper the question of constructing Robinson-Schensted correspondence that preserves descents when $\cP$ avoids a  less restrictive pattern  $\cP_{(3,1,1),5}$. In this paper we solve this problem for unit interval orders that avoid both  $\cP_{(3,1,1),5}$ and  $\cP_{(4,2,1,1),6}$. As a result, in those cases we are able to prove Schur positivity of the  $\cP$-Knuth equivalence classes.

This project started as an attempt to prove Stanley-Stembridge conjecture. This goal remains elusive, as it would require introducing an affine analog of  $\cP$-Knuth equivalence classes and proving their $h$-positivity. We expect this to be strictly harder than proving Schur positivity of the $\cP$-Knuth equivalence graphs introduced in this paper, and even that remains open in full generality. Nevertheless, $\cP$-Knuth equivalence classes seem to be interesting objects of their own, perhaps having geometric meaning in terms of (equivariant) cohomology and moment graphs of Hessenberg varieties. We hope that understanding $\cP$-Knuth equivalence classes, and in particular proving Conjecture \ref{conj:main}, will shed new light on Stanley-Stembridge conjecture. 

The paper proceeds as follows. In Section \ref{sec:def} we recall some standard combinatorial notions such as partitions, partial orders and tableaux. In Section \ref{sec:unit} we recall properties and characterizations of natural unit interval orders. We also introduce an important class of natural unit interval orders called ladders, as well as  ladder-climbing property. In Section \ref{sec:Knuth} we introduce $\cP$-Knuth equivalence classes and state the main theorem \ref{thm:mainstrong}. In Section \ref{sec:ins} we introduce column insertion procedure, which is then used in Section \ref{sec:RS} to define the full $\cP$-Robinson-Schensted insertion algorithm. Section \ref{sec:examples} is filled with examples illustrating everything introduced in the previous sections. In Sections \ref{sec:proofprop} and \ref{sec:proofthm} we give proofs of the results from previous sections.

\section{Definitions and notations} \label{sec:def}

For $a, b \in \bZ$, we set $[a,b]\colonequals \{x \in \bZ\mid a\leq x \leq b\}$. For a set $X$, we let $|X|$ be its cardinal.

\subsection{Partitions}
A partition is a finite sequence of integers $\lambda=(\lambda_1, \lambda_2, \ldots, \lambda_a)$ such that $\lambda_1\geq \lambda_2 \geq \cdots \geq \lambda_a >0$. In such a case, we set its length to be $a$ (denoted $l(\lambda)$) and its size to be $\lambda_1+\lambda_2+\cdots+\lambda_a$ (denoted $|\lambda|$). When $|\lambda|=n$, we also write $\lambda \vdash n$. If $i > l(\lambda)$, we set $\lambda_i=0$. We write $\lambda'$ to denote the conjugate partition of $\lambda$. We define the staircase partition $\stair(n)$ to be $(n-1, n-2, \ldots, 2, 1)$. For two partitions $\lambda$ and $\mu$, we write $\lambda \subset \mu$ if $\lambda_i\leq \mu_i$ for all $i \in \bZ_{>0}$. Pictorially, it means that the Young diagram of $\mu$ contains that of $\lambda$.

\subsection{Partial orders} We use the symbols $\geq, >, \leq,$ and $<$ for the usual order on $\bR$. However, throughout this paper we discuss various partial orders, for which new symbols are necessary in order to avoid conflict. Namely, suppose that a partial order $\cP$ on $[1,n]$ is given. For $a,b\in [1,n]$, we write

\begin{enumerate}
\item $a \pleq b$ (or $b \pgeq a$) if $a$ is smaller than $b$ with respect to $\cP$,
\item $a \pl b$ (or $b \pr a$) if $a<b$ and $a \pleq b$,
\item $a \pdl b$ (or $b \pdr a$) if $a<b$ but $a \not\pleq b$, and
\item $a \dash_\cP b$ if $a$ and $b$ are not comparable with respect to $\cP$ (and $a\neq b$).
\end{enumerate}
If there is no confusion we drop the subscript ${}_\cP$ from each symbol.

For a partial order $\cP$ on a set $X$ and its subset $Y \subset X$, the restriction of $\cP$ to $Y$, denoted $\cP|_Y$, is well-defined. For two partial orders $\cP$ on $[1,n]$ and $\cP'$ on $[1,m]$, we say that $\cP$ avoids $\cP'$ or $\cP$ is $\cP'$-avoiding if restriction of $\cP$ to any subset of $[1,n]$ (of cardinal $m$) is not isomorphic to $\cP'$.

\subsection{Symmetric groups and words} In this paper, a word means a finite sequence. For a word $\alpha=(\alpha_1, \alpha_2, \ldots, \alpha_k)$, we also write $\alpha=\alpha_1\alpha_2\cdots\alpha_k$ to simplify notations. For a word $\alpha$, we denote by $\und{\alpha}$ the corresponding underlying set. We let $|\alpha|\in \bN$ be the length of $\alpha$. If $|\alpha|=0$, then we also write $\alpha=\emptyset$. By a subword of $\alpha$, we mean a word $(\alpha_{i_1}, \alpha_{i_2}, \ldots, \alpha_{i_s})$ such that $1\leq i_1<i_2<\cdots<i_s\leq k$. For two words $\alpha$ and $\beta$, we define $\alpha+\beta$ to be their concatenation.

Let $\sym_n$ be the symmetric group permuting $[1,n]$. We identify elements in $\sym_n$ with the words in which each of $1, 2, \ldots, n$ appears once. For $w=w_1w_2\cdots w_n \in \sym_n$ and a partial order $\cP$ defined on $[1,n]$, we set 
\begin{enumerate}[label=\textbullet]
\item its $\cP$-descent to be $\des_\cP(w) = \{i \in [1,n-1] \mid w_i\pgeq w_{i+1}\},$
\item its genuine $\cP$-inversion to be 
\begin{align*}
\ginv_\cP(w) =& \{(i,j) \in [1,n]^2 \mid \ i \pr j, \textnormal{ $i$ appears before $j$ in $w$},
\\&\textnormal{ and there do not exist any subword } ia_1a_2\cdots a_kj \textnormal{ of $w$ such that }
\\&\ i \dash_\cP a_1 \dash_\cP a_2 \dash_\cP \cdots \dash_\cP a_k \dash_\cP j\},
\end{align*}
\item its (genuine) $\cP$-height to be (if $\ginv_\cP(w) =\emptyset$ then $\ght_\cP(w)=1$ and otherwise)
\begin{align*}
\ght_\cP(w) = \max \{k \in \bN \mid& \textnormal{ there exist } a_1, a_2, \ldots, a_{k-1}, a_k \textnormal{ such that }
\\&\ (a_1, a_2), \ldots, (a_{k-1},a_k) \in \ginv_\cP(w)\},
\end{align*}
\item its fake $\cP$-inversion to be $\finv_\cP(w)=\{(i,j) \in [1,n]^2\mid i \pdr j, w^{-1}(i)<w^{-1}(j)\}$, and
\item its (fake) $\cP$-inversion number to be $|\finv_\cP(w)|$.
\end{enumerate}
For example, if $\cP$ is the usual order on $[1,n]$ then $\ginv_\cP(w)$ is the set of usual inversions in $w$, $\ght(w)$ is the length of the longest decreasing subword of $w$, and $\finv_\cP(w) = \emptyset$. On the other hand, if $\cP$ is the trivial order on $[1,n]$ then $\ginv_\cP(w)=\emptyset$, $\ght_\cP(w) = 1$, and $\finv_\cP(w)$ is the set of usual inversions in $w$. In addition, if $\cP$ is defined with respect to the Hasse diagram in Figure \ref{fig:defpar} and $w=(9,5,1,8,4,7,3,6,2)$ then 
\begin{enumerate}[label=\textbullet]
\item $\des_\cP(w)=\{1, 2, 4, 6, 8\}$, 
\item $\ginv_\cP(w)=\{ (9,5), (9,4), (9,3), (9,2), (9,1),  (8,4), (8,3), (8,2),  (7, 3), (7,2), (6, 2), (5,1)\}$,
\item $\ght_\cP(w)=3 $ ($=$ the length of (9,5,1)),
\item $\finv_\cP(w)=\{(3,2), (4,3), (5,3), (5,4), (7,6), (8,6), (8,7), (9,8)\}$, and
\item $|\finv_\cP(w)|=8$.
\end{enumerate}
\begin{rmk} We adopt the convention that if $i\to j$ in the Hasse diagram of an order $\cP$ then $i$ is greater than $j$ with respect to $\cP$, which is in accordance with the notation $i \pr j$.
\end{rmk}

\begin{figure}[!htbp]
\begin{tikzpicture}[yscale=0.7]
\node (node_8) at (36.0bp,153.5bp) [draw,draw=none] {$9$};
  \node (node_7) at (66.0bp,104.5bp) [draw,draw=none] {$8$};
  \node (node_6) at (36.0bp,104.5bp) [draw,draw=none] {$7$};
  \node (node_5) at (6.0bp,104.5bp) [draw,draw=none] {$6$};
  \node (node_4) at (96.0bp,55.5bp) [draw,draw=none] {$5$};
  \node (node_3) at (66.0bp,55.5bp) [draw,draw=none] {$4$};
  \node (node_2) at (36.0bp,55.5bp) [draw,draw=none] {$3$};
  \node (node_1) at (51.0bp,6.5bp) [draw,draw=none] {$2$};
  \node (node_0) at (81.0bp,6.5bp) [draw,draw=none] {$1$};
  \draw [black,<-] (node_0) ..controls (77.092bp,19.746bp) and (73.649bp,30.534bp)  .. (node_3);
  \draw [black,<-] (node_1) ..controls (63.27bp,20.316bp) and (74.902bp,32.464bp)  .. (node_4);
  \draw [black,<-] (node_0) ..controls (68.73bp,20.316bp) and (57.098bp,32.464bp)  .. (node_2);
  \draw [black,<-] (node_1) ..controls (54.908bp,19.746bp) and (58.351bp,30.534bp)  .. (node_3);
  \draw [black,<-] (node_4) ..controls (88.047bp,68.96bp) and (80.84bp,80.25bp)  .. (node_7);
  \draw [black,<-] (node_3) ..controls (58.047bp,68.96bp) and (50.84bp,80.25bp)  .. (node_6);
  \draw [black,<-] (node_5) ..controls (13.953bp,117.96bp) and (21.16bp,129.25bp)  .. (node_8);
  \draw [black,<-] (node_4) ..controls (94.265bp,72.769bp) and (90.702bp,95.077bp)  .. (81.0bp,111.0bp) .. controls (73.21bp,123.79bp) and (60.367bp,135.07bp)  .. (node_8);
  \draw [black,<-] (node_0) ..controls (84.908bp,19.746bp) and (88.351bp,30.534bp)  .. (node_4);
  \draw [black,<-] (node_1) ..controls (39.145bp,20.893bp) and (27.69bp,35.052bp)  .. (21.0bp,49.0bp) .. controls (15.015bp,61.478bp) and (11.072bp,76.746bp)  .. (node_5);
  \draw [black,<-] (node_2) ..controls (43.953bp,68.96bp) and (51.16bp,80.25bp)  .. (node_7);
  \draw [black,<-] (node_2) ..controls (36.0bp,68.603bp) and (36.0bp,79.062bp)  .. (node_6);
  \draw [black,<-] (node_6) ..controls (36.0bp,117.6bp) and (36.0bp,128.06bp)  .. (node_8);
  \draw [black,<-] (node_2) ..controls (28.047bp,68.96bp) and (20.84bp,80.25bp)  .. (node_5);
  \draw [black,<-] (node_3) ..controls (66.0bp,68.603bp) and (66.0bp,79.062bp)  .. (node_7);
\end{tikzpicture}
\caption{}\label{fig:defpar}
\end{figure}

\subsection{Standard and $\cP$-tableaux} A tableau $T$ is said to satisfy the $\cP$-tableau condition if for two entries $i,j \in [1,n]$ adjacent in $T$, (1) if $i$ is above $j$ then $i\pleq j$, and (2) if $i$ is left to $j$ then $i \not\pgeq j$.
That is, entries in $T$ are increasing along columns and nondecreasing along rows with respect to $\cP$. 
Such a tableau $T$ is called a $\cP$-tableau if in addition it contains each entry of $[1,n]$ exactly once.
(Note that our definition is a conjugated version of the one in \cite[Theorem 3]{gas96}.) We denote by $\ptab_n$ (resp. $\ptab_\lambda$) the set of $\cP$-tableaux of size $n$ (resp. of shape $\lambda$). Similarly, we denote by $\SYT_n$ (resp. $\SYT_\lambda$) the set of standard Young tableaux of size $n$ (resp. of shape $\lambda$). Note that if $\cP$ is the usual order on $[1,n]$ then $\cP$-tableaux are exactly standard Young tableaux.

\ytableausetup{smalltableaux}
For a tableau $T$, we often identify each of its columns with its reading word from bottom to top and also identify $T$ with the sequence of its columns. In addition, we define the reading word of $T$, denoted $\rw(T)$, to be the concatenation of column reading words from bottom to top. We define the descent of $T\in \SYT_n$ to be $\des(T) =\{i \in [1,n-1]\mid i \textup{ is in a higher row than that of } i+1\}$. For example, if $T=\ytableaushort{1358,267,4}$ then $T$ is identified with $((4,2,1), (6,3), (7,5), (8))$, the reading word of $T$ is $\rw(T)=(4,2,1,6,3,7,5,8)$, and $\des(T)=\{1, 3, 5\}$.

For a partial order $\cP$ on $[1,n]$ and $T\in \ptab_n$, we define its fake $\cP$-inversion to be $\finv_\cP(T)=\{(i,j) \in [1,n]^2\mid i \pdr j, \textnormal{ the column of } i \textnormal{ is left to that of } j\}$ and its fake $\cP$-inversion number to be $|\finv_\cP(T)|$. Since each column of a $\cP$-tableau is a chain in $\cP$, it is easy to show that $\finv_\cP(T)=\finv_\cP(\rw(T))$ for any $\cP$-tableau $T$. For example, if $\cP$ is given again by Figure \ref{fig:defpar} and $T=\ytableaushort{1432,5876,9}$  then $\finv_\cP(T)=\finv_{\cP}(\rw(T))=\{(3,2), (4,3), (5,3), (5,4), (7,6), (8,6), (8,7), (9,8)\}$, and thus $|\finv_\cP(T)|=8$.

\subsection{Schur and fundamental quasi-symmetric functions} For a partition $\lambda$, we set $s_\lambda$ to be the Schur function corresponding to $\lambda$. For a composition $\mu$, we set $F_\mu$ to be the fundamental quasi-symmetric function corresponding to $\mu$ defined by Gessel.

\section{Natural unit interval order} \label{sec:unit}
In this section we recall the notion of natural unit interval orders and some of its properties. Also, we introduce a ladder order which plays a prominent role in this paper.
\subsection{Three equivalent definitions of natural unit interval orders} Here we define natural unit interval orders in three different ways. We refer readers to \cite[Section 4]{sw16} for the proof that these definitions are indeed equivalent.
\subsubsection{Definition in terms of unit intervals} 
\begin{defn} We say that $\cP$ on $[1,n]$ is a natural unit interval order if it there exist $n$ real numbers $y_1<y_2< \cdots<y_n$ such that $i\pleq j \Leftrightarrow y_i+1<y_j$.
\end{defn}
Pictorially, one may regard $y_1, \ldots, y_n$ as the starting points of unit intervals $I_1=[y_1, y_1+1], I_2=[y_2, y_2+1], \ldots, I_n=[y_n, y_n+1]$. Then the above definition translates to the following.
\begin{enumerate}
\item If $i<j$, then $I_i$ should start before $I_j$ in the real line.
\item We have $i\pleq j$ if and only if $I_j$ starts after $I_i$ ends in the real line.
\end{enumerate}
For example, Figure \ref{fig:nuio1} shows an arrangement of unit intervals and the corresponding natural unit interval order.

\begin{figure}[!htbp]
\begin{tikzpicture}[scale=2,baseline=(current bounding box.center)]
\Intv[1]{1}{2};
\Intv[2]{1.3}{2.3};
\Intv[3]{2.5}{2};
\Intv[4]{2.8}{2.3};
\Intv[5]{3.2}{1.7};
\Intv[6]{4}{2};
\end{tikzpicture}\qquad
\begin{tikzpicture}[baseline=(current bounding box.center), xscale=1.4]
\node (node_5) at (21.0bp,104.5bp) [draw,draw=none] {$6$};
  \node (node_4) at (66.0bp,55.5bp) [draw,draw=none] {$5$};
  \node (node_3) at (6.0bp,55.5bp) [draw,draw=none] {$3$};
  \node (node_2) at (36.0bp,55.5bp) [draw,draw=none] {$4$};
  \node (node_1) at (51.0bp,6.5bp) [draw,draw=none] {$2$};
  \node (node_0) at (21.0bp,6.5bp) [draw,draw=none] {$1$};
  \draw [black,<-] (node_0) ..controls (17.092bp,19.746bp) and (13.649bp,30.534bp)  .. (node_3);
  \draw [black,<-] (node_1) ..controls (54.908bp,19.746bp) and (58.351bp,30.534bp)  .. (node_4);
  \draw [black,<-] (node_0) ..controls (24.908bp,19.746bp) and (28.351bp,30.534bp)  .. (node_2);
  \draw [black,<-] (node_1) ..controls (38.73bp,20.316bp) and (27.098bp,32.464bp)  .. (node_3);
  \draw [black,<-] (node_1) ..controls (47.092bp,19.746bp) and (43.649bp,30.534bp)  .. (node_2);
  \draw [black,<-] (node_0) ..controls (33.27bp,20.316bp) and (44.902bp,32.464bp)  .. (node_4);
  \draw [black,<-] (node_3) ..controls (9.9083bp,68.746bp) and (13.351bp,79.534bp)  .. (node_5);
  \draw [black,<-] (node_2) ..controls (32.092bp,68.746bp) and (28.649bp,79.534bp)  .. (node_5);
\end{tikzpicture}
\caption{Natural unit interval order in terms of unit intervals}\label{fig:nuio1}
\end{figure}

\subsubsection{Definition using partitions} For $n \in \bZ_{>0}$, let $\lambda$ be a partition contained in $\stair(n)$. We define the partial order $\cP_{\lambda,n}$ on $[1,n]$ such that $a\pleq b$ if and only if $a\leq \lambda_{n+1-b}$. In other words, we have $[1, \lambda_i]=\{x \in [1,n] \mid x \pleq n+1-i\}$. For example, when $n=5$ and $\lambda=(3,1)$ then $\cP_{\lambda,n}$ is given as in Figure \ref{fig:nuio2}.
\begin{figure}[!htbp]
\begin{tikzpicture}[scale=0.5,baseline=(current bounding box.center)]
 \StairGrid{5};
 \Boxfill{0}{0};
 \Boxfill{1}{0};
 \Boxfill{2}{0};
 \Boxfill{0}{1};
 \node at (-0.5,4.5) {$1$};
 \node at (-0.5,3.5) {2};
 \node at (-0.5,2.5) {3};
 \node at (-0.5,1.5) {4};
 \node at (-0.5,.5) {5}; 
 \node at (0.5,-.5) {1};
 \node at (1.5,-.5) {2};
 \node at (2.5,-.5) {3};
 \node at (3.5,-.5) {4};
 \node at (4.5,-.5) {5};
\end{tikzpicture} \qquad
\begin{tikzpicture}[baseline=(current bounding box.center)]
\node (node_4) at (51.0bp,55.5bp) [draw,draw=none] {$5$};
  \node (node_3) at (21.0bp,55.5bp) [draw,draw=none] {$4$};
  \node (node_2) at (66.0bp,6.5bp) [draw,draw=none] {$3$};
  \node (node_1) at (36.0bp,6.5bp) [draw,draw=none] {$2$};
  \node (node_0) at (6.0bp,6.5bp) [draw,draw=none] {$1$};
  \draw [black,<-] (node_0) -- (node_3);
  \draw [black,<-] (node_1) -- (node_4);
  \draw [black,<-] (node_0) -- (node_4);
  \draw [black,<-] (node_2) -- (node_4);
\end{tikzpicture}
\caption{Natural unit interval order in terms of partitions}\label{fig:nuio2}
\end{figure}

\begin{defn}
We say that a partial order $\cP$ on $[1,n]$ is a natural unit interval order if $\cP=\cP_{\lambda, n}$ for some $\lambda \subset \stair(n)$.
\end{defn}

\subsubsection{Defining properties} One may also define natural unit interval orders by imposing certain conditions on a partial order, namely:
\begin{defn} We say that a partial order $\cP$ on $[1,n]$ is a natural unit interval order if
\begin{enumerate}[label=\textbullet]
\item the usual order is a linearization of $\cP$, i.e. if $a \pleq b$ then $a<b$, and
\item if $b \pl c$, $a \dash_\cP b$, and $a\dash_\cP c$, then $b<a<c$. 
\end{enumerate}
\end{defn}
Because of the first condition, $\pl, \pr$ are equivalent to $\pleq, \pgeq$ when we consider natural unit interval orders. From now on we usually use the former rather than the latter for such orders. Also, hereafter we refer to the second condition as \cond. For example, if $\cP$ is a natural unit interval order and $a, b, c \in [1,n]$ satisfy $b\pl c$ and $a<b$, then $a \pl c$ by \cond{}.

\subsection{$(3+1)$ and $(2+2)$ avoidance}
There is another characterization of natural unit interval orders in terms of suborder avoidance. Indeed, it is essentially proved in \cite{ss58} that a partial order $\cP$ on $[1,n]$ is a natural unit interval order if and only if the usual order is a linearization of $\cP$ and $\cP$ avoids suborders ``$(3+1)$'' (disjoint union of a chain of length 3 and an element) and ``$(2+2)$'' (disjoint union of two chains of length 2). Here we prove only one direction which will be useful later on.
\begin{lem}[See Figure \ref{fig:3122}] \label{lem:3122} Suppose that $\cP$ is a natural unit interval order on $[1,n]$. Then,
\begin{enumerate}
\item there does not exist $a,b,c,d \in [1,n]$ such that $a\pr b\pr c$, $a\dash_\cP d$, and $d\dash_\cP c$, and
\item there does not exist $a,b,c,d \in [1,n]$ such that $a\pr b$, $c\pr d$, $c \dash_\cP b$, and $a \dash_\cP d$.
\end{enumerate}
\end{lem}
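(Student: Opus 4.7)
For both parts I will work entirely from the two defining properties of a natural unit interval order: that the usual order linearizes $\cP$, and Condition $(\pitchfork)$ (applied to a pair $x \pl y$ with a $z$ incomparable to both, forcing $x < z < y$). In each case I rule out every potential relation of the ``extra'' elements by a case analysis: if two elements are $\cP$-comparable, I reach a contradiction by transitivity (combining with the given chain); if they are $\cP$-incomparable, I apply $(\pitchfork)$ twice to place the extra element on both sides of something in the usual order and derive a numerical contradiction.

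\textbf{Part (1).} Given $a \pr b \pr c$, $a \dash_\cP d$, and $d \dash_\cP c$, I compare $d$ with $b$. If $d \pl b$, transitivity with $b \pl a$ yields $d \pl a$, contradicting $a \dash_\cP d$. If $d \pr b$, transitivity with $c \pl b$ yields $c \pl d$, contradicting $d \dash_\cP c$. If $d = b$ then $d \pr c$, again contradicting $d \dash_\cP c$. The remaining case is $d \dash_\cP b$; then applying $(\pitchfork)$ to $c \pl b$ with $d$ gives $c < d < b$, while applying $(\pitchfork)$ to $b \pl a$ with $d$ gives $b < d < a$, which together contradict each other.

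\textbf{Part (2).} Given $a \pr b$, $c \pr d$, $c \dash_\cP b$, $a \dash_\cP d$, I first determine the relation between $a$ and $c$. Each of $a \pl c$, $a \pr c$, and $a = c$ leads (via transitivity with the given $a \pr b$ and $c \pr d$) to a contradiction with one of the incomparability hypotheses, so $a \dash_\cP c$. An analogous case analysis on the relation between $b$ and $d$ forces $b \dash_\cP d$. Now $(\pitchfork)$ applied to $b \pl a$ with $c$ gives $b < c < a$, while $(\pitchfork)$ applied to $d \pl c$ with $a$ gives $d < a < c$, and comparing the last coordinates ($c < a$ versus $a < c$) is the desired contradiction.

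\textbf{Expected difficulty.} There is no real obstacle here; the proof is a disciplined bookkeeping exercise in case analysis, and the only mild subtlety is remembering that in a natural unit interval order the symbols $\pl$ and $\pleq$ are interchangeable (so transitivity can be invoked freely) and that $(\pitchfork)$ places the ``middle'' element strictly between in the \emph{usual} order, which is precisely what makes the two applications in each part incompatible.
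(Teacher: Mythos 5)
Your proof is correct and follows essentially the same approach as the paper: a case analysis on the position of the extra element, powered by Condition $(\pitchfork)$ and transitivity. The paper's write-up is marginally tighter because it splits cases by the \emph{usual} linear order (so a single sandwich application of $(\pitchfork)$ handles each branch, with no need for the double application you use in the $d \dash_\cP b$ case), and in part (2) it invokes the evident symmetry of the hypotheses to halve the work; note also that your derivation of $b \dash_\cP d$ in part (2) is never used in the concluding contradiction.
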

\begin{proof} (1) If $b>d$ (resp. $b=d$) then $a\pr d$ by \cond{} applied to $\{a, b, d\}$ (resp. since $a \pr b$) which is absurd. Similarly, if $b<d$ then $d \pr c$ by \cond{} which is again impossible. (2) If $c>a$ (resp. $c=a$) then $c \pr b$ by \cond{} applied to $\{a, b, c\}$ (resp. since $a \pr b$) which is absurd, and thus $c<a$. However, by symmetry we should have $a<c$ as well, which is again impossible.
\end{proof}

\begin{figure}[!htbp]
\begin{tikzpicture}[yscale=-1]
	\node (a) at (0,0) {$a$};
	\node (b) at (-1,1) {$b$};
	\node (d) at (1,1) {$d$};
	\node (c) at (0,2) {$c$};

	\draw[->] (a) to (b);
	\draw[->] (b) to (c);
	\draw[dashed] (a) to (d);
	\draw[dashed] (d) to (c);
\end{tikzpicture}\qquad
\begin{tikzpicture}[yscale=-1]
	\node (a) at (0,0) {$a$};
	\node (b) at (0,2) {$b$};
	\node (d) at (2,2) {$d$};
	\node (c) at (2,0) {$c$};

	\draw[->] (a) to (b);
	\draw[->] (c) to (d);
	\draw[dashed] (c) to (b);
	\draw[dashed] (a) to (d);
\end{tikzpicture}
\caption{(3+1)- and (2+2)-posets}
\label{fig:3122}
\end{figure}

\subsection{Properties of $\cP$-tableaux} 
We discuss some properties of $\cP$-tableaux for natural unit interval orders. These will be frequently used in the later part of this paper without reminder.

\begin{lem} Suppose that $(a_p, \ldots, a_1)$ and $(b_q, \ldots b_1)$ are two adjacent columns in a $\cP$-tableau such that the former is on the left of the latter. 
\begin{enumerate}
\item If $a_i>b_j$ then $i\geq j$
\item If $a_i \pr b_j$ then $i>j$.
\item If $i<j$ then $a_i <b_j$.
\end{enumerate}
\end{lem}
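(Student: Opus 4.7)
I would prove (3) first and deduce (1) and (2) from it. The shape of any $\cP$-tableau is a Young diagram, so column heights weakly decrease left to right, giving $p\geq q$. Each column is a chain in $\cP$, so $a_1\pl a_2\pl\cdots\pl a_p$ and $b_1\pl\cdots\pl b_q$; in particular all the $a_i$'s are increasing both in $\cP$ and in the usual order, and likewise for the $b_j$'s. The row condition gives $a_k\not\pgeq b_k$ whenever $k\leq q$.

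To prove (3), fix $i<j$. Since $j\leq q\leq p$, the entry $a_j$ exists and $a_i\pl a_j$, so $a_i<a_j$. The row condition at row $j$ gives two cases: either $a_j\pl b_j$, in which case transitivity and linearization give $a_i<a_j<b_j$; or $a_j\dash_\cP b_j$, which is where \cond{} does the real work. In the latter case I would enumerate the $\cP$-relation between $b_j$ and $a_i$: if $b_j\pgeq a_i$, then linearization gives $a_i<b_j$ immediately; if $b_j\pl a_i$, then transitivity with $a_i\pl a_j$ yields $b_j\pl a_j$, contradicting $a_j\dash_\cP b_j$; if $b_j\dash_\cP a_i$, then \cond{} applied to $a_i\pl a_j$ with $b_j$ incomparable to both forces $a_i<b_j<a_j$. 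All three possibilities give $a_i<b_j$, which is (3).

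Parts (1) and (2) are then formal consequences. For (1), if $i<j$, then (3) gives $a_i<b_j$, contradicting $a_i>b_j$; so $i\geq j$. For (2), the case $i<j$ is ruled out by (3) (since $a_i\pr b_j$ forces $a_i>b_j$), and the case $i=j$ is ruled out directly by the row condition $a_i\not\pgeq b_i$. The only real obstacle is the subcase $a_j\dash_\cP b_j$ in (3), where the defining axiom \cond{} of natural unit interval orders must be invoked via a short case analysis on how $b_j$ sits relative to $a_i$; everything else is routine manipulation of the chain, row, and linearization properties.
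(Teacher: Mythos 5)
Your proof is correct. It is a mirror image of the paper's argument rather than a genuinely different one: the paper proves each of (1)--(3) independently by contradiction, applying \cond{} to the right-column chain $b_i \pl b_j$ together with the row condition at row $i$ to derive the forbidden relation $a_i \pr b_i$; you instead prove (3) directly by locating $b_j$ relative to the left-column chain $a_i \pl a_j$ and the row condition at row $j$, then read off (1) and (2) as corollaries. Both proofs use the same ingredients (\cond{}, one column chain, one instance of the row condition) and are equally short. The one substantive difference is that your argument additionally needs $q \le p$, i.e.\ the Young-diagram shape, so that $a_j$ exists whenever $j \le q$ — a fact the paper's version never invokes, since it only references $b_i$ with $i < j \le q$. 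That extra hypothesis is harmless here, and you flagged it explicitly, so the proof is sound; the paper's bookkeeping is just marginally more economical.
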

\begin{proof} For (1), suppose $a_i>b_j$ and $i<j$. Then $b_i \pl b_j$ thus $a_i \pr b_i$  by \cond{}, which contradicts the $\cP$-tableau condition. This proves (1). For (2), suppose $a_i \pr b_j$ and $i\leq j$. Then $b_i \leq b_j$ thus $a_i \pr b_i$ by \cond{}, which contradicts the $\cP$-tableau condition. This proves (2). For (3), suppose $i<j$ and $a_i\geq b_j$. Then $b_i \pl b_j$ thus $a_i \pr b_i$ by \cond{}, which contradicts the $\cP$-tableau condition. This proves (3).
\end{proof}

\subsection{Ladders} We define a special kind of a natural unit interval order called a ladder order.
\begin{defn} A partial order $\cP$ is called a \emph{ladder order} if it is isomorphic to $\cP_{\stair(m-1),m}$ for some $m \in \bN$.
\end{defn}

\begin{figure}[!htbp]
\begin{tikzpicture}[scale=0.4,baseline=(current bounding box.center)]
 \StairGrid{10};
 \Recfill{0}{0}{8}{1};
 \Recfill{0}{1}{7}{2};
 \Recfill{0}{2}{6}{3};
 \Recfill{0}{3}{5}{4};
 \Recfill{0}{4}{4}{5};
 \Recfill{0}{5}{3}{6};
 \Recfill{0}{6}{2}{7};
 \Recfill{0}{7}{1}{8};
\end{tikzpicture}
\adjustbox{stack=cc}{
\begin{tikzpicture}[]
	\node (1) at (0,0) {1};
	\node (3) at (2,0) {3};
	\node (5) at (4,0) {5};
	\node (7) at (6,0) {7};
	\node (9) at (8,0) {9};
	
	\node (2) at (1,1) {2};
	\node (4) at (3,1) {4};
	\node (6) at (5,1) {6};
	\node (8) at (7,1) {8};
	\node (10) at (9,1) {10};

	\draw[<-] (1) to (3);
	\draw[<-] (3) to (5);
	\draw[<-] (5) to (7);
	\draw[<-] (7) to (9);

	\draw[<-] (2) to (4);
	\draw[<-] (4) to (6);
	\draw[<-] (6) to (8);
	\draw[<-] (8) to (10);
	
	\draw[<-] (1) to (4);
	\draw[<-] (2) to (5);
	\draw[<-] (3) to (6);
	\draw[<-] (4) to (7);
	\draw[<-] (5) to (8);
	\draw[<-] (6) to (9);
	\draw[<-] (7) to (10);
\end{tikzpicture}\\
\vspace{20pt}
\begin{tikzpicture}[scale=1.3]
	\tikzmath{\x=0.6;\y=0.3;};
	\Intv[1]{0}{0};
	\Intv[2]{\x}{\y};
	\Intv[3]{2*\x}{0};
	\Intv[4]{3*\x}{\y};
	\Intv[5]{4*\x}{0};
	\Intv[6]{5*\x}{\y};
	\Intv[7]{6*\x}{0};
	\Intv[8]{7*\x}{\y};
	\Intv[9]{8*\x}{0};
	\Intv[10]{9*\x}{\y};
\end{tikzpicture}
}
\caption{$\cP_{\stair(9), 10}$: a ladder of size 10}\label{fig:ladder}
\end{figure}
Figure \ref{fig:ladder} shows the partial order $\cP_{\stair(9),10}$ which is by definition a ladder order, and the partition and the unit interval arrangement defining it. The term ``ladder'' is inspired from the shapes of its Hasse diagram and the corresponding unit interval arrangement.

\begin{defn} For a partial order $\cP$ defined on $X$, we say that $A$ is a \emph{ladder in $\cP$} if $\cP|_A$ is a ladder order.
\end{defn}
For example, if $\cP=\cP_{\stair(m-1),m}$ then ladders in $\cP$ are exactly $[a,b]$ for some $a,b \in [1,m]$.
Let us describe some basic properties of ladders. Later we will frequently use these properties without reminder.
\begin{lem} Suppose that $\cP$ is a natural unit interval order on $[1,n]$ and assume that $y_1, \ldots, y_k \in [1,n]$ such that $y_1<y_2<\cdots<y_k$ and $\{y_1, \ldots, y_k\}$ is a ladder in $\cP$. Then,
\begin{enumerate}
\item $\{y_i, \ldots, y_j\}$ is a ladder in $\cP$ for any $1\leq i\leq j \leq k$.
\item $y_i \pdl y_{i+1}$ for $i \in [1,k-1]$.
\item $y_i \pl y_j$ if $j-i\geq 2$.
\item If $x \not\pl y_1$ and $x \pl y_2$, then $\{x, y_1, \ldots, y_k\}$ is a ladder in $\cP$.
\item If $z \not\pr y_k$ and $z \pr y_{k-1}$, then $\{y_1, \ldots, y_k, z\}$ is a ladder in $\cP$.
\item If $y_i<x<y_{i+1}$ for some $i \in [1,k-1]$, then $y_i \pdl x$ and $x \pdl y_{i+1}$.
\end{enumerate}
\end{lem}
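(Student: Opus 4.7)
My plan is to derive all six items from just two ingredients: the explicit description of the reference ladder order, and \cond{}. Unwinding $\stair(m-1)=(m-2,m-3,\ldots,1)$ in the definition of $\cP_{\stair(m-1),m}$ shows that a set $\{z_1<z_2<\cdots<z_m\}$ forms a ladder iff $z_a\pl z_b$ precisely when $b-a\geq 2$, and $z_a\dash z_{a+1}$ for all $a$. Applied to $\{y_1,\ldots,y_k\}$, this single combinatorial characterization immediately yields \textbf{(1)} (the pattern is preserved under restriction to a contiguous subinterval), \textbf{(2)} ($y_i<y_{i+1}$ and $y_i\dash y_{i+1}$, so $y_i\pdl y_{i+1}$), and \textbf{(3)}.

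For \textbf{(4)}, the first step is to locate $x$, namely to prove $x<y_1$ and $x\dash y_1$. I would rule out $y_1\pl x$ because transitivity of $\pleq$ would then combine with $x\pl y_2$ to give $y_1\pl y_2$, contradicting (2); and I would rule out $y_1<x$ by applying \cond{} to the relation $x\pl y_2$ with $a=y_1$, which would force $x<y_1$. Since $x\not\pl y_1$ by hypothesis, this leaves $x\dash y_1$ with $x<y_1$. Next I would establish $x\pl y_j$ for every $j\geq 2$: for $j\geq 4$ transitivity through $y_2\pleq y_j$ works. For the delicate case $j=3$, I would apply \cond{} to the ladder relation $y_1\pl y_3$ with $a=x$: if $x\dash y_3$ held, \cond{} would force $y_1<x$, which we have just excluded, so $x$ must be comparable with $y_3$, whence $x\pl y_3$ since $x<y_3$. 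Together with (3), these facts show that $\{x,y_1,\ldots,y_k\}$ obeys exactly the ladder pattern. Item \textbf{(5)} is the exact mirror of (4) (inserting $z$ above $y_k$), and I would prove it by the same \cond{} manipulations with all inequalities reversed.

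For \textbf{(6)}, I would argue by contradiction. If $y_i$ and $x$ were comparable, then $y_i\pl x$ since $y_i<x$. Applying \cond{} to $y_i\pl x$ with $a=y_{i+1}$ (using $y_{i+1}\dash y_i$ from (2)), I would show $y_{i+1}$ must be comparable with $x$: otherwise \cond{} would give $y_{i+1}<x$, contradicting $x<y_{i+1}$. Thus $x\pl y_{i+1}$, and transitivity yields $y_i\pl y_{i+1}$, contradicting (2). The incomparability $x\dash y_{i+1}$ follows by the symmetric argument, and since $y_i<x<y_{i+1}$ we conclude $y_i\pdl x$ and $x\pdl y_{i+1}$.

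Overall the proof is routine bookkeeping with \cond{} and transitivity of $\pleq$. The only place demanding any real care is the $j=3$ step in (4): the naive transitivity shortcut through $y_2$ fails there (since $x\pl y_2$ and $y_2\pl y_3$ is not available in the ladder), and one must instead invoke \cond{} across the relation $y_1\pl y_3$ to pin down $x$'s position relative to $y_3$. I do not anticipate any substantive obstacle beyond this careful casework.
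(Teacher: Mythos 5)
Your overall approach matches the paper's terse proof: reduce to the explicit combinatorial description of a ladder and then push everything through \cond{}. Items (2)--(6) are handled correctly, and you rightly flag the $j=3$ case of (4) as the one needing care.

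The one step I would push back on is the opening claim that ``unwinding the definition'' already shows that $\{z_1 < z_2 < \cdots < z_m\}$ is a ladder if and only if $z_a \pl z_b$ exactly when $b-a \geq 2$. The ``if'' direction is indeed immediate, since the identification $z_a \leftrightarrow a$ is then a poset isomorphism. But the ``only if'' direction is not a definitional unwinding: by the paper's definition, $\cP|_A$ is only required to be \emph{abstractly} poset-isomorphic to $\cP_{\stair(m-1),m}$, so you must still argue that any such isomorphism is forced to send $z_a$ to $a$, i.e.\ that it respects the two underlying linear orders. This is precisely where \cond{} does real work: if the isomorphism $\phi$ reversed some pair $z_a < z_b$, that pair would have to be $\cP$-incomparable (comparable pairs cannot be reversed, by the linearization property), hence $\phi(z_a)$ and $\phi(z_b)$ would be adjacent in the ladder; pulling back the ladder-neighbor on the far side then produces a triple violating \cond{} (and for $m \le 2$ there is nothing to check). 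It is a short argument, in exactly the same spirit as your (4)--(6), but it \emph{is} an argument. Without it, items (1)--(3), which you read straight off the characterization, are not fully justified; once it is supplied, the rest of your proof goes through.
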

\begin{proof} It follows almost directly from the definition of a ladder and \cond{}.
\end{proof}

\subsection{Climbing a ladder} We define a special kind of partial orders called ``ladder-climbing'' orders.
\begin{defn} For a partial order $\cP$ on $[1,n]$, we say that \emph{someone is climbing a ladder in $\cP$} or \emph{$\cP$ is ladder-climbing} if there exist $x, y_1, \ldots, y_k \in [1,n]$ such that
\begin{enumerate}
\item $x \not \in \{y_1, y_2, \ldots, y_k\}$,
\item $\{y_1, y_2, \ldots, y_k\}$ is a ladder in $\cP$, and
\item $y_1 \pl x \pl y_k$.
\end{enumerate}
In this case, we also say that \emph{$x$ is climbing a ladder in $\cP$} or \emph{$x$ is climbing (the ladder) $\{y_1, \ldots, y_k\}$  in $\cP$}. If there is no such $x$, then we say that \emph{no one is climbing a ladder in $\cP$} or simply \emph{$\cP$ is not ladder-climbing}.
\end{defn}

Indeed, there is a characterization of ladder-climbing partial orders in terms of the avoidance of certain suborders as the following proposition shows.
\begin{prop} Let $\cP$ be a partial order on $[1,n]$. Then no one is climbing a ladder in $\cP$ if and only if it avoids both $\cP_{(3,1,1),5}$ and $\cP_{(4,2,1,1),6}$. (See Figure \ref{fig:climbingladders} for the Hasse diagrams of these two orders.)
\end{prop}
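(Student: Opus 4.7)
The plan is to prove both implications. The easy direction, that containment of either pattern implies ladder-climbing, follows by direct verification: in $\cP_{(3,1,1),5}$ the element $3$ climbs the ladder $\{1,2,4,5\}$, and in $\cP_{(4,2,1,1),6}$ the element $4$ climbs $\{1,2,3,5,6\}$. Since a climbing witness is encoded entirely in the pairwise comparabilities of a fixed set of elements, and an induced suborder preserves these relations, any $\cP$ containing one of them as a suborder is itself ladder-climbing.

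For the harder direction, suppose $\cP$ is ladder-climbing and choose $x$ climbing $\{y_1, \ldots, y_k\}$ with $k$ minimal among all climbing witnesses in $\cP$. The first reduction is that $y_j \dash_\cP x$ for every $j \in [2, k-1]$: otherwise $y_j \pl x$ would let $x$ climb the shorter ladder $\{y_j, \ldots, y_k\}$ and $x \pl y_j$ would let $x$ climb $\{y_1, \ldots, y_j\}$, either contradicting minimality. Letting $i$ be the unique index with $y_i < x < y_{i+1}$, the ladder property that a value numerically between consecutive ladder elements must be incomparable to both rules out $i = 1$ (which would force $y_1 \dash_\cP x$) and $i = k-1$ (forcing $x \dash_\cP y_k$), so $i \in [2, k-2]$ and in particular $k \geq 4$.

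The crucial step is the upper bound $k \leq 5$. Assuming $k \geq 6$, consider $A = \{y_1, y_2, x, y_{k-1}\}$: the relations $y_1 \dash_\cP y_2$, $y_2 \dash_\cP x$, $x \dash_\cP y_{k-1}$, together with $y_1 \pl x$, $y_2 \pl y_{k-1}$ (ladder distance $k-3 \geq 3$), and $y_1 \pl y_{k-1}$ (distance $k-2 \geq 4$) exhibit $\cP|_A$ as a ladder of size $4$. Since $k \geq 6$, $y_3$ is distinct from every element of $A$, and $y_1 \pl y_3$ (distance $2$) together with $y_3 \pl y_{k-1}$ (distance $k-4 \geq 2$) shows that $y_3$ climbs $A$, contradicting minimality. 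Hence $k \in \{4, 5\}$.

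It remains to identify the forbidden pattern in each terminal case by explicit tabulation of relations. For $k = 4$ (which forces $i = 2$), the five elements $\{y_1, y_2, x, y_3, y_4\}$ in the numerical order $y_1 < y_2 < x < y_3 < y_4$ admit exactly the relations $y_1 \pl x$, $y_1 \pl y_3$, $y_1 \pl y_4$, $y_2 \pl y_4$, $x \pl y_4$, matching $\cP_{(3,1,1),5}$ under the identification $(y_1, y_2, x, y_3, y_4) \leftrightarrow (1, 2, 3, 4, 5)$. For $k = 5$, either choice of $i \in \{2, 3\}$ yields the six-element set $\{y_1, \ldots, y_5, x\}$ realizing $\cP_{(4,2,1,1),6}$, verified pair-by-pair using ladder distances together with the climbing relations $y_1 \pl x$ and $x \pl y_5$ and the incomparabilities $y_j \dash_\cP x$ for $j \in [2, 4]$. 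The main obstacle is the $k \leq 5$ reduction: once the subladder $A = \{y_1, y_2, x, y_{k-1}\}$ and its climber $y_3$ are pinpointed, the entire case analysis collapses to two routine relation-profile comparisons.
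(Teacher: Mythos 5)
Your proof is correct, and the overall strategy matches the paper's: set up a climbing witness, force $x$ incomparable to the interior rungs, establish $k\geq 4$, and then identify the forbidden suborder by explicit comparison of relation profiles. The genuine difference is in how you bound $k$ from above. The paper rules out $k\geq 7$ by observing that $y_2, y_4, y_6$ form a chain all incomparable to $x$, contradicting $(3+1)$-avoidance (Lemma~\ref{lem:3122}), and is then left with three cases $k\in\{4,5,6\}$; the $k=6$ case must be handled separately by extracting a five-element set realizing $\cP_{(3,1,1),5}$. You instead exploit minimality of $k$ directly: when $k\geq 6$, the set $A=\{y_1,y_2,x,y_{k-1}\}$ is itself a size-4 ladder climbed by $y_3$, contradicting the choice of $k$. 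This collapses the case analysis to $k\in\{4,5\}$ only, and the remaining two verifications are exactly the two target posets — so your route is slightly shorter and avoids the extra $k=6$ extraction step. One small note: in the easy direction you use the witness ``$4$ climbs $\{1,2,3,5,6\}$'' in $\cP_{(4,2,1,1),6}$, which is correct, while the paper's text states ``$4$ is climbing the ladder $\{1,2,4,5,6\}$'' — a typo, since $4$ lies in that set; the paper presumably intended $3$ as the climber of $\{1,2,4,5,6\}$. Both choices are valid witnesses. (Both your proof and the paper's implicitly use \cond{} and the structure of ladders in natural unit interval orders, e.g.\ that a value numerically wedged between two consecutive rungs is incomparable to both; this is the intended reading of the proposition even though the statement says ``partial order.'')
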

\begin{proof} Direct calculation shows that 3 is climbing the ladder $\{1, 2, 4, 5\}$ in $\cP_{(3,1,1), 5}$ and 4 is climbing the ladder $\{1, 2, 4, 5, 6\}$ in $\cP_{(4,2,1,1),6}$. Thus if no one is climbing a ladder in $\cP$ then it should avoid $\cP_{(3,1,1),5}$ and $\cP_{(4,2,1,1),6}$.

It remains to show that if $\cP$ is ladder-climbing then there is a set $X \in [1,n]$ such that $\cP|_X$ is isomorphic to either $\cP_{(3,1,1),5}$ or $\cP_{(4,2,1,1),6}$. Suppose that $x$ is climbing the ladder $\{y_1, y_2, \ldots, y_k\}$ in $\cP$ where $y_1<y_2<\cdots<y_k$, in which case $y_1 \pl x \pl y_k$. Then without loss of generality we may assume that $y_2\not \pl x$ and $y_{k-1}\not\pr x$. Since $y_1 \pdl y_2$ and $y_{k-1} \pdl y_k$, this implies that $y_2 \pdl x$ and $y_{k-1} \pdr x$ by \cond. Moreover, we have $y_i \dash_\cP x$ for $i\in [2,k-1]$. Indeed, if $y_i\pl x$ (resp.  $y_i\pr x$) then $y_2 \pl x$ (resp. $y_{k-1}\pr x$) by \cond{} applied to $(y_2, y_i, x)$ (resp. $(x, y_i, y_{k-1})$), which is a contradiction.

Since $y_2 \pdl x$ and $y_{k-1} \pdr x$ we have $k\geq 4$. On the other hand, if $k\geq 7$ then $y_2, y_4, y_6\dash_\cP x$ and $y_2 \pl y_4 \pl y_6$ which is impossible by Lemma \ref{lem:3122}. Thus it follows that $k \in[4,6]$. If $k=4$, then one can easily show that $\cP$ restricted to $\{x, y_1, y_2, y_3, y_4\}$ is isomorphic to $\cP_{(3,1,1),5}$ where the isomorphism of posets is given by $(x, y_1, y_2, y_3, y_4)\mapsto (3,1,2,4,5) $. If $k=5$, then $\cP$ restricted to $\{x, y_1, y_2, y_3, y_4, y_5\}$ is isomorphic to $\cP_{(4,2,1,1),6}$ where the isomorphism is given by $(x, y_1, y_2, y_3, y_4, y_5) \mapsto (3,1,2,4,5,6)$. Finally, if $k=6$ then $\cP$ restricted to $\{x, y_1, y_2, y_3, y_5\}$ is isomorphic to $\cP_{(3,1,1),5}$, where the isomorphism is given by $(x, y_1, y_2, y_3, y_5) \mapsto (4, 1, 2, 3, 6)$. It suffices for the proof.
\end{proof}

\begin{figure}[!htbp]
\begin{tikzpicture}[baseline=(current bounding box.center)]
  \node (5) at (0,4) [draw,draw=none] {$5$};
  \node (4) at (2,3) [draw,draw=none] {$4$};
  \node (3) at (0,2) [draw,draw=none] {$3$};
  \node (2) at (-2,1) [draw,draw=none] {$2$};
  \node (1) at (0,0) [draw,draw=none] {$1$};
  \draw [black,<-] (1) to (3);
  \draw [black,<-] (3) to (5);
  \draw [black,<-] (2) to (5);
  \draw [black,<-] (1) to (4);
\end{tikzpicture}\qquad
\begin{tikzpicture}[baseline=(current bounding box.center)]
  \node (6) at (0,4) [draw,draw=none] {$6$};
  \node (5) at (2,3) [draw,draw=none] {$5$};
  \node (4) at (0,2) [draw,draw=none] {$4$};
  \node (3) at (-2,2) [draw,draw=none] {$3$};
  \node (2) at (2,1) [draw,draw=none] {$2$};
  \node (1) at (0,0) [draw,draw=none] {$1$};
  \draw [black,<-] (1) to (3);
  \draw [black,<-] (1) to (4);
  \draw [black,<-] (1) to (5);
  \draw [black,<-] (2) to (5);
  \draw [black,<-] (2) to (6);
  \draw [black,<-] (4) to (6);
  \draw [black,<-] (3) to (6);
\end{tikzpicture}
\caption{$\cP_{(3,1,1),5}$ and $\cP_{(4,2,1,1),6}$}\label{fig:climbingladders}
\end{figure}

There is another characterization of $\cP_{(3,1,1),5}$-avoiding partial orders as follows.
\begin{lem} \label{lem:ladjoin} The following two conditions are equivalent.
\begin{enumerate}
\item $\cP$ avoids $\cP_{(3,1,1),5}$.
\item ``A join of two ladders is again a ladder.'' Suppose that $\cL$ and $\cL'$ are two ladders in $\cP$ such that $\cL\cap \cL' = \{x\}$. If $x$ is the maximum in $\cL$ and the minimum in $\cL'$ with respect to the usual order and $|\cL|, |\cL'|\geq 3$ then  $\cL \cup \cL'$ is also a ladder in $\cP$.
\end{enumerate}
\end{lem}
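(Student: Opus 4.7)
I prove the two implications separately; the substantial direction is $(1) \Rightarrow (2)$. Write $\cL = \{y_1 < \cdots < y_p < x\}$ and $\cL' = \{x < z_1 < \cdots < z_q\}$ with $p, q \geq 2$. To show $\cL \cup \cL'$ is a ladder I need the consecutive pairs in the merged enumeration $y_1 < \cdots < y_p < x < z_1 < \cdots < z_q$ to be incomparable (immediate from $\cL$ and $\cL'$ being ladders) and every nonconsecutive pair comparable. Within $\cL$ or within $\cL'$ this is given, so the real content is the cross relations $y_i \pl z_j$ for $i \in [1, p]$ and $j \in [1, q]$. When $i \leq p - 1$, applying \cond{} to the pair $y_i \pl x$ with element $z_j > x$ rules out $z_j$ being incomparable to both $y_i$ and $x$; combined with $x \pl z_j$ for $j \geq 2$ this yields $y_i \pl z_j$ by transitivity, and for $j = 1$ the same \cond{} argument together with $y_i < z_1$ gives $y_i \pl z_1$. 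By the symmetric use of $x \pl z_j$ with $j \geq 2$ and element $y_i < x$ one also obtains $y_i \pl z_j$ for every $i$ whenever $j \geq 2$. The only cross pair not covered is $(y_p, z_1)$.

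The crux is establishing $y_p \pl z_1$. Suppose for contradiction $y_p \dash_\cP z_1$. Gathering what is already known, the comparable pairs in the five-element set $\{y_{p-1}, y_p, x, z_1, z_2\}$ are exactly $(y_{p-1}, x), (y_{p-1}, z_1), (y_{p-1}, z_2), (y_p, z_2), (x, z_2)$. Under the usual-order-preserving bijection $(y_{p-1}, y_p, x, z_1, z_2) \mapsto (1, 2, 3, 4, 5)$, this matches the comparable pairs $(1, 3), (1, 4), (1, 5), (2, 5), (3, 5)$ of $\cP_{(3,1,1),5}$ exactly, so $\cP$ restricted to these five elements is isomorphic to $\cP_{(3,1,1),5}$, contradicting (1).

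For the reverse direction $(2) \Rightarrow (1)$ I argue contrapositively. Given $X \subset [1, n]$ with $\cP|_X \cong \cP_{(3,1,1),5}$, enumerate $X = \{x_1 < \cdots < x_5\}$ in the usual order. The up-degree of $i$ in any NUIO $\cP_{\lambda, m}$ equals $\lambda'_i$, so the multiset of up-degrees determines $\lambda'$ and hence $\lambda$; consequently the relabeling $x_i \mapsto i$ identifies $\cP|_X$ literally with $\cP_{(3,1,1),5}$. Inside the latter one checks that $\{1, 2, 3\}$ and $\{3, 4, 5\}$ are ladders of size $3$ meeting precisely at $\{3\}$, with $3$ the maximum of the first and the minimum of the second, yet $\{1, \ldots, 5\}$ is not a ladder because $2 \not\pl 4$. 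Transferring back, $\{x_1, x_2, x_3\}$ and $\{x_3, x_4, x_5\}$ witness the failure of (2) in $\cP$.

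The main obstacle is the pair $(y_p, z_1)$: every other cross comparison in $(1) \Rightarrow (2)$ dissolves under a single invocation of \cond{} together with transitivity, but $(y_p, z_1)$ is unreachable by those local arguments, and its treatment is precisely what forces the forbidden $\cP_{(3,1,1),5}$ pattern to appear among the five surrounding elements.
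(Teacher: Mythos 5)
Your proof is correct and follows essentially the same route as the paper's: both reduce the forward direction to the single cross pair $(y_p, z_1)$ (the paper's $(a_k, b_1)$), derive the remaining cross comparabilities from \cond{}, and show that incomparability of that pair forces $\{y_{p-1}, y_p, x, z_1, z_2\}$ to realize the $\cP_{(3,1,1),5}$ pattern, with the reverse direction read off from $\{1,2,3\}$ and $\{3,4,5\}$ inside $\cP_{(3,1,1),5}$ in both cases.
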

\begin{proof} Suppose that $\cP=\cP_{(3,1,1),5}$. Then $\{1,2,3\}$ and $\{3, 4, 5\}$ are ladders but $\{1, 2, 3, 4, 5\}$ is not a ladder. Thus any order that does not avoid $\cP=\cP_{(3,1,1),5}$ cannot satisfy the second condition. Now suppose that $\cP$ avoids $\cP_{(3,1,1),5}$ and assume that $a_1<\cdots<a_k<x<b_1< \cdots<b_l$ such that $\{a_1, \ldots, a_k, x\}$ and $\{x, b_1, \ldots, b_l\}$ are ladders in $\cP$. Then it suffices to show that $\{a_{k-1}, a_k, x, b_1, b_2\}$ is again a ladder in $\cP$. By assumption we have $a_{k-1} \pl x \pl b_2$ and $a_{k-1} \pdl a_{k} \pdl x \pdl b_1 \pdl b_2$. Also $a_{k-1} \pl b_1$ and $a_k \pl b_2$ by \cond{}. Thus if $a_k \pdl b_1$ then direct calculation shows that $\cP$ restricted to $\{a_{k-1}, a_k, x, b_1, b_2\}$ is isomorphic to $\cP_{(3,1,1),5}$, which is a contradiction. Thus $a_k \pl b_1$ and the result follows.
\end{proof}

\section{$\cP$-Knuth equivalence and the main theorem} \label{sec:Knuth}
In this section we assume that a fixed natural unit interval order $\cP$ on $[1,n]$ is given and define $\cP$-Knuth moves and equivalences. Also we state our main theorem in this paper.
\subsection{Definition of $\cP$-Knuth equivalences} First we define the notions of $\cP$-Knuth moves and equivalences which generalize the ones originally introduced by Knuth.
\begin{defn}\label{def:pknuth} Let $1\leq a<b<c\leq n$ and $a\pl c$. We say that two words $w, w'$ are connected by a $\cP$-Knuth move if they fall into one of the following situations, in which case we write $w \pkm w'$ (or $w {\leftrightsquigarrow} w'$ if there is no confusion).
\begin{enumerate}
\item If $a\pdl b$ and $b\pdl c$, then $[\cdots bca \cdots]\pkm[\cdots cab \cdots]$.
\item If $a\pl b$ and $b\pdl c$, then $[\cdots bca \cdots]\pkm[\cdots bac \cdots]$ and $[\cdots cba \cdots]\pkm[\cdots cab \cdots]$.
\item If $a\pdl b$ and $b\pl c$, then $[\cdots bca \cdots]\pkm[\cdots cba \cdots]$ and $[\cdots acb \cdots]\pkm[\cdots cab \cdots]$.
\item If $a\pl b$ and $b\pl c$, then $[\cdots bca \cdots]\pkm[\cdots bac \cdots]$ and $[\cdots acb \cdots]\pkm[\cdots cab \cdots]$.
\end{enumerate}
\begin{center}
\begin{tikzpicture}[scale=0.5]
  \node (a) at (0,0) {$a$};
  \node (b) at (2,1) {$b$};
  \node (c) at (0,2) {$c$};
  \node (d) at (1,-1) {(1) $\cP|_{\{a,b,c\}}\simeq \cP_{(1),3}$};
  \draw [black,<-] (a) -- (c);
  \draw [black,dashed,<-] (a) -- (b);
  \draw [black,dashed,<-] (b) -- (c);
\end{tikzpicture}\quad\begin{tikzpicture}[scale=0.5]
  \node (a) at (0,0) {$a$};
  \node (b) at (2,1) {$b$};
  \node (c) at (0,2) {$c$};
  \node (d) at (1,-1) {(2) $\cP|_{\{a,b,c\}}\simeq \cP_{(1,1),3}$};
  \draw [black,<-] (a) -- (c);
  \draw [black,<-] (a) -- (b);
  \draw [black,dashed,<-] (b) -- (c);
\end{tikzpicture}\quad\begin{tikzpicture}[scale=0.5]
  \node (a) at (0,0) {$a$};
  \node (b) at (2,1) {$b$};
  \node (c) at (0,2) {$c$};
  \node (d) at (1,-1) {(3) $\cP|_{\{a,b,c\}}\simeq \cP_{(2),3}$};
  \draw [black,<-] (a) -- (c);
  \draw [black,dashed,<-] (a) -- (b);
  \draw [black,<-] (b) -- (c);
\end{tikzpicture}\quad\begin{tikzpicture}[scale=0.5]
  \node (a) at (0,0) {$a$};
  \node (b) at (2,1) {$b$};
  \node (c) at (0,2) {$c$};
  \node (d) at (1,-1) {(4) $\cP|_{\{a,b,c\}}\simeq \cP_{(2,1),3}$};
  \draw [black,<-] (a) -- (c);
  \draw [black,<-] (a) -- (b);
  \draw [black,<-] (b) -- (c);
\end{tikzpicture}
\end{center}
In each situation, there exists $i \in [2,n-1]$ such that the set of positions of $a, b,$ and $c$ is $\{i-1, i, i+1\}$. In such a case, we say that $i$ is the position of the $\cP$-Knuth move and also that the $\cP$-Knuth move occurs at position $i$.
\end{defn}
\begin{defn}
The $\cP$-Knuth equivalence relation on the set of words is the equivalence relation generated by $\cP$-Knuth moves. If two words $w, w'$ are equivalent under this relation, we say that $w$ and $w'$ are $\cP$-Knuth equivalent and write $w\stackrel{\cP}\sim w'$. (If there is no confusion, we also say that $w$ and $w'$ are equivalent and write $w\sim w'$.)
\end{defn}

Note that $\cP$-Knuth move/equivalence revert to the usual Knuth move/equivalence when $\cP$ is the usual order on $[1,n]$.

\begin{example}[Figure \ref{fig:s3ex}] All the possible $\cP$-Knuth moves for natural unit interval orders $\cP$ on $[1,3]$ are described in Figure \ref{fig:s3ex}. Here, the underlined numbers in each word indicate its $\cP$-descents.
\end{example}

\begin{figure}[!htbp]
\fbox{
\subcaptionbox{$\cP_{\emptyset, 3}$}{
\begin{tikzpicture}[scale=1.2]
  \node[draw] (123) at (0:1) {$123$};
  \node[draw] (132) at (60:1) {$132$};
  \node[draw] (213) at (120:1) {$213$};
  \node[draw] (231) at (180:1) {$231$};
  \node[draw] (312) at (240:1) {$312$};
  \node[draw] (321) at (300:1) {$321$};
\end{tikzpicture}
}
}\ \ 
\fbox{
\subcaptionbox{$\cP_{(1), 3}$}{
\begin{tikzpicture}[scale=1.2]
  \node[draw] (123) at (0:1) {$123$};
  \node[draw] (132) at (60:1) {$132$};
  \node[draw] (213) at (120:1) {$213$};
  \node[draw] (231) at (180:1) {$2\und{3}1$};
  \node[draw] (312) at (240:1) {$\und{3}12$};
  \node[draw] (321) at (300:1) {$321$};
  \draw (312) to (231);
\end{tikzpicture}
}
}\ \ 
\fbox{
\subcaptionbox{$\cP_{(2), 3}$}{
\begin{tikzpicture}[scale=1.2]
  \node[draw] (123) at (0:1) {$123$};
  \node[draw] (132) at (60:1) {$1\und{3}2$};
  \node[draw] (213) at (120:1) {$213$};
  \node[draw] (231) at (180:1) {$2\und{3}1$};
  \node[draw] (312) at (240:1) {$\und{3}12$};
  \node[draw] (321) at (300:1) {$\und{3}21$};
  \draw (231) to (321);
  \draw (132) to (312);
\end{tikzpicture}
}
}

\vspace*{5pt}
\fbox{
\subcaptionbox{$\cP_{(1,1), 3}$}{
\begin{tikzpicture}[scale=1.2]
  \node[draw] (123) at (0:1) {$123$};
  \node[draw] (132) at (60:1) {$132$};
  \node[draw] (213) at (120:1) {$\und{2}13$};
  \node[draw] (231) at (180:1) {$2\und{3}1$};
  \node[draw] (312) at (240:1) {$\und{3}12$};
  \node[draw] (321) at (300:1) {$3\und{2}1$};
  \draw (213) to (231);
  \draw (312) to (321);
\end{tikzpicture}
}
}\ \ 
\fbox{
\subcaptionbox{$\cP_{(2,1), 3}$}{
\begin{tikzpicture}[scale=1.2]
  \node[draw] (123) at (0:1) {$123$};
  \node[draw] (132) at (60:1) {$1\und{3}2$};
  \node[draw] (213) at (120:1) {$\und{2}13$};
  \node[draw] (231) at (180:1) {$2\und{3}1$};
  \node[draw] (312) at (240:1) {$\und{3}12$};
  \node[draw] (321) at (300:1) {$\und{3}\und{2}1$};
  \draw (213) to (231);
  \draw (132) to (312);
\end{tikzpicture}
}
}
\caption{The $\cP$-Knuth moves on $\sym_3$}\label{fig:s3ex}
\end{figure}

\subsection{Relation to $\cP$-descents and D graphs}
Here we relate $\cP$-Knuth equivalences with study of dual equivalence graphs by Assaf. More precisely, we show that the graphs obtained from the $\cP$-Knuth moves are D graphs in the sense of \cite[Definition 4.5]{ab12}. First let us discuss how $\cP$-Knuth moves affect the $\cP$-descents of words.
\begin{lem} \label{lem:pmovedes} Assume that $w,w'\in \sym_n$ are connected by a $\cP$-Knuth move at position $i$. 
\begin{enumerate}
\item We have $\{\des_\cP(w)\cap\{i-1, i\}, \des_\cP(w')\cap\{i-1, i\}\} = \{\{i-1\}, \{i\}\}$.
\item If $i>2$, then $\{\des_\cP(w)\cap\{i-2, i-1\}, \des_\cP(w')\cap\{i-2, i-1\}\}$ is equal to one of $\{\emptyset, \{i-1\}\}$,  $\{\{i-2\}, \{i-1\}\}$, or $\{\{i-2\}, \{i-2,i-1\}\}$.
\item If $i<n-1$, then $\{\des_\cP(w)\cap\{i, i+1\}, \des_\cP(w')\cap\{i, i+1\}\}$ is equal to one of $\{\emptyset, \{i\}\}$,  $\{\{i\}, \{i+1\}\}$, or $\{\{i+1\}, \{i,i+1\}\}$.
\item If $j\in [1,n-1]-[i-2, i+1]$, then $\des_\cP(w)\cap\{j\}=\des_\cP(w')\cap\{j\}$.
\end{enumerate}
\end{lem}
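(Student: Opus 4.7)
The plan is to verify the four parts in turn via a combination of a direct case analysis on the four cases of Definition \ref{def:pknuth} and a monotonicity property of natural unit interval orders. Throughout, let $a<b<c$ with $a \pl c$ be the three letters involved in the $\cP$-Knuth move, occupying positions $\{i-1, i, i+1\}$ in both $w$ and $w'$.

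Part (4) is immediate, since a $\cP$-Knuth move only modifies the letters at positions $i-1, i, i+1$, so for $j \in [1,n-1] \setminus [i-2,i+1]$ the letters at both positions $j$ and $j+1$ coincide in $w$ and $w'$. Part (1) reduces to a direct computation: I would go through each of the (up to two) listed moves in each of the four cases and compute the $\cP$-descent status at positions $i-1$ and $i$. A representative check: in case (1), $w = \cdots bca \cdots$ has no descent at $i-1$ because $b \pdl c$ and a descent at $i$ because $a \pl c$, whereas $w' = \cdots cab \cdots$ has a descent at $i-1$ and no descent at $i$. The seven remaining subcases are verified in the same manner.

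For parts (2) and (3) the key tool is the following \emph{monotonicity property}, immediate from the unit-interval realization (or, equivalently, from \cond{} combined with transitivity of $\pleq$): if $x<y$ and $d \pgeq y$, then $d \pgeq x$; dually, if $f<f'$ and $d' \pl f$, then $d' \pl f'$. Focusing on (2), let $d$ be the letter at position $i-2$ (the same in $w$ and $w'$) and let $e, e'$ be the letters at position $i-1$ in $w, w'$ respectively; then $i-2 \in \des_\cP(w)$ iff $d \pgeq e$, and similarly for $w'$. Inspection of Definition \ref{def:pknuth} shows that either $e = e'$ or $e \neq e'$, and in the latter situation one may arrange by relabelling that $e < e'$ and that $w$ is the word without a descent at $i-1$ (by part (1), $w'$ then has a descent at $i-1$).

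If $e = e'$ then descents at $i-2$ agree in $w$ and $w'$, so combined with part (1) the pair is $\{\emptyset, \{i-1\}\}$ or $\{\{i-2\}, \{i-2, i-1\}\}$. If $e < e'$ then monotonicity gives $d \pgeq e' \Rightarrow d \pgeq e$, so the possible descent-statuses at $i-2$ in $(w, w')$ are $(\text{no}, \text{no})$, $(\text{yes}, \text{no})$, or $(\text{yes}, \text{yes})$, yielding precisely the three listed pairs. Part (3) follows by the mirror argument at positions $i+1$ and $i+2$ using the dual monotonicity. The bulk of the labor is in the case-by-case bookkeeping for part (1), together with the verification that in each subcase with $e \neq e'$ the word containing the smaller of the two letters at position $i-1$ is the one without a $\cP$-descent at $i-1$ (and the analogous statement at $i+1$); I expect this to be the main, although routine, obstacle.
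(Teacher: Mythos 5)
Your proof is correct, and for parts (2) and (3) it takes a genuinely different route from the paper, which simply says these are ``checked case-by-case.''

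For parts (1) and (4) you are doing exactly what the paper does. For (2) and (3), however, the paper's proof consists of first observing (given part (1)) that only the pair $\{\emptyset, \{i-2,i-1\}\}$ need be excluded, and then excluding it by brute-force inspection of all moves in Definition \ref{def:pknuth}. You instead isolate the structural reason this pair is impossible: the monotonicity $(x<y \text{ and } y\pl d) \Rightarrow x\pl d$ coming from \cond{} (this is indeed the remark right after the ``defining properties'' definition of natural unit interval orders), together with the observation that whenever the $\cP$-Knuth move changes the letter at position $i-1$, the word carrying the smaller letter there is always the one without a $\cP$-descent at $i-1$. The latter observation still requires a pass through the move list, but it is a cleaner invariant than re-deriving the descent pair in each case, and it makes clear \emph{why} $\{\emptyset, \{i-2,i-1\}\}$ is forbidden: a descent at $i-2$ against the larger letter forces one against the smaller. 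Part (3) is the exact mirror with the dual monotonicity $(f<f' \text{ and } d'\pl f) \Rightarrow d'\pl f'$ and the observation that the word carrying the smaller letter at position $i+1$ is the one \emph{with} a descent at $i$. Both proofs require some case-checking — yours concentrates it all in part (1) plus the two correlation observations — but yours trades a pile of case checks for a single structural lemma, which is the more conceptual route.

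One small caution: when you write ``one may arrange by relabelling that $e<e'$ and that $w$ is the word without a descent at $i-1$,'' relabelling $w\leftrightarrow w'$ only buys you one of these two normalizations; the point (which you acknowledge as ``the main obstacle'') is that they are \emph{automatically} correlated, and this must be verified against the move list. Once that is done the rest goes through exactly as you say.
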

\begin{proof} (1) is checked case-by-case. For (2), we only need to check that $\{\des_\cP(w)\cap\{i-2, i-1\}, \des_\cP(w')\cap\{i-2, i-1\}\} \neq \{\emptyset, \{i-2, i-1\}\}$ thanks to (1). This is also checked case-by-case. (3) is proved similarly to (2). (4) is trivial from the definition of $\cP$-Knuth moves.
\end{proof}

We recall the notion of signed colored graphs following \cite[4.2]{ab12} and \cite[Definition 3.1]{ass15}.
\begin{defn} A signed colored graph of degree $n$ is a collection $(V, \sigma, \{E_i\}_{1<i<n})$ where $V$ is a set, $\sigma$ is a function $\sigma: V\rightarrow 2^{[1,n-1]}$, and each $E_i$ is a set of unordered pairs of different elements in $V$. (Here $2^{[1,n-1]}$ denotes the power set of $[1,n-1]$.) Each element in $V$ is called a vertex, and each element in $E_i$ is called an edge colored $i$.
\end{defn}
\begin{rmk}
In \cite[4.2]{ab12} and \cite[Definition 3.1]{ass15} the function $\sigma$ assigns to each vertex $v \in V$ a sequence of length $n-1$ consisting of $+$ and $-$. Their definition is equivalent to ours if we define a new sigma function, say $\sigma': V \rightarrow 2^{[1,n-1]}$, such that $\sigma'(v) \owns i$ (resp. $\sigma'(v) \not\owns i$) if and only if the $i$-th component of $\sigma(v)$ equals $+$ (resp. $-$).
\end{rmk}

\begin{defn} Suppose that $V \subset \sym_n$ is closed under $\cP$-Knuth moves. Then we define the $\cP$-Knuth equivalence graph $\Gamma_V$ attached to $V$ to be $\Gamma_V=(V, \des_\cP, \{E_i\}_{1<i<n})$ where each $E_i$ is the set of pairs in $V$ connected by a $\cP$-Knuth move at position $i$.
\end{defn}

It is clear that $\cP$-Knuth equivalence graph is a signed colored graph of degree $n$. Now we recall the notion of D graphs following \cite[Definition 4.2, 4.5]{ab12}.
\begin{defn} A signed colored graph $(V, \sigma, \{E_i\}_{1<i<n})$ of degree $n$ is called a D graph if the following axioms hold.
\begin{enumerate}[label=Ax\arabic*.]
\item For $w \in V$ and $1<i<m$, $|\sigma(w) \cap \{i-1, i\}|=1$ if and only if there exists $x\in V$ such that $\{w, x\} \in E_i$. Moreover, $x$ is unique when it exists.
\item Whenever $\{w, x\} \in E_i$, $\sigma(w)\cap \{i\} \neq \sigma(x) \cap \{i\}$ and $\sigma(w)\cap \{h\} =\sigma(x) \cap \{h\}$ for $h \not\in [i-2, i+1]$.
\item For $\{w, x\}\in E_i$, if $\sigma(w) \cap\{i-2\}\neq \sigma(x) \cap \{i-2\}$ then $|\sigma(w) \cap\{i-2,i-1\}|=1$. Also, if $\sigma(w) \cap\{i+1\}\neq \sigma(x) \cap \{i+1\}$ then $|\sigma(w) \cap\{i,i+1\}|=1$.
\item[Ax5.] Whenever $|i-j|\geq 3$, $\{w,x\} \in E_i$, and $\{x,y\} \in E_j$, there exists $v\in V$ such that $\{w, v\} \in E_j$ and $\{v, y\} \in E_i$.
\end{enumerate}
\end{defn}
We claim that the $\cP$-Knuth equivalence graphs are indeed D graphs.
\begin{prop} A $\cP$-Knuth equivalence graph is a D graph.
\end{prop}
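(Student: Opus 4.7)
The plan is to verify Ax1, Ax2, Ax3, and Ax5 in turn for $\Gamma_V$. Axioms Ax2 and Ax3 are essentially direct consequences of Lemma \ref{lem:pmovedes}. Indeed, if $w$ and $x$ are linked by a $\cP$-Knuth move at position $i$, then part (1) of that lemma forces $\des_\cP(w)\cap\{i\}\neq\des_\cP(x)\cap\{i\}$, while part (4) gives agreement outside $[i-2,i+1]$; together these give Ax2. For Ax3, parts (2) and (3) of the lemma each list three possibilities for the pair of sets $\{\des_\cP(w)\cap\{i-2,i-1\},\des_\cP(x)\cap\{i-2,i-1\}\}$ (resp.\ on $\{i,i+1\}$); in the unique alternative that exhibits disagreement at $i-2$ (resp.\ $i+1$), both intersections happen to have cardinality one, which is exactly what Ax3 requires.

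The main work is Ax1. I would run a case analysis on the triple at positions $i-1,i,i+1$. Write $\{w_{i-1},w_i,w_{i+1}\}=\{a,b,c\}$ with $a<b<c$. I would first show that if $a\dash_\cP c$, then \cond{} forces $a\dash_\cP b$ and $b\dash_\cP c$: assuming $a\pl b$ and splitting on whether $b\pl c$ or $b\dash_\cP c$, one either directly obtains $a\pl c$ by chaining (a contradiction) or applies \cond{} to the triple $(c,a,b)$ to conclude $a<c<b$, contradicting $b<c$; a similar argument using \cond{} applied to $(a,b,c)$ rules out $b\pl c$. In this situation all three pairs among $w_{i-1},w_i,w_{i+1}$ are $\cP$-incomparable, so $\des_\cP(w)\cap\{i-1,i\}=\emptyset$ and no $\cP$-Knuth move applies at $i$, and both sides of Ax1 fail. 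If instead $a\pl c$, we are in one of the four subcases of Definition \ref{def:pknuth} according to whether $a\pl b$ or $a\pdl b$, and whether $b\pl c$ or $b\pdl c$. In each subcase I would enumerate all six arrangements of $a,b,c$ and tabulate $\des_\cP\cap\{i-1,i\}$; a direct verification shows that in every subcase the arrangements with $|\des_\cP\cap\{i-1,i\}|=1$ are exactly those appearing in the $\cP$-Knuth moves of that subcase and are paired as prescribed, which gives both existence and uniqueness of $x$.

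Axiom Ax5 follows from locality of $\cP$-Knuth moves. Given $\{w,x\}\in E_i$ and $\{x,y\}\in E_j$ with $|i-j|\geq 3$, the windows $\{i-1,i,i+1\}$ and $\{j-1,j,j+1\}$ are disjoint. Define $v\in\sym_n$ to agree with $y$ on $\{j-1,j,j+1\}$ and with $w$ elsewhere. Because $w$ and $x$ coincide outside $\{i-1,i,i+1\}$ (and in particular on $\{j-1,j,j+1\}$), the three entries of $w$ at $\{j-1,j,j+1\}$ equal those of $x$, so the very same $\cP$-Knuth move that produces $y$ from $x$ produces $v$ from $w$, giving $\{w,v\}\in E_j$. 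A symmetric argument at the window $\{i-1,i,i+1\}$ yields $\{v,y\}\in E_i$, and $v\in V$ by closure of $V$ under $\cP$-Knuth moves.

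The only place requiring genuine labor is the case analysis for Ax1, which is twenty-four rows (six arrangements times four subcases) of routine bookkeeping after the $a\dash_\cP c$ case is disposed of; Ax2 and Ax3 are immediate from Lemma \ref{lem:pmovedes}, and Ax5 is just the observation that distant $\cP$-Knuth moves have disjoint support.
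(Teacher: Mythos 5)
Your proposal is correct and follows essentially the same route as the paper: Ax2 and Ax3 are derived from Lemma \ref{lem:pmovedes} exactly as the paper does, Ax1 is verified by checking that the $\cP$-Knuth moves at position $i$ enumerate precisely the words with $|\des_\cP(w)\cap\{i-1,i\}|=1$, and Ax5 follows from disjointness of the supports of $\cP$-Knuth moves at distant positions. The only difference is one of detail: where the paper asserts these verifications in a sentence each, you spell out the $a\dash_\cP c$ case explicitly, tabulate the six arrangements per subcase of Definition \ref{def:pknuth}, and give the explicit construction of $v$ for Ax5.
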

\begin{proof} We need to check that Ax1, Ax2, Ax3, and Ax5 hold for $\cP$-Knuth equivalence graphs. For Ax1, it follows from the fact that the $\cP$-Knuth move at position $i$ in Definition \ref{def:pknuth} occurs in all the possible cases of words satisfying $|\sigma(w) \cap \{i-1, i\}|=1$. Ax2 follows from (1) and (4) of Lemma \ref{lem:pmovedes}. Ax3 follows from (2) and (3) of Lemma \ref{lem:pmovedes}. Lastly, Ax5 clearly follows from the definition of $\cP$-Knuth moves.
\end{proof}

In \cite[Definition 3.2]{ass15} and \cite[Definition 4.2]{ab12}, they defined dual equivalence graphs which are a special kind of D graphs by imposing two additional axioms. This framework is used to study Schur positivity of certain quasi-symmetric functions. In particular, the ``generating functions'' attached to a dual equivalence graph is a single Schur function by \cite[Corollary 4.4]{ass15}. However, our graphs are not dual equivalence graphs in general.

\begin{example} Figure \ref{fig:214} shows all the connected $\cP$-Knuth equivalence graphs on $\sym_4$ with $\cP=\cP_{(2,1),4}$, where underlined numbers denote $\cP$-descents and numbers above edges indicate their colors. (Written above each connected component is the corresponding generating function which we will define in a moment.) There is one connected component with 5 vertices which satisfies neither Axiom 4 nor Axiom 6 of \cite[Definition 3.2]{ass15} for dual equivalence graphs. 
\end{example}

\begin{figure}[!htbp]
\begin{tikzpicture}[yscale=1.3, xscale=1.7]

    \draw[rounded corners] (0.5,0.5) rectangle (1.5,1.5);
    \node[fill=white] at (1,1.5) {$s_4$};
    \node[draw] (1) at  (1,1) {$1234$};
    
    \draw[rounded corners] (2,0.5) rectangle (3,1.5);
    \node[fill=white] at (2.5,1.5) {$ts_4$};
    \node[draw] (2) at  (2.5,1) {$1243$};
    
    \draw[rounded corners] (3.5,0.5) rectangle (4.5,1.5);
    \node[fill=white] at (4,1.5) {$ts_4$};
    \node[draw] (3) at  (4,1) {$1324$};

    \draw[rounded corners] (5,0.5) rectangle (8,1.5);
    \node[fill=white] at (6.5,1.5) {$ts_{31}$};
    \node[draw] (4) at  (5.5,1) {$13\und{4}2$};
    \node[draw] (5) at  (6.5,1) {$1\und{4}23$};
    \node[draw] (19) at (7.5,1) {$\und{4}123$};
    \draw (4) -- (5) node [midway, above] {$3$};
    \draw (5) -- (19) node [midway, above] {$2$};

    \draw[rounded corners] (0.5,0) rectangle (1.5,-1);
    \node[fill=white] at (1,0) {$t^2s_4$};
    \node[draw] (6) at  (1,-.5) {$1432$};

    \draw[rounded corners] (2,0) rectangle (3,-1);
    \node[fill=white] at (2.5,0) {$ts_4$};
    \node[draw] (7) at  (2.5,-.5) {$2134$};

    \draw[rounded corners] (3.5,0) rectangle (4.5,-1);
    \node[fill=white] at (4,0) {$t^2s_4$};
    \node[draw] (8) at  (4,-.5) {$2143$};
    
    \draw[rounded corners] (5,0) rectangle (8,-1);
    \node[fill=white] at (6.5,0) {$ts_{31}$};
    \node[draw] (9) at  (6.5,-.5) {$2\und{3}14$};
    \node[draw] (10) at (5.5,-.5) {$23\und{4}1$};
    \node[draw] (13) at (7.5,-.5) {$\und{3}124$};
    \draw (9) -- (10) node [midway, above] {$3$};
    \draw (9) -- (13) node [midway, above] {$2$};

    \draw[rounded corners] (0.5,-1.5) rectangle (3.5,-2.5);
    \node[fill=white] at (2,-1.5) {$t^2s_{31}$};
    \node[draw] (11) at (2,-2) {$2\und{4}13$};
    \node[draw] (12) at (1,-2) {$24\und{3}1$};
    \node[draw] (21) at (3,-2) {$\und{4}213$};
    \draw (11) -- (12) node [midway, above] {$3$};
    \draw (11) -- (21) node [midway, above] {$2$};

    \draw[rounded corners] (4.25,-1.5) rectangle (6.25,-2.5);
    \node[fill=white] at (5.25,-1.5) {$ts_{22}$};
    \node[draw] (14) at (4.75,-2) {$\und{3}1\und{4}2$};
    \node[draw] (17) at (5.75,-2) {$3\und{4}12$};
    \draw (14) -- (17) node [midway, above] {$2,3$};

    \draw[rounded corners] (7,-1.5) rectangle (8,-2.5);
    \node[fill=white] at (7.5,-1.5) {$t^2s_4$};
    \node[draw] (15) at (7.5,-2) {$3214$};

    \draw[rounded corners] (1,-3) rectangle (6,-4);
    \node[fill=white] at (3.5,-3) {$t^2(s_{31}+s_{22})$};
    \node[draw] (16) at (1.5,-3.5) {$32\und{4}1$};
    \node[draw] (18) at (2.5,-3.5) {$3\und{4}21$};
    \node[draw] (20) at (5.5,-3.5) {$\und{4}132$};
    \node[draw] (22) at (3.5,-3.5) {$\und{4}2\und{3}1$};
    \node[draw] (23) at (4.5,-3.5) {$4\und{3}12$};
    \draw (16) -- (18) node [midway, above] {$3$};
    \draw (18) -- (22) node [midway, above] {$2$};
    \draw (20) -- (23) node [midway, above] {$2$};
    \draw (22) -- (23) node [midway, above] {$3$};
    
    \draw[rounded corners] (6.5,-3) rectangle (7.5,-4);
    \node[fill=white] at (7,-3) {$t^3s_4$};
    \node[draw] (24) at (7,-3.5) {$4321$};
\end{tikzpicture}
\caption{$\cP_{(2,1),4}$-Knuth equivalence graph} \label{fig:214}
\end{figure}

\subsection{Genuine $\cP$-height and fake $\cP$-inversion number}
Here we prove that the $\cP$-Knuth move preserves genuine $\cP$-heights and fake $\cP$-inversion numbers of permutations. For the former claim, we need to impose assumption that $\cP$ is not ladder-climbing.

\begin{prop}\label{prop:ght} Suppose that $\cP$ avoids $\cP_{(3,1,1),5}$ and $\cP_{(4,2,1,1), 6}$. If $w \pkm w'$, then $\ght(w) = \ght(w')$. As a result, the genuine $\cP$-height is constant on any connected $\cP$-Knuth equivalence graph.
\end{prop}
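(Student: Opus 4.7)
Since $\pkm$ is symmetric, it suffices to show $\ght_\cP(w) \leq \ght_\cP(w')$. The plan is to fix a chain $y_1 \pr y_2 \pr \cdots \pr y_k$ realizing $\ght_\cP(w) = k$ and construct a chain of the same length in $w'$.

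More concretely, the chosen chain has $y_j$ appearing before $y_{j+1}$ in $w$ and each $(y_j, y_{j+1}) \in \ginv_\cP(w)$. Suppose the Knuth move permutes entries $a < b < c$ at positions $i-1, i, i+1$. I would define $y_j' = y_j$ whenever $y_j \notin \{a,b,c\}$. For indices with $y_j \in \{a,b,c\}$, the $\cP$-structure on $\{a,b,c\}$ (determined by the Knuth type in Definition \ref{def:pknuth}) together with the $\cP$-chain constraint leaves only a few possibilities for $y_j$ and its neighbors $y_{j\pm 1}$, and I would set $y_j'$ by a small case check (possibly substituting $y_j$ with a different element of $\{a,b,c\}$) so that $(y_1', \ldots, y_k')$ is a $\cP$-chain appearing in this order in $w'$.

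Next, I would verify that each $(y_j', y_{j+1}') \in \ginv_\cP(w')$. The easy part: any putative separating chain in $w'$ whose intermediate vertices all avoid $\{a,b,c\}$ transfers immediately to a separating chain in $w$ (those intermediate vertices occupy the same positions in $w$ and $w'$), contradicting $(y_j, y_{j+1}) \in \ginv_\cP(w)$.

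The main obstacle---which is where the non-climbing hypothesis must enter---is to rule out a separating chain in $w'$ that uses an element of $\{a,b,c\}$ in an order not available in $w$. Assuming such a separating chain $y_j' \dash s_1 \dash \cdots \dash s_t \dash y_{j+1}'$ exists, I would apply \cond{} repeatedly to pin down the $\cP$-relations between each $s_m$ and the members of $\{a,b,c\}$. For instance, in Type 1 of Definition \ref{def:pknuth}, the intermediate vertex $s_m$ adjacent to the reordered pair must satisfy $a \pl s_m$ together with $b \dash s_m$ and $c \dash s_m$. These constraints, combined with Lemma \ref{lem:ladjoin} to glue short ladders at a common endpoint, assemble into a ladder in $\cP$ of size $4$ or $5$ that is climbed by one of $a, b, c$, contradicting the hypothesis that $\cP$ avoids $\cP_{(3,1,1),5}$ and $\cP_{(4,2,1,1),6}$. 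Identifying the specific ladder and the climbing element in each sub-case (one for each Knuth type and each relative position of $y_j$ within the chain) is the most delicate portion of the argument.
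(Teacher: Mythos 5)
Your overall framework---fix a longest genuine-inversion chain realizing $\ght_\cP(w)$, modify it near the move, and invoke \cond{} together with the ladder-joining lemma and the non-climbing hypothesis to justify the modified chain in $w'$---is the same framework the paper uses. But there is a concrete gap in the construction rule ``$y_j' = y_j$ whenever $y_j \notin \{a,b,c\}$, and substitute only within $\{a,b,c\}$ otherwise.''

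The case that breaks your rule is already the simplest one in the paper's analysis: the case $u=0$, i.e.\ the chosen longest chain $\cI = (p_1,\ldots,p_s,q_1,\ldots,q_t)$ contains no element of $\{a,b,c\}$ at all. Here your rule forces $\cI' = \cI$. However, the Knuth move reorders $\{a,b,c\}$ inside the window between $p_s$ and $q_1$, and this reordering can \emph{create} a new separating subword (a ladder by Lemma~\ref{lem:ght}) between $p_s$ and $q_1$ in $w'$, destroying the pair $(p_s,q_1) \in \ginv_\cP$. Crucially, this separating ladder is \emph{not} in general a contradiction: for $\cP|_{\{a,b,c\}} \simeq \cP_{(1),3}$ and $w=[\cdots bca\cdots]\pkm w'=[\cdots cab\cdots]$, the ladder $p_s \pdr d_1 \pdr \cdots \pdr c \pdr b \pdr \cdots \pdr q_1$ exists in $w'$ (and only in $w'$, because $b,c$ appear in the opposite order in $w$), and nothing is climbing it. So you cannot ``rule out'' the separating chain via non-climbing; the pair really does stop being a genuine inversion. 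The paper's fix is to \emph{replace an entry outside $\{a,b,c\}$}: it sets $\cI' = p_1 p_2 a q_2$, dropping $q_1$ (which is not in $\{a,b,c\}$) and inserting $a$. The non-climbing hypothesis is then used to verify $(p_2,a)$ and $(a,q_2)$ are genuine inversions in $w'$, not to forbid the original separating chain. Your proposal has no mechanism for bringing an element of $\{a,b,c\}$ into the chain at a slot that was previously occupied by something else, so the $u=0$ sub-case (and the analogous boundary adjustments of $p_s$, $q_1$ in the $u\geq 1$ cases) cannot be handled.

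A secondary, smaller point: the paper reduces to $s=t=2$ by padding with sentinel elements $n+1,n+2$ and $0,-1$; without some such normalization the case analysis at the two ends of the chain does not quite close up (you need $p_{s-1}$ and $q_2$ to exist so the ``unchanged tail'' of the chain is nonempty on both sides). This is a technicality, but worth building in.
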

The proof of this proposition will be given in Section \ref{sec:proofprop}.
\begin{rmk} If we allow that $\cP$ is ladder-climbing, then the above proposition is no longer true. For example, when $\cP=\cP_{(3,1,1),5}$ we have $53241 \pkm 53412$ but $\ght_\cP(53241)=|(5,3)|=2\neq 3=|(5,3,1)|=\ght_\cP(53412)$. Likewise, when $\cP=\cP_{(4,2,1,1),6}$ we have $563241 \pkm 635241$ but $\ght_\cP(563241)=|(6,4,1)|=3\neq 2=|(6,3)|=\ght_\cP(635241)$. 
\end{rmk}

\begin{lem}\label{lem:presfinv} If $w \pkm w'$, then $|\finv_\cP(w)| = |\finv_\cP(w')|$. As a result, the fake $\cP$-inversion number is constant on any connected $\cP$-Knuth equivalence graph.
\end{lem}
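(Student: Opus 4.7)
The plan is to reduce everything to a local check at the three positions where the $\cP$-Knuth move takes place. Suppose $w \pkm w'$ via a move at position $i$, so the three entries $a<b<c$ (satisfying $a \pl c$) are rearranged among positions $i-1, i, i+1$, while all other positions are fixed.

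I would split $\finv_\cP(w)$ into three disjoint pieces according to how many members of the pair lie in $\{a,b,c\}$. First, any fake inversion $(j,k)$ with both $j,k \notin \{a,b,c\}$ is clearly unaffected, since the positions of such entries are identical in $w$ and $w'$. Second, for a fake inversion involving exactly one element $x \in \{a,b,c\}$ and one element $y \notin \{a,b,c\}$, the position of $y$ in both $w$ and $w'$ is either strictly less than $i-1$ or strictly greater than $i+1$; hence $y$ precedes all of $a,b,c$ in $w$ iff it does so in $w'$, and likewise for following. The condition ``$x \pdr y$'' or ``$y \pdr x$'' depends only on $\cP$, not on the word. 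Therefore these fake inversions contribute the same total to $w$ and $w'$.

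The remaining contribution comes from pairs inside $\{a,b,c\}$, which I would handle by direct inspection of the four cases of Definition \ref{def:pknuth}. The point is that the set of incomparable pairs inside $\{a,b,c\}$ is determined by which of $a\dash_\cP b$, $b\dash_\cP c$, $a\dash_\cP c$ hold; since $a \pl c$ is assumed, only $(b,a)$ and/or $(c,b)$ may be incomparable pairs giving rise to potential fake inversions. In Case (1) both $(b,a)$ and $(c,b)$ are incomparable, and one checks $bca$ has the single fake inversion $(b,a)$ while $cab$ has the single fake inversion $(c,b)$. In Case (2) only $(c,b)$ is incomparable, and $b$ precedes $c$ in both $bca$ and $bac$ (giving $0$ each), while $c$ precedes $b$ in both $cba$ and $cab$ (giving $1$ each). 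Case (3) is the mirror: only $(b,a)$ is incomparable, $b$ precedes $a$ in both $bca$ and $cba$ (giving $1$ each), and $b$ follows $a$ in both $acb$ and $cab$ (giving $0$ each). Case (4) has no incomparable pair inside $\{a,b,c\}$, so the internal contribution is $0$ for every permutation of the three letters.

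Combining the three pieces yields $|\finv_\cP(w)| = |\finv_\cP(w')|$ for a single move, and the statement about connected components follows by transitivity of the $\cP$-Knuth equivalence relation. There is no real obstacle here; the only thing to be careful about is that the case analysis inside $\{a,b,c\}$ is done against the precise list of moves allowed in each of the four configurations, rather than trying to argue via a bijection or a uniform formula. In particular, unlike Proposition \ref{prop:ght}, no ladder-climbing hypothesis is needed, because the argument is genuinely local and never invokes chains longer than $3$ in $\ginv_\cP$.
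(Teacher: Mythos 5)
Your proposal is correct and follows essentially the same route as the paper: localize to the three entries involved in the move, observe that any pair with at least one member outside $\{a,b,c\}$ keeps its relative order, and then check the internal contribution case by case against Definition~\ref{def:pknuth}. The paper states this more tersely (``we may restrict our attention to words of length 3''), but your explicit case counts match Figure~\ref{fig:s3ex} and your final remark that no ladder-climbing hypothesis is needed is also correct.
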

\begin{proof} Suppose that the $\cP$-Knuth move $w \pkm w'$ occurs at position $i$. If either $x \not\in \{w_{i-1}, w_i, w_{i+1}\}$ or $y \not\in \{w_{i-1}, w_i, w_{i+1}\}$ then it is clear that $\finv_\cP(w) \cap\{(x,y)\} =\finv_\cP(w') \cap\{(x,y)\}$ since the relative position of $x$ and $y$ does not change. Thus for the verification of this lemma we may restrict our attention to words of length 3, e.g. Figure \ref{fig:s3ex}. Now the lemma follows from case-by-case observation.
\end{proof}

\subsection{Generating functions and the main theorem}
Let us define a generating function of a $\cP$-Knuth equivalence graph. (cf. \cite[Theorem 3.1]{sw16})
\begin{defn} For a $\cP$-Knuth equivalence graph $\Gamma_V=(V, \des_\cP, \{E_i\})$, its generating function is defined to be $\gamma_V\colonequals \sum_{w \in V} t^{|\finv_\cP(w)|}F_{\des_{\cP}(w)}$. 
\end{defn}
If we consider a connected $\cP$-Knuth equivalence graph, then we may factor out $t^{|\finv_\cP(w)|}$ from the formula due to Lemma \ref{lem:presfinv}.
More precisely, if $\Gamma_V$ is a connected graph then $\gamma_V\colonequals t^{|\finv_\cP(V)|}\sum_{w \in V} F_{\des_{\cP}(w)}$ where $|\finv_\cP(V)|$ is the fake $\cP$-inversion number of any element in $V$. Now we state the main theorem of this paper. Its proof is given in Section \ref{sec:proofthm}. Note that this strengthens \cite[Theorem 4]{gas96} for a natural unit interval order which avoids $\cP_{(3,1,1),5}$ and $\cP_{(4,2,1,1),6}$.
\begin{thm}[Main theorem] \label{thm:mainstrong} Suppose that $\cP$ is a natural unit interval order on $[1,n]$ which avoids $\cP_{(3,1,1),5}$ and $\cP_{(4,2,1,1),6}$. Let $\Gamma=(V, \des_\cP, \{E_i\})$ be a connected $\cP$-Knuth equivalence graph and $\gamma_V$ be its generating function. Let $w_1, \ldots, w_k$ be all the elements in $V$ each of which is the reading word of the $\cP$-tableau $PT_1, \ldots, PT_k$, of shape $\lambda_1, \ldots, \lambda_k$, respectively. Then we have $\gamma_V = t^{|\finv_\cP(V)|}(s_{\lambda_1}+\cdots +s_{\lambda_k})$, where $|\finv_\cP(V)|$ is the fake $\cP$-inversion number of any element in $V$. Furthermore, we have $l(\lambda_1)=\cdots =l(\lambda_k)$ which is also equal to the genuine $\cP$-height of any $w \in V$.
\end{thm}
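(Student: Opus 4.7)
The plan is to leverage the $\cP$-Robinson-Schensted correspondence constructed in Sections \ref{sec:ins} and \ref{sec:RS}, which (under the hypothesis that $\cP$ avoids $\cP_{(3,1,1),5}$ and $\cP_{(4,2,1,1),6}$) gives a descent-preserving map $w \mapsto (P(w),Q(w))$ sending $\sym_n$ bijectively onto pairs with $P(w)\in\ptab_\lambda$ and $Q(w)\in\SYT_\lambda$ for some $\lambda$. The four ingredients I expect to invoke are: (i) descent preservation $\des_\cP(w)=\des(Q(w))$; (ii) idempotency $P(\rw(T))=T$ for any $\cP$-tableau $T$; (iii) the shape identity $l(\text{shape}(P(w)))=\ght_\cP(w)$; and (iv) a $\cP$-Knuth move on $w$ either preserves $P(w)$ while performing an elementary standard move on $Q(w)$, or sends $P(w)$ to another $\cP$-tableau whose reading word remains in the same $\cP$-Knuth class.

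The first step is to use (iii) together with Proposition \ref{prop:ght} (constancy of $\ght_\cP$ on connected components, which requires the non-ladder-climbing hypothesis) to deduce that $l(\text{shape}(P(w)))$ is constant on $V$ and equals $\ght_\cP(w)$. This immediately yields $l(\lambda_1)=\cdots=l(\lambda_k)=\ght_\cP(V)$. The second step is to prove the decomposition $V=\bigsqcup_{i=1}^k F_i$ where $F_i=\{w\in V:P(w)=PT_i\}$. By (ii), each reading word $w_i=\rw(PT_i)\in V$ lies in the fiber $F_i$, so each $PT_i$ genuinely appears; by (iv), the set of $P$-tableaux realized in $V$ is closed under $\cP$-Knuth moves and hence equals exactly $\{PT_1,\ldots,PT_k\}$. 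Moreover, within each fiber $F_i$, the map $w\mapsto Q(w)$ is a bijection onto $\SYT_{\lambda_i}$: injectivity is inherited from $\cP$-RS, and surjectivity follows by showing that the $P$-preserving $\cP$-Knuth moves act transitively on $Q$-tableaux of shape $\lambda_i$.

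The third step combines these with Lemma \ref{lem:presfinv} (constancy of $|\finv_\cP|$ on connected components) and Gessel's identity $s_\lambda=\sum_{Q\in\SYT_\lambda}F_{\des(Q)}$ to obtain
\[
\gamma_V = \sum_{w\in V} t^{|\finv_\cP(w)|}F_{\des_\cP(w)} = t^{|\finv_\cP(V)|}\sum_{i=1}^k\sum_{Q\in\SYT_{\lambda_i}}F_{\des(Q)} = t^{|\finv_\cP(V)|}(s_{\lambda_1}+\cdots+s_{\lambda_k}),
\]
which is the desired formula.

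The main obstacle is Step 2, and specifically the interaction (iv) between $\cP$-Knuth moves and the $\cP$-RS insertion: verifying that a $\cP$-Knuth move always projects to an elementary standard dual equivalence move on $Q$ when $P$ is preserved, and that the transitions between different $P$-tableaux exhaust precisely the list $\{PT_1,\ldots,PT_k\}$. This is where the hypothesis of non-ladder-climbing must be used essentially, since both the column insertion of Section \ref{sec:ins} and the descent-preservation property (which fails in general for the Sundquist-Wagner-West insertion) depend on avoiding $\cP_{(3,1,1),5}$ and $\cP_{(4,2,1,1),6}$. Once the compatibility in (iv) is established, the remaining steps are formal consequences of descent preservation and standard identities for quasi-symmetric functions.
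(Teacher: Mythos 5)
Your outline reaches the same conclusion as the paper, and ingredients (i)--(iii) correspond to Theorem~\ref{thm:main}\ref{mainthm2}, \ref{mainthm4}, \ref{mainthm3.5}, though (i) should be $\{n-x:x\in\des_\cP(w)\}=\des(Q(w))$, not $\des_\cP(w)=\des(Q(w))$; this is why the paper's final computation passes through Sch\"utzenberger's evacuation $\omega$ and uses that it is an involution. The real divergence is in your Step~2. The paper never proves anything like your ingredient~(iv): it does not track how a single $\cP$-Knuth move acts on $Q(w)$, and it does not establish that $P$-preserving moves act transitively on $\SYT_{\lambda_i}$ within a fiber. Attempting to prove that transitivity directly is essentially proving a dual-equivalence-graph axiom for the fiber, and the paper explicitly warns that these graphs fail the dual equivalence axioms in general (see the five-vertex component in Figure~\ref{fig:214}), so this route is at best much harder than what the paper actually does, and at worst a dead end.

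What the paper does instead is more indirect but avoids the obstacle you flagged. From Theorem~\ref{thm:main}\ref{mainthm3} ($w\psim\rw(P(w))$) it follows that the set of insertion tableaux occurring in $V$ is precisely $\{PT_1,\ldots,PT_k\}$, so $\prs(V)\subseteq\bigsqcup_i\{PT_i\}\times\SYT_{\lambda_i}$. The reverse inclusion, which is exactly the per-fiber surjectivity you want, is deduced from \emph{global} surjectivity of $\prs$ onto $\bigsqcup_{\lambda\vdash n}\ptab_\lambda\times\SYT_\lambda$ (Theorem~\ref{thm:main}\ref{mainthm6}): given $(PT_i,T)$, global surjectivity produces a $w$ with $\prs(w)=(PT_i,T)$, and then $w\psim\rw(PT_i)=w_i\in V$ forces $w\in V$. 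Global surjectivity, in turn, is proved by a counting argument using injectivity of $\prs$, descent preservation, and Gasharov's identity $\sum_{w\in\sym_n}F_{\des_\cP(w)}=\sum_\lambda|\ptab_\lambda|\,s_\lambda$ -- not by any move-level compatibility. So the gap you yourself identified as ``the main obstacle'' is genuine under your plan, but it is also an artifact of the route you chose: the paper's proof sidesteps it entirely by trading the local transitivity claim for global bijectivity of the insertion map.
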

\begin{conj}[Main conjecture] \label{conj:main}
The claim of the Theorem \ref{thm:mainstrong} is true for all unit interval orders $\cP$ (except the last sentense).
\end{conj}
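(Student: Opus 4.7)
The plan is to extend the Robinson-Schensted machinery of Sections \ref{sec:ins} and \ref{sec:RS} to arbitrary natural unit interval orders via a \emph{multivalued} column insertion. In the proof of Theorem \ref{thm:mainstrong}, each permutation $w$ is sent to a pair $(P,Q)$ consisting of a $\cP$-tableau $P$ and a standard Young tableau $Q$ of the same shape, with descents matching $\des_\cP(w) = \des(Q)$; the Schur expansion then follows from the standard identity $s_\lambda = \sum_{Q \in \SYT_\lambda} F_{\des(Q)}$. The five-vertex component appearing already for $\cP_{(3,1,1),5}$ violates Assaf's Axioms~4 and~6, so no \emph{deterministic} descent-preserving insertion can handle the ladder-climbing case; my plan is to replace $(P,Q)$ by a signed multiset $M(w)$ of such pairs, designed so that spurious contributions cancel after summing over a connected $\cP$-Knuth equivalence class.

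The construction would proceed in three steps. First, I would describe the branching rule explicitly on the two minimal obstruction orders $\cP_{(3,1,1),5}$ and $\cP_{(4,2,1,1),6}$ by hand, using Figure~\ref{fig:214}-style bookkeeping to match the quasi-symmetric generating function of each connected component with its expected Schur sum. Second, I would globalize: given a general $\cP$ and a word $w$, define $M(w)$ by running the column insertion of Section \ref{sec:ins}, and whenever the insertion path enters a patch locally isomorphic to one of the two minimal cases, branch according to the local rule. Lemma \ref{lem:ladjoin} is crucial here, since it controls how two ladders can meet and thereby ensures the obstructions behave essentially locally. Third, I would prove that each $\cP$-Knuth move $w \pkm w'$ induces a local bijection between $M(w)$ and $M(w')$ that swaps exactly one pair, generalizing the classical fact that Knuth moves preserve the insertion tableau. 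Summing over a connected class $V$ then yields $\gamma_V = t^{|\finv_\cP(V)|}\sum_i s_{\lambda_i}$, with $|\finv_\cP(V)|$ well defined by Lemma \ref{lem:presfinv} and the shapes $\lambda_i$ matching those $\lambda$ for which some $P \in \ptab_\lambda$ has $\rw(P) \in V$.

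The main obstacle is proving that cancellations in the signed multiset $\bigsqcup_{w \in V} M(w)$ produce exactly the shapes of $\cP$-tableaux whose reading words lie in $V$, with no phantom shapes remaining and no genuine shapes missing. On the two minimal obstructions this can be verified directly, but in general one must show the branching rule is \emph{additive} across disjoint obstruction patches and introduces no cross-interactions when several ladder-climbing configurations coexist in $\cP$ or overlap along shared ladders. A secondary difficulty is pinning down the ``right'' branching: several combinatorially plausible rules presumably give the same generating function but differ as multisets, and selecting one canonically may require geometric input, for instance by identifying $\cP$-Knuth equivalence classes with moment-graph cells or fixed-point loci on the regular Hessenberg variety and interpreting the branching as restriction of Tymoczko's dot action, tying the argument back into the Brosnan-Chow and Guay-Paquet framework. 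Developing that geometric dictionary is in my view the most promising, and also the most demanding, route through the obstruction.
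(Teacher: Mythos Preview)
The statement you are addressing is Conjecture~\ref{conj:main}, which the paper explicitly leaves open; there is no proof in the paper to compare against. The paper proves only the restricted case (Theorem~\ref{thm:mainstrong}, where $\cP$ avoids $\cP_{(3,1,1),5}$ and $\cP_{(4,2,1,1),6}$) and states in the introduction that proving Conjecture~\ref{conj:main} ``remains open in full generality.'' So the baseline here is not a proof you might have missed but an open problem.

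Your write-up is accordingly not a proof but a research proposal, and you are candid about this: you identify the ``main obstacle'' (controlling cancellations in the signed multiset so that exactly the right shapes survive) and a ``secondary difficulty'' (selecting a canonical branching rule), and you suggest that geometric input may ultimately be required. That is a reasonable outline of where the difficulty lies, but none of the three steps is actually carried out. In particular, Step~3---showing that a $\cP$-Knuth move induces a sign-preserving local bijection on the branched multisets---is the entire content of the conjecture repackaged, and you give no mechanism for why spurious branches should cancel class-by-class rather than only globally over $\sym_n$. The appeal to Lemma~\ref{lem:ladjoin} for locality is also misplaced: that lemma characterizes $\cP_{(3,1,1),5}$-avoidance, so it is available precisely when the obstruction is \emph{absent}, not when you need to handle it. As written, this is a plausible sketch of a program, not a proof.
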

See Section \ref{sec:equivgraph} for some examples of $\cP$-Knuth equivalence graphs and their generating functions.
The following corollary is a direct consequence, which generalizes both \cite[Theorem 3]{gas96} and \cite[Theorem 6.3]{sw16} for a natural unit interval order which avoids $\cP_{(3,1,1),5}$ and $\cP_{(4,2,1,1),6}$.
\begin{cor} Suppose that $\cP$ is a natural unit interval order on $[1,n]$ which avoids $\cP_{(3,1,1),5}$ and $\cP_{(4,2,1,1),6}$.
Then the generating function of any $\cP$-Knuth equivalence graph is Schur positive, i.e. it is a symmetric function and its coefficients with respect to the expansion of Schur functions are polynomials in $t$ with nonnegative integer coefficients.
\end{cor}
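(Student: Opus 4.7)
The plan is to derive the corollary as a direct consequence of Theorem \ref{thm:mainstrong}. First I would observe that the generating function
\[
\gamma_V \;=\; \sum_{w \in V} t^{|\finv_\cP(w)|} F_{\des_\cP(w)}
\]
is additive on vertex-disjoint unions: if $V = V^{(1)} \sqcup V^{(2)} \sqcup \cdots \sqcup V^{(r)}$ is the decomposition of the $\cP$-Knuth equivalence graph $\Gamma_V$ into connected components, then by definition $\gamma_V = \gamma_{V^{(1)}} + \cdots + \gamma_{V^{(r)}}$. Each $\Gamma_{V^{(j)}}$ is itself a $\cP$-Knuth equivalence graph (closure under $\cP$-Knuth moves being automatic for a connected component), so it suffices to prove Schur positivity for each connected component separately.

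Next I would invoke Theorem \ref{thm:mainstrong} on each connected component $V^{(j)}$. Under the hypothesis that $\cP$ avoids both $\cP_{(3,1,1),5}$ and $\cP_{(4,2,1,1),6}$, the theorem expresses
\[
\gamma_{V^{(j)}} \;=\; t^{|\finv_\cP(V^{(j)})|}\bigl( s_{\lambda^{(j)}_1} + \cdots + s_{\lambda^{(j)}_{k_j}} \bigr),
\]
where the $\lambda^{(j)}_i$ are the shapes of the $\cP$-tableaux whose reading words appear in $V^{(j)}$ (by Lemma \ref{lem:presfinv} the fake $\cP$-inversion number is a well-defined invariant of the component). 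Summing over $j$ gives a presentation
\[
\gamma_V \;=\; \sum_{j=1}^{r} \sum_{i=1}^{k_j} t^{|\finv_\cP(V^{(j)})|}\, s_{\lambda^{(j)}_i},
\]
so $\gamma_V$ is a nonnegative integer combination of monomials $t^m s_\lambda$. Since each $s_\lambda$ is a symmetric function in the $x$-variables and the coefficients in $t$ are nonnegative integers, $\gamma_V$ is symmetric and Schur positive with coefficients in $\bN[t]$.

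There is essentially no obstacle: the corollary is a bookkeeping consequence of Theorem \ref{thm:mainstrong} once one notices that the generating function splits over connected components. The substantive content — that each connected component corresponds to a set of $\cP$-tableaux of a fixed number of rows, and that summing fundamental quasi-symmetric functions over a component collapses to a sum of Schur functions — is already packaged in Theorem \ref{thm:mainstrong}.
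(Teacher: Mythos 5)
Your proposal is correct and matches the paper's intent: the corollary is stated as a direct consequence of Theorem \ref{thm:mainstrong}, and the only step not already in the theorem is the observation that $\gamma_V$ is additive over connected components, which you supply. Nothing further is needed.
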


\section{Column insertion algorithm} \label{sec:ins}
In this section, we assume that a fixed natural unit interval order $\cP$ is given and define a column insertion algorithm.

\subsection{Column insertion algorithm $\alg_\Phi$} 
For convenience, we add $\vn, -\vn$ to the poset $([1,n], \cP)$ so that $\vn \pr i$ (resp. $-\vn\pl i$) for any $i\in[1,n]$. We define 
\begin{align*}
\fA&\colonequals\{(a_m, \ldots, a_1) \mid m \in \bN, a_i \in [1,n] \cup \{\vn\}, a_i\neq a_j \textnormal{ if } i\neq j \textnormal{ and } a_i, a_j \neq \vn\},
\\\fC&\colonequals\{(c_l, \ldots,  c_1) \mid l \in \bN, c_i \in [1,n], c_i\pl c_j \textnormal{ if }  i<j\},
\\\fA\fC&\colonequals\{(\alpha, c) \in \fA\times \fC \mid a_i \neq c_j \textnormal{ for any } i, j\}, and
\\\fC\fA&\colonequals\{(c, \alpha) \in \fC\times \fA \mid a_i \neq c_j \textnormal{ for any } i, j\}.
\end{align*}
One may regard $\fA$ as a set of (input/output) words and $\fC$ as a set of chains, i.e. one-column $\cP$-tableaux. (Recall that we read columns from bottom to top.) 

We introduce the column insertion algorithm $\alg_\Phi$. This defines a function $\Phi: \fA\fC \rightarrow \fC\fA$ and is described in terms of the pseudocode Algorithm \ref{alg1}.

\begin{algorithm}
\DontPrintSemicolon
\caption{Column insertion algorithm $\alg_\Phi$}\label{alg1}
\Fn(\tcp*[f]{$((a_m, \ldots, a_1),(c_l, \ldots, c_1)) \in \fA\fC$}){$\Phi((a_m, \ldots, a_1),(c_l, \ldots, c_1))$}{
$m \gets |(a_m, \ldots, a_1)|$\;
\lFor(\tcp*[f]{Initialize $(b_m, \ldots, b_1)$ to $(\vn,\ldots, \vn)$}){$i \gets 1$ \KwTo $m$}{$b_i \gets \vn$}
$l \gets |(c_l, \ldots, c_1)|$\;
$d_0 \gets -\vn$\;
\lFor(\tcp*[f]{Initialize $(d_l, \ldots, d_1, d_0)$ to $(c_l, \ldots, c_1,-\vn)$}){$i \gets 1$ \KwTo $l$}{$d_i \gets c_i$}
$p \gets 1$\;
\While {$p\leq m$}{
	\lIf(\tcp*[f]{\csiii[(a)]: pass if $a_p=\vn$}) {$a_p=\vn$}{$p \gets p+1$}
	\Else{
            	$r\gets \max \{i \in [0, l] \mid d_i<a_p\}$ \tcp*{Choose $r$ so that $d_r<a_p<d_{r+1}$}
            	\If(\tcp*[f]{\csi}) {$a_p \pr d_r$}
            	{
            		\lIf(\tcp*[f]{\csi[(a)]}){$r=l$}{
            			$l \gets l+1$  
            		}
            		\lElse(\tcp*[f]{\csi[(b)]}){
                    		$b_p\gets d_{r+1}$
            		}
			$d_{r+1} \gets a_p$\; 
            		$p \gets p+1$ 
            	}
            	\Else(\tcp*[f]{\csii})
		{
			$(h,q) \gets \max \{(i,j) \in \bN^2 \mid \{d_r, \ldots, d_{r+i}, a_p, \ldots, a_{p+j}\} $ is a ladder in $\cP$\; $\qquad\qquad\qquad\qquad\qquad\qquad \textnormal{ and } a_p<a_{p+1}<\cdots<a_{p+j}\}$\;
			\tcp*{The maximum is w.r.t. lexicographic order}
            		\If(\tcp*[f]{\csii[(a)]}) {$a_{p+q}<d_{r+h}$}{
            			\lFor{$j \gets 0$ \KwTo $q$}{{$b_{p+j} \gets a_{p+j}$}} 
            		}
			\Else(\tcp*[f]{\csii[(b)]})
			{
            			\For{$i \gets 0$ \KwTo $h$}
				{
            				$j \gets \min\{t \in [0, q] \mid a_{p+t}>d_{r+i}\}$\;
            				\lIf {$i=h$}{$k \gets q$}
					\lElse{$k \gets \max\{t \in [0, q-1] \mid a_{p+t}<d_{r+i+1}\}$}
            				$b_{p+j} \gets d_{r+i}$\;
            				\lFor {$t \gets j$ \KwTo $k-1$}{$b_{p+t+1} \gets a_{p+t}$}
            				$d_{r+i} \gets a_{p+k}$\;
            			}
            		}
			$p\gets p+q+1$\;
            	}
	}
}
\Return $((d_l, \ldots, d_1), (b_m, \ldots, b_1))$\;
}
\end{algorithm}

Let us investigate this algorithm in more detail. It takes the input $(\alpha, c) \in \fA\fC$ where $\alpha=(a_m, \ldots, a_1)$ and $c=(c_l, \ldots, c_1)$. Initiate $\fb=(b_m, \ldots, b_1)$ with $(a_m, \ldots, a_1)$ and $\fd=(d_l, \ldots, d_1)$ with $(c_l, \ldots, c_1)$. (Here $l=|\fd|$, which may change as the algorithm is performed.) Also we set $d_0 \colonequals -\vn$ to simplify our argument. Initialize $p$ with $1$. 

\subsubsection{}If $p>m=|\alpha|$, then we terminate the algorithm and return $(d, \beta)=(\fd, \fb)$.

\subsubsection{\und{\csiii[(a)]}} Suppose that $a_p=\vn$. Then we increase $p$ by 1 and repeat the algorithm from the beginning.
\begin{rmk} The reason why we call it \csiii[(a)] (instead of \csiii{}) shall become apparent when we describe another algorithm $\Psi_X$ in the proof of Proposition \ref{prop:phiinj}.
\end{rmk}

From now on we suppose $a_p \neq \vn$ and choose $r \in [0,l]$ such that $d_r<a_p<d_{r+1}$. (If $d_l<a_p$, then we set $r=l$.) First suppose that $a_p$ and $d_r$ are comparable, i.e. $d_r \pl a_p$.

\subsubsection{\und{\csi[(a)]}} We first consider the case when $r= l$, i.e. $a_p$ is bigger than any element in $\fd$ with respect to $\cP$. (This includes the case when $r=l=0$, i.e. $\fd$ is an empty chain.) In this case we ``add $a_p$ to the end of the chain $\fd$'', i.e. set $d_{l+1}\colonequals a_p$ and replace $\fd$ with $(d_{l+1}=a_p, d_l, \cdots, d_1)$. (As a result, the length of $\fd$ increases by 1.) After this, we increase $p$ by 1 and repeat the algorithm from the beginning.

\subsubsection{\und{\csi[(b)]}} Now suppose that  $r<l$. (This include the case when $-\vn=d_0< a_p< d_1$.) Then ``$a_p$ bumps $d_{r+1}$''; we set $b_p\colonequals d_{r+1}$ and then replace $d_{r+1}$ in $\fd$ with $a_p$. For example, if $1\pl 3$ and $1\pl 2$, then $\Phi((2), (3,1)) = ((2,1),(3))$. (Whether $2\pl 3$ or not does not affect the result here.) After this, we increase $p$ by 1 and repeat the algorithm from the beginning.

Now we suppose that $a_p$ and $d_r$ are not comparable (which forces that $r>0$). We set
\begin{align*}
A=\{&(i,j) \in \bN^2 \mid \ a_p<a_{p+1}<\cdots<a_{p+j} \textup{ and } \{d_{r}, \ldots, d_{r+i}, a_p, \ldots, a_{p+j}\} \textnormal{ is a ladder in } \cP\}.
\end{align*}
Note that $(0,0) \in A$ as $\{d_r, a_p\}$ is a ladder in $\cP$ by assumption (since $\cP_{\stair(1),2}=\cP_{\emptyset,2}$). We set $(h,q)$ to be the maximum of $A$ with respect to lexicographic order. In other words, we choose $(h,q)$ such that 
\begin{enumerate}[label=\textbullet]
\item $a_p<a_{p+1}<\cdots<a_{p+q}$ with respect to the usual order,
\item $\{d_r, \ldots, d_{r+h}, a_p, \ldots, a_{p+q}\}$ is a ladder in $\cP$,
\item $h$ is the biggest among such possible $(h,q)$'s, and
\item $q$ is the biggest among such possible $(h,q)$'s with $h$ chosen above.
\end{enumerate}
For later use, we define:
\begin{defn} We assume the situation above. Then the phrase ``maximality in \csii{}'' indicates the maximality of $(h,q)$ in $A$.
\end{defn}
\subsubsection{\und{\csii[(a)]}} First we suppose that $a_{p+q}<d_{r+h}$, i.e. $\max\{d_r, \ldots, d_{r+h}, a_p, \ldots, a_{p+q}\}=d_{r+h}$. (e.g. Example \ref{stairex0} and \ref{stairex2}) In this case we do not alter the chain $\fd$ and simply let $a_p, \ldots, a_{p+q}$ ``pass through the chain'', i.e. set $b_i\colonequals a_i$ for $i \in [p,p+q]$.
After this, we increase $p$ by $q+1$ and repeat the algorithm from the beginning.
\begin{rmk} Here, the maximality in \csii{} means that either $p+q=m$ or $a_{p+q+1}$ does not satisfy both $a_{p+q+1}\pr a_{p+q}$ and $a_{p+q+1} \pdr d_{r+h}$.
\end{rmk}

\subsubsection{\und{\csii[(b)]}} The remaining case is when $a_{p+q}>d_{r+h}$, i.e.  $\max\{d_r, \ldots, d_{r+h}, a_p, \ldots, a_{p+q}\}=a_{p+q}$. (e.g. Example \ref{stairex1} and \ref{stairex2}) We set $p-1=u(r-1)< u(r)< u(r+1)< \cdots< u(r+h)=p+q$ such that $d_{i}<a_{u(i-1)+1}<\cdots<a_{u(i)}$ for $i\in [r, r+h]$. 
Then we replace $d_r, \ldots, d_{r+h}$ on the chain in $\fd$ with $a_{u(r)}, \ldots, a_{u(r+h)}$, respectively. Furthermore, for $j \in [p, p+q]$ we set
$$b_j \colonequals \left\{\begin{aligned} &d_{i} &\textnormal{ if } j=u(i-1)+1 \textnormal{ for some } i \in [r,r+h],
\\ &a_{j-1} &\textnormal{ otherwise}.
\end{aligned}\right.$$ 
After this, we increase $p$ by $q+1$ and repeat the algorithm from the beginning.

\begin{rmk} Here, the maximality in \csii{} means that
\begin{enumerate}[label=\textbullet] 
\item either $p+q=m$ or $a_{p+q+1}$ does not satisfy both $a_{p+q+1}\pdr a_{p+q}$ and $a_{p+q+1} \pr a_{p+q}$ (or $a_{p+q+1} \pr d_{r+h}$ if $a_{p+q} \pdr d_{r+h}$), and
\item either $r+h=l$ or $d_{r+h+1}\not\pdr a_i$ for any $i \in [p,p+q]$.
\end{enumerate}
One may check that the second condition is equivalent to
\begin{enumerate}[label=\textbullet] 
\item either $r+h=l$ or $d_{r+h+1}\pr a_i$ for any $i \in [p,p+q]$.
\end{enumerate}
\end{rmk}
For later use, we define:
\begin{defn}
We say that $a_p$ is in \csi[(a)], 1(b), etc. if the step in the column insertion algorithm $\alg_\Phi$ processing $a_p$ corresponds to \csi[(a)], 1(b), etc.
\end{defn}
This finishes the description of the algorithm $\alg_\Phi$. See Section \ref{sec:exphi} for some examples about this algorithm. Before we proceed, we need to check that:
\begin{thm}\label{thm:wellphi} The algorithm $\alg_\Phi$ is well-defined, i.e. $\Phi(\alpha, c) \in \fC\fA$.
\end{thm}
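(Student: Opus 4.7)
The plan is to establish three things: termination of the algorithm, the chain condition on the output $\fd$, and the disjointness of entries ensuring $(\fd, \fb) \in \fC\fA$. Termination is immediate: in every iteration of the while loop, $p$ strictly increases (by $1$ in \csiii[(a)], \csi[(a)], and \csi[(b)]; by $q + 1 \geq 1$ in \csii[(a)] and \csii[(b)]), so the loop halts after at most $m$ iterations. The maxima appearing inside the algorithm are well-defined for routine reasons: $\{i \in [0,l] \mid d_i < a_p\}$ contains $0$ since $d_0 = -\vn$, and the set $A$ in \csii{} contains $(0,0)$ because $\{d_r, a_p\}$ is a two-element ladder in $\cP$.

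For the chain and disjointness conditions, I would maintain the invariant, true at the start of each iteration, that the non-$\vn$ entries of $\fb$, the entries of $\fd$, and the values among $a_p, a_{p+1}, \ldots, a_m$ are pairwise disjoint, and $\fd$ satisfies $d_i \pl d_j$ for $i < j$. This holds initially by $(\alpha, c) \in \fA\fC$, and I would then check each case. \csiii[(a)] and \csii[(a)] modify neither $\fb$ nor $\fd$ structurally. \csi[(a)] appends $a_p$ to the top of $\fd$ with $d_l = d_r \pl a_p$. \csi[(b)] swaps $a_p$ with $d_{r+1}$ and places $d_{r+1}$ into $\fb$; one must verify $a_p \pl d_{r+2}$ when $r + 2 \leq l$, which follows from \cond{} applied to $d_{r+1} \pl d_{r+2}$, since $a_p < d_{r+1}$ rules out $a_p$ being incomparable with $d_{r+2}$.

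The delicate case is \csii[(b)]. Here the segment $(d_r, \ldots, d_{r+h})$ of $\fd$ is replaced by $(a_{u(r)}, \ldots, a_{u(r+h)})$, and the positions $b_p, \ldots, b_{p+q}$ are filled with the complementary multiset $\{d_r, \ldots, d_{r+h}\} \sqcup \{a_p, \ldots, a_{p+q}\} \setminus \{a_{u(r)}, \ldots, a_{u(r+h)}\}$. Within the replaced segment, $a_{u(i)}$ and $a_{u(i+1)}$ are separated in the ladder $\{d_r, \ldots, d_{r+h}, a_p, \ldots, a_{p+q}\}$ by $d_{i+1}$, so they lie two apart in the ladder order and satisfy $a_{u(i)} \pl a_{u(i+1)}$. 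At the upper boundary, if $r + h < l$, the remark on maximality in \csii{} gives $d_{r+h+1} \pr a_i$ for every $i \in [p, p+q]$, so in particular $a_{u(r+h)} \pl d_{r+h+1}$. At the lower boundary, if $r \geq 2$, then from $d_{r-1} \pl d_r < a_{u(r)}$ together with \cond{} we deduce $d_{r-1} \pl a_{u(r)}$; the case $r = 1$ is trivial since $d_0 = -\vn$. Thus $\fd$ remains a $\cP$-chain, and disjointness is preserved because every value written into $\fb$ at this step is removed from $\fd \cup \{a_p, \ldots, a_{p+q}\}$ and each position is written exactly once.

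The main obstacle is \csii[(b)], specifically the bookkeeping of the interleaved indexing $u(r - 1) = p - 1 < u(r) < \cdots < u(r+h) = p + q$ against the ladder structure, so that both the chain update and the piecewise definition of $b_j$ respect the invariant. The remaining cases reduce to short applications of \cond{}, and distinctness of $\fb$-entries follows immediately from the invariant since each position is written at most once.
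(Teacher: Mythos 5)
Your proposal is correct and follows the same strategy as the paper: disjointness is preserved trivially, termination is immediate, and the chain property of $\fd$ is verified case-by-case using Condition $(\pitchfork)$, with the argument then closed by iterating over the steps of the algorithm (your loop invariant is just the paper's induction on the number of steps in different clothing). Your treatment of \csii[(b)] — including the boundary checks at $d_{r-1}$ and $d_{r+h+1}$ and the observation that the new chain entries $a_{u(i)}, a_{u(i+1)}$ are two apart in the ladder — fills in exactly the details the paper waves away with ``easily checked case-by-case.''
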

\begin{proof} First suppose that $a_1, a_m \neq \vn$ and only one step of $\alg_\Phi$ is performed when calculating $\Phi((a_m, \ldots, a_1), (c_l, \ldots, c_1))=((d_{l'}, \ldots, d_1), (b_m, \ldots, b_1))$. Then we need to show that $b_i, d_j$ for $i \in[1,m]$, $j\in[1,l']$ are pairwise different (possibly except $\vn$) and $d_1\pl d_2 \pl \cdots \pl d_{l'}$. But the first part is clear from the assumption that $a_1, \ldots, a_m, c_1, \ldots, c_l$ are pairwise different. The second part is also easily checked case-by-case using \cond. Now the statement in the general case follows from induction on the number of steps.
\end{proof}

\subsection{Properties of $\alg_\Phi$}
Here we discuss some properties of $\alg_\Phi$. Firstly, if $\Phi(\alpha,c) = (d, \beta)$ then it is easy to observe the following. (We will use these facts without reminder later on.)
\begin{enumerate}[label=\textbullet]
\item if $a_i$ is in \csiii[(a)] or \csi[(a)], we have $b_i=\vn$,
\item if $a_i$ is in \csi[(b)], we have $a_i<b_i\neq \vn$,
\item if $a_i$ is in \csii[(a)], we have $a_i=b_i$, and
\item if $a_i$ is in \csii[(b)], we have $a_i\pdr b_i$.
\end{enumerate}
The following lemma is less trivial.
\begin{lem}\label{lem:smallelt} For $\alpha=(a_m, \ldots, a_1)$ and $c=(c_l, \ldots, c_1)$, suppose that there exists $i \in [1,l]$ such that $c_i \pl a_j$ for all $j\in [1,m]$. If $\Phi(\alpha, c) = ((d_{l'}, \ldots, d_1), -)$, then we have  $c_j=d_j$ for $j \in [1,i]$.
\end{lem}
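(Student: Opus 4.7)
The plan is to proceed by induction on the number of iterations of the main while-loop in Algorithm \ref{alg1}, maintaining the invariant that at the start of each iteration the working chain $\fd=(d_l,\ldots,d_1)$ (whose length may have grown) satisfies $d_j=c_j$ for all $j\in[1,i]$. The invariant holds at the outset, since $\fd$ is initialized to $(c_l,\ldots,c_1)$; preservation across every iteration plainly gives the conclusion on termination, so the whole argument reduces to checking the inductive step case by case.

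For the inductive step, consider one iteration processing $a_p$. The step \csiii[(a)] (where $a_p=\vn$) leaves $\fd$ untouched. Otherwise, the algorithm selects $r$ maximal with $d_r<a_p$ in the usual order. From the hypothesis $c_i\pl a_p$, and the fact that the usual order is a linearization of $\cP$, we deduce $d_i=c_i<a_p$, so $r\geq i$ by maximality of $r$. Now inspect the cases that could possibly modify $\fd$: step \csi[(a)] only appends a new entry at the fresh position $l+1$, so no existing $d_j$ changes; step \csi[(b)] overwrites $d_{r+1}$ alone, and $r+1\geq i+1>i$; step \csii[(a)] leaves $\fd$ alone.

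The essential case is \csii[(b)], in which $d_r,d_{r+1},\ldots,d_{r+h}$ are all overwritten, so the invariant demands the strict inequality $r>i$. This case is entered only when $a_p\dash_\cP d_r$. However, the invariant yields $d_i=c_i\pl a_p$, so $d_i$ and $a_p$ are $\cP$-comparable; hence $r\neq i$, and combined with $r\geq i$ this gives $r>i$, as required. All entries $d_1,\ldots,d_i$ therefore survive the iteration, closing the induction. The only step that needs genuine thought is \csii[(b)]; the comparable-versus-incomparable dichotomy between $a_p$ and $c_i$ settles it immediately, so no real obstacle arises.
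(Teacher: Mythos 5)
Your proof is correct and takes essentially the same approach as the paper, which dismisses the lemma with ``It is shown by case-by-case observation.'' Your inductive invariant $d_j = c_j$ for $j \in [1,i]$, together with the case split on \csi[(a)], \csi[(b)], \csii[(a)], \csii[(b)], \csiii[(a)], is precisely the case-by-case check the authors have in mind, and the key observation that in \csii{} one has $r > i$ because $d_i = c_i \pl a_p$ while $d_r \pdl a_p$ is exactly the right pivot.
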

\begin{proof} It is shown by case-by-case observation.
\end{proof}

The proofs of the following two propositions are provided in Section \ref{sec:proofprop}.

\begin{prop} \label{prop:column}
Suppose that $\Phi(\alpha, c) = (d, \beta)$ where $\alpha=(a_m, \ldots, a_1)$, $\beta=(b_m, \ldots, b_1)$, and $c=(c_l, \ldots, c_1)$. Write $\alpha^f$ (resp. $\beta^f$) to be the word obtained by removing $\vn$ from $\alpha$ (resp. $\beta$).
\begin{enumerate}[label=\textnormal{(\Alph*)}]
\item\label{mainprop1} $\alpha^f+c$ and $d+\beta^f$ are $\cP$-Knuth equivalent. In particular, $\und{\alpha^f+c}=\und{d+\beta^f}$ as sets.
\item\label{mainprop2} Suppose that $\alpha \in \fC$, $m\geq l$, and $(\alpha, c)$ satisfies the $\cP$-tableau condition, i.e. $a_i \not\pr c_i$ for $i \in [1,l]$. Then $d=\alpha$ and $\beta=(\vn,\ldots, \vn) +c$.
\item\label{mainprop3} If $a_i, a_{i+1} \neq \vn$ and $a_i \pl a_{i+1}$, then either $[b_{i+1} = \vn]$ or $[b_i, b_{i+1} \neq \vn$ and $b_i \pl b_{i+1}]$.
\item\label{mainprop4} If $a_i, a_{i+1} \neq \vn$ and $a_i \not\pl a_{i+1}$, then either $[b_{i} = \vn, b_{i+1}\neq \vn]$ or $[b_i, b_{i+1} \neq \vn$ and $b_i \not\pl b_{i+1}]$.
\end{enumerate}
\end{prop}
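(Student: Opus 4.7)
The plan is to prove all four parts by step-by-step analysis of the column insertion algorithm, inducting on the number of iterations of the outer while-loop. The natural invariant to maintain for part (A) is the following: at the start of the iteration processing input position $p$, writing $(d_{l'},\ldots,d_1)$ for the current state of the chain and $(b_{p-1},\ldots,b_1)$ for the currently accumulated output, the word $(a_m,\ldots,a_{p})^f + (d_{l'},\ldots,d_1) + (b_{p-1},\ldots,b_1)^f$ should be $\cP$-Knuth equivalent to $\alpha^f + c$. Cases $\infty$(a) and I(a) introduce only $\vn$'s and leave the underlying word unchanged. Case I(b) swaps $a_p$ with $d_{r+1}$ across the chain/output boundary; depending on whether $d_r \pl d_{r+1}$ or $d_r \pdl d_{r+1}$, this swap is precisely a single $\cP$-Knuth move of type (2) or (4) applied to the triple $(a_p,d_{r+1},d_r)$. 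The main difficulty is Cases II(a) and II(b), where a block $a_p,\ldots,a_{p+q}$ is processed against chain-tops $d_r,\ldots,d_{r+h}$ forming a ladder. The plan is to decompose the net effect of the block-swap as an explicit sequence of $\cP$-Knuth moves sliding each $a$ past each $d$ one position at a time; the crucial point is that every adjacent triple arising in such a slide lies inside the combined ladder, so its $\pdl$/$\pl$ pattern matches exactly one of the four templates in Definition~\ref{def:pknuth} according to how many rungs apart its members are.

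For part (B), I will prove by induction on $p\in[1,m]$ that at the start of step $p$ the current chain is $(c_l,\ldots,c_p,a_{p-1},\ldots,a_1)$ for $p\le l$ and $(a_{p-1},\ldots,a_1)$ for $p>l$, and that step $p$ executes either Case I(b) (when $a_p\pl c_p$) or Case II(b) with $(h,q)=(0,0)$ (when $a_p$ and $c_p$ are $\cP$-incomparable), both of which bump exactly $c_p$ into $b_p$, or simply Case I(a) when $p>l$. The subtle point is to verify maximality of $(h,q)=(0,0)$ in the incomparable case: $q\geq 1$ would require $a_p\pdl a_{p+1}$, contradicting $\alpha\in\fC$, and $h\geq 1$ would require $c_p\pdl c_{p+1}$, contradicting $c\in\fC$. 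The inequality $a_p<c_{p+1}$ needed to make $r=p$ valid in the incomparable case follows from \cond{} applied to $c_p\pl c_{p+1}$ together with $a_p\dash_\cP c_p$.

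For parts (C) and (D), I will fix $i$ with $a_i,a_{i+1}\neq \vn$ and split into cases depending on whether $a_i$ and $a_{i+1}$ are processed in the same iteration of the while-loop (namely $a_{i+1}$ appearing among $a_{p+1},\ldots,a_{p+q}$ of a Case II batch containing $a_i=a_p$) or in different iterations. In the same-iteration case, the ladder requirement of Case II forces $a_i\pdl a_{i+1}$, placing us automatically in the setting of (D); the outputs $b_i,b_{i+1}$ can be read off from the explicit formulas for II(a) or II(b), and $b_i\not\pl b_{i+1}$ follows from the ladder structure and the maximality of $(h,q)$. In the different-iteration case the relation between $b_i$ and $b_{i+1}$ is controlled by the chain positions $r,r'$ chosen during the two steps, and I will use Lemma~\ref{lem:smallelt} together with \cond{} to compare $r$ and $r'$ and derive the required $\pl$- or $\dash$-relation, or to identify which of $b_i,b_{i+1}$ equals $\vn$.

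The main obstacle is the case explosion in II(b) for part (A) and in the cross-step analysis for (C) and (D). The saving grace is that $\cP$ is a natural unit interval order, so \cond{} and the ladder axioms from Section~\ref{sec:unit} drastically restrict the admissible local configurations; in each sub-case the verification reduces to one of a small number of elementary Knuth-move identities or a short ladder-based computation.
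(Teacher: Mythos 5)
Part (A) has a genuine gap in the treatment of Cases II(a) and II(b). You claim the block-swap can be decomposed into Knuth moves ``sliding each $a$ past each $d$ one position at a time'' because every adjacent triple lies inside the combined ladder. But lying inside a ladder is not enough: the four templates in Definition~\ref{def:pknuth} only connect \emph{specific arrangements} of each triple, and the arrangement produced by a naive one-step slide is generally not one of them. Concretely, take a $4$-element ladder $c_1 < a_1 < a_2 < c_2$ (consecutive pairs $\pdl$, skip pairs $\pl$) and try to turn $a_2\, a_1\, c_2\, c_1$ into $c_2\, c_1\, a_2\, a_1$ in Case~II(a). The first slide $a_2 a_1 c_2 c_1 \pkm a_2 c_2 a_1 c_1$ is legal (case~3 on $\{c_1,a_1,c_2\}$), but from $a_2 c_2 a_1 c_1$ the window $c_2 a_1 c_1$ is $cba$ in case~3 and is only connected \emph{backward} to $bca$; you cannot push $a_1$ past $c_1$ directly. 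The paper's Lemma~\ref{lem:simple2} instead uses a non-monotone route — for $m=2$, $a_2 a_1 c_2 c_1 \pkm a_2 c_2 a_1 c_1 \pkm c_2 a_1 a_2 c_1 \pkm c_2 a_2 c_1 a_1 \pkm c_2 c_1 a_2 a_1$ — which first inverts the relative order of the $a_i$ while dragging $c_2$ forward, then restores it while pushing $c_1$ to the front. That zig-zag is essential; your plan does not produce it, and without some replacement for Lemma~\ref{lem:simple2} (and its $s$-induction generalization for II(b)) the proof of~(A) does not go through.

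There is also a recurring misconception in (B), (C), (D): you assume that if $a_i, a_{i+1}$ lie in the same Case~II batch they are adjacent in the batch ladder, hence $a_i \pdl a_{i+1}$. That is false: the ladder $\{d_r,\ldots,d_{r+h},a_p,\ldots,a_{p+q}\}$ may interleave $d$'s between the $a$'s in value, so same-batch $a$'s can satisfy $a_i \pl a_{i+1}$. This invalidates your maximality argument for (B) — $h\ge 1$ or $q\ge 1$ does \emph{not} force $c_p\pdl c_{p+1}$ or $a_p\pdl a_{p+1}$, because the interleaved elements keep those pairs from being ladder-consecutive — and it invalidates the claim in the same-step case of (C)/(D) that you are ``automatically in the setting of (D).'' The paper's proof of (B) explicitly allows a block $a_1,\ldots,a_k$ to be processed in one II(b) step, each bumping the matching $c_i$, and it handles the same-batch II(b) case of (C) by exhibiting the ladder $b_i < a_i < b_{i+1} < a_{i+1}$, in which $b_i, b_{i+1}$ are a skip-one pair so $b_i\pl b_{i+1}$. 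Your outline needs both of these corrections before parts (B)--(D) can be completed.
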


\begin{prop} \label{prop:phiinj} Suppose that $(\alpha=(a_m, \ldots, a_1), c), (\alpha'=(a'_m, \ldots, a'_1), c')\in \fA\fC$ satisfy $\Phi(\alpha,c) = \Phi(\alpha',c')$ and $a_i =\vn \Leftrightarrow a_i'=\vn$. Then we have $(\alpha,c) = (\alpha',c')$.
\end{prop}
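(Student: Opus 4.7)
The plan is to prove injectivity by constructing, for each subset $X \subseteq [1,m]$, an inverse algorithm $\Psi_X : \fC\fA \to \fA\fC$ that recovers $(\alpha, c)$ from $\Phi(\alpha, c)$ provided the set $X = \{i \mid a_i = \vn\}$ of $\vn$-positions is given. Once such $\Psi_X$ is built, the proposition follows immediately: if $\Phi(\alpha, c) = \Phi(\alpha', c')$ with common $\vn$-positions $X$, then $(\alpha, c) = \Psi_X(\Phi(\alpha, c)) = \Psi_X(\Phi(\alpha', c')) = (\alpha', c')$.

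The algorithm $\Psi_X$ undoes $\alg_\Phi$ one step at a time, processing indices $p$ in decreasing order from $m$ down to $1$ while maintaining a current chain $\mathfrak{d}$ (initialized to $d$). At each iteration the task is to identify which forward case was applied, recover the corresponding $a$-values, and revert $\mathfrak{d}$ to its pre-step state. The case identification uses only local data read off from $p$, $b_p$, and the current $\mathfrak{d}$: if $p \in X$ then \csiii[(a)] was applied (set $a_p = \vn$); else if $b_p = \vn$ then \csi[(a)] was applied (set $a_p$ to the top of $\mathfrak{d}$ and pop it); otherwise we locate the unique index $s$ with $\mathfrak{d}_s < b_p < \mathfrak{d}_{s+1}$ and distinguish among \csi[(b)], \csii[(a)], and \csii[(b)] using the $\cP$-comparabilities between $b_p$ and $\mathfrak{d}_s, \mathfrak{d}_{s+1}$, together with the adjacent $b$-values in the same forward block.

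The individual reversal rules are then straightforward to describe: \csi[(b)] swaps $b_p$ back into $\mathfrak{d}$ at position $s+1$ and sets $a_p = \mathfrak{d}_{s+1}$; \csii[(a)] records the pass-through block $a_p = b_p, a_{p-1} = b_{p-1}, \ldots$ without altering $\mathfrak{d}$ and stops when the maximality in \csii{} would fail; and \csii[(b)] restores $d_r, \ldots, d_{r+h}$ to their original positions in $\mathfrak{d}$ (which currently hold $a_{u(r)}, \ldots, a_{u(r+h)}$) after identifying the sub-block boundaries $u(r-1)+1 = p$, $u(r)+1, \ldots, u(r+h-1)+1$ as exactly the positions within the block where $b_j$ equals one of the old chain elements $d_i$.

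I expect the main obstacle to be verifying unambiguity of the case identification and parameter recovery for \csii[(b)]: the sub-block partition encoded by $u(r), \ldots, u(r+h)$ must be uniquely reconstructible from the $b$-values and the current $\mathfrak{d}$. This reduces to showing that the forward algorithm's output lets one distinguish the positions $j$ where $b_j = d_i$ (an old chain element, occurring at sub-block starts) from the positions where $b_j = a_{j-1}$ (a shifted $a$-value), using only the ladder structure on $\{d_r, \ldots, d_{r+h}, a_p, \ldots, a_{p+q}\}$ (inherited, up to relabeling, by the current $\mathfrak{d}$ at positions $r, \ldots, r+h$), the strict usual-order increase $a_p < \cdots < a_{p+q}$ within each sub-block, and the maximality in \csii{}. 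Parts \ref{mainprop3} and \ref{mainprop4} of Proposition \ref{prop:column} will be used to ensure consistency at the block boundaries. Once unambiguity is established, verifying $\Psi_X \circ \Phi = \mathrm{id}$ reduces to a routine case-by-case check that applying one step of $\Psi_X$ correctly inverts one step of $\alg_\Phi$ in each of the five cases.
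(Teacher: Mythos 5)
Your plan is conceptually in the same family as the paper's --- build an explicit inverse $\Psi_X$ parameterized by the set of $\vn$-positions and conclude injectivity --- but it misses the crucial structural trick, and as a result you are left facing a genuinely hard reconstruction problem that the paper's argument neatly sidesteps. The paper does \emph{not} process positions in decreasing order. Instead it conjugates by the involution $\omega$ that sends $a \mapsto \hh{a} = n+1-a$ (flipping $\cP$ to the transposed order $\ccP$) and reverses words; under this conjugation a ``backward pass'' becomes a \emph{forward} pass. Concretely, the paper's $\Psi_X$ (Algorithm \ref{algrev}) is $\Phi$ itself with one extra case, \csiii[(b)], and Lemma \ref{lem:invrel} asserts $(\alpha,c) = \omega\circ\Psi^\ccP_X\circ\omega\,(d,\beta)$ with $X$ recording the (mirrored) positions of \csi[(a)]. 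Because $\Psi_X$ is structurally identical to $\Phi$, each case of the single-step inversion is a one-to-one translation of the corresponding forward case, and the induction reduces to showing that the first step of the mirrored forward pass processes exactly the mirror image of the last step of $\Phi$ --- a block-boundary claim verified with the same ladder lemmas already used in the forward direction.

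Your direct-reversal route requires you to build all of this from scratch, and the part you flag as ``the main obstacle'' is exactly where it becomes genuinely hard. Two concrete difficulties your outline does not resolve. First, case identification at a single position is ambiguous: if $\fd_s < b_p < \fd_{s+1}$ in the current chain, then $\fd_s \pl b_p$ can arise both from \csi[(b)] (where $\fd_s$ is the newly inserted $a_p$ and $b_p$ was bumped out) and from the start of a \csii[(b)] block (where $\fd_s$ and $b_p$ are consecutive old chain entries), so the relations with $\fd_s,\fd_{s+1}$ alone do not decide the case, and you will need a further criterion involving the block structure. Second, and more seriously, when you first reach position $m$ you must determine the extent $p_0,\dots,m$ of the last forward block and its parameters $(h,q,r)$ purely from $d$ and $\beta$, but a \csii[(b)] block hides the original $d_r,\dots,d_{r+h}$ among the $b$-values, interleaved in a way that is strictly increasing, so no simple local jump criterion distinguishes an old chain element from a shifted $a$-value; you would have to recover them via the ladder structure together with the maximality in \csii{}. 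This is precisely what the paper's block-boundary argument in the proof of Lemma \ref{lem:invrel} does, but in the mirrored forward setting where one can compare two runs of essentially the same algorithm. So your approach is not wrong, but it leaves the genuinely technical core of the proof open, whereas introducing the $\omega$-conjugation converts that core into a case-match against machinery already in place for $\Phi$.
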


\subsection{Another algorithm $\alg_\Psi$}
Here we introduce another column insertion algorithm $\alg_\Psi$ which resembles $\alg_\Phi$. This will not be used for the definition of the $\cP$-Robinson-Schensted algorithm in the next section, but it will play an important role when we prove Proposition \ref{prop:phiinj}. Also see Section \ref{sec:exphi} for some examples about this algorithm.

%

For a subset $X \subset \bZ_{>0}$, the algorithm $\alg_\Psi$ defines a function $\Psi_X: \fA\fC\rightarrow \fC\fA$ and is described by the pseudocode Algorithm \ref{algrev}. Note that the only difference between $\alg_{\Phi}$ and $\alg_{\Psi}$ is when $a_p=\vn$, $p \in X$, and $\fd\neq \emptyset$, which is as follows.

\noindent{\und{\csiii[(b)]}} Suppose that $a_p=\vn$, $p \in X$, and $\fd\neq \emptyset$. Then we ``drag the first entry of $\fd$ to $\fb$'', i.e. set $b_p\colonequals d_1$ and replace $\fd$ with $(d_l, \ldots, d_2)$. (As a result, the length of $\fd$ decreases by 1.) After this, we increase $p$ by 1 and repeat the algorithm from the beginning.

Indeed, if $X=\emptyset$ then $\alg_{\Psi}$ and $\Psi_X$ revert to $\alg_{\Phi}$ and $\Phi$, respectively.

\begin{algorithm}
\DontPrintSemicolon
\caption{Another algorithm $\alg_{\Psi}$}\label{algrev}
\Fn(\tcp*[f]{$((a_m, \ldots, a_1),(c_l, \ldots, c_1)) \in \fA\fC$}){$\Psi_X((a_m, \ldots, a_1),(c_l, \ldots, c_1))$}{
$m \gets |(a_m, \ldots, a_1)|$\;
\lFor(\tcp*[f]{Initialize $(b_m, \ldots, b_1)$ to $(\vn,\ldots, \vn)$}){$i \gets 1$ \KwTo $m$}{$b_i \gets \vn$}
$l \gets |(c_l, \ldots, c_1)|$\;
$d_0 \gets -\vn$\;
\lFor(\tcp*[f]{Initialize $(d_l, \ldots, d_1, d_0)$ to $(c_l, \ldots, c_1,-\vn)$}){$i \gets 1$ \KwTo $l$}{$d_i \gets c_i$}
$p \gets 1$\;
\While {$p\leq m$}{
	\If{$a_p=\vn$}{
		\If(\tcp*[f]{\csiii[(b)]}){$p\in X$ {\bf and} $l> 0$}{
			$b_p\gets d_1$\;
			\lFor{$i \gets 1$ \KwTo $l-1$}{$d_i \gets d_{i+1}$}
			$l\gets l-1$\;
		}
		$p \gets p+1$  \tcp*{\csiii[(a)]: pass if $a_p=\vn$ and $p\not\in X$}
	}
	\Else{
            	$r\gets \max \{i \in [0, l] \mid d_i<a_p\}$ \tcp*{Choose $r$ so that $d_r<a_p<d_{r+1}$}
            	\If(\tcp*[f]{\csi}) {$a_p \pr d_r$}
            	{
            		\lIf(\tcp*[f]{\csi[(a)]}){$r=l$}{
            			$l \gets l+1$  
            		}
            		\lElse(\tcp*[f]{\csi[(b)]}){
                    		$b_p\gets d_{r+1}$
            		}
			$d_{r+1} \gets a_p$\; 
            		$p \gets p+1$ 
            	}
            	\Else(\tcp*[f]{\csii})
		{
			$(h,q) \gets \max \{(i,j) \in \bN^2 \mid \{d_r, \ldots, d_{r+i}, a_p, \ldots, a_{p+j}\}$ is a ladder in $\cP$\; $\qquad\qquad\qquad\qquad\qquad\qquad \textnormal{ and } a_p<a_{p+1}<\cdots<a_{p+j}\}$\;
			\tcp*{The maximum is w.r.t. lexicographic order}
            		\If(\tcp*[f]{\csii[(a)]}) {$a_{p+q}<d_{r+h}$}{
            			\lFor{$j \gets 0$ \KwTo $q$}{{$b_{p+j} \gets a_{p+j}$}} 
            		}
			\Else(\tcp*[f]{\csii[(b)]})
			{
            			\For{$i \gets 0$ \KwTo $h$}
				{
            				$j \gets \min\{t \in [0, q] \mid a_{p+t}>d_{r+i}\}$\;
            				\lIf {$i=h$}{$k \gets q$}
					\lElse{$k \gets \max\{t \in [0, q-1] \mid a_{p+t}<d_{r+i+1}\}$}
            				$b_{p+j} \gets d_{r+i}$\;
            				\lFor {$t \gets j$ \KwTo $k-1$}{$b_{p+t+1} \gets a_{p+t}$}
            				$d_{r+i} \gets a_{p+k}$\;
            			}
            		}
			$p\gets p+q+1$\;
            	}
	}
}
\Return $((d_l, \ldots, d_1),(b_m, \ldots, b_1)) $\;
}
\end{algorithm}
\begin{lem} \label{lem:wellpsi}The algorithm $\Psi_X$ is well-defined.
\end{lem}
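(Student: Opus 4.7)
The plan is to mimic the proof of Theorem \ref{thm:wellphi} and proceed by induction on the number of steps the algorithm performs. At each step I must verify two invariants about the internal state $(\fd, \fb)$: (i) the nonzero entries among $d_1, \ldots, d_l, b_1, \ldots, b_m$ together with $c_1, \ldots, c_l, a_1, \ldots, a_m$ are pairwise distinct, and (ii) the chain condition $d_1 \pl d_2 \pl \cdots \pl d_l$ continues to hold. If both invariants are maintained throughout, then the final output $((d_l, \ldots, d_1),(b_m, \ldots, b_1))$ lies in $\fC \fA$.

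The bulk of the verification has already been carried out in Theorem \ref{thm:wellphi}, since the steps \csi{} and \csii{} of $\alg_\Psi$ are literally identical to those of $\alg_\Phi$. So the only step I need to analyze is the new case \csiii[(b)]: when $a_p=\vn$, $p \in X$, and $\fd \neq \emptyset$, we set $b_p \colonequals d_1$ and delete $d_1$ from the chain $\fd$. For invariant (i), note that $d_1$ was (by the inductive hypothesis) distinct from every other entry in the state, so assigning it as the new value of $b_p$ (which was previously $\vn$) and removing it from $\fd$ keeps all entries distinct. For invariant (ii), the new chain is just a suffix $(d_l, \ldots, d_2)$ of the previous chain, so the chain inequalities are inherited.

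Finally I would note that the precondition check $l>0$ in the pseudocode ensures we never attempt to read $d_1$ from an empty chain, so \csiii[(b)] is itself well-defined whenever it is invoked. Combining these observations with the already-proved analysis of the other cases, the invariants persist after every step, and the algorithm halts (as $p$ strictly increases in every iteration and is bounded above by $m$) with output in $\fC \fA$. I do not expect any genuine obstacle here: the content of the lemma is essentially that the one new instruction of $\alg_\Psi$ is a benign ``pop from the chain'' operation that respects both invariants by construction.
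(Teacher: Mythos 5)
Your proof is correct and follows essentially the same route as the paper, which simply remarks that the argument is the same as for Theorem \ref{thm:wellphi}; you have merely spelled out why the one new instruction (\csiii[(b)], popping $d_1$ off the chain) preserves the invariants. Nothing to change.
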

\begin{proof} It is proved in the same manner as Theorem \ref{thm:wellphi}.
\end{proof}

\section{$\cP$-Robinson-Schensted algorithm} \label{sec:RS}
In this section, we assume that a fixed natural unit interval order $\cP$ is given and define a $\cP$-Robinson-Schensted algorithm.
\subsection{$\cP$-Robinson-Schensted algorithm}
We identify $\ptab$ with the subset of $\fC^n$ such that $(c^1, c^2, \ldots, c^n)\in \fC^n$ corresponds to the $\cP$-tableau whose reading word is $c^1+c^2+\cdots+ c^n$ if such a $\cP$-tableau exists.
We define the $\cP$-Robinson-Schensted algorithm $\alg_{\prs}$ as in Algorithm \ref{alg2}.

\begin{algorithm}
\DontPrintSemicolon
\caption{$\cP$-Robinson-Schensted algorithm $\alg_{\prs}$}\label{alg2}
\Fn(\tcp*[f]{$(a_m, \ldots, a_1) \in \fA$}){$\prs(a_m, \ldots, a_1)$}{
$m \gets  |(a_m, \ldots, a_1)|$\;
\lFor(\tcp*[f]{Initialize $(b_m, \ldots, b_1)$ to $(a_m,\ldots, a_1)$}){$i \gets 1$ \KwTo $m$}{$b_i \gets a_i$}
$p \gets 0$\;
\While{$(b_m, \ldots, b_1)\neq (\vn,\ldots, \vn)$}{
	$p\gets p+1$\;
	$(PT_p, (t_m, \ldots, t_1)) \gets \Phi((b_m, \ldots, b_1),\emptyset)$\;
	\tcp*{$PT_p$ is a new column of the $\cP$-tableau}
	$QT_p \gets (i(k), \ldots, i(1)) \textnormal{ where } \left\{
	\begin{aligned} & \{i(1), \ldots, i(k)\} = \{j \in [1,m] \mid t_{j}=\vn, b_{j} \neq \vn\},
	\\ &1\leq i(1)<\cdots< i(k)\leq m
	\end{aligned}\right.$\;
	\tcp*{$QT_p$ is a new column of the standard Young tableau}
	\lFor{$i \gets 1$ \KwTo $m$}{$b_i \gets t_i$}
	\tcp*{The output of $\Phi$ is the new input on the next step}
}
\Return $((PT_1, \ldots, PT_p), (QT_1, \ldots, QT_p))$
}
\end{algorithm}

Let us describe the algorithm in detail. This algorithm takes an input $\alpha=(a_m, \ldots, a_1) \in \fA$ and produces an output $(PT, QT)$. Initialize $p$ with 0 and $(b_m, \ldots, b_1)$ with $(a_m, \ldots, a_1)$.

\subsubsection{} If $(b_m, \ldots, b_1)=(\vn, \ldots, \vn)$ then terminate the algorithm and return $(PT, QT)$ where $PT=(PT_1, \ldots, PT_p)$ and $QT=(QT_1, \ldots, QT_p)$.

\subsubsection{} Otherwise, we increase p by 1 and set $(PT_p, (t_m, \ldots, t_1))$ to be $\Phi((b_m, \ldots, b_1),\emptyset)$. Also we set $QT_p=(i(k), \ldots, i(1))$ where $i(1)< \cdots< i(k)$ are chosen such that $\{i(1), \ldots, i(k)\} = \{j \in [1.m] \mid t_j=\vn, b_j\neq \vn\}$, i.e. they are indices where \csi[(a)] of $\alg_\Phi$ occured in the calculation of $\Phi((b_m, \ldots, b_1),\emptyset)$. After this, we set $(b_m, \ldots, b_1)$ to be $(t_m, \ldots, t_1)$ and repeat the algorithm from the beginning.

This finishes the description of the algorithm $\alg_{\prs}$. It is clear that each column of $PT$ (resp. $QT$) is a chain with respect to $\cP$ (resp. the usual order). However, it is not clear at this moment that $PT$ (resp. $QT$) is a $\cP$-tableau (standard Young tableau). Indeed, it is not always so; see Section \ref{sec:patho} for such examples. However, we will observe that this algorithm behaves well when the given partial order on $[1,n]$ avoids $\cP_{(3,1,1),5}$ and $\cP_{(4,2,1,1),6}$.

\subsection{Properties of $\prs$}
Let $T_\lambda \in \SYT_\lambda$ be the standard Young tableau of shape $\lambda$ where $\lambda'=(l_1, l_2, \ldots, l_p)$ such that the $i$-th column of $T_\lambda$ consists of $(\sum_{k=1}^{i-1}l_k)+1, (\sum_{k=1}^{i-1}l_k)+2, \ldots, (\sum_{k=1}^{i}l_k)$. For example, we have $T_{(4,3,1)} = \ytableaushort{1468,257,3}$. The following theorem summarizes important properties of the algorithm $\alg_{\prs}$ which is proved in Section \ref{sec:proofthm}, together with the main theorem (Theorem \ref{thm:mainstrong}) of this paper.
\begin{thm} \label{thm:main}
Suppose that $\cP$ avoids $\cP_{(3,1,1),5}$ and $\cP_{(4,2,1,1),6}$, i.e. $\cP$ is not ladder-climbing. Then the following are satisfied.
\begin{enumerate}[label=\textnormal{(\Alph*)}]
\item \label{mainthm1} For $w\in \sym_n$, if $\prs(w)=(PT, QT)$ then $PT$ is a $\cP$-tableau and $QT$ is a standard Young tableau.
\item \label{mainthm2} For $w\in \sym_n$, if $\prs(w) = (PT, QT)$ then $\{n-x \mid x \in \des_\cP(w)\} = \des(QT)$.
\item \label{mainthm3} For $w\in \sym_n$, If $\prs(w)=(PT, QT)$ then $w\psim \rw(PT)$.
\item \label{mainthm3.5} For $w \in \sym_n$, if $\prs(w)=(PT, QT)$ then the length of the first column of $PT$ is equal to $\ght_\cP(w)$. Furthermore, if $w' \in \sym_n$ satisfies $w\sim_\cP w'$ and $\prs(w') = (PT', QT')$, then the lengths of the first column of $PT$ and $PT'$ are the same.
\item \label{mainthm4} If $w=\rw(PT)$ for some $PT \in \ptab_\lambda$ then $\prs(w) = (PT, \omega(T_\lambda))$ where $\omega : \SYT_\lambda \rightarrow \SYT_\lambda$ is Sch\"utzenberger's evacuation. 
\item \label{mainthm5} If $\alpha=(a_m, \ldots, a_1)$ and $\alpha'=(a'_m, \ldots, a_1')$ are two words of the same length then $\prs(\alpha)=\prs(\alpha') \Leftrightarrow \alpha=\alpha'$.
\item \label{mainthm6} $\prs$ restricts to a bijection $\prs: \sym_n \xrightarrow{\sim} \bigsqcup_{\lambda \vdash n} \ptab_\lambda \times \SYT_\lambda$.
\end{enumerate}
\end{thm}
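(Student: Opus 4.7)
The plan is to establish (A) through (F) in sequence, leaning on Propositions \ref{prop:column} and \ref{prop:phiinj} together with Proposition \ref{prop:ght}; (G) then follows from (F) by an inverse construction. I expect (A) to be the main obstacle, since it is the point at which the non--ladder-climbing hypothesis does its essential work. For (A), I would induct on the stage $p$ of $\alg_{\prs}$, maintaining on the residue $(t_m, \ldots, t_1)$ produced at stage $p$ two invariants: (i) consecutive non-$\vn$ entries of the residue obey the comparability pattern prescribed by Proposition \ref{prop:column}(C)--(D); and (ii) $PT_p$ is weakly longer than every column extractable from that residue, with the $\cP$-tableau row condition holding between $PT_p$ and $PT_{p+1}$. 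Invariant (ii) directly gives the partition shape of $PT$ and its row condition; invariant (i) is what propagates (ii) across stages. Without the non--ladder-climbing hypothesis, \csii[(b)] of $\alg_\Phi$ can produce a residue whose next column is strictly longer than $PT_p$, yielding the pathologies indicated in Section \ref{sec:patho}. For $QT$, the positions recorded as $QT_p$ are exactly those where \csi[(a)] fired, and a case-by-case inspection of $\alg_\Phi$ shows these form an increasing sequence that injects properly into $QT_{p-1}$.

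Parts (C), (D), (E) are then short consequences. For (C), Proposition \ref{prop:column}(A) applied at stage $p$ with empty input chain gives $\alpha^{(p)} \psim PT_p + \alpha^{(p+1)}$, where $\alpha^{(p)}$ denotes the residue entering stage $p$ with $\vn$'s stripped; iterating yields $w \psim \rw(PT)$. For (D), combining (C) with Proposition \ref{prop:ght} gives $\ght_\cP(w) = \ght_\cP(\rw(PT))$, and a short argument (using the row condition of $PT$ together with non--ladder-climbing) shows the latter equals the length of the first column of $PT$. For (E), I iterate Proposition \ref{prop:column}(B), processing the columns of $PT$ right-to-left in the order $\alg_\Phi$ encounters them: since $|c^{p-1}| \geq |c^p|$ and $(c^{p-1}, c^p)$ satisfies the $\cP$-tableau condition, (B) makes each successive column replace the preceding one in the stored chain, so after stage one the chain equals $c^1 = PT_1$; iterating over stages gives $PT_p = c^p$, and recording the \csi[(a)] positions at each stage identifies $QT$ with $\omega(T_\lambda)$. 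For (B), I track each adjacent pair $(w_i, w_{i+1})$ across stages using Proposition \ref{prop:column}(C)--(D); the reversal $x \mapsto n-x$ reflects the right-to-left processing order of $\alg_\Phi$.

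For (F), assume $\prs(\alpha) = \prs(\alpha')$ and induct backwards through the stages of $\alg_{\prs}$: at the terminal stage both residues equal $(\vn, \ldots, \vn)$, and at each earlier stage the $\vn$-pattern of the incoming residue is determined by that of the outgoing residue together with $QT_p$, so Proposition \ref{prop:phiinj} recovers the incoming residue uniquely from $(PT_p, \text{outgoing residue})$; this forces $\alpha = \alpha'$. Finally for (G), injectivity on $\sym_n$ is (F), and for surjectivity I would use the variant $\alg_\Psi$ of Section \ref{sec:ins} with set $X$ read off from the columns of $QT$ to construct, for any $(PT, QT) \in \ptab_\lambda \times \SYT_\lambda$, an explicit preimage $w \in \sym_n$, verifying $\prs(w) = (PT, QT)$ by checking that $\alg_\Psi$ and $\alg_\Phi$ are mutually inverse stage by stage.
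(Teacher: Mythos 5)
Your plan for parts (B)--(F) and the injectivity half of (G) tracks the paper's proof closely: (B) from Proposition~\ref{prop:column}(C)--(D), (C) from Proposition~\ref{prop:column}(A), (D) from (C) together with Proposition~\ref{prop:ght}, (E) from Proposition~\ref{prop:column}(B), and (F) from Proposition~\ref{prop:phiinj} applied backwards through the stages with $X$ recovered from the $QT$ columns (this is exactly Lemma~\ref{lem:invrel}). The substantive issue is part~(A). You describe a two-invariant induction on the stage $p$, with ``(ii) $PT_p$ is weakly longer than the next column and the row condition holds between $PT_p$ and $PT_{p+1}$'' propagated ``across stages'' by invariant~(i). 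But invariant~(i) --- that consecutive residue entries inherit the descent structure of Proposition~\ref{prop:column}(C)--(D) --- only constrains how $\vn$'s and descents are distributed in the residue; it does not, on its own, give the entrywise row condition $d_i \not\pr e_i$ between the chain $PT_p = (d_p,\ldots,d_1)$ and the next chain $PT_{p+1} = (e_q,\ldots,e_1)$, nor the length inequality $p \ge q$. Establishing exactly these two facts is the paper's entire $\approx$15-page case analysis (Cases I.i through III.vi), which requires the ladder machinery of Lemmas~\ref{lem:extlad}--\ref{lem:ladmid}, the non-ladder-climbing hypothesis in sharp combinatorial form, and a careful bookkeeping of how the residue sits relative to both $PT_p$ and $PT_{p+1}$. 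Asserting that (i) propagates (ii) is precisely the theorem; it is not a step that can be left implicit.

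For the surjectivity half of (G), you propose the constructive route through $\alg_\Psi$: build a candidate $w$ by running $\Psi_X$ on the columns of $(PT,QT)$ and verify $\prs(w) = (PT,QT)$. The paper instead proves surjectivity by a Gessel/Gasharov counting argument ($\sum_\lambda |\ptab_\lambda| s_\lambda = \sum_w F_{\des_\cP(w)}$), noting your alternative only in a remark. Your route would be legitimate but is not a free consequence of Lemma~\ref{lem:invrel}, which proves $\Psi_X \circ \Phi = \mathrm{id}$ on the image of $\Phi$ (a left-inverse statement). You would need the symmetric right-inverse statement $\Phi \circ \Psi_X = \mathrm{id}$ on arbitrary $(PT_p, \text{residue})$ pairs, plus an argument that the iterated $\Psi_X$ construction started from an arbitrary $(PT,QT) \in \ptab_\lambda \times \SYT_\lambda$ terminates with empty chain and produces a genuine permutation --- facts the paper only recovers \emph{after} surjectivity is proved, as a corollary. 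So both (A) and surjectivity in (G) contain genuine gaps that the non-ladder-climbing hypothesis and the ladder lemmas must close explicitly.
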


\section{Examples}\label{sec:examples}
In this section we give various examples of the objects that we introduced so far.
\subsection{$\cP$-Knuth equivalence graphs} \label{sec:equivgraph}
Here we provide some examples of $\cP$-Knuth equivalence graphs whose generating functions are not a single Schur function. In Figure \ref{fig:graphexbegin}--\ref{fig:graphexend}, underlined numbers in each word denote its descents and numbers above edges indicate their colors. Vertices with bold borders are reading words of some $\cP$-tableaux and vertices of the same colors are the ones that give the same $\cP$-tableau under $\prs$. Also, two gray vertices in Figure \ref{fig:311graph} are the ones that insert to \ytableaushort{312,54}, which is not a $\cP$-tableau for $\cP=\cP_{(3,1,1),5}$.

\begin{figure}[H]
\begin{tikzpicture}[xscale=3,yscale=1.2]
	\node[draw,very thick] (node_5) at (0,0)  {$\und{4}2\und{3}15$};
	\node[draw,red] (node_0) at (0,-1)  {$4\und{3}125$};
	\node[draw,very thick,red] (node_2) at (0,-2)  {$\und{4}1325$};
	\node[draw] (node_1) at (-1,0)  {$3\und{4}215$};
	\node[draw,red] (node_8) at (-1,-1)  {$32\und{4}15$};
	\node[draw,red] (node_3) at (-1,-2)  {$324\und{5}1$};
	\node[draw] (node_6) at (1,0)  {$\und{4}23\und{5}1$};
	\node[draw] (node_7) at (1,-1)  {$3\und{4}2\und{5}1$};
	\node[draw] (node_4) at (1,-2)  {$34\und{5}21$};
  
  \draw [black,] (node_7) -- (node_6) node [midway,left]  {$1$};
  \draw [black,] (node_8) -- (node_1) node [midway,left] {$2$};
  \draw [black,] (node_1) -- (node_5) node [midway,above] {$1$};
  \draw [black,] (node_4) -- (node_7) node [midway,left] {$2,3$};
  \draw [black,] (node_3) -- (node_8) node [midway,left] {$3$};
  \draw [black,] (node_0) -- (node_5) node [midway,left] {$2$};
  \draw [black,] (node_6) -- (node_5) node [midway,above] {$3$};
  \draw [black,] (node_0) -- (node_2) node [midway,left] {$1$};
\end{tikzpicture}
\caption{$\cP=\cP_{(2,2,1),5}, \gamma_V=t^2(s_{32}+s_{41})$}\label{fig:graphexbegin}
\end{figure}

\begin{figure}[H]
\begin{tikzpicture}[xscale=2.5,yscale=1.2]
	\node[draw] (node_10) at (0,0)  {$\und{5}23\und{4}1$};
	\node[draw] (node_7) at (-1,0)  {$3\und{5}2\und{4}1$};
	\node[draw, very thick] (node_2) at (1,0)  {$\und{5}2\und{3}14$};
	
	\node[draw,blue] (node_5) at (-2,0)  {$35\und{4}12$};
	\node[draw,blue] (node_0) at (-2,-1)  {$3\und{5}142$};
	\node[draw, very thick,blue] (node_3) at (-2,-2)  {$\und{3}1542$};
	
	\node[draw] (node_6) at (-1,-1)  {$34\und{5}21$};
	\node[draw,blue] (node_4) at (-1,-2)  {$342\und{5}1$};
	
	\node[draw] (node_1) at (2,0)  {$3\und{5}214$};
	\node[draw,red] (node_12) at (2,-1)  {$32\und{5}14$};
	\node[draw,red] (node_9) at (2,-2)  {$325\und{4}1$};
	
	\node[draw,red] (node_11) at (1,-1)  {$5\und{3}124$};
	\node[draw, very thick,red] (node_8) at (1,-2)  {$\und{5}1324$};

  \draw [black,] (node_0) -- (node_3) node [midway,left]   {$1$};
  \draw [black,] (node_5) -- (node_7) node [midway,above]   {$3$};
  \draw [black,] (node_10) -- (node_2) node [midway,above]  {$3$};
  \draw [black,] (node_12) --  (node_1) node [midway,left]  {$2$};
  \draw [black,] (node_6) --  (node_7) node [midway,left] {$2$};
  \draw [black,] (node_4) --  (node_6) node [midway,left] {$3$};
  \draw [black,] (node_11) -- (node_8) node [midway,left] {$1$};
  \draw [black,] (node_1) -- (node_2) node [midway,above] {$1$};
  \draw [black,] (node_5) -- (node_0) node [midway,left]  {$2$};
  \draw [black,] (node_11) --  (node_2) node [midway,left]  {$2$};
  \draw [black,] (node_9) --  (node_12) node [midway,left] {$3$};
  \draw [black,] (node_7)--  (node_10) node [midway,above]  {$1$};
\end{tikzpicture}
\caption{$\cP=\cP_{(2,1,1),5}, \gamma_V=t^3(s_{32} + 2s_{41})$}
\end{figure}

\begin{figure}[H]
\begin{tikzpicture}[xscale=2.3,yscale=1.2]
	\node[draw,red] (node_0) at (0,0) {$432\und{5}1$};
	\node[draw] (node_3) at (1,0) {$43\und{5}21$};
	\node[draw,blue] (node_2) at (2,0)  {$4\und{5}321$};
	\node[draw,very thick,blue] (node_1) at (2,1) {$\und{5}3\und{4}21$};
	\node[draw,blue] (node_10) at (2,2)  {$\und{5}32\und{4}1$};
	\node[draw,blue] (node_7) at (1,2) {$3\und{5}2\und{4}1$};
	\node[draw,blue] (node_11) at (0,2) {$35\und{4}21$};
	
	\node[draw] (node_13) at (3,1) {$5\und{4}2\und{3}1$};
	\node[draw] (node_9) at (3,0) {$\und{5}24\und{3}1$};
	\node[draw,very thick] (node_5) at (4,0) {$\und{5}2\und{4}13$};
	\node[draw] (node_12) at (5,0)  {$5\und{4}213$};
	\node[draw,red] (node_8) at (3,2) {$54\und{3}12$};
	\node[draw,red] (node_6) at (4,2) {$5\und{4}132$};
	\node[draw,very thick,red] (node_4) at (5,2) {$\und{5}1432$};

  \draw [black,] (node_0) -- (node_3) node [midway,above] {$3$};
  \draw [black,] (node_13)--(node_1) node [midway,above] {$2$};
  \draw [black,] (node_2) -- (node_1) node [midway,left] {$1$};
  \draw [black,] (node_9) -- (node_5) node [midway,above] {$3$};
  \draw [black,] (node_8) -- (node_13) node [midway,left] {$3$};
  \draw [black,] (node_13)--(node_9) node [midway,left] {$1$};
  \draw [black,] (node_3) -- (node_2) node [midway,above] {$2$};
  \draw [black,] (node_6) -- (node_4) node [midway,above] {$1$};
  \draw [black,] (node_10) -- (node_1) node [midway,left] {$3$};
  \draw [black,] (node_12) --(node_5) node [midway,above] {$1, 2$};
  \draw [black,] (node_11) -- (node_7) node [midway,above] {$2, 3$};
  \draw [black,] (node_7) --(node_10) node [midway,above]  {$1$};
  \draw [black,] (node_8) -- (node_6) node [midway,above] {$2$};
\end{tikzpicture}
\caption{$\cP=\cP_{(3,2,1),5}, \gamma_V=t^3(2s_{32}+s_{41})$}
\end{figure}

\begin{figure}[H]
\begin{tikzpicture}[xscale=2,yscale=1.2]
	\node[draw,black!40!green] (node_7) at (0,0)  {$4\und{5}\und{3}12$};
	\node[draw,black!40!green] (node_14) at (-1,0)  {$4\und{5}2\und{3}1$};
	\node[draw,black!40!green] (node_18) at (1,0)  {$\und{5}3\und{4}12$};
	\node[draw,black!40!green] (node_15) at (-1,-1)  {$42\und{5}\und{3}1$};
	\node[draw, very thick,black!40!green] (node_3) at (1,-1)  {$\und{5}\und{3}142$};
	
	\node[draw,black!40!green] (node_1) at (1,1)  {$\und{5}32\und{4}1$};
	\node[draw,fill=black!30!white] (node_8) at (2,1)  {$3\und{5}2\und{4}1$};
	\node[draw,red] (node_13) at (3,1)  {$35\und{4}12$};
	\node[draw,fill=black!30!white] (node_10) at (2,0)  {$34\und{5}21$};
	\node[draw,red] (node_11) at (2,-1)  {$342\und{5}1$};
	\node[draw,red] (node_0) at (3,0)  {$3\und{5}142$};
	\node[draw, very thick,red] (node_5) at (3,-1)  {$\und{3}1542$};
	
	\node[draw] (node_9) at (-1,1)  {$\und{5}24\und{3}1$};
	\node[draw, very thick] (node_12) at (-2,1)  {$\und{5}2\und{4}13$};
	\node[draw] (node_2) at (-3,1)  {$4\und{5}213$};
	\node[draw,blue] (node_4) at (-3,0)  {$42\und{5}13$};
	\node[draw,blue] (node_17) at (-3,-1)  {$421\und{5}3$};
	\node[draw,blue] (node_6) at (-2,0)  {$5\und{4}123$};
	\node[draw, very thick,blue] (node_16) at (-2,-1)  {$\und{5}1423$};

  \draw [black,] (node_6) -- (node_12) node [midway,left] {$2$};
  \draw [black,] (node_8) --  (node_1) node [midway,above]  {$1$};
  \draw [black,] (node_15) -- (node_14) node [midway,left]  {$2$};
  \draw [black,] (node_2) -- (node_12) node [midway,above]  {$1$};
  \draw [black,] (node_14) -- (node_7) node [midway,above]  {$3$};
  \draw [black,] (node_13) -- (node_0) node [midway,left]  {$2$};
  \draw [black,] (node_1) --  (node_18) node [midway,left]  {$3$};
  \draw [black,] (node_6) -- (node_16) node [midway,left]  {$1$};
  \draw [black,] (node_0) -- (node_5) node [midway,left]  {$1$};
  \draw [black,] (node_18) --  (node_3) node [midway,left]  {$2$};
  \draw [black,] (node_11) --  (node_10) node [midway,left]  {$3$};
  \draw [black,] (node_10) -- (node_8) node [midway,left] {$2$};
  \draw [black,] (node_7) --  (node_18) node [midway,above]  {$1$};
  \draw [black,] (node_9) -- (node_12) node [midway,above]  {$3$};
  \draw [black,] (node_17) -- (node_4) node [midway,left]  {$3$};
  \draw [black,] (node_13) -- (node_8) node [midway,above]  {$3$};
  \draw [black,] (node_14) --  (node_9) node [midway,left]  {$1$};
  \draw [black,] (node_4) -- (node_2) node [midway,left]  {$2$};
\end{tikzpicture}
\caption{$\cP=\cP_{(3,1,1),5}, \gamma_V=t^3(s_{311} + s_{32} + 2s_{41})$}\label{fig:311graph}
\end{figure}

\begin{figure}[H]
\begin{tikzpicture}[xscale=1,yscale=0.6]
	\node (node_2) at (0,0) [draw] {$6\und{4}2\und{3}15$};
	\node (node_20) at (0,2) [draw] {$6\und{4}23\und{5}1$};
	\node (node_9) at (-2,0) [draw] {$\und{6}24\und{3}15$};
	\node (node_0) at (0,-2) [draw,blue] {$64\und{3}125$};
	\node (node_17) at (2,0) [draw,very thick,black!40!green] {$\und{6}3\und{4}215$};
	
	\node (node_11) at (-2,2) [draw] {$\und{6}243\und{5}1$};
	\node (node_3) at (-4,0) [draw,very thick] {$\und{6}2\und{4}135$};
	\node (node_23) at (-4,2) [draw] {$6\und{4}2135$};
	\node (node_24) at (-2,-2) [draw,blue] {$6\und{4}1325$};
	\node (node_25) at (-4,-2) [draw,very thick,blue] {$\und{6}14325$};
	
	\node (node_16) at (2,2) [draw,very thick,red] {$\und{6}3\und{4}2\und{5}1$};
	\node (node_5) at (2,4) [draw,red] {$4\und{6}32\und{5}1$};
	\node (node_7) at (0,4) [draw] {$43\und{6}2\und{5}1$};
	\node (node_26) at (-2,4) [draw] {$436\und{5}21$};
	\node (node_14) at (4,2) [draw,red] {$\und{6}34\und{5}21$};
	\node (node_21) at (4,4) [draw,red] {$4\und{6}3\und{5}21$};
	\node (node_6) at (6,4) [draw,red] {$46\und{5}321$};
	
	\node (node_8) at (2,-2) [draw,black!40!green] {$4\und{6}3215$};
	\node (node_4) at (2,-4) [draw] {$43\und{6}215$};
	\node (node_1) at (0,-4) [draw,blue] {$432\und{6}15$};
	\node (node_19) at (-2,-4) [draw,blue] {$4326\und{5}1$};
	
	\node (node_10) at (4,0) [draw,black!40!green] {$\und{6}32\und{4}15$};
	\node (node_27) at (4,-2) [draw,black!40!green] {$3\und{6}2\und{4}15$};
	\node (node_18) at (4,-4) [draw,black!40!green] {$36\und{4}215$};

	\node (node_15) at (6,-2) [draw,black!40!green] {$3\und{6}24\und{5}1$};
	\node (node_13) at (6,0) [draw,black!40!green] {$\und{6}324\und{5}1$};
	\node (node_22) at (8,-2) [draw,black!40!green] {$36\und{4}2\und{5}1$};
	\node (node_12) at (8,0) [draw,black!40!green] {$364\und{5}21$};

  \draw [black,] (node_0) -- (node_2) node [midway,left]  {$3$};
  \draw [black,] (node_21) -- (node_14) node [midway,left]  {$1$};
  \draw [black,] (node_15) -- (node_13) node [midway,left] {$1$};
  \draw [black,] (node_24) --(node_25) node [midway,above]  {$1$};
  \draw [black,] (node_5) -- (node_21) node [midway,above]  {$4$};
  \draw [black,] (node_12) -- (node_22) node [midway,left] {$3,4$};
  \draw [black,] (node_26) -- (node_7) node [midway,above]  {$3,4$};
  \draw [black,] (node_7) -- (node_5) node [midway,above]  {$2$};
  \draw [black,] (node_6) --(node_21) node [midway,above]  {$2,3$};
  \draw [black,] (node_22)-- (node_15) node [midway,above]  {$2$};
  \draw [black,] (node_2) -- (node_17) node [midway,above]  {$2$};
  \draw [black,] (node_19) -- (node_1) node [midway,above]  {$4$};
  \draw [black,] (node_20) --(node_2) node [midway,left]  {$4$};
  \draw [black,] (node_14) -- (node_16) node [midway,above]  {$3,4$};
  \draw [black,] (node_2) --(node_9) node [midway,above]  {$1$};
  \draw [black,] (node_10) -- (node_17) node [midway,above]  {$3$};
  \draw [black,] (node_18) -- (node_27) node [midway,left]  {$2,3$};
  \draw [black,] (node_9) --(node_3) node [midway,above]  {$3$};
  \draw [black,] (node_20)-- (node_11) node [midway,above]  {$1$};
  \draw [black,] (node_8) -- (node_17) node [midway,left]  {$1$};
  \draw [black,] (node_5) -- (node_16) node [midway,left]  {$1$};
  \draw [black,] (node_1) --(node_4) node [midway,above]  {$3$};
  \draw [black,] (node_4)--(node_8) node [midway,left]  {$2$};
  \draw [black,] (node_13) --(node_10) node [midway,above]  {$4$};
  \draw [black,] (node_15)-- (node_27) node [midway,above]  {$4$};
  \draw [black,] (node_11)-- (node_9) node [midway,left] {$4$};
  \draw [black,] (node_20)-- (node_16) node [midway,above]  {$2$};
  \draw [black,] (node_0) -- (node_24) node [midway,above]  {$2$};
  \draw [black,] (node_23) -- (node_3) node [midway,left]  {$1,2$};
  \draw [black,] (node_27)-- (node_10) node [midway,left]  {$1$};
\end{tikzpicture}
\caption{$\cP=\cP_{(3,3,2,1),6}, \gamma_V=t^4(s_{33} + 2s_{42} + s_{51})$}
\end{figure}

\begin{figure}[H]
\begin{tikzpicture}[xscale=1.1, yscale=0.6]
	\node (node_14) at (-4,2) [draw] {$\und{6}2\und{4}1\und{5}3$};
	\node (node_12) at (-4,4) [draw] {$2\und{6}\und{4}1\und{5}3$};
	\node (node_10) at (-2,2) [draw] {$\und{6}24\und{5}13$};
	\node (node_5) at (-2,4) [draw] {$2\und{6}4\und{5}13$};
	
	\node (node_22) at (-4,0) [draw] {$\und{6}\und{2}14\und{5}3$};
	\node (node_4) at (-4,-2) [draw, very thick] {$\und{6}\und{2}1\und{5}34$};
	\node (node_9) at (-4,-4) [draw] {$\und{6}2\und{5}134$};
	\node (node_1) at (-2,-4) [draw] {$6\und{5}\und{2}134$};
	
	\node (node_18) at (0,-4) [draw] {$6\und{5}2\und{3}14$};
	\node (node_0) at (2,-4) [draw] {$\und{6}2\und{5}\und{3}14$};
	\node (node_8) at (0,-2) [draw] {$6\und{5}23\und{4}1$};
	\node (node_20) at (2,-2) [draw] {$\und{6}2\und{5}3\und{4}1$};
	
	\node (node_6) at (2,0) [draw] {$\und{6}24\und{5}\und{3}1$};
	\node (node_13) at (2,2) [draw] {$2\und{6}4\und{5}\und{3}1$};
	\node (node_24) at (4,4) [draw,red] {$254\und{6}\und{3}1$};
	\node (node_2) at (0,4) [draw] {$25\und{6}\und{4}13$};
	\node (node_15) at (2,4) [draw] {$25\und{6}4\und{3}1$};
	
	\node (node_23) at (4,2) [draw,red] {$26\und{5}3\und{4}1$};
	\node (node_21) at (4,0) [draw,red] {$26\und{5}\und{3}14$};
	\node (node_25) at (6,0) [draw,red] {$2\und{6}3\und{5}14$};
	\node (node_19) at (6,2) [draw,red] {$2\und{6}35\und{4}1$};
	
	\node (node_17) at (8,2) [draw,red] {$\und{6}235\und{4}1$};
	\node (node_16) at (8,0) [draw,red] {$\und{6}23\und{5}14$};
	\node (node_11) at (6,-2) [draw,red] {$2\und{6}\und{3}154$};
	\node (node_7) at (8,-2) [draw,red] {$\und{6}2\und{3}154$};
	\node (node_3) at (8,-4) [draw, very thick,red] {$\und{6}\und{2}1354$};

  \draw [black,] (node_20) -- (node_0) node [midway,left]  {$4$};
  \draw [black,] (node_25)-- (node_11) node [midway,left]  {$3$};
  \draw [black,] (node_17) -- (node_16) node [midway,left]  {$4$};
  \draw [black,] (node_8) -- (node_20) node [midway,above]  {$1,2$};
  \draw [black,] (node_11)-- (node_7) node [midway,above]   {$1$};
  \draw [black,] (node_19)--(node_17) node [midway,above]  {$1$};
  \draw [black,] (node_16) -- (node_7) node [midway,left]   {$3$};
  \draw [black,] (node_15) --(node_13) node [midway,left]  {$2$};
  \draw [black,] (node_22)-- (node_4) node [midway,left]   {$4$};
  \draw [black,] (node_5)-- (node_10) node [midway,left]   {$1$};
  \draw [black,] (node_13) -- (node_6) node [midway,left]  {$1$};
  \draw [black,] (node_12) -- (node_14) node [midway,left]  {$1$};
  \draw [black,] (node_19) -- (node_25) node [midway,left]  {$4$};
  \draw [black,] (node_23) -- (node_13) node [midway,above]  {$3$};
  \draw [black,] (node_25) -- (node_16) node [midway,above]  {$1$};
  \draw [black,] (node_18) -- (node_0) node [midway,above]   {$1,2$};
  \draw [black,] (node_9) -- (node_4) node [midway,left]  {$2,3$};
  \draw [black,] (node_23) --(node_21) node [midway,left]  {$4$};
  \draw [black,] (node_6) -- (node_20) node [midway,left]   {$3$};
  \draw [black,] (node_1) --(node_9) node [midway,above]   {$1$};
  \draw [black,] (node_14)--(node_22) node [midway,left]   {$2$};
  \draw [black,] (node_2) -- (node_5) node [midway,above]  {$2$};
  \draw [black,] (node_21) --(node_25) node [midway,above]  {$2$};
  \draw [black,] (node_18) --(node_1) node [midway,above]   {$3$};
  \draw [black,] (node_7) -- (node_3) node [midway,left]   {$2$};
  \draw [black,] (node_15)-- (node_2) node [midway,above]   {$4$};
  \draw [black,] (node_8) -- (node_18) node [midway,left]  {$4$};
  \draw [black,] (node_10) -- (node_14) node [midway,above]   {$3,4$};
  \draw [black,] (node_23) --(node_19) node [midway,above]   {$2$};
  \draw [black,] (node_5) --(node_12) node [midway,above]   {$3,4$};
  \draw [black,] (node_24)-- (node_15) node [midway,above]  {$3$};
\end{tikzpicture}
\caption{$\cP=\cP_{(4, 3, 2, 1, 1),6}, \gamma_V=t^2(s_{321} + s_{411})$}\label{fig:graphexend}
\end{figure}

\subsection{Some examples of $\alg_\Phi$ and $\alg_\Psi$}\label{sec:exphi}
Here we provide some examples how the algorithms $\alg_\Phi$ and $\alg_\Psi$ work.
\begin{example}[Figure \ref{fig:stairex0}] \label{stairex0} Suppose that $\cP=\cP_{(5,3,2,1),6}$, $\alpha=(4,3,2)$, and $c=(6,5,1)$. In this case only one step of $\alg_\Phi$ is required to calculate $\Phi(\alpha, c)$, i.e. \csii[(a)]. Here, the set $A$ in the description of \csii{} is equal to $\{(0,0),(0,1),(0,2),(1,2)\}$, and thus we have $(h,q)=(1,2)$. It follows that $\Phi(\alpha, c) = ((6,5,1), (4,3,2))$.
\end{example}
\begin{figure}[!htbp]
\begin{tikzpicture}[rotate=270, xscale=-0.8, yscale=1.2]
	\node (1) at (0,0) {1};
	\node (3) at (2,0) {3};
	\node (5) at (4,0) {5};
	
	\node (2) at (1,1) {2};
	\node (4) at (3,1) {4};
	\node (6) at (5,1) {6};

	\draw[<-] (1) to (3);
	\draw[<-] (3) to (5);

	\draw[<-] (2) to (4);
	\draw[<-] (4) to (6);
	
	\draw[<-] (1) to (4);
	\draw[<-] (2) to (5);
	\draw[<-] (5) to (6);
\end{tikzpicture}\qquad\qquad 
\begin{tikzpicture}[]
	\tikzmath{\x=0.5;\y=1;}
	\node (1) at (\y, 0) {1};
	\node (2) at (0, -0.5) {2};
	\node (3) at (0, -1.5) {3};
	\node (4) at (0, -2.5) {4};
	\node (5) at (\y, -3) {5};
	\node (6) at (\y, -4) {6};
	\draw [<-] (1) to (3);
	\draw [<-, out=240, in=120] (2) to (4);
	\draw [<-] (3) to (5);
	\draw [<-] (1) to (5);
	\draw [<-] (4) to (6);
	\draw [<-] (5) to (6);
	\draw [dashed,<-] (1) to (2);
	\draw [dashed,<-] (2) to (3);
	\draw [dashed,<-] (3) to (4);
	\draw [dashed,<-] (4) to (5);
	\draw[rounded corners] (-.5, 0.5) rectangle (.5, -4.5);
	\node[fill=white] at (0,0.5) {$\alpha$};
	\draw[rounded corners] (\y-.5, 0.5) rectangle (\y+0.5, -4.5);
	\node[fill=white] at (\y,0.5) {$c$};
	
	\draw[ddarr] (\y+0.5+0.2,-2) --  (\y+0.5+1.8,-2) node [midway,above] {\csii[(a)]};

	\node (1) at (4*\y, 0) {1};
	\node (2) at (5*\y, -0.5) {2};
	\node (3) at (5*\y, -1.5) {3};
	\node (4) at (5*\y, -2.5) {4};
	\node (5) at (4*\y, -3) {5};
	\node (6) at (4*\y, -4) {6};
	\draw [<-] (1) to (5);
	\draw [<-] (5) to (6);
	\draw[rounded corners] (-.5+4*\y, 0.5) rectangle (.5+4*\y, -4.5);
	\node[fill=white] at (4*\y,0.5) {$d$};
	\draw[rounded corners] (5*\y-.5, 0.5) rectangle (5*\y+0.5, -4.5);
	\node[fill=white] at (5*\y,0.5) {$\beta$};
\end{tikzpicture} 
\caption{$\alg_\Phi$: $\cP=\cP_{(5,3,2,1),6}$, $\alpha=(4,3,2)$, and $c=(6,5,1)$}\label{fig:stairex0}
\end{figure}

\begin{example}[Figure \ref{fig:stairex1}] \label{stairex1} Suppose that $\cP=\cP_{\stair(6),7}$, $\alpha=(7,5,4,2)$, and $c=(6,3,1)$. Similarly to above, in this case only one step of $\alg_\Phi$ is required to calculate $\Phi(\alpha, c)$, i.e. \csii[(b)]. Here, the set $A$ in the description of \csii{} is equal to $A=\{(0,0),(1,0),(1,1),(1,2), (2,2), (2,3)\}$, and thus we have $(h,q)=(2,3)$. It follows that $\Phi(\alpha, c) = ((7,5,2), (6,4,3,1))$.
\end{example}
\begin{figure}[!htbp]
\begin{tikzpicture}[rotate=270, xscale=-0.8, yscale=1.2]
	\node (1) at (0,0) {1};
	\node (3) at (2,0) {3};
	\node (5) at (4,0) {5};
	\node (7) at (6,0) {7};
	
	\node (2) at (1,1) {2};
	\node (4) at (3,1) {4};
	\node (6) at (5,1) {6};

	\draw[<-] (1) to (3);
	\draw[<-] (3) to (5);
	\draw[<-] (5) to (7);

	\draw[<-] (2) to (4);
	\draw[<-] (4) to (6);
	
	\draw[<-] (1) to (4);
	\draw[<-] (2) to (5);
	\draw[<-] (3) to (6);
	\draw[<-] (4) to (7);
\end{tikzpicture}\qquad\qquad 
\begin{tikzpicture}[]
	\tikzmath{\x=0.5;\y=1;}
	\node (1) at (\y, 0) {1};
	\node (2) at (0, -0.5) {2};
	\node (3) at (\y, -1) {3};
	\node (4) at (0, -1.5) {4};
	\node (5) at (0, -2.5) {5};
	\node (6) at (\y, -3) {6};
	\node (7) at (0, -3.5) {7};
	\draw [<-] (1) to (3);
	\draw [<-] (2) to (4);
	\draw [<-] (3) to (5);
	\draw [<-] (4) to (6);
	\draw [<-] (3) to (6);
	\draw [<-] (5) to (7);
	\draw [dashed,<-] (1) to (2);
	\draw [dashed,<-] (2) to (3);
	\draw [dashed,<-] (3) to (4);
	\draw [dashed,<-] (4) to (5);
	\draw [dashed,<-] (5) to (6);
	\draw [dashed,<-] (6) to (7);
	\draw[rounded corners] (-.5, 0.5) rectangle (.5, -4.5);
	\node[fill=white] at (0,0.5) {$\alpha$};
	\draw[rounded corners] (\y-.5, 0.5) rectangle (\y+0.5, -4.5);
	\node[fill=white] at (\y,0.5) {$c$};
	
	\draw[ddarr] (\y+0.5+0.2,-2) --  (\y+0.5+1.8,-2) node [midway,above] {\csii[(b)]};

	\node (2) at (5*\y, 0) {1};
	\node (1) at (4*\y, -0.5) {2};

	\node (4) at (5*\y, -1) {3};
	\node (5) at (5*\y, -2) {4};
	\node (3) at (4*\y, -2.5) {5};
	\node (7) at (5*\y, -3) {6};
	\node (6) at (4*\y, -3.5) {7};

	\draw [<-] (1) to (3);
	\draw [<-] (3) to (6);
	\draw[rounded corners] (-.5+4*\y, 0.5) rectangle (.5+4*\y, -4.5);
	\node[fill=white] at (4*\y,0.5) {$d$};
	\draw[rounded corners] (5*\y-.5, 0.5) rectangle (5*\y+0.5, -4.5);
	\node[fill=white] at (5*\y,0.5) {$\beta$};
\end{tikzpicture} 
\caption{$\alg_\Phi$: $\cP=\cP_{\stair(6),7}$, $\alpha=(7,5,4,2)$, and $c=(6,3,1)$}\label{fig:stairex1}
\end{figure}

\begin{example}[Figure \ref{fig:stairex2}] \label{stairex2} Suppose that $\cP=\cP_{(2,1,1),5}$, $\alpha=(5,4,2)$ and $c=(3,1)$. We need to process three steps of $\alg_\Phi$ to calculate $\Phi(\alpha,c)$ in this case. 
\begin{enumerate}
\item Since $a_1=2 \pdr 1=d_1$, $a_1$ is in \csii{}. Here $A=\{(0,0), (1,0), (0,1), (0,2)\}$, and thus we have $(h,q)=(1,0)$ that is the maximum of $A$ with respect to the lexicographic order even if the choice of $(0,2)$ produces a bigger ladder. We set $b_1=2$, $p=2$ and continue.
\item Since $a_2=4 \pdr 3=d_2$, $a_2$ is in \csii{}. Here $A=\{(0,0)\}$, and thus $(h,q)=(1,0)$. Note that $\{3, 4, 5\}$ is not a ladder in $\cP$ since $3 \not \pl 5$. We set $b_2=3$, $d_2=4$, $p=3$ and continue.
\item Since $a_3=5\pdr 4=d_2$, $a_3$ is in \csii{}. Here $A=\{(0,0)\}$, and thus $(h,q)=(1,0)$. We set $b_3=4$, $d_3=5$, and terminate the algorithm.
\end{enumerate}
As a result, we have $\Phi((5,4,2), (3,1)) = ((5,1), (4,3,2))$.
\end{example}
\begin{figure}[!htbp]
\begin{tikzpicture}[baseline=(current bounding box.center), scale=1.5]
\node (node_4) at (1.5,2) [draw,draw=none] {$5$};
  \node (node_3) at (0.5,1.5) [draw,draw=none] {$4$};
  \node (node_2) at (-0.5,1) [draw,draw=none] {$3$};
  \node (node_1) at (1,0.5) [draw,draw=none] {$2$};
  \node (node_0) at (0,0) [draw,draw=none] {$1$};
  \draw [black,<-] (node_0) to (node_3);
  \draw [black,<-] (node_0) to (node_4);
  \draw [black,<-] (node_1) to (node_4);
  \draw [black,<-] (node_0) to (node_2);
\end{tikzpicture}\qquad\qquad
\begin{tikzpicture}[baseline=(current bounding box.center)]
	\tikzmath{\x=0.5;\y=1;\z=4;\zz=9;\zzz=13;}	
	\node (d1) at (\y, 0) {1};
	\node (d2) at (\y, -2*\x) {3};

	\node (a1) at (0, -\x) {2};
	\node (a2) at (0, -3*\x) {4};
	\node (a3) at (0, -5*\x) {5};
	\draw [very thick,->] ($(a1)-(0.6,0)$) to (a1);
	\node at ($(a1)-(0.9,0)$)  {$a_p$};
	
	\draw [<-] (d1) to (d2);
	\draw [<-] (d1) to (a2);

	\draw [<-, out=240, in=120] (a1) to (a3);
	\draw [dashed,<-] (d1) to (a1);
	\draw [dashed,<-] (a1) to (d2);
	\draw [dashed,<-] (a1) to (a2);	
	\draw [dashed,<-] (d2) to (a2);
	\draw [dashed,<-] (a2) to (a3);
	\draw [dashed,<-] (d2) to (a3);
	\draw[rounded corners] (-.5, 0.5) rectangle (.5, -3);
	\node[fill=white] at (0,0.5) {$\alpha$};
	\draw[rounded corners] (0.5, 0.5) rectangle (1.5, -3);
	\node[fill=white] at (1,0.5) {$c$};
\end{tikzpicture}

\begin{tikzpicture}[baseline=(current bounding box.center)]
	\tikzmath{\x=0.5;\y=1;\z=4;\zz=9;\zzz=13;}	
	\draw[ddarr] (\y+0.5+0.2,-1) --  (\y+0.5+1.8,-1) node [midway,above] {\csii[(b)]};

	\node (2) at (\z, -0.5) {$\not2$};	
	\node (1) at (\z+1, 0) {1};
	\node (3) at (\z+1, -1) {3};
	\node (4) at (\z+0, -1.5) {4};
	\node (5) at (\z+0, -2.5) {5};
	\node (b1) at (\z+2, -0.5) {2};
	\node (b2) at (\z+2, -1.5) {$\vn$};
	\node (b3) at (\z+2, -2.5) {$\vn$};
	
	\draw [very thick,->] ($(4)-(0.6,0)$) to (4);
	\node at ($(4)-(0.9,0)$)  {$a_p$};
	
	\draw [<-] (1) to (3);
	\draw [<-] (1) to (4);
	\draw [dashed,<-] (3) to (4);
	\draw [dashed,<-] (4) to (5);
	\draw [dashed,<-] (3) to (5);
	\draw[rounded corners] (\z+-.5, 0.5) rectangle (\z+.5, -3);
	\node[fill=white] at (\z+0,0.5) {$\alpha$};
	\draw[rounded corners] (\z+0.5, 0.5) rectangle (\z+1.5, -3);
	\node[fill=white] at (\z+1,0.5) {$\fd$};
	\draw[rounded corners] (\z+1.5, 0.5) rectangle (\z+2.5, -3);
	\node[fill=white] at (\z+2,0.5) {$\fb$};

	\draw[ddarr] (\z+2*\y+0.5+0.2,-1) --  (\z+2*\y+0.5+1.8,-1) node [midway,above] {\csii[(a)]};

	\node (2) at (\zz, -0.5) {$\not2$};	
	\node (4) at (\zz, -1.5) {$\not4$};	
	\node (1) at (\zz+1, 0) {1};
	\node (4) at (\zz+1, -4*\x) {4};
	\node (5) at (\zz+0, -2.5) {5};
	\node (b1) at (\zz+2, -\x) {2};
	\node (b2) at (\zz+2, -3*\x) {3};
	\node (b3) at (\zz+2, -2.5) {$\vn$};

	\draw [very thick,->] ($(5)-(0.6,0)$) to (5);
	\node at ($(5)-(0.9,0)$)  {$a_p$};

	\draw [<-] (1) to (4);
	\draw [dashed,<-] (4) to (5);
	\draw[rounded corners] (\zz+-.5, 0.5) rectangle (\zz+.5, -3);
	\node[fill=white] at (\zz+0,0.5) {$\alpha$};
	\draw[rounded corners] (\zz+0.5, 0.5) rectangle (\zz+1.5, -3);
	\node[fill=white] at (\zz+1,0.5) {$\fd$};
	\draw[rounded corners] (\zz+1.5, 0.5) rectangle (\zz+2.5, -3);
	\node[fill=white] at (\zz+2,0.5) {$\fb$};

	\draw[ddarr] (\zz+2*\y+0.5+0.2,-1) --  (\zz+2*\y+0.5+1.8,-1) node [midway,above] {\csii[(a)]};
	\node (1) at (\zzz+1, 0) {1};
	\node (5) at (\zzz+1, -2.5) {5};
	\node (b1) at (\zzz+2, -0.45) {2};
	\node (b2) at (\zzz+2, -1.25) {3};
	\node (b3) at (\zzz+2, -2.05) {4};
	\draw [<-] (1) to (5);
	\draw[rounded corners] (\zzz+0.5, 0.5) rectangle (\zzz+1.5, -3);
	\node[fill=white] at (\zzz+1,0.5) {$d$};
	\draw[rounded corners] (\zzz+1.5, 0.5) rectangle (\zzz+2.5, -3);
	\node[fill=white] at (\zzz+2,0.5) {$\beta$};
\end{tikzpicture}
\caption{$\alg_\Phi$: $\cP=\cP_{(2,1,1),5}$, $\alpha=(5,4,2)$, and $c=(3,1)$} \label{fig:stairex2}
\end{figure}

\begin{example}[Figure \ref{fig:stairex3}] \label{stairex3} Suppose that $\cP=\cP_{(7, 6, 5, 4, 3, 2, 1),9}$, $\alpha=(9, 8, 7, 5, 6, 3, 2, 4, 1)$ and $c=\emptyset$. We need to process five steps of $\alg_\Phi$ to calculate $\Phi(\alpha,c)$ in this case. 
\begin{enumerate}
\item Since the chain is empty, $a_1$ is in \csi[(a)]. We set $d_1=1$, $p=2$ and continue.
\item Since $a_2=4>d_1=1$ and $4 \pr 1$, $a_2$ is in \csi[(a)]. We set $d_2=4$, $p=3$ and continue.
\item Since $d_1=1<a_3=2<d_2=4$ and $2 \pdr 1$, $a_3$ is in \csii{}. The set $A$ is equal to $\{(0,0), (0,1), (1,1)\}$ thus $(h,q)=(1,1)$, in which case it is in \csii[(a)]. We set $b_3=2$, $b_4=3$, $p=5$ and continue.
\item Since $a_4=6>d_2=4$ and $6 \pr 4$, $a_4$ is in \csi[(a)]. We set $d_3=6$, $p=6$ and continue.
\item Since $d_2=4<a_6=5<d_3=6$ and $5 \pdr 4$, $a_6$ is in \csii{}. The set $A$ is equal to $\{(0,0), (1,0), (1,1), (1,2), (1,3)\}$ thus $(h,q)=(1,3)$ in which case it is in \csii[(b)]. We set $d_2=5$, $d_3=9$, $b_6=4$, $b_7=6$, $b_8=7$, $b_9=8$ and terminate the algorithm.
\end{enumerate}
As a result, we have $\Phi((9,8,7,5,6,3,2,4,1), \emptyset) = ((9,5,1), (8,7,6,4,\vn,3,2,\vn,\vn))$.
\end{example}
\begin{figure}[!htbp]
\begin{tikzpicture}[baseline=(current bounding box.center), xscale=1,yscale=.6, rotate=90]
	\node (1) at (0,0) {1};
	\node (3) at (2,0) {3};
	\node (5) at (4,0) {5};
	\node (7) at (6,0) {7};
	\node (9) at (8,0) {9};
	
	\node (2) at (1,1) {2};
	\node (4) at (3,1) {4};
	\node (6) at (5,1) {6};
	\node (8) at (7,1) {8};

	\draw[<-] (1) to (3);
	\draw[<-] (3) to (5);
	\draw[<-] (5) to (7);
	\draw[<-] (7) to (9);

	\draw[<-] (2) to (4);
	\draw[<-] (4) to (6);
	\draw[<-] (6) to (8);
	
	\draw[<-] (1) to (4);
	\draw[<-] (2) to (5);
	\draw[<-] (3) to (6);
	\draw[<-] (4) to (7);
	\draw[<-] (5) to (8);
	\draw[<-] (6) to (9);
\end{tikzpicture}\qquad
\begin{tikzpicture}[baseline=(current bounding box.center)]
	\tikzmath{\x=0.5;\z=4;\zz=9;}	
	
	\node (a1) at (0, -1*\x) {1};
	\node (a2) at (0, -2*\x) {4};
	\node (a3) at (0, -3*\x) {2};
	\node (a4) at (0, -4*\x) {3};
	\node (a5) at (0, -5*\x) {6};
	\node (a6) at (0, -6*\x) {5};
	\node (a7) at (0, -7*\x) {7};
	\node (a8) at (0, -8*\x) {8};
	\node (a9) at (0, -9*\x) {9};	
	
	\draw [very thick,->] ($(a1)-(0.6,0)$) to (a1);
	\node at ($(a1)-(0.9,0)$)  {$a_p$};

	\draw[rounded corners] (-.5, 0) rectangle (.5, -9*\x-0.5);
	\node[fill=white] at (0,0) {$\alpha$};
	\draw[rounded corners] (0.5, 0) rectangle (1.5, -.5);
	\node[fill=white] at (1,0) {$c$};
	
	\draw[ddarr] (1.5+0.2,-3) --  (1.5+1.8,-3) node [midway,above] {\csi[(a)]};
	
	\node (a1) at (\z, -1*\x) {$\cancel{1}$};
	\node (a2) at (\z, -2*\x) {4};
	\node (a3) at (\z, -3*\x) {2};
	\node (a4) at (\z, -4*\x) {3};
	\node (a5) at (\z, -5*\x) {6};
	\node (a6) at (\z, -6*\x) {5};
	\node (a7) at (\z, -7*\x) {7};
	\node (a8) at (\z, -8*\x) {8};
	\node (a9) at (\z, -9*\x) {9};	
	
	\node (c1) at (\z+1, -1*\x) {$1$};
	\draw[->] (a2)--(c1);
	
	\node (b1) at (\z+2, -1*\x) {$\vn$};
	\node (b2) at (\z+2, -2*\x) {$\vn$};
	\node (b3) at (\z+2, -3*\x) {$\vn$};
	\node (b4) at (\z+2, -4*\x) {$\vn$};
	\node (b5) at (\z+2, -5*\x) {$\vn$};
	\node (b6) at (\z+2, -6*\x) {$\vn$};
	\node (b7) at (\z+2, -7*\x) {$\vn$};
	\node (b8) at (\z+2, -8*\x) {$\vn$};
	\node (b9) at (\z+2, -9*\x) {$\vn$};	
	
	\draw [very thick,->] ($(a2)-(0.6,0)$) to (a2);
	\node at ($(a2)-(0.9,0)$)  {$a_p$};

	\draw[rounded corners] (\z-.5, 0) rectangle (\z+.5, -9*\x-0.5);
	\node[fill=white] at (\z,0) {$\alpha$};
	\draw[rounded corners] (\z+0.5, 0) rectangle (\z+1.5, -\x-.5);
	\node[fill=white] at (\z+1,0) {$\fd$};
	\draw[rounded corners] (\z+1.5, 0) rectangle (\z+2.5, -9*\x-0.5);
	\node[fill=white] at (\z+2,0) {$\fb$};
	
	\draw[ddarr] (6.5+0.2,-3) --  (6.5+1.8,-3) node [midway,above] {\csi[(a)]};
	
	\node (a1) at (\zz, -1*\x) {$\cancel{1}$};
	\node (a2) at (\zz, -2*\x) {$\cancel{4}$};
	\node (a3) at (\zz, -3*\x) {2};
	\node (a4) at (\zz, -4*\x) {3};
	\node (a5) at (\zz, -5*\x) {6};
	\node (a6) at (\zz, -6*\x) {5};
	\node (a7) at (\zz, -7*\x) {7};
	\node (a8) at (\zz, -8*\x) {8};
	\node (a9) at (\zz, -9*\x) {9};	
	
	\node (c1) at (\zz+1, -1*\x) {$1$};
	\node (c2) at (\zz+1, -2*\x) {$4$};
	
	\node (b1) at (\zz+2, -1*\x) {$\vn$};
	\node (b2) at (\zz+2, -2*\x) {$\vn$};
	\node (b3) at (\zz+2, -3*\x) {$\vn$};
	\node (b4) at (\zz+2, -4*\x) {$\vn$};
	\node (b5) at (\zz+2, -5*\x) {$\vn$};
	\node (b6) at (\zz+2, -6*\x) {$\vn$};
	\node (b7) at (\zz+2, -7*\x) {$\vn$};
	\node (b8) at (\zz+2, -8*\x) {$\vn$};
	\node (b9) at (\zz+2, -9*\x) {$\vn$};	
	
	\draw [very thick,->] ($(a3)-(0.6,0)$) to (a3);
	\node at ($(a3)-(0.9,0)$)  {$a_p$};

	\draw[rounded corners] (\zz-.5, 0) rectangle (\zz+.5, -9*\x-0.5);
	\node[fill=white] at (\zz,0) {$\alpha$};
	\draw[rounded corners] (\zz+0.5, 0) rectangle (\zz+1.5, -2*\x-.5);
	\node[fill=white] at (\zz+1,0) {$\fd$};
	\draw[rounded corners] (\zz+1.5, 0) rectangle (\zz+2.5, -9*\x-0.5);
	\node[fill=white] at (\zz+2,0) {$\fb$};
	\draw[rounded corners,dashed] (\zz-.2, -3*\x+0.2) rectangle (\zz+.2, -4*\x-0.2);
	\draw[rounded corners,dashed] (\zz+1-.2, -1*\x+0.2) rectangle (\zz+1+.2, -2*\x-0.2);
	\draw[dashed] (\zz+.2, -3.5*\x) -- (\zz+1-.2, -1.5*\x);
\end{tikzpicture}

\begin{tikzpicture}[baseline=(current bounding box.center)]
	\draw[ddarr] (-2.5+0.2,-2) --  (-2.5+1.8,-2) node [midway,above] {\csii[(a)]};
	\tikzmath{\x=0.5;\z=5;\zz=9;}	
	
	\node (a1) at (0, -1*\x) {$\cancel{1}$};
	\node (a2) at (0, -2*\x) {$\cancel{4}$};
	\node (a3) at (0, -3*\x) {$\cancel{2}$};
	\node (a4) at (0, -4*\x) {$\cancel{3}$};
	\node (a5) at (0, -5*\x) {6};
	\node (a6) at (0, -6*\x) {5};
	\node (a7) at (0, -7*\x) {7};
	\node (a8) at (0, -8*\x) {8};
	\node (a9) at (0, -9*\x) {9};	
	
	\node (c1) at (1, -1*\x) {$1$};
	\node (c2) at (1, -2*\x) {$4$};
	
	\node (b1) at (2, -1*\x) {$\vn$};
	\node (b2) at (2, -2*\x) {$\vn$};
	\node (b3) at (2, -3*\x) {$2$};
	\node (b4) at (2, -4*\x) {$3$};
	\node (b5) at (2, -5*\x) {$\vn$};
	\node (b6) at (2, -6*\x) {$\vn$};
	\node (b7) at (2, -7*\x) {$\vn$};
	\node (b8) at (2, -8*\x) {$\vn$};
	\node (b9) at (2, -9*\x) {$\vn$};	
	
	\draw [very thick,->] ($(a5)-(0.6,0)$) to (a5);
	\node at ($(a5)-(0.9,0)$)  {$a_p$};
	\draw[->] (a5)--(c2);

	\draw[rounded corners] (-.5, 0) rectangle (.5, -9*\x-0.5);
	\node[fill=white] at (0,0) {$\alpha$};
	\draw[rounded corners] (0.5, 0) rectangle (1.5, -2*\x-.5);
	\node[fill=white] at (1,0) {$\fd$};
	\draw[rounded corners] (1.5, 0) rectangle (2.5, -9*\x-0.5);
	\node[fill=white] at (2,0) {$\fb$};
	
	\draw[ddarr] (2.5+0.2,-2) --  (2.5+1.8,-2) node [midway,above] {\csi[(a)]};
	
	\node (a1) at (\z, -1*\x) {$\cancel{1}$};
	\node (a2) at (\z, -2*\x) {$\cancel{4}$};
	\node (a3) at (\z, -3*\x) {$\cancel{2}$};
	\node (a4) at (\z, -4*\x) {$\cancel{3}$};
	\node (a5) at (\z, -5*\x) {$\cancel{6}$};
	\node (a6) at (\z, -6*\x) {5};
	\node (a7) at (\z, -7*\x) {7};
	\node (a8) at (\z, -8*\x) {8};
	\node (a9) at (\z, -9*\x) {9};	
	
	\node (c1) at (\z+1, -1*\x) {$1$};
	\node (c2) at (\z+1, -2*\x) {$4$};
	\node (c3) at (\z+1, -3*\x) {$6$};
	
	\node (b1) at (\z+2, -1*\x) {$\vn$};
	\node (b2) at (\z+2, -2*\x) {$\vn$};
	\node (b3) at (\z+2, -3*\x) {$2$};
	\node (b4) at (\z+2, -4*\x) {$3$};
	\node (b5) at (\z+2, -5*\x) {$\vn$};
	\node (b6) at (\z+2, -6*\x) {$\vn$};
	\node (b7) at (\z+2, -7*\x) {$\vn$};
	\node (b8) at (\z+2, -8*\x) {$\vn$};
	\node (b9) at (\z+2, -9*\x) {$\vn$};	
	
	\draw [very thick,->] ($(a6)-(0.6,0)$) to (a6);
	\node at ($(a6)-(0.9,0)$)  {$a_p$};

	\draw[rounded corners] (\z-.5, 0) rectangle (\z+.5, -9*\x-0.5);
	\node[fill=white] at (\z+0,0) {$\alpha$};
	\draw[rounded corners] (\z+0.5, 0) rectangle (\z+1.5, -3*\x-.5);
	\node[fill=white] at (\z+1,0) {$\fd$};
	\draw[rounded corners] (\z+1.5, 0) rectangle (\z+2.5, -9*\x-0.5);
	\node[fill=white] at (\z+2,0) {$\fb$};
	
	\draw[rounded corners,dashed] (\z-.2, -6*\x+0.2) rectangle (\z+.2, -9*\x-0.2);
	\draw[rounded corners,dashed] (\z+1-.2, -2*\x+0.2) rectangle (\z+1+.2, -3*\x-0.2);
	\draw[dashed] (\z+.2, -7.5*\x) -- (\z+1-.2, -2.5*\x);
	
	\draw[ddarr] (7.5+0.2,-2) --  (7.5+1.8,-2) node [midway,above] {\csii[(b)]};

	\node (c1) at (\zz+1, -1*\x) {$1$};
	\node (c2) at (\zz+1, -2*\x) {$5$};
	\node (c3) at (\zz+1, -3*\x) {$9$};
	
	\node (b1) at (\zz+2, -1*\x) {$\vn$};
	\node (b2) at (\zz+2, -2*\x) {$\vn$};
	\node (b3) at (\zz+2, -3*\x) {$2$};
	\node (b4) at (\zz+2, -4*\x) {$3$};
	\node (b5) at (\zz+2, -5*\x) {$\vn$};
	\node (b6) at (\zz+2, -6*\x) {$4$};
	\node (b7) at (\zz+2, -7*\x) {$6$};
	\node (b8) at (\zz+2, -8*\x) {$7$};
	\node (b9) at (\zz+2, -9*\x) {$8$};

	\draw[rounded corners] (\zz+0.5, 0) rectangle (\zz+1.5, -3*\x-.5);
	\node[fill=white] at (\zz+1,0) {$d$};
	\draw[rounded corners] (\zz+1.5, 0) rectangle (\zz+2.5, -9*\x-0.5);
	\node[fill=white] at (\zz+2,0) {$\beta$};

\end{tikzpicture}
\caption{$\alg_\Phi$: $\cP=\cP_{(7,6,5,4,3,2,1),9}$, $\alpha=(9,8,7,5,6,3,2,4,1)$, and $c=\emptyset$} \label{fig:stairex3}
\end{figure}

\begin{example}[Figure \ref{fig:stairex3inv}] \label{stairex3inv} Suppose that $\cP=\cP_{(7, 6, 5, 4, 3, 2, 1),9}$, $\alpha=(\vn,\vn,8,7,\vn,6,4,3,2)$, $c=(9, 5, 1)$, and $X=\{5, 8, 9\}$. We need to process five steps of $\alg_\Psi$ to calculate $\Psi_X(\alpha,c)$ in this case. 
\begin{enumerate}
\item Since $d_1=1<a_1=2<d_2=5$ and $2 \pdr 1$, $a_1$ is in \csii{}. The set $A$ is equal to $\{(0,0), (0,1), (0,2), (1,2), (1,3)\}$ thus $(h,q)=(1,3)$ in which case it is in \csii[(b)]. We set $d_1=4$, $d_2=6$, $b_1=1$, $b_2=2$, $b_3=3$, $b_4=5$, $p=5$ and continue.
\item Since $a_5=\vn$, $5\in X$,  and $\fd \neq \emptyset$, $a_4$ is in \csiii[(b)]. We set $\fd=(9,6)$, $b_5=4$, $p=6$ and continue.
\item Since $d_1=6<a_6=7<d_2=9$ and $7 \pdr 6$, $a_6$ is in \csii{}. The set $A$ is equal to $\{(0,0), (0,1), (1,1)\}$ thus $(h,q)=(1,1)$, in which case it is in \csii[(a)]. We set $b_6=7$, $b_7=8$, $p=8$ and continue.
\item Since $a_8=\vn$, $8\in X$,  and $\fd \neq \emptyset$, $a_8$ is in \csiii[(b)]. We set $\fd=(9)$, $b_8=6$ and continue.
\item Since $a_9=\vn$, $9\in X$,  and $\fd \neq \emptyset$, $a_9$ is in \csiii[(b)]. We set $\fd=\emptyset$, $b_9=9$ and continue.
\end{enumerate}
As a result, we have $\Psi_{\{5, 8, 9\}}((\vn,\vn,8,7,\vn,6,4,3,2), (9, 5, 1)) = (\emptyset, (9,6,8,7,4,5,3,2,1))$.
\end{example}
\begin{figure}[!htbp]
\begin{tikzpicture}[baseline=(current bounding box.center), xscale=1,yscale=.6, rotate=90]
	\node (1) at (0,0) {1};
	\node (3) at (2,0) {3};
	\node (5) at (4,0) {5};
	\node (7) at (6,0) {7};
	\node (9) at (8,0) {9};
	
	\node (2) at (1,1) {2};
	\node (4) at (3,1) {4};
	\node (6) at (5,1) {6};
	\node (8) at (7,1) {8};

	\draw[<-] (1) to (3);
	\draw[<-] (3) to (5);
	\draw[<-] (5) to (7);
	\draw[<-] (7) to (9);

	\draw[<-] (2) to (4);
	\draw[<-] (4) to (6);
	\draw[<-] (6) to (8);
	
	\draw[<-] (1) to (4);
	\draw[<-] (2) to (5);
	\draw[<-] (3) to (6);
	\draw[<-] (4) to (7);
	\draw[<-] (5) to (8);
	\draw[<-] (6) to (9);
\end{tikzpicture}\qquad
\begin{tikzpicture}[baseline=(current bounding box.center)]
	\tikzmath{\x=0.5;\z=4;\zz=9;}	
	
	\node (a1) at (0, -1*\x) {2};
	\node (a2) at (0, -2*\x) {3};
	\node (a3) at (0, -3*\x) {4};
	\node (a4) at (0, -4*\x) {6};
	\node (a5) at (0, -5*\x) {$\vn$};
	\node (a6) at (0, -6*\x) {7};
	\node (a7) at (0, -7*\x) {8};
	\node (a8) at (0, -8*\x) {$\vn$};
	\node (a9) at (0, -9*\x) {$\vn$};	
	
	\node (c1) at (1, -1*\x) {$1$};
	\node (c1) at (1, -2*\x) {$5$};
	\node (c1) at (1, -3*\x) {$9$};
	
	\draw (0.3,-5*\x) -- (.7,-5*\x) -- (.7,-9*\x) -- (0.3,-9*\x);
	\draw (0.3,-8*\x) -- (.7,-8*\x);
	\node[fill=white] at (.7,-7*\x) {$X$};
	
	\draw [very thick,->] ($(a1)-(0.6,0)$) to (a1);
	\node at ($(a1)-(0.9,0)$)  {$a_p$};

	\draw[rounded corners] (-.5, 0) rectangle (.5, -9*\x-0.5);
	\node[fill=white] at (0,0) {$\alpha$};
	\draw[rounded corners] (0.5, 0) rectangle (1.5, -3*\x-.5);
	\node[fill=white] at (1,0) {$c$};
	\draw[rounded corners,dashed] (-.2, -1*\x+0.2) rectangle (+.2, -4*\x-0.2);
	\draw[rounded corners,dashed] (+1-.2, -1*\x+0.2) rectangle (+1+.2, -2*\x-0.2);
	\draw[dashed] (+.2, -2.5*\x) -- (+1-.2, -1.5*\x);

	\draw[ddarr] (1.5+0.2,-2) --  (1.5+1.8,-2) node [midway,above] {\csii[(b)]};
	
	\node (a1) at (\z, -1*\x) {$\cancel{2}$};
	\node (a2) at (\z, -2*\x) {$\cancel{3}$};
	\node (a3) at (\z, -3*\x) {$\cancel{4}$};
	\node (a4) at (\z, -4*\x) {$\cancel{6}$};
	\node (a5) at (\z, -5*\x) {$\vn$};
	\node (a6) at (\z, -6*\x) {7};
	\node (a7) at (\z, -7*\x) {8};
	\node (a8) at (\z, -8*\x) {$\vn$};
	\node (a9) at (\z, -9*\x) {$\vn$};	
	
	\node (c1) at (\z+1, -1*\x) {$4$};
	\node (c2) at (\z+1, -2*\x) {$6$};
	\node (c3) at (\z+1, -3*\x) {$9$};
	
	\node (b1) at (\z+2, -1*\x) {$1$};
	\node (b2) at (\z+2, -2*\x) {$2$};
	\node (b3) at (\z+2, -3*\x) {$3$};
	\node (b4) at (\z+2, -4*\x) {$5$};
	\node (b5) at (\z+2, -5*\x) {$\vn$};
	\node (b6) at (\z+2, -6*\x) {$\vn$};
	\node (b7) at (\z+2, -7*\x) {$\vn$};
	\node (b8) at (\z+2, -8*\x) {$\vn$};
	\node (b9) at (\z+2, -9*\x) {$\vn$};	
	
	\draw (\z+0.3,-5*\x) -- (\z+.7,-5*\x) -- (\z+.7,-9*\x) -- (\z+0.3,-9*\x);
	\draw (\z+0.3,-8*\x) -- (\z+.7,-8*\x);
	\node[fill=white] at (\z+.7,-7*\x) {$X$};
	
	\draw [very thick,->] ($(a5)-(0.6,0)$) to (a5);
	\node at ($(a5)-(0.9,0)$)  {$a_p$};

	\draw[rounded corners] (\z-.5, 0) rectangle (\z+.5, -9*\x-0.5);
	\node[fill=white] at (\z,0) {$\alpha$};
	\draw[rounded corners] (\z+0.5, 0) rectangle (\z+1.5, -3*\x-.5);
	\node[fill=white] at (\z+1,0) {$\fd$};
	\draw[rounded corners] (\z+1.5, 0) rectangle (\z+2.5, -9*\x-0.5);
	\node[fill=white] at (\z+2,0) {$\fb$};
	
	\draw[ddarr] (6.5+0.2,-2) --  (6.5+1.8,-2) node [midway,above] {\csiii[(b)]};
	
	\node (a1) at (\zz, -1*\x) {$\cancel{2}$};
	\node (a2) at (\zz, -2*\x) {$\cancel{3}$};
	\node (a3) at (\zz, -3*\x) {$\cancel{4}$};
	\node (a4) at (\zz, -4*\x) {$\cancel{6}$};
	\node (a5) at (\zz, -5*\x) {$\cancel{\vn}$};
	\node (a6) at (\zz, -6*\x) {7};
	\node (a7) at (\zz, -7*\x) {8};
	\node (a8) at (\zz, -8*\x) {$\vn$};
	\node (a9) at (\zz, -9*\x) {$\vn$};	
	
	\node (c1) at (\zz+1, -1*\x) {$6$};
	\node (c2) at (\zz+1, -2*\x) {$9$};
	
	\node (b1) at (\zz+2, -1*\x) {$1$};
	\node (b2) at (\zz+2, -2*\x) {$2$};
	\node (b3) at (\zz+2, -3*\x) {$3$};
	\node (b4) at (\zz+2, -4*\x) {$5$};
	\node (b5) at (\zz+2, -5*\x) {$4$};
	\node (b6) at (\zz+2, -6*\x) {$\vn$};
	\node (b7) at (\zz+2, -7*\x) {$\vn$};
	\node (b8) at (\zz+2, -8*\x) {$\vn$};
	\node (b9) at (\zz+2, -9*\x) {$\vn$};	

	\draw (\zz+0.3,-5*\x) -- (\zz+.7,-5*\x) -- (\zz+.7,-9*\x) -- (\zz+0.3,-9*\x);
	\draw (\zz+0.3,-8*\x) -- (\zz+.7,-8*\x);
	\node[fill=white] at (\zz+.7,-7*\x) {$X$};
	
	\draw [very thick,->] ($(a6)-(0.6,0)$) to (a6);
	\node at ($(a6)-(0.9,0)$)  {$a_p$};

	\draw[rounded corners] (\zz-.5, 0) rectangle (\zz+.5, -9*\x-0.5);
	\node[fill=white] at (\zz,0) {$\alpha$};
	\draw[rounded corners] (\zz+0.5, 0) rectangle (\zz+1.5, -2*\x-.5);
	\node[fill=white] at (\zz+1,0) {$\fd$};
	\draw[rounded corners] (\zz+1.5, 0) rectangle (\zz+2.5, -9*\x-0.5);
	\node[fill=white] at (\zz+2,0) {$\fb$};
	\draw[rounded corners,dashed] (\zz-.2, -6*\x+0.2) rectangle (\zz+.2, -7*\x-0.2);
	\draw[rounded corners,dashed] (\zz+1-.2, -1*\x+0.2) rectangle (\zz+1+.2, -2*\x-0.2);
	\draw[dashed] (\zz+.2, -6.5*\x) -- (\zz+1-.2, -1.5*\x);
\end{tikzpicture}

\begin{tikzpicture}[baseline=(current bounding box.center)]
	\draw[ddarr] (-2.5+0.2,-2) --  (-2.5+1.8,-2) node [midway,above] {\csii[(a)]};
	\tikzmath{\x=0.5;\z=5;\zz=9;}	
	
	\node (a1) at (0, -1*\x) {$\cancel{2}$};
	\node (a2) at (0, -2*\x) {$\cancel{3}$};
	\node (a3) at (0, -3*\x) {$\cancel{4}$};
	\node (a4) at (0, -4*\x) {$\cancel{6}$};
	\node (a5) at (0, -5*\x) {$\cancel{\vn}$};
	\node (a6) at (0, -6*\x) {$\cancel{7}$};
	\node (a7) at (0, -7*\x) {$\cancel{8}$};
	\node (a8) at (0, -8*\x) {$\vn$};
	\node (a9) at (0, -9*\x) {$\vn$};	
	
	\node (c1) at (1, -1*\x) {$6$};
	\node (c2) at (1, -2*\x) {$9$};
	
	\node (b1) at (2, -1*\x) {$1$};
	\node (b2) at (2, -2*\x) {$2$};
	\node (b3) at (2, -3*\x) {$3$};
	\node (b4) at (2, -4*\x) {$5$};
	\node (b5) at (2, -5*\x) {$4$};
	\node (b6) at (2, -6*\x) {$7$};
	\node (b7) at (2, -7*\x) {$8$};
	\node (b8) at (2, -8*\x) {$\vn$};
	\node (b9) at (2, -9*\x) {$\vn$};	
	
	\draw [very thick,->] ($(a8)-(0.6,0)$) to (a8);
	\node at ($(a8)-(0.9,0)$)  {$a_p$};
	
	\draw (0.3,-5*\x) -- (.7,-5*\x) -- (.7,-9*\x) -- (0.3,-9*\x);
	\draw (0.3,-8*\x) -- (.7,-8*\x);
	\node[fill=white] at (.7,-7*\x) {$X$};

	\draw[rounded corners] (-.5, 0) rectangle (.5, -9*\x-0.5);
	\node[fill=white] at (0,0) {$\alpha$};
	\draw[rounded corners] (0.5, 0) rectangle (1.5, -2*\x-.5);
	\node[fill=white] at (1,0) {$\fd$};
	\draw[rounded corners] (1.5, 0) rectangle (2.5, -9*\x-0.5);
	\node[fill=white] at (2,0) {$\fb$};
	
	\draw[ddarr] (2.5+0.2,-2) --  (2.5+1.8,-2) node [midway,above] {\csiii[(b)]};
	
	\node (a1) at (\z, -1*\x) {$\cancel{2}$};
	\node (a2) at (\z, -2*\x) {$\cancel{3}$};
	\node (a3) at (\z, -3*\x) {$\cancel{4}$};
	\node (a4) at (\z, -4*\x) {$\cancel{6}$};
	\node (a5) at (\z, -5*\x) {$\cancel{\vn}$};
	\node (a6) at (\z, -6*\x) {$\cancel{7}$};
	\node (a7) at (\z, -7*\x) {$\cancel{8}$};
	\node (a8) at (\z, -8*\x) {$\cancel{\vn}$};
	\node (a9) at (\z, -9*\x) {$\vn$};	
	
	\node (c1) at (\z+1, -1*\x) {$9$};
	
	\node (b1) at (\z+2, -1*\x) {$1$};
	\node (b2) at (\z+2, -2*\x) {$2$};
	\node (b3) at (\z+2, -3*\x) {$3$};
	\node (b4) at (\z+2, -4*\x) {$5$};
	\node (b5) at (\z+2, -5*\x) {$4$};
	\node (b6) at (\z+2, -6*\x) {$7$};
	\node (b7) at (\z+2, -7*\x) {$8$};
	\node (b8) at (\z+2, -8*\x) {$6$};
	\node (b9) at (\z+2, -9*\x) {$\vn$};
	
	\draw [very thick,->] ($(a9)-(0.6,0)$) to (a9);
	\node at ($(a9)-(0.9,0)$)  {$a_p$};
	\draw (\z+0.3,-5*\x) -- (\z+.7,-5*\x) -- (\z+.7,-9*\x) -- (\z+0.3,-9*\x);
	\draw (\z+0.3,-8*\x) -- (\z+.7,-8*\x);
	\node[fill=white] at (\z+.7,-7*\x) {$X$};

	\draw[rounded corners] (\z-.5, 0) rectangle (\z+.5, -9*\x-0.5);
	\node[fill=white] at (\z+0,0) {$\alpha$};
	\draw[rounded corners] (\z+0.5, 0) rectangle (\z+1.5, -1*\x-.5);
	\node[fill=white] at (\z+1,0) {$\fd$};
	\draw[rounded corners] (\z+1.5, 0) rectangle (\z+2.5, -9*\x-0.5);
	\node[fill=white] at (\z+2,0) {$\fb$};

	\draw[ddarr] (7.5+0.2,-2) --  (7.5+1.8,-2) node [midway,above] {\csiii[(b)]};

	\node (b1) at (\zz+2, -1*\x) {$1$};
	\node (b2) at (\zz+2, -2*\x) {$2$};
	\node (b3) at (\zz+2, -3*\x) {$3$};
	\node (b4) at (\zz+2, -4*\x) {$5$};
	\node (b5) at (\zz+2, -5*\x) {$4$};
	\node (b6) at (\zz+2, -6*\x) {$7$};
	\node (b7) at (\zz+2, -7*\x) {$8$};
	\node (b8) at (\zz+2, -8*\x) {$6$};
	\node (b9) at (\zz+2, -9*\x) {$9$};

	\draw[rounded corners] (\zz+0.5, 0) rectangle (\zz+1.5, -0*\x-.5);
	\node[fill=white] at (\zz+1,0) {$d$};
	\draw[rounded corners] (\zz+1.5, 0) rectangle (\zz+2.5, -9*\x-0.5);
	\node[fill=white] at (\zz+2,0) {$\beta$};

\end{tikzpicture}
\captionsetup{justification=centering}
\caption{$\alg_\Psi$: $\cP=\cP_{(7,6,5,4,3,2,1),9}$, $\alpha=(\vn,\vn,8,7,\vn,6,4,3,2)$, \\$c=(9,5,1)$, and $X=\{5, 8, 9\}$} \label{fig:stairex3inv}
\end{figure}

\begin{rmk} Indeed, Example \ref{stairex3} and Example \ref{stairex3inv} are ``mirror images'' to each other. This is not a coincidence but explained in detail in Section \ref{sec:proofinj}.
\end{rmk}

\subsection{Some examples of $\alg_{\prs}$}
Here we provide some examples of the algorithm $\alg_{\prs}$.

\begin{example} Figure \ref{fig:21-5} shows the steps of $\alg_{\prs}$ when $\cP=\cP_{(2,1),4}$ and $w$ is an element of $\{3241, 3421, 4231, 4312, 4132\}$, the set of vertices of a connected $\cP$-Knuth equivalence graph that is not a dual equivalence graph (cf. Figure \ref{fig:214}). Here we have $\Phi(3241) = (\ytableaushort{132,4},\ytableaushort{134,2})$, $\Phi(3421) = (\ytableaushort{21,43},\ytableaushort{12,34})$, $\Phi(4231) = (\ytableaushort{21,43},\ytableaushort{13,24})$, $\Phi(4312) = (\ytableaushort{132,4},\ytableaushort{124,3})$, and $\Phi(4132) = (\ytableaushort{132,4},\ytableaushort{123,4})$. It is easy to observe in this case that (cf. Theorem \ref{thm:main})
\begin{enumerate}
\item[(A)] the outputs are pairs $(PT, QT)$ where $PT \in \ptab_\lambda$, $QT\in \SYT_\lambda$ for some $\lambda \vdash 4$,
\item[(B)] if $\prs(w) = (PT, QT)$ then $\des_\cP(w) = \{4-x\mid x \in \des(QT)\}$,
\item[(C)] $\rw(\ytableaushort{21,43}) = 4231$ and $\rw(\ytableaushort{132,4}) = 4132$ are the vertices of the given connected $\cP$-Knuth equivalence graph,
\item[(D)] $\Phi(4231)=(\ytableaushort{21,43}, \omega(\ytableaushort{13,24}))=(\ytableaushort{21,43}, \ytableaushort{13,24})$ and $\Phi(4132) = (\ytableaushort{132,4},\omega(\ytableaushort{134,2}))=(\ytableaushort{132,4},\ytableaushort{123,4})$ where $\omega$ is Sch\"utzenberger's evacuation, and
\item[(E), (F)] $\Phi$ gives a bijection between the given set of vertices and $\{\ytableaushort{21,43}\} \times \SYT_{(2,2)} \sqcup \{\ytableaushort{132,4}\}\times \SYT_{(3,1)}$.
\end{enumerate}
Furthermore, its generating function is $t^2(s_{31}+s_{22})$ as expected by Theorem \ref{thm:mainstrong}.

\end{example}

\begin{figure}[!htbp]
\begin{tikzpicture}[baseline=(current bounding box.center), rotate=90, xscale=0.8]
	\node (1) at (0,0) {1};
	\node (3) at (2,0) {3};
	\node (2) at (1,1) {2};
	\node (4) at (3,1) {4};
	\draw[<-] (1) to (3);
	\draw[<-] (2) to (4);
	\draw[<-] (1) to (4);
\end{tikzpicture}\qquad\qquad
\begin{tikzpicture}[baseline=(current bounding box.center)]
	\tikzmath{\nnn=3;\x=0.5;\y=0.4;\z=1.8;\zz=4.4;\zzz=7;\zzzz=11;}
	\node at (0, -0*\x) {$1$};
	\node at (0, -1*\x) {$4$};
	\node at (0, -2*\x) {$2$};
	\node at (0, -3*\x) {$3$};

	\draw[rounded corners] (-\y, 0.5) rectangle (\y, -\nnn*\x-.5);
	\node[fill=white] at (0,0.5) {$w$};
	\draw[ddarr] (\y+.2, -1*\x) -- (\y+.8, -1*\x);
	
	\node at (\z, 0) {$1$};
	\node at (\z, -\x) {$4$};
	\node[circ] at (\z+2*\y, -0*\x) {$\vn$};
	\node[circ] at (\z+2*\y, -1*\x) {$\vn$};
	\node at (\z+2*\y, -2*\x) {$2$};
	\node at (\z+2*\y, -3*\x) {$3$};
	\draw[rounded corners] (-\y+\z, 0.5) rectangle (\y+\z,-1*\x-.5);
	\node[fill=white] at (\z,0.5) {$PT_1$};
	\draw[rounded corners] (\y+\z, 0.5) rectangle (3*\y+\z, -\nnn*\x-.5);
	\draw[ddarr] (\z+3*\y+.2, -1*\x) -- (\z+3*\y+.8, -1*\x);
	
	\node at (\zz, 0) {$3$};
	\node at (\zz+2*\y, -0*\x) {$\vn$};
	\node at (\zz+2*\y, -1*\x) {$\vn$};
	\node[circ] at (\zz+2*\y, -2*\x) {$\vn$};
	\node at (\zz+2*\y, -3*\x) {$2$};

	\draw[rounded corners] (-\y+\zz, 0.5) rectangle (\y+\zz, -0*\x-.5);
	\node[fill=white] at (\zz,0.5) {$PT_2$};
	\draw[rounded corners] (\y+\zz, 0.5) rectangle (3*\y+\zz, -\nnn*\x-.5);
	\draw[ddarr] (\zz+3*\y+.2, -1*\x) -- (\zz+3*\y+.8, -1*\x);
	
	\node at (\zzz, 0) {$2$};
	\node at (\zzz+2*\y, -0*\x) {$\vn$};
	\node at (\zzz+2*\y, -1*\x) {$\vn$};
	\node at (\zzz+2*\y, -2*\x) {$\vn$};
	\node[circ] at (\zzz+2*\y, -3*\x) {$\vn$};

	\draw[rounded corners] (-\y+\zzz, 0.5) rectangle (\y+\zzz, -0*\x-.5);
	\node[fill=white] at (\zzz,0.5) {$PT_3$};
	\draw[rounded corners] (\y+\zzz, 0.5) rectangle (3*\y+\zzz, -\nnn*\x-.5);
\end{tikzpicture}

\begin{tikzpicture}[baseline=(current bounding box.center)]
	\tikzmath{\nnn=3;\x=0.5;\y=0.4;\z=1.8;\zz=4.4;\zzz=7;\zzzz=11;}
	\node at (0, -0*\x) {$1$};
	\node at (0, -1*\x) {$2$};
	\node at (0, -2*\x) {$4$};
	\node at (0, -3*\x) {$3$};

	\draw[rounded corners] (-\y, 0.5) rectangle (\y, -\nnn*\x-.5);
	\node[fill=white] at (0,0.5) {$w$};
	\draw[ddarr] (\y+.2, -1*\x) -- (\y+.8, -1*\x);
	
	\node at (\z, 0) {$2$};
	\node at (\z, -\x) {$4$};
	\node[circ] at (\z+2*\y, -0*\x) {$\vn$};
	\node at (\z+2*\y, -1*\x) {$1$};
	\node[circ] at (\z+2*\y, -2*\x) {$\vn$};
	\node at (\z+2*\y, -3*\x) {$3$};
	\draw[rounded corners] (-\y+\z, 0.5) rectangle (\y+\z,-1*\x-.5);
	\node[fill=white] at (\z,0.5) {$PT_1$};
	\draw[rounded corners] (\y+\z, 0.5) rectangle (3*\y+\z, -\nnn*\x-.5);
	\draw[ddarr] (\z+3*\y+.2, -1*\x) -- (\z+3*\y+.8, -1*\x);
	
	\node at (\zz, 0) {$1$};
	\node at (\zz, -\x) {$3$};
	\node at (\zz+2*\y, -0*\x) {$\vn$};
	\node[circ] at (\zz+2*\y, -1*\x) {$\vn$};
	\node at (\zz+2*\y, -2*\x) {$\vn$};
	\node[circ] at (\zz+2*\y, -3*\x) {$\vn$};

	\draw[rounded corners] (-\y+\zz, 0.5) rectangle (\y+\zz, -1*\x-.5);
	\node[fill=white] at (\zz,0.5) {$PT_2$};
	\draw[rounded corners] (\y+\zz, 0.5) rectangle (3*\y+\zz, -\nnn*\x-.5);
\end{tikzpicture}\qquad
\begin{tikzpicture}[baseline=(current bounding box.center)]
	\tikzmath{\nnn=3;\x=0.5;\y=0.4;\z=1.8;\zz=4.4;\zzz=7;\zzzz=11;}
	\node at (0, -0*\x) {$1$};
	\node at (0, -1*\x) {$3$};
	\node at (0, -2*\x) {$2$};
	\node at (0, -3*\x) {$4$};

	\draw[rounded corners] (-\y, 0.5) rectangle (\y, -\nnn*\x-.5);
	\node[fill=white] at (0,0.5) {$w$};
	\draw[ddarr] (\y+.2, -1*\x) -- (\y+.8, -1*\x);
	
	\node at (\z, 0) {$2$};
	\node at (\z, -\x) {$4$};
	\node[circ] at (\z+2*\y, -0*\x) {$\vn$};
	\node[circ] at (\z+2*\y, -1*\x) {$\vn$};
	\node at (\z+2*\y, -2*\x) {$1$};
	\node at (\z+2*\y, -3*\x) {$3$};
	\draw[rounded corners] (-\y+\z, 0.5) rectangle (\y+\z,-1*\x-.5);
	\node[fill=white] at (\z,0.5) {$PT_1$};
	\draw[rounded corners] (\y+\z, 0.5) rectangle (3*\y+\z, -\nnn*\x-.5);
	\draw[ddarr] (\z+3*\y+.2, -1*\x) -- (\z+3*\y+.8, -1*\x);
	
	\node at (\zz, 0) {$1$};
	\node at (\zz, -\x) {$3$};
	\node at (\zz+2*\y, -0*\x) {$\vn$};
	\node at (\zz+2*\y, -1*\x) {$\vn$};
	\node[circ] at (\zz+2*\y, -2*\x) {$\vn$};
	\node[circ] at (\zz+2*\y, -3*\x) {$\vn$};

	\draw[rounded corners] (-\y+\zz, 0.5) rectangle (\y+\zz, -1*\x-.5);
	\node[fill=white] at (\zz,0.5) {$PT_2$};
	\draw[rounded corners] (\y+\zz, 0.5) rectangle (3*\y+\zz, -\nnn*\x-.5);
\end{tikzpicture}

\begin{tikzpicture}[baseline=(current bounding box.center)]
	\tikzmath{\nnn=3;\x=0.5;\y=0.4;\z=1.8;\zz=4.4;\zzz=7;\zzzz=11;}
	\node at (0, -0*\x) {$2$};
	\node at (0, -1*\x) {$1$};
	\node at (0, -2*\x) {$3$};
	\node at (0, -3*\x) {$4$};

	\draw[rounded corners] (-\y, 0.5) rectangle (\y, -\nnn*\x-.5);
	\node[fill=white] at (0,0.5) {$w$};
	\draw[ddarr] (\y+.2, -1*\x) -- (\y+.8, -1*\x);
	
	\node at (\z, 0) {$1$};
	\node at (\z, -\x) {$4$};
	\node[circ] at (\z+2*\y, -0*\x) {$\vn$};
	\node at (\z+2*\y, -1*\x) {$2$};
	\node[circ] at (\z+2*\y, -2*\x) {$\vn$};
	\node at (\z+2*\y, -3*\x) {$3$};
	\draw[rounded corners] (-\y+\z, 0.5) rectangle (\y+\z,-1*\x-.5);
	\node[fill=white] at (\z,0.5) {$PT_1$};
	\draw[rounded corners] (\y+\z, 0.5) rectangle (3*\y+\z, -\nnn*\x-.5);
	\draw[ddarr] (\z+3*\y+.2, -1*\x) -- (\z+3*\y+.8, -1*\x);
	
	\node at (\zz, 0) {$3$};
	\node at (\zz+2*\y, -0*\x) {$\vn$};
	\node[circ] at (\zz+2*\y, -1*\x) {$\vn$};
	\node at (\zz+2*\y, -2*\x) {$\vn$};
	\node at (\zz+2*\y, -3*\x) {$2$};

	\draw[rounded corners] (-\y+\zz, 0.5) rectangle (\y+\zz, -0*\x-.5);
	\node[fill=white] at (\zz,0.5) {$PT_2$};
	\draw[rounded corners] (\y+\zz, 0.5) rectangle (3*\y+\zz, -\nnn*\x-.5);
	\draw[ddarr] (\zz+3*\y+.2, -1*\x) -- (\zz+3*\y+.8, -1*\x);
	
	\node at (\zzz, 0) {$2$};
	\node at (\zzz+2*\y, -0*\x) {$\vn$};
	\node at (\zzz+2*\y, -1*\x) {$\vn$};
	\node at (\zzz+2*\y, -2*\x) {$\vn$};
	\node[circ] at (\zzz+2*\y, -3*\x) {$\vn$};

	\draw[rounded corners] (-\y+\zzz, 0.5) rectangle (\y+\zzz, -0*\x-.5);
	\node[fill=white] at (\zzz,0.5) {$PT_3$};
	\draw[rounded corners] (\y+\zzz, 0.5) rectangle (3*\y+\zzz, -\nnn*\x-.5);
\end{tikzpicture}

\begin{tikzpicture}[baseline=(current bounding box.center)]
	\tikzmath{\nnn=3;\x=0.5;\y=0.4;\z=1.8;\zz=4.4;\zzz=7;\zzzz=11;}
	\node at (0, -0*\x) {$2$};
	\node at (0, -1*\x) {$3$};
	\node at (0, -2*\x) {$1$};
	\node at (0, -3*\x) {$4$};

	\draw[rounded corners] (-\y, 0.5) rectangle (\y, -\nnn*\x-.5);
	\node[fill=white] at (0,0.5) {$w$};
	\draw[ddarr] (\y+.2, -1*\x) -- (\y+.8, -1*\x);
	
	\node at (\z, 0) {$1$};
	\node at (\z, -\x) {$4$};
	\node[circ] at (\z+2*\y, -0*\x) {$\vn$};
	\node at (\z+2*\y, -1*\x) {$2$};
	\node at (\z+2*\y, -2*\x) {$3$};
	\node[circ] at (\z+2*\y, -3*\x) {$\vn$};
	\draw[rounded corners] (-\y+\z, 0.5) rectangle (\y+\z,-1*\x-.5);
	\node[fill=white] at (\z,0.5) {$PT_1$};
	\draw[rounded corners] (\y+\z, 0.5) rectangle (3*\y+\z, -\nnn*\x-.5);
	\draw[ddarr] (\z+3*\y+.2, -1*\x) -- (\z+3*\y+.8, -1*\x);
	
	\node at (\zz, 0) {$3$};
	\node at (\zz+2*\y, -0*\x) {$\vn$};
	\node[circ] at (\zz+2*\y, -1*\x) {$\vn$};
	\node at (\zz+2*\y, -2*\x) {$2$};
	\node at (\zz+2*\y, -3*\x) {$\vn$};

	\draw[rounded corners] (-\y+\zz, 0.5) rectangle (\y+\zz, -0*\x-.5);
	\node[fill=white] at (\zz,0.5) {$PT_2$};
	\draw[rounded corners] (\y+\zz, 0.5) rectangle (3*\y+\zz, -\nnn*\x-.5);
	\draw[ddarr] (\zz+3*\y+.2, -1*\x) -- (\zz+3*\y+.8, -1*\x);
	
	\node at (\zzz, 0) {$2$};
	\node at (\zzz+2*\y, -0*\x) {$\vn$};
	\node at (\zzz+2*\y, -1*\x) {$\vn$};
	\node[circ] at (\zzz+2*\y, -2*\x) {$\vn$};
	\node at (\zzz+2*\y, -3*\x) {$\vn$};

	\draw[rounded corners] (-\y+\zzz, 0.5) rectangle (\y+\zzz, -0*\x-.5);
	\node[fill=white] at (\zzz,0.5) {$PT_3$};
	\draw[rounded corners] (\y+\zzz, 0.5) rectangle (3*\y+\zzz, -\nnn*\x-.5);
\end{tikzpicture}
\caption{$\alg_{\prs}$: $\cP=\cP_{(2,1),4}, w\in \{3241, 3421, 4231, 4312, 4132\}$}\label{fig:21-5}
\end{figure}

\begin{example} Figure \ref{fig:7654321,987563241} shows the steps of $\alg_{\prs}$ when $\cP=\cP_{(7,6,5,4,3,2,1),9}$ and $w=(9,8,7,5,6,3,2,4,1)$. Here we have $\Phi(w) = (\ytableaushort{1432,5876,9}, \ytableaushort{1346,2789,5})$.  Note that $\{9-x \mid x\in \des_\cP(w)\} = \{1,4,6\}=\des(\ytableaushort{1346,2789,5})$.
\end{example}

\begin{figure}[!htbp]
\begin{tikzpicture}[baseline=(current bounding box.center), xscale=.8,yscale=.6, rotate=90]
	\node (1) at (0,0) {1};
	\node (3) at (2,0) {3};
	\node (5) at (4,0) {5};
	\node (7) at (6,0) {7};
	\node (9) at (8,0) {9};
	
	\node (2) at (1,1) {2};
	\node (4) at (3,1) {4};
	\node (6) at (5,1) {6};
	\node (8) at (7,1) {8};

	\draw[<-] (1) to (3);
	\draw[<-] (3) to (5);
	\draw[<-] (5) to (7);
	\draw[<-] (7) to (9);

	\draw[<-] (2) to (4);
	\draw[<-] (4) to (6);
	\draw[<-] (6) to (8);
	
	\draw[<-] (1) to (4);
	\draw[<-] (2) to (5);
	\draw[<-] (3) to (6);
	\draw[<-] (4) to (7);
	\draw[<-] (5) to (8);
	\draw[<-] (6) to (9);
\end{tikzpicture}\qquad
\begin{tikzpicture}[baseline=(current bounding box.center)]
	\tikzmath{\nnn=3;\x=0.5;\y=0.4;\z=1.8;\zz=4.4;\zzz=7;\zzzz=9.5;}
	\node at (0, -0*\x) {$1$};
	\node at (0, -1*\x) {$2$};
	\node at (0, -2*\x) {$4$};
	\node at (0, -3*\x) {$3$};
	\node at (0, -4*\x) {$6$};
	\node at (0, -5*\x) {$5$};
	\node at (0, -6*\x) {$7$};
	\node at (0, -7*\x) {$8$};
	\node at (0, -8*\x) {$9$};
	
	\draw[rounded corners] (-\y, 0.5) rectangle (\y, -8*\x-.5);
	\node[fill=white] at (0,0.5) {$w$};
	\draw[double distance=1.5pt, arrows={-Straight Barb[scale=0.6]}] (\y+.2, -2*\x) -- (\y+.8, -2*\x);
	
	\node at (\z, 0) {$1$};
	\node at (\z, -\x) {$5$};
	\node at (\z, -2*\x) {$9$};
	\node[circ] at (\z+2*\y, -0*\x) {$\vn$};
	\node[circ] at (\z+2*\y, -1*\x) {$\vn$};
	\node at (\z+2*\y, -2*\x) {$2$};
	\node at (\z+2*\y, -3*\x) {$3$};
	\node[circ] at (\z+2*\y, -4*\x) {$\vn$};
	\node at (\z+2*\y, -5*\x) {$4$};
	\node at (\z+2*\y, -6*\x) {$6$};
	\node at (\z+2*\y, -7*\x) {$7$};
	\node at (\z+2*\y, -8*\x) {$8$};
	\draw[rounded corners] (-\y+\z, 0.5) rectangle (\y+\z,-2*\x-.5);
	\node[fill=white] at (\z,0.5) {$PT_1$};
	\draw[rounded corners] (\y+\z, 0.5) rectangle (3*\y+\z, -8*\x-.5);
	\draw[ddarr] (\z+3*\y+.2, -2*\x) -- (\z+3*\y+.8, -2*\x);
	
	\node at (\zz, 0) {$4$};
	\node at (\zz, -\x) {$8$};
	\node at (\zz+2*\y, -0*\x) {$\vn$};
	\node at (\zz+2*\y, -1*\x) {$\vn$};
	\node[circ] at (\zz+2*\y, -2*\x) {$\vn$};
	\node at (\zz+2*\y, -3*\x) {$2$};
	\node at (\zz+2*\y, -4*\x) {$\vn$};
	\node at (\zz+2*\y, -5*\x) {$3$};
	\node[circ] at (\zz+2*\y, -6*\x) {$\vn$};
	\node at (\zz+2*\y, -7*\x) {$6$};
	\node at (\zz+2*\y, -8*\x) {$7$};
	\draw[rounded corners] (-\y+\zz, 0.5) rectangle (\y+\zz, -1*\x-.5);
	\node[fill=white] at (\zz,0.5) {$PT_2$};
	\draw[rounded corners] (\y+\zz, 0.5) rectangle (3*\y+\zz, -8*\x-.5);
	\draw[ddarr] (\zz+3*\y+.2, -2*\x) -- (\zz+3*\y+.8, -2*\x);

	\node at (\zzz, 0) {$3$};
	\node at (\zzz, -\x) {$7$};
	\node at (\zzz+2*\y, -0*\x) {$\vn$};
	\node at (\zzz+2*\y, -1*\x) {$\vn$};
	\node at (\zzz+2*\y, -2*\x) {$\vn$};
	\node[circ] at (\zzz+2*\y, -3*\x) {$\vn$};
	\node at (\zzz+2*\y, -4*\x) {$\vn$};
	\node at (\zzz+2*\y, -5*\x) {$2$};
	\node at (\zzz+2*\y, -6*\x) {$\vn$};
	\node[circ] at (\zzz+2*\y, -7*\x) {$\vn$};
	\node at (\zzz+2*\y, -8*\x) {$6$};
	\draw[rounded corners] (-\y+\zzz, 0.5) rectangle (\y+\zzz, -1*\x-.5);
	\node[fill=white] at (\zzz,0.5) {$PT_3$};
	\draw[rounded corners] (\y+\zzz, 0.5) rectangle (3*\y+\zzz, -8*\x-.5);
	\draw[ddarr] (\zzz+3*\y+.2, -2*\x) -- (\zzz+3*\y+.8, -2*\x);
	
	\node at (\zzzz, 0) {$2$};
	\node at (\zzzz, -\x) {$6$};
	\node at (\zzzz+2*\y, -0*\x) {$\vn$};
	\node at (\zzzz+2*\y, -1*\x) {$\vn$};
	\node at (\zzzz+2*\y, -2*\x) {$\vn$};
	\node at (\zzzz+2*\y, -3*\x) {$\vn$};
	\node at (\zzzz+2*\y, -4*\x) {$\vn$};
	\node[circ] at (\zzzz+2*\y, -5*\x) {$\vn$};
	\node at (\zzzz+2*\y, -6*\x) {$\vn$};
	\node at (\zzzz+2*\y, -7*\x) {$\vn$};
	\node[circ] at (\zzzz+2*\y, -8*\x) {$\vn$};
	\draw[rounded corners] (-\y+\zzzz, 0.5) rectangle (\y+\zzzz, -1*\x-.5);
	\node[fill=white] at (\zzzz,0.5) {$PT_4$};
	\draw[rounded corners] (\y+\zzzz, 0.5) rectangle (3*\y+\zzzz, -8*\x-.5);
\end{tikzpicture}
\caption{$\alg_{\prs}$: $\cP=\cP_{(7,6,5,4,3,2,1),9}, w=(9,8,7,5,6,3,2,4,1)$}\label{fig:7654321,987563241}
\end{figure}

\begin{example} Figure \ref{fig:986643221} shows the steps of $\alg_{\prs}$ when $\cP=\cP_{(9,8,6,6,4,3,2,2,1),10}$ and $w=(8, 4, 6, 7, 10, 1, 2, 5, 3, 9)$. Here we have $\Phi(w) = (\ytableaushort{1239,465{10},87}, \ytableaushort{1245,3789,6{10}})$. Note that $\{10-x \mid x\in \des_\cP(w)\} = \{2,5,9\}=\des(\ytableaushort{1245,3789,6{10}})$.
\end{example}

\begin{figure}[!htbp]
\begin{tikzpicture}[baseline=(current bounding box.center),xscale=0.5,yscale=0.9]
	\node (1) at (0,0) {1};
	\node (2) at (0,1) {2};
	\node (3) at (-1,2) {3};
	\node (4) at (1,2) {4};
	\node (5) at (-1,3) {5};
	\node (6) at (1,3) {6};
	\node (7) at (-1,4) {7};
	\node (8) at (1,4) {8};
	\node (9) at (0,5) {9};
	\node (10) at (0,6) {10};

	\draw[<-] (1) to (2);
	\draw[<-] (2) to (3);
	\draw[<-] (2) to (4);
	\draw[<-] (3) to (5);
	\draw[<-] (3) to (6);
	\draw[<-] (4) to (6);
	
	\draw[<-] (5) to (7);
	\draw[<-] (5) to (8);
	\draw[<-] (6) to (7);
	\draw[<-] (6) to (8);
	\draw[<-] (7) to (9);	
	\draw[<-] (8) to (9);
	\draw[<-] (9) to (10);	
\end{tikzpicture}\qquad
\begin{tikzpicture}[baseline=(current bounding box.center)]
	\tikzmath{\nnn=9;\x=0.5;\y=0.4;\z=1.8;\zz=4.4;\zzz=7;\zzzz=9.5;}
	\node at (0, -0*\x) {$2$};
	\node at (0, -1*\x) {$5$};
	\node at (0, -2*\x) {$3$};
	\node at (0, -3*\x) {$9$};
	\node at (0, -4*\x) {$1$};
	\node at (0, -5*\x) {$10$};
	\node at (0, -6*\x) {$7$};
	\node at (0, -7*\x) {$6$};
	\node at (0, -8*\x) {$4$};
	\node at (0, -9*\x) {$8$};
	
	\draw[rounded corners] (-\y, 0.5) rectangle (\y, -\nnn*\x-.5);
	\node[fill=white] at (0,0.5) {$w$};
	\draw[double distance=1.5pt, arrows={-Straight Barb[scale=0.6]}] (\y+.2, -2*\x) -- (\y+.8, -2*\x);
	
	\node at (\z, 0) {$1$};
	\node at (\z, -\x) {$4$};
	\node at (\z, -2*\x) {$8$};
	\node[circ] at (\z+2*\y, -0*\x) {$\vn$};
	\node at (\z+2*\y, -1*\x) {$9$};
	\node[circ] at (\z+2*\y, -2*\x) {$\vn$};
	\node at (\z+2*\y, -3*\x) {$3$};
	\node at (\z+2*\y, -4*\x) {$2$};
	\node[circ] at (\z+2*\y, -5*\x) {$\vn$};
	\node at (\z+2*\y, -6*\x) {$10$};
	\node at (\z+2*\y, -7*\x) {$5$};
	\node at (\z+2*\y, -8*\x) {$6$};
	\node at (\z+2*\y, -9*\x) {$7$};
	\draw[rounded corners] (-\y+\z, 0.5) rectangle (\y+\z,-2*\x-.5);
	\node[fill=white] at (\z,0.5) {$PT_1$};
	\draw[rounded corners] (\y+\z, 0.5) rectangle (3*\y+\z, -\nnn*\x-.5);
	\draw[ddarr] (\z+3*\y+.2, -2*\x) -- (\z+3*\y+.8, -2*\x);
	
	\node at (\zz, 0) {$2$};
	\node at (\zz, -\x) {$6$};
	\node at (\zz, -2*\x) {$7$};
	\node at (\zz+2*\y, -0*\x) {$\vn$};
	\node [circ] at (\zz+2*\y, -1*\x) {$\vn$};
	\node at (\zz+2*\y, -2*\x) {$\vn$};
	\node at (\zz+2*\y, -3*\x) {$9$};
	\node at (\zz+2*\y, -4*\x) {$3$};
	\node at (\zz+2*\y, -5*\x) {$\vn$};
	\node [circ] at (\zz+2*\y, -6*\x) {$\vn$};
	\node at (\zz+2*\y, -7*\x) {$10$};
	\node at (\zz+2*\y, -8*\x) {$5$};
	\node [circ] at (\zz+2*\y, -9*\x) {$\vn$};
	\draw[rounded corners] (-\y+\zz, 0.5) rectangle (\y+\zz, -2*\x-.5);
	\node[fill=white] at (\zz,0.5) {$PT_2$};
	\draw[rounded corners] (\y+\zz, 0.5) rectangle (3*\y+\zz, -\nnn*\x-.5);
	\draw[ddarr] (\zz+3*\y+.2, -2*\x) -- (\zz+3*\y+.8, -2*\x);

	\node at (\zzz, 0) {$3$};
	\node at (\zzz, -\x) {$5$};
	\node at (\zzz+2*\y, -0*\x) {$\vn$};
	\node at (\zzz+2*\y, -1*\x) {$\vn$};
	\node at (\zzz+2*\y, -2*\x) {$\vn$};
	\node [circ] at (\zzz+2*\y, -3*\x) {$\vn$};
	\node at (\zzz+2*\y, -4*\x) {$9$};
	\node at (\zzz+2*\y, -5*\x) {$\vn$};
	\node at (\zzz+2*\y, -6*\x) {$\vn$};
	\node [circ] at (\zzz+2*\y, -7*\x) {$\vn$};
	\node at (\zzz+2*\y, -8*\x) {$10$};
	\node at (\zzz+2*\y, -9*\x) {$\vn$};
	\draw[rounded corners] (-\y+\zzz, 0.5) rectangle (\y+\zzz, -1*\x-.5);
	\node[fill=white] at (\zzz,0.5) {$PT_3$};
	\draw[rounded corners] (\y+\zzz, 0.5) rectangle (3*\y+\zzz, -\nnn*\x-.5);
	\draw[ddarr] (\zzz+3*\y+.2, -2*\x) -- (\zzz+3*\y+.8, -2*\x);
	
	\node at (\zzzz, 0) {$9$};
	\node at (\zzzz, -\x) {$10$};
	\node at (\zzzz+2*\y, -0*\x) {$\vn$};
	\node at (\zzzz+2*\y, -1*\x) {$\vn$};
	\node at (\zzzz+2*\y, -2*\x) {$\vn$};
	\node at (\zzzz+2*\y, -3*\x) {$\vn$};
	\node [circ] at (\zzzz+2*\y, -4*\x) {$\vn$};
	\node at (\zzzz+2*\y, -5*\x) {$\vn$};
	\node at (\zzzz+2*\y, -6*\x) {$\vn$};
	\node at (\zzzz+2*\y, -7*\x) {$\vn$};
	\node [circ] at (\zzzz+2*\y, -8*\x) {$\vn$};
	\node at (\zzzz+2*\y, -9*\x) {$\vn$};
	\draw[rounded corners] (-\y+\zzzz, 0.5) rectangle (\y+\zzzz, -1*\x-.5);
	\node[fill=white] at (\zzzz,0.5) {$PT_4$};
	\draw[rounded corners] (\y+\zzzz, 0.5) rectangle (3*\y+\zzzz, -\nnn*\x-.5);
\end{tikzpicture}
\caption{$\alg_{\prs}$: $\cP=\cP_{(9,8,6,6,4,3,2,2,1),10}, w=(8, 4, 6, 7, 10, 1, 2, 5, 3, 9)$}\label{fig:986643221}
\end{figure}

\subsection{Some pathologies for ladder-climbing partial orders} \label{sec:patho} Here we provide some examples when $\prs$ does not produce a desired output when $\cP$ is ladder-climbing.

\ytableausetup{smalltableaux}
\begin{example} Figure \ref{fig:311,34521} shows the steps of $\alg_{\prs}$ when $\cP=\cP_{(3,1,1),5}$ and $w=(3,4,5,2,1)$. Here we see that $\Phi(w) = (\ytableaushort{312,54}, \ytableaushort{125,34})$. However, $PT$ in this case is not a $\cP$-tableau since $3\pr 1$.
\end{example}

\begin{figure}[!htbp]
\begin{tikzpicture}[baseline=(current bounding box.center), xscale=.8,yscale=.6, rotate=90]
	\node (1) at (0,0) {1};
	\node (3) at (2,0) {3};
	\node (5) at (4,0) {5};
	\node (2) at (1,1) {2};
	\node (4) at (3,1) {4};

	\draw[<-] (1) to (3);
	\draw[<-] (3) to (5);

	\draw[<-] (1) to (4);
	\draw[<-] (2) to (5);

\end{tikzpicture}\qquad
\begin{tikzpicture}[baseline=(current bounding box.center)]
	\tikzmath{\nnn=3;\x=0.5;\y=0.4;\z=1.8;\zz=4.4;\zzz=7;\zzzz=9.5;}
	\node at (0, 0) {$1$};
	\node at (0, -\x) {$2$};
	\node at (0, -2*\x) {$5$};
	\node at (0, -3*\x) {$4$};
	\node at (0, -4*\x) {$3$};
	
	\draw[rounded corners] (-\y, 0.5) rectangle (\y, -2.5);
	\node[fill=white] at (0,0.5) {$w$};
	\draw[double distance=1.5pt, arrows={-Straight Barb[scale=0.6]}] (\y+.2, -2*\x) -- (\y+.8, -2*\x);
	
	\node at (\z, 0) {$3$};
	\node at (\z, -\x) {$5$};
	\node[circ] at (\z+2*\y, 0) {$\vn$};
	\node at (\z+2*\y, -\x) {$1$};
	\node[circ] at (\z+2*\y, -2*\x) {$\vn$};
	\node at (\z+2*\y, -3*\x) {$4$};
	\node at (\z+2*\y, -4*\x) {$2$};
	\draw[rounded corners] (-\y+\z, 0.5) rectangle (\y+\z, -1);
	\node[fill=white] at (\z,0.5) {$PT_1$};
	\draw[rounded corners] (\y+\z, 0.5) rectangle (3*\y+\z, -2.5);
	\draw[double distance=1.5pt, arrows={-Straight Barb[scale=0.6]}] (\z+3*\y+.2, -2*\x) -- (\z+3*\y+.8, -2*\x);
	
	\node at (\zz, 0) {$1$};
	\node at (\zz, -\x) {$4$};
	\node at (\zz+2*\y, 0) {$\vn$};
	\node[circ] at (\zz+2*\y, -\x) {$\vn$};
	\node at (\zz+2*\y, -2*\x) {$\vn$};
	\node[circ] at (\zz+2*\y, -3*\x) {$\vn$};
	\node at (\zz+2*\y, -4*\x) {$2$};
	\draw[rounded corners] (-\y+\zz, 0.5) rectangle (\y+\zz, -1);
	\node[fill=white] at (\zz,0.5) {$PT_2$};
	\draw[rounded corners] (\y+\zz, 0.5) rectangle (3*\y+\zz, -2.5);
	\draw[double distance=1.5pt, arrows={-Straight Barb[scale=0.6]}] (\zz+3*\y+.2, -2*\x) -- (\zz+3*\y+.8, -2*\x);

	\node at (\zzz, 0) {$2$};
	\node at (\zzz+2*\y, 0) {$\vn$};
	\node at (\zzz+2*\y, -\x) {$\vn$};
	\node at (\zzz+2*\y, -2*\x) {$\vn$};
	\node at (\zzz+2*\y, -3*\x) {$\vn$};
	\node[circ] at (\zzz+2*\y, -4*\x) {$\vn$};
	\draw[rounded corners] (-\y+\zzz, 0.5) rectangle (\y+\zzz, -.5);
	\node[fill=white] at (\zzz,0.5) {$PT_3$};
	\draw[rounded corners] (\y+\zzz, 0.5) rectangle (3*\y+\zzz, -2.5);	
\end{tikzpicture}
\caption{$\alg_{\prs}$: $\cP=\cP_{(3,1,1),5}, w=(3,4,5,2,1)$}\label{fig:311,34521}
\end{figure}

\begin{example} Figure \ref{fig:4211,436521} shows the steps of $\alg_{\prs}$ when $\cP=\cP_{(4,2,1,1),6}$ and $w=(4,3,6,5,2,1)$. Here we see that $\Phi(w) = (\ytableaushort{4132,65}, \ytableaushort{1256,34})$. However, $PT$ in this case is not a $\cP$-tableau since $4\pr 1$.
\end{example}

\begin{figure}[!htbp]
\begin{tikzpicture}[baseline=(current bounding box.center), xscale=.8,yscale=.6, rotate=90]
	\node (1) at (0,0) {1};
	\node (2) at (1,1) {2};
	\node (3) at (2,0) {3};
	\node (4) at (3,-1) {4};
	\node (5) at (4,1) {5};
	\node (6) at (5,0) {6};

	\draw[<-] (1) to (3);
	\draw[<-] (1) to (4);
	\draw[<-] (1) to (5);
	\draw[<-] (2) to (5);
	\draw[<-] (2) to (6);
	
	\draw[<-] (3) to (6);
	\draw[<-] (4) to (6);

\end{tikzpicture}\qquad
\begin{tikzpicture}[baseline=(current bounding box.center)]
	\tikzmath{\nnn=3;\x=0.5;\y=0.4;\z=1.8;\zz=4.4;\zzz=7;\zzzz=9.5;}
	\node at (0, 0) {$1$};
	\node at (0, -\x) {$2$};
	\node at (0, -2*\x) {$5$};
	\node at (0, -3*\x) {$6$};
	\node at (0, -4*\x) {$3$};
	\node at (0, -5*\x) {$4$};
	
	\draw[rounded corners] (-\y, 0.5) rectangle (\y, -3);
	\node[fill=white] at (0,0.5) {$w$};
	\draw[double distance=1.5pt, arrows={-Straight Barb[scale=0.6]}] (\y+.2, -2*\x) -- (\y+.8, -2*\x);
	
	\node at (\z, 0) {$4$};
	\node at (\z, -\x) {$6$};
	\node[circ] at (\z+2*\y, 0) {$\vn$};
	\node at (\z+2*\y, -\x) {$1$};
	\node[circ] at (\z+2*\y, -2*\x) {$\vn$};
	\node at (\z+2*\y, -3*\x) {$5$};
	\node at (\z+2*\y, -4*\x) {$2$};
	\node at (\z+2*\y, -5*\x) {$3$};
	\draw[rounded corners] (-\y+\z, 0.5) rectangle (\y+\z, -1);
	\node[fill=white] at (\z,0.5) {$PT_1$};
	\draw[rounded corners] (\y+\z, 0.5) rectangle (3*\y+\z, -3);
	\draw[double distance=1.5pt, arrows={-Straight Barb[scale=0.6]}] (\z+3*\y+.2, -2*\x) -- (\z+3*\y+.8, -2*\x);
	
	\node at (\zz, 0) {$1$};
	\node at (\zz, -\x) {$5$};
	\node at (\zz+2*\y, 0) {$\vn$};
	\node[circ] at (\zz+2*\y, -\x) {$\vn$};
	\node at (\zz+2*\y, -2*\x) {$\vn$};
	\node[circ] at (\zz+2*\y, -3*\x) {$\vn$};
	\node at (\zz+2*\y, -4*\x) {$2$};
	\node at (\zz+2*\y, -5*\x) {$3$};
	\draw[rounded corners] (-\y+\zz, 0.5) rectangle (\y+\zz, -1);
	\node[fill=white] at (\zz,0.5) {$PT_2$};
	\draw[rounded corners] (\y+\zz, 0.5) rectangle (3*\y+\zz, -3);
	\draw[double distance=1.5pt, arrows={-Straight Barb[scale=0.6]}] (\zz+3*\y+.2, -2*\x) -- (\zz+3*\y+.8, -2*\x);

	\node at (\zzz, 0) {$3$};
	\node at (\zzz+2*\y, 0) {$\vn$};
	\node at (\zzz+2*\y, -\x) {$\vn$};
	\node at (\zzz+2*\y, -2*\x) {$\vn$};
	\node at (\zzz+2*\y, -3*\x) {$\vn$};
	\node[circ] at (\zzz+2*\y, -4*\x) {$\vn$};
	\node at (\zzz+2*\y, -5*\x) {$2$};
	\draw[rounded corners] (-\y+\zzz, 0.5) rectangle (\y+\zzz, -.5);
	\node[fill=white] at (\zzz,0.5) {$PT_3$};
	\draw[rounded corners] (\y+\zzz, 0.5) rectangle (3*\y+\zzz, -3);	
	\draw[double distance=1.5pt, arrows={-Straight Barb[scale=0.6]}] (\zzz+3*\y+.2, -2*\x) -- (\zzz+3*\y+.8, -2*\x);
	
	\node at (\zzzz, 0) {$2$};
	\node at (\zzzz+2*\y, 0) {$\vn$};
	\node at (\zzzz+2*\y, -\x) {$\vn$};
	\node at (\zzzz+2*\y, -2*\x) {$\vn$};
	\node at (\zzzz+2*\y, -3*\x) {$\vn$};
	\node at (\zzzz+2*\y, -4*\x) {$\vn$};
	\node[circ] at (\zzzz+2*\y, -5*\x) {$\vn$};
	\draw[rounded corners] (-\y+\zzzz, 0.5) rectangle (\y+\zzzz, -.5);
	\node[fill=white] at (\zzzz,0.5) {$PT_4$};
	\draw[rounded corners] (\y+\zzzz, 0.5) rectangle (3*\y+\zzzz, -3);	
\end{tikzpicture}
\caption{$\alg_{\prs}$: $\cP=\cP_{(4,2,1,1),6}, w=(4,3,6,5,2,1)$}\label{fig:4211,436521}
\end{figure}

\begin{example} Figure \ref{fig:53211,3156742} shows the steps of $\alg_{\prs}$ when $\cP=\cP_{(5,3,2,1,1),7}$ and $w=(3,1,5,6,7,4,2)$. Here we see that $\Phi(w) = (\ytableaushort{1264,35,{\none}7}, \ytableaushort{1256,34,{\none}7})$. However, $PT$ and $QT$ in this case are not even tableaux.
\end{example}

\begin{figure}[H]
\begin{tikzpicture}[baseline=(current bounding box.center), xscale=.8,yscale=.6, rotate=90]
	\node (1) at (0,0) {1};
	\node (2) at (1,1) {2};
	\node (3) at (2,0) {3};
	\node (4) at (3,-1) {4};
	\node (5) at (4,1) {5};
	\node (6) at (5,0) {6};
	\node (7) at (6,1) {7};

	\draw[<-] (1) to (3);
	\draw[<-] (1) to (4);
	\draw[<-] (1) to (5);
	\draw[<-] (2) to (5);
	\draw[<-] (2) to (6);
	\draw[<-] (3) to (6);
	\draw[<-] (3) to (7);
	\draw[<-] (4) to (7);
	\draw[<-] (5) to (7);
\end{tikzpicture}\qquad
\begin{tikzpicture}[baseline=(current bounding box.center)]
	\tikzmath{\nnn=3;\x=0.5;\y=0.4;\z=1.8;\zz=4.4;\zzz=7;\zzzz=9.5;}
	\node at (0, 0) {$2$};
	\node at (0, -\x) {$4$};
	\node at (0, -2*\x) {$7$};
	\node at (0, -3*\x) {$6$};
	\node at (0, -4*\x) {$5$};
	\node at (0, -5*\x) {$1$};
	\node at (0, -6*\x) {$3$};
	
	\draw[rounded corners] (-\y, 0.5) rectangle (\y, -3.5);
	\node[fill=white] at (0,0.5) {$w$};
	\draw[ddarr] (\y+.2, -2*\x) -- (\y+.8, -2*\x);
	
	\node at (\z, 0) {$1$};
	\node at (\z, -\x) {$3$};
	\node[circ] at (\z+2*\y, 0) {$\vn$};
	\node at (\z+2*\y, -\x) {$2$};
	\node[circ] at (\z+2*\y, -2*\x) {$\vn$};
	\node at (\z+2*\y, -3*\x) {$6$};
	\node at (\z+2*\y, -4*\x) {$4$};
	\node at (\z+2*\y, -5*\x) {$5$};
	\node at (\z+2*\y, -6*\x) {$7$};
	\draw[rounded corners] (-\y+\z, 0.5) rectangle (\y+\z, -1);
	\node[fill=white] at (\z,0.5) {$PT_1$};
	\draw[rounded corners] (\y+\z, 0.5) rectangle (3*\y+\z, -3.5);
	\draw[ddarr] (\z+3*\y+.2, -2*\x) -- (\z+3*\y+.8, -2*\x);
	
	\node at (\zz, 0) {$2$};
	\node at (\zz, -\x) {$5$};
	\node at (\zz, -2*\x) {$7$};
	\node at (\zz+2*\y, 0) {$\vn$};
	\node[circ] at (\zz+2*\y, -\x) {$\vn$};
	\node at (\zz+2*\y, -2*\x) {$\vn$};
	\node[circ] at (\zz+2*\y, -3*\x) {$\vn$};
	\node at (\zz+2*\y, -4*\x) {$4$};
	\node at (\zz+2*\y, -5*\x) {$6$};
	\node[circ] at (\zz+2*\y, -6*\x) {$\vn$};
	\draw[rounded corners] (-\y+\zz, 0.5) rectangle (\y+\zz, -1.5);
	\node[fill=white] at (\zz,0.5) {$PT_2$};
	\draw[rounded corners] (\y+\zz, 0.5) rectangle (3*\y+\zz, -3.5);
	\draw[ddarr] (\zz+3*\y+.2, -2*\x) -- (\zz+3*\y+.8, -2*\x);

	\node at (\zzz, 0) {$6$};
	\node at (\zzz+2*\y, 0) {$\vn$};
	\node at (\zzz+2*\y, -\x) {$\vn$};
	\node at (\zzz+2*\y, -2*\x) {$\vn$};
	\node at (\zzz+2*\y, -3*\x) {$\vn$};
	\node[circ] at (\zzz+2*\y, -4*\x) {$\vn$};
	\node at (\zzz+2*\y, -5*\x) {$4$};
	\node at (\zzz+2*\y, -6*\x) {$\vn$};
	\draw[rounded corners] (-\y+\zzz, 0.5) rectangle (\y+\zzz, -.5);
	\node[fill=white] at (\zzz,0.5) {$PT_3$};
	\draw[rounded corners] (\y+\zzz, 0.5) rectangle (3*\y+\zzz, -3.5);	
	\draw[ddarr] (\zzz+3*\y+.2, -2*\x) -- (\zzz+3*\y+.8, -2*\x);
	
	\node at (\zzzz, 0) {$4$};
	\node at (\zzzz+2*\y, 0) {$\vn$};
	\node at (\zzzz+2*\y, -\x) {$\vn$};
	\node at (\zzzz+2*\y, -2*\x) {$\vn$};
	\node at (\zzzz+2*\y, -3*\x) {$\vn$};
	\node at (\zzzz+2*\y, -4*\x) {$\vn$};
	\node[circ] at (\zzzz+2*\y, -5*\x) {$\vn$};
	\node at (\zzzz+2*\y, -6*\x) {$\vn$};
	\draw[rounded corners] (-\y+\zzzz, 0.5) rectangle (\y+\zzzz, -.5);
	\node[fill=white] at (\zzzz,0.5) {$PT_4$};
	\draw[rounded corners] (\y+\zzzz, 0.5) rectangle (3*\y+\zzzz, -3.5);	
\end{tikzpicture}
\caption{$\alg_{\prs}$: $\cP=\cP_{(5,3,2,1,1),7}, w=(3, 1, 5, 6, 7, 4, 2)$}\label{fig:53211,3156742}
\end{figure}

\section{Proof of Proposition  \ref{prop:ght}, \ref{prop:column}, and \ref{prop:phiinj}}  \label{sec:proofprop}

\subsection{Proof of Proposition \ref{prop:ght}}
We start with the following lemma.
\begin{lem} \label{lem:ght} Let $\cP$ be a natural unit interval order on $[1,n]$. Suppose that we are given $a, b \in [1,n]$ and $w \in \sym_n$ that satisfy $w^{-1}(a)<w^{-1}(b)$ and $a\pr b$. Then the following are equivalent:
\begin{enumerate}
\item $(a,b) \not\in \ght_\cP(w)$.
\item there exists a subword $ad_1\cdots d_kb$ of $w$ such that $a \dash_\cP d_1 \dash_\cP \cdots \dash_\cP d_k \dash_\cP b$.
\item there exists a subword $ad_1\cdots d_kb$ of $w$ such that $a \pdr d_1 \pdr \cdots \pdr d_k \pdr b$.
\item there exists a subword $ad_1\cdots d_kb$ of $w$ such that $a \pdr d_1 \pdr \cdots \pdr d_k \pdr b$ and $\{a, d_1, \ldots, d_k, b\}$ is a ladder in $\cP$.
\end{enumerate}
\end{lem}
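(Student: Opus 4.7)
The plan is to establish $(1) \Leftrightarrow (2)$ directly from definitions, note the trivial directions $(4) \Rightarrow (3) \Rightarrow (2)$, and then obtain the two nontrivial refinements $(2) \Rightarrow (3)$ and $(3) \Rightarrow (4)$ via successive length-minimization arguments based on two local shortcut lemmas particular to natural unit interval orders. The equivalence $(1) \Leftrightarrow (2)$ is immediate from the defining condition of $\ginv_\cP(w)$ once we use the hypotheses $w^{-1}(a)<w^{-1}(b)$ and $a \pr b$; the directions $(4)\Rightarrow(3)\Rightarrow(2)$ are just the observation that a ladder, read from top to bottom in natural order, produces a strictly decreasing chain with consecutive incomparabilities (a $\pdr$-chain), which in turn is a $\dash_\cP$-chain.

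For $(2) \Rightarrow (3)$ I would take a subword $ad_1\cdots d_k b$ witnessing (2) with $k$ minimal. The key ingredient is the following \emph{zigzag lemma} for a NUIO, expressed through the unit-interval characterization $i \pl j \Leftrightarrow y_i + 1 < y_j$: if $x \dash_\cP y$ and $y \dash_\cP z$ and $x,z$ lie on the \emph{same} side of $y$ in the natural order, then both $y_x,y_z$ belong to the interval $(y_y, y_y+1]$ (or both to $[y_y-1, y_y)$), so $|y_x - y_z| < 1$ and hence $x \dash_\cP z$. Consequently, at any interior index $i$ with $e_{i-1} > e_i < e_{i+1}$ or $e_{i-1} < e_i > e_{i+1}$ in the chain $a = e_0, e_1, \ldots, e_{k+1} = b$, we could remove $e_i$ to obtain a strictly shorter valid chain, contradicting minimality. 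Hence the chain is monotone in the natural order; since $e_0 = a > b = e_{k+1}$, it is strictly decreasing, which is exactly (3).

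For $(3) \Rightarrow (4)$ I would take a chain $a = e_0 \pdr e_1 \pdr \cdots \pdr e_k \pdr e_{k+1} = b$ realizing (3) with $k$ minimal. If $e_i \dash_\cP e_{i+2}$ for some $i$, deleting $e_{i+1}$ preserves both the natural decrease and consecutive incomparability, so such a configuration is ruled out by minimality; thus $e_i \pr e_{i+2}$, i.e., $y_{e_i} - y_{e_{i+2}} > 1$, for every $i$. Combined with $0 < y_{e_j} - y_{e_{j+1}} \leq 1$ from consecutive incomparabilities, a one-line telescoping yields $y_{e_i} - y_{e_j} > 1$ whenever $j - i \geq 2$, so $e_i \pr e_j$ for all such pairs. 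This says exactly that $\{e_0, \ldots, e_{k+1}\}$, written in increasing natural order, is isomorphic to $\cP_{\stair(k+1), k+2}$, i.e., a ladder, proving (4). I do not anticipate a serious obstacle: once the zigzag lemma and the skip-$2$ shortcut are in place, everything reduces to routine minimization; the only point to keep in mind is that the arguments genuinely use the NUIO hypothesis (via the $y_i$-coordinates), not merely Condition $(\pitchfork)$.
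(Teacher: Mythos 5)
Your proof is correct, and it takes a somewhat different route from the paper's. You split $(2)\Rightarrow(4)$ into two minimization arguments: first shrink the $\dash_\cP$-chain until it is monotone in the natural order (via your zigzag lemma), giving $(3)$, and then shrink again until no two-step shortcut $e_i \dash_\cP e_{i+2}$ exists, and deduce the ladder structure by telescoping the $y$-coordinate estimates. The paper instead proves $(2)\Rightarrow(4)$ in a single pass: it runs a greedy cleanup that at each step replaces $d_{i+1}$ by the \emph{last} element of the remaining subword incomparable to $d_i$, then shows the resulting chain is already a strictly decreasing $\pdr$-chain (the analog of your zigzag observation, argued via \cond{}) and that the greedy "last incomparable" choice automatically forces $d_{i-1}\pr d_{i+1}$, hence the ladder. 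The paper's version is slightly more economical and stays entirely inside the \cond{} framework; your version makes the metric intuition explicit through the $y_i$-coordinates and separates the two structural facts cleanly, which is perhaps easier to follow. One small overstatement in your closing remark: you say the arguments "genuinely use the NUIO hypothesis via the $y_i$-coordinates, not merely Condition $(\pitchfork)$," but the paper's proof shows that \cond{} together with the linearization hypothesis suffices — indeed these two properties characterize NUIOs, so the coordinate language is a convenience rather than a necessity. Everything else checks out: your zigzag lemma is correct (both witnesses land in a half-open unit interval on one side of $y_y$), and the telescoping $y_{e_i}-y_{e_j}=(y_{e_i}-y_{e_{i+2}})+(y_{e_{i+2}}-y_{e_j})>1+0$ for $j-i\geq 2$ correctly identifies the ladder order type.
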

\begin{proof}
(1) $\Leftrightarrow$ (2) by definition, and (4) $\Rightarrow$ (3) $\Rightarrow$ (2) is clear. It remains to show (2) $\Rightarrow$ (4). To this end, suppose that $ad_1\cdots d_kb$ is a subword of $w$ such that $a \dash_\cP d_1 \dash_\cP \cdots \dash_\cP d_k \dash_\cP b$ and we construct another subword satisfying the condition of (4).

For simplicity we set $d_0=a$ and $d_{k+1}=b$. First, let $d_i$ be the last element in the subword $d_0d_1\cdots d_kd_{k+1}$ such that $d_0\dash_\cP d_i$. By removing $d_1, d_2, \ldots, d_{i-1}$ if necessary, without loss of generality we may assume that it is $d_1$. Similarly we let $d_j$ be the last element in the subword $d_0d_1\cdots d_kd_{k+1}$ such that $d_1 \dash_\cP d_j$. By removing $d_2, \ldots, d_{j-1}$ if necessary, without loss of generality we may assume that it is $d_2$. We iterate this process, and thus we may assume that $d_{i+1}$ is the last element in $d_0d_1\cdots d_kd_{k+1}$ such that $d_i \dash_\cP d_{i+1}$ for $i \in [0,k]$.

We claim that $d_i \pdr d_{i+1}$ for any $i \in [0,k]$. For the sake of contradiction suppose otherwise, i.e. $d_i \pdl d_{i+1}$ and let $i \in [0,k]$ be the smallest element such that $d_i \pdl d_{i+1}$. If $i >0$, then $d_{i-1} \pdr d_i$ by assumption and thus $d_{i-1} \dash_\cP d_{i+1}$ by \cond{}, which contradicts the fact that $d_{i}$ is the last element satisfying $d_{i-1} \dash_\cP d_i$. If $i=0$, i.e. $a \pdl d_1$, then we claim that $a \pl d_j$ for any $j \geq 2$, which in particular contradicts the fact that $a \pr b=d_{k+1}$. Thus suppose otherwise. Then there exists $j$ such that $a \pr d_j$, and we choose such $j$ to be the minimum one. Then we have $a<d_{j-1}$, $d_{j-1} \dash_\cP d_j$, and $a \pr d_j$, which contradicts \cond{}. (Note that this argument is valid even when $j=2$.)

Now for any $i \in [1,k]$, we have $d_{i-1} \pdr d_i \pdr d_{i+1}$, and thus $d_{i-1}\pr d_{i+1}$ since again $d_i$ is the last element satisfying $d_{i-1} \dash_\cP d_i$. But this means that $\{d_0,d_1, \ldots, d_k, d_{k+1}\}$ is a ladder in $\cP$, which is what we want to prove.
\end{proof}

%
%
%
%
%

Let us start proving Proposition \ref{prop:ght}. Let $w = \cdots xyz \cdots$ and $w'=\cdots x'y'z' \cdots$ such that the $\cP$-Knuth move connecting $w$ and $w'$ shuffles $\{x, y, z\}=\{x',y',z'\}$. 
Suppose that $\cI\colonequals(p_1, \ldots, p_s, r_1, \ldots, r_u, q_1, \ldots, q_t)$ is the longest subword of genuine $\cP$-inversions in $w$ such that $\und{\cI} \cap \{x, y, z\} = \{r_1, \ldots, r_u\}$. In particular we have $\ght_\cP(w) = s+t+u$. Note that $s, t \geq 0$ and $u\in [0,2]$. ($u$ cannot be 3 since otherwise $x\pr y\pr z$ in which case there is no $\cP$-Knuth move shuffling $x,y,z$.) 

We may assume $s, t \geq 2$. Indeed, we add $n+1, n+2$ and $0, -1$ to the poset $([1,n], \cP)$ so that $n+2 \pr n+1 \pr i \pr 0 \pr -1$ for any $i \in [1,n]$. Then we replace $w$ and $w'$ with $(n+2, n+1)+w+(0, -1)$ and $(n+2, n+1)+w'+(0, -1)$, respectively. After this procedure, $\cP$ still remains to avoid $\cP_{(3,1,1),5}$ and $\cP_{(4,2,1,1), 6}$. Also, any longest subword of genuine $\cP$-inversions in $w$ and $w'$ contains $n+2, n+1, 0,$ and $-1$, which in particular increases $s$ and $t$ by 2, respectively. (Here, we have $p_1=n+2, p_2=n+1, q_{t-1}=0,$ and $q_t=-1$.)

We will construct the subword $\cI'$ of genuine $\cP$-inversions of length $s+t+u$ on $w'$, which contains $p_1, \ldots, p_{s-1}, q_2, \ldots, q_t$. It means in particular that $\ght_\cP(w') \geq s+t+u$, which proves the claim by symmetry. Note that we possibly change only $p_s, r_1, \ldots, r_u, q_1$ part from $\cI$ to obtain $\cI'$. Therefore, the entries before $p_{s-1}$ and after $q_2$ in $w$ and $w'$ do not affect this process. By removing such entries if necessary, it suffices to assume that $s=t=2$, i.e. we have $\cI=(p_1, p_2, r_1, \ldots, r_u, q_1, q_2)$. 

Let $a, b, c \in [1,n]$ be such that $a<b<c$ and $\{a, b, c\}=\{x,y,z\}=\{x',y',z'\}$. From now on we argue case-by-case based on $\cP|_{\{a, b, c\}}$ and $u\in [0,2]$.

\subsubsection{$\cP|_{\{a, b, c\}} \simeq \cP_{\emptyset,3}$} There is no $\cP$-Knuth move in this case, so there is nothing to prove.

\subsubsection{$\cP|_{\{a,b,c\}}\simeq \cP_{(1),3}$}
We have $a\pdl b \pdl c$ and $a \pl c$. There is only one $\cP$-Knuth move in this case: $[\cdots bca \cdots] \pkm [\cdots cab \cdots]$.

\noindent{\bfseries \textbullet\ $u=0$ case.} In this case we have $\cI=p_1p_2q_1q_2$ and in particular $(p_2,q_1) \in \ginv_\cP(w)$. If $(p_2, q_1) \in \ginv_\cP(w')$ then we are done since we may set $\cI'=\cI$. From now on we assume $(p_2, q_1) \not\in \ginv_\cP(w')$.

First consider the case when $w=[\cdots bca \cdots]$ and $w'=[\cdots cab \cdots]$. By Lemma \ref{lem:ght}, there exists a subword $\cJ\colonequals p_2d_1\cdots d_kq_1$ ($k\geq 1$) in $w'$ where $p_2 \pdr d_1\pdr \cdots \pdr d_k\pdr q_1$ and $\und{\cJ}$ is a ladder in $\cP$. As $(p_2,q_1) \in \ginv_\cP(w)$, it means that the order of some elements in $\cJ$ should be switched under the $\cP$-Knuth move so that it no longer prohibits $(p_2, q_1)$ from being a genuine $\cP$-inversion of $w$. Then the only possibility is that $\cJ=p_2d_1\cdots d_{v-1} cb d_{v+2} \cdots d_kq_1$, i.e. $d_v=c$ and $d_{v+1}=b$ for some $v \in [1,k-1]$. 

We claim that we may choose $\cI'\colonequals p_1p_2aq_2$. To this end we need to check that $(p_2,a), (a, q_2) \in \ght_\cP(w')$. First if $(p_2,a) \not\in \ght_\cP(w')$ then as $p_2>c\pr a$ there exists a subword $p_2e_1\cdots e_la$ of $w'$ such that $p_2 \dash_\cP e_1 \dash_\cP \cdots \dash_\cP e_l \dash_\cP a$. Since $c \pr a$, $e_l$ appears before $c$ in $w'$ which means that $p_2e_1\cdots e_la$ is also a subword of $w$. However, this contradicts $(p_2, q_1) \in \ght_\cP(w)$ as $a \dash_\cP b \dash_\cP d_{v+2} \dash_\cP \cdots \dash_\cP d_{k} \dash_\cP q_1$. 

This time suppose that $(a,q_2) \not\in \ght_\cP(w')$. Since $b>q_1 \pr q_2$ and $b \pdr a$, by \cond{} we should have $a>q_2$. First suppose that $a \pdr q_2$. If $v<k-1$ then $b \pr q_1 \pr q_2$ and $b\pdr a\pdr q_2$, which contradicts Lemma \ref{lem:3122}. Thus $v=k-1$, i.e. $\cJ=\cdots cbq_1$. However, direct calculation shows that $\cP$ restricted to $\{c, b, q_1, a, q_2\}$ is isomorphic to $\cP_{(3,1,1),5}$, which is again a contradiction.

Thus we have $a \pr q_2$, and by Lemma \ref{lem:ght} there exists a subword $ae_1\cdots e_lq_2$ of $w'$ ($l\geq 1$) such that $a \pdr e_1 \pdr \cdots \pdr e_l \pdr q_2$ and $\{a, e_1, \ldots, e_l, q_2\}$ is a ladder in $\cP$. By Lemma \ref{lem:ladjoin} it follows that $\{c,b,a, e_1, \ldots, e_l, q_2\}$ is also a ladder in $\cP$. By assumption we have $c \pr q_1 \pr q_2$, and also $q_1 \not \in \{c,b,a, e_1, \ldots, e_l, q_2\}$; it is clear that $q_1\neq c,b,a, q_2$, and if $q_1=e_i$ for some $i$ then it means that $(q_1, q_2) \not\in \ght(w)$. In other words, $q_1$ is climbing the ladder $\{c,b,a, e_1, \ldots, e_l, q_2\}$, which contradicts the assumption on $\cP$.

It remains to consider the case when $w=[\cdots cab \cdots]$ and $w'=[\cdots bca \cdots]$. By arguing similarly, we should be able to find a subword $\cJ=p_2d_1\cdots d_{v-1}bad_{v+2}\cdots d_kq_1$ of $w'$ with the same properties as above. Then we may choose $\cI'=p_1cq_1q_2$, which can also be shown analogously. We omit the details.

\noindent{\bfseries \textbullet\ $u=1$ case.} We have $\cI=p_1p_2r_1q_1q_2$ where $r_1 \in \{a,b,c\}$. First consider the case when $w=[\cdots bca \cdots]$ and $w'=[\cdots cab \cdots]$. Using Lemma \ref{lem:ght}, one can easily show that if $r_1=a$ then we may set $\cI'=\cI$ and we are done. 

Now suppose that $r_1=b$. 
We claim that we may choose $\cI'=p_1p_2aq_1q_2$. To this end we need to verify that $(p_2,a), (a, q_1) \in \ght_\cP(w')$. First if $(p_2, a) \not\in\ght_\cP(w')$ then as $p_2\pr b > a$ there exists a subword $p_2e_1\cdots e_la$ of $w'$ such that $p_2 \pdr e_1\pdr \cdots \pdr e_l \pdr a$. Since $b\pdr a$, either $e_l=b$ or $e_l \dash_\cP b$ by \cond{}, which means that  $(p_2, b) \not\in \ght_\cP(w)$. This is a contradiction and thus we have $(p_2, a) \in\ght_\cP(w')$. On the other hand, since $b\pr q_1$ and $b\pdr a$ we have $a>q_1$ by \cond{}. Now if $(a, q_1) \not\in \ght_\cP(w')$, i.e. there exists a subword $ae_1\cdots e_lq_1$ ($l\geq 0$) of $w'$ such that $a\pdr e_1 \pdr \cdots \pdr e_l \pdr q_1$ (note that $e_1$ cannot be $b$), then since $b \pdr a$ it follows that $(b, q_1) \not\in \ght_\cP(w)$. This is a contradiction, and thus we have $(a, q_1) \in \ght_\cP(w')$ as desired.

If $r_1=c$, then by the same argument one can easily show that we may choose $\cI'=p_1p_2bq_1q_2$. We omit the details.

%
%

It remains to consider the case when $w=[\cdots cab \cdots]$ and $w'=[\cdots bca \cdots]$. Similarly to above, we may set $\cI'=p_1p_2cq_1 q_2$ if $r_1\in \{b,c\}$ and $\cI'=p_1q_2bq_1q_2$ if $r_1=a$. Again we omit the details.

\noindent{\bfseries \textbullet\ $u=2$ case.} We have $\cI = p_1p_2caq_1q_2$. Then using Lemma \ref{lem:ght}, one can easily show that $\cI'=\cI$ satisfies the desired properties.

\subsubsection{$\cP|_{\{a,b,c\}}\simeq \cP_{(1,1),3}$} 
We have $a \pl b \pdl c$ and $a \pl c$. Here we have two kinds of $\cP$-Knuth moves: $[\cdots bca \cdots]\pkm[\cdots bac \cdots]$ and $[\cdots cba \cdots]\pkm[\cdots cab \cdots]$.

\noindent{\bfseries \textbullet\ $u=0$ case.} In this case we have $\cI=p_1p_2q_1q_2$ and in particular $(p_2,q_1) \in \ginv_\cP(w)$. Since the relative position of $b$ and $c$ does not change by the $\cP$-Knuth moves in this case, one can easily show that $(p_2, q_1) \in \ginv_\cP(w')$ by using Lemma \ref{lem:ght}. Then we are done since we may set $\cI'=\cI$. 

\noindent{\bfseries \textbullet\ $u=1$ case.} We have $\cI=p_1p_2r_1q_1q_2$ where $r_1 \in \{a,b,c\}$. Again, since the relative position of $b$ and $c$ does not change by the $\cP$-Knuth moves in this case, one can easily show that we may set $\cI'=\cI$ by using Lemma \ref{lem:ght}.

\noindent{\bfseries \textbullet\ $u=2$ case.} We have $\cI = p_1p_2r_1r_2q_1q_2$ where either $(r_1, r_2) = (b,a)$ or $(r_1, r_2) = (c,a)$. Since the relative position of $b$ and $c$ does not change by the $\cP$-Knuth moves in this case, we only need to consider the situations when the relative position of $r_1$ and $r_2$ changes under the moves. We have two cases to consider.

The first case is when $\cI=p_1p_2baq_1q_2$, $w=[\cdots cba \cdots]$, and $w'=[\cdots cab \cdots]$. We claim that we may choose $\cI'=p_1p_2 caq_1q_2$. To this end we need to check $(p_2,c) \in \ght_\cP(w')$. (It is trivial that $(c,a) \in \ght_\cP(w')$ in this case.) Since $p_2 \pr b$ and $c \pdr b$, we have $p_2>c$ by \cond{}. Thus if $(p_2,c) \not\in \ght_\cP(w')$ then there exists a subword $p_2e_1\cdots e_lc$ ($l\geq 0$) of $w'$ such that $p_2 \pdr e_1 \pdr \cdots \pdr e_l \pdr c$. However, since $c \pdr b$, it implies $(p_2,b) \not\in \ght_\cP(w)$, which is a contradiction.

The remaining case is when $\cI=p_1p_2caq_1q_2$, $w=[\cdots bca \cdots]$, and $w'=[\cdots bac \cdots]$. Here one can show that we may choose $\cI'=p_1p_2baq_1q_2$ similarly to above. We omit the details.

\subsubsection{$\cP|_{\{a,b,c\}}\simeq \cP_{(2),3}$} This case is completely analogous to the above case when $\cP|_{\{a,b,c\}}\simeq \cP_{(1,1),3}$ if one ``reverses'' the words therein. We again omit the details.

\subsubsection{$\cP|_{\{a,b,c\}}\simeq \cP_{(2,1),3}$} We have $a\pl b\pl c$. Here we have two kinds of $\cP$-Knuth moves: $[\cdots bca \cdots]\pkm[\cdots bac \cdots]$ and $[\cdots acb \cdots]\pkm[\cdots cab \cdots]$.

\noindent{\bfseries \textbullet\ $u=0$ case.} In this case we have $\cI=p_1p_2q_1q_2$. It is easy to show that $(p_2, q_1) \in \ght_\cP(w)$ implies $(p_2, q_1) \in \ght_\cP(w')$, and thus we may set $\cI'=\cI$. 

\noindent{\bfseries \textbullet\ $u=1$ case.} We have $\cI=p_1p_2r_1q_1q_2$ where $r_1 \in \{a,b,c\}$. Again, one can easily show that we may set $\cI'=\cI$.

\noindent{\bfseries \textbullet\ $u=2$ case.} We have $\cI = p_1p_2r_1r_2q_1q_2$ where $(r_1, r_2) \in \{(b,a), (c,a), (c,b)\}$. Here we only need to consider the situations when the relative position of $r_1$ and $r_2$ changes under the $\cP$-Knuth moves. We have two cases to consider.

The first case is when $\cI = p_1p_2caq_1q_2$, $w=[\cdots bca \cdots]$, and $w'=[\cdots bac \cdots]$. Here we may choose $\cI'=p_1p_2baq_1q_2$. Trivially $(b,a) \in \ght_\cP(w')$, so it suffices to check that  $(p_2,b) \in \ght_\cP(w')$. If we suppose otherwise then since $p_2 \pr c \pr b$ there exists a subword $p_2e_1\cdots e_lb$ ($l>0$) in $w'$ such that $p_2\pdr e_1 \pdr \cdots \pdr e_l \pdr b$ and $\{p_2, e_1, \ldots, e_l, b\}$ is a ladder in $\cP$. However, $c \not \in \{p_2, e_1, \ldots, e_l, b\}$ and $p_2 \pr c \pr b$ which means that $c$ is climbing the aforementioned ladder, which is a contradiction. Thus we have $(p_2,b) \in \ght_\cP(w')$.

The remaining case is when  $\cI = p_1p_2caq_1q_2$, $w=[\cdots cab \cdots]$, and $w'=[\cdots acb \cdots]$. Here we may choose $\cI'=p_1p_2cbq_1q_2$ similarly to above. We omit the details.

We exhausted all the cases and thus conclude the statement.

\subsection{Proof of Proposition \ref{prop:column}\ref{mainprop1}} 
Here, we write $(\cdots \und{abc} \cdots) \pkm (\cdots a'b'c' \cdots)$ for $\{a', b' , c'\}=\{a,b,c\}$ to indicate the location of a $\cP$-Knuth move (that is the underlined part). We start with the following lemma. After this, we use wavy underlines, e.g. $\cdots \uwave{x_1\cdots x_k} \cdots \psim \cdots y_1\cdots y_k \cdots$ for $\{x_1, \ldots, x_k\}=\{y_1, \ldots, y_k\}$ to indicate the part where Lemma \ref{lem:simple1} is applied.
\begin{lem} Suppose that a fixed natural unit interval order $\cP$ is given. 
\label{lem:simple1}
\begin{enumerate} 
\item If $b_1\pl b_2\pl\cdots\pl b_k$ and $a<b_1$ then $ab_k\cdots b_1 \psim  b_k \cdots b_2ab_1$.
\item If $b_1\pl b_2\pl \cdots \pl b_k$ and $a>b_k$ then $b_k\cdots b_1a \psim  b_kab_{k-1} \cdots b_1$.
\item If $b_1\pdl b_2 \pdl\cdots  \pdl b_k$, $x\pl y$, and $y<b_1$ then $yxb_k\cdots b_1\psim yb_k\cdots b_1x$.
\item If $b_1\pdl b_2 \pdl\cdots \pdl b_k$, $x\pl y$, and $x>b_k$ then $b_k\cdots b_1yx\psim yb_k\cdots b_1x$.
\end{enumerate}
\end{lem}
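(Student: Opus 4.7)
The plan is to prove each of the four parts by realizing the claimed equivalence as a concatenation of individual $\cP$-Knuth moves, each sliding a single distinguished letter ($a$ in (1) and (2), $x$ in (3), $y$ in (4)) one position at a time through the sequence $b_k,\ldots,b_1$. There are $k-1$ slides for (1) and (2), and $k$ slides for (3) and (4); equivalently, each part can be phrased as a short induction on $k$ with a trivial small-$k$ base.

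Each individual slide is a single $\cP$-Knuth move affecting three consecutive letters. Writing the window in sorted usual order $\alpha<\beta<\gamma$, the moves that arise are: $acb \pkm cab$ for each slide in (1) and for the first slide of (4); $bac \pkm bca$ for each slide in (2) and for the first slide of (3); $cab \pkm cba$ for the intermediate slides of (3); and $bca \pkm cba$ for the intermediate slides of (4). The first of these equivalences appears in cases (3) and (4) of Definition \ref{def:pknuth}; the second in cases (2) and (4); the third in case (2); and the fourth in case (3). In particular, whenever two cases of Definition \ref{def:pknuth} apply to a given slide (depending on whether the moving letter is $\pl$- or $\dash$-related to the relevant $b_i$), both cases define the same three-letter equivalence, so the validity of the slide is uniform across sub-cases.

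The only conditions of Definition \ref{def:pknuth} that are not immediate from the hypotheses are certain comparabilities between the moving letter and each $b_i$, namely $a \pl b_i$ in (1), $b_i \pl a$ in (2), $x \pl b_i$ in (3), and $b_i \pl y$ in (4). Each of these is obtained by a single uniform application of \cond{}. For instance in (3), from $x \pl y$ together with $b_i > y$, \cond{} applied to the pair $(x,y)$ with third element $b_i$ excludes the possibility that $b_i$ is incomparable to both $x$ and $y$ (this would force $x < b_i < y$, contradicting $b_i > y$), so $b_i$ is comparable in $\cP$ to $x$ or to $y$; combined with $x \pl y$ either possibility yields $x \pl b_i$. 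The other three parts reduce to symmetric such applications of \cond{}. The main bookkeeping task is therefore just to identify the window at each slide and check the relevant comparability, after which the four equivalences follow by concatenating the individual moves; there is no essential obstacle beyond this.
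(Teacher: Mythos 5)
Your proposal is correct and follows essentially the same approach as the paper's proof: decompose each equivalence into a chain of single-letter slides, each of which is one $\cP$-Knuth move, and use \cond{} to supply the needed comparabilities between the sliding letter and the $b_i$. The paper simply writes out the chain of moves for (1) and (3) and says (2) and (4) are similar, leaving the case analysis of Definition \ref{def:pknuth} and the \cond{} applications implicit; your more explicit bookkeeping of which Definition cases arise and why they give a uniform three-letter equivalence is a faithful unpacking of the same argument.
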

\begin{proof} For (1), we have 
$
\und{ab_kb_{k-1}}\cdots b_1 \pkm b_k\und{ab_{k-1}b_{k-2}} \cdots b_1 \pkm\cdots\pkm b_k\cdots b_3\und{ab_2b_1}\pkm b_k\cdots b_3b_2ab_1
$. (2) is proved similarly. For (3), we have $\und{yxb_k}\cdots b_1  \pkm y\und{b_kxb_{k-1}}\cdots b_1 \pkm \cdots \pkm yb_k \cdots \und{b_2xb_1} \pkm yb_k \cdots b_2b_1x$. (4) is proved similarly.
\end{proof}

We prove (A) by showing that each step in the column insertion algorithm respects the $\cP$-Knuth equivalence. It is sufficient to consider when the input is given by $(\alpha=(a_m, \ldots, a_1), c=(c_l, \ldots, c_1))$ such that $c_1\pl c_2 \pl \cdots \pl c_l$, $\alpha=\alpha^f$, and only one step of $\alg_\Phi$ is required to obtain the output. We argue case-by-case.

\noindent\und{\csi[(a)].} There is nothing to prove since $\alpha+c=d$ and $\beta=(\vn)$.

\noindent\und{\csi[(b)].} The length of $\alpha$ equals 1, i.e. $\alpha=(a)$ for some $a$. First suppose that  $c_i<a<c_{i+1}$ and $c_i \pl a$ for some $i \in [1,l-1]$. We have
\begin{align*}
&\uwave{ac_l\cdots c_{i+2} c_{i+1}}c_i \cdots c_1 \psim c_l\cdots c_{i+2}\und{ac_{i+1}c_i}c_{i-1} \cdots c_1
\\&\pkm c_l\cdots c_{i+2}a\uwave{c_ic_{i+1}c_{i-1} \cdots c_1}\psim c_l\cdots c_{i+2}ac_i\cdots c_1c_{i+1}.
\end{align*}
which proves the claim.
It remains to show that $ac_l\cdots c_1 \psim c_l\cdots c_2ac_1$ if $c_1 \pl c_2 \pl \cdots \pl c_{l}$ and $a<c_1$, but it follows directly from Lemma \ref{lem:simple1}.

\noindent\und{\csii[(a)].}
It suffices to prove the following lemma.
\begin{lem} \label{lem:simple2} Suppose that $c_1 \pl \cdots \pl c_l$, $c_i<a_1<a_2<\cdots<a_m<c_{i+1}$ for some $i \in [1,l-1]$, and  $\{c_i, a_1, \ldots, a_m, c_{i+1}\}$ is a ladder in $\cP$. Then $a_m\cdots a_1 c_l\cdots c_1 \psim c_l\cdots c_1 a_m \cdots a_1$.
\end{lem}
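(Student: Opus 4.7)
My plan is to reduce the general statement to the base case $l = 2$ via a three-stage ``peeling'' argument built from Lemma~\ref{lem:simple1}, and then to handle the base case by a separate induction on $m$.

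For $l \geq 3$, the reduction proceeds in three stages. In \textbf{Stage A}, for each $j$ running from $l$ down to $i+2$, I apply Lemma~\ref{lem:simple1}(4) to the substring $a_m \cdots a_1 c_j c_{j-1}$ with $b_r = a_r$, $y = c_j$, and $x = c_{j-1}$. The hypotheses are immediate: the $a$'s form a ladder antichain ($a_r \pdl a_{r+1}$), $c_{j-1} \pl c_j$ is given, and $c_{j-1} > a_m$ because $a_m < c_{i+1} \leq c_{j-1}$. Iterating yields $c_l \cdots c_{i+2} \, a_m \cdots a_1 \, c_{i+1} c_i \cdots c_1$. In \textbf{Stage B}, I invoke the lemma in the case $l = 2$ on the middle substring $a_m \cdots a_1 c_{i+1} c_i$, whose underlying ladder is $\{c_i, a_1, \ldots, a_m, c_{i+1}\}$, turning it into $c_{i+1} c_i a_m \cdots a_1$. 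In \textbf{Stage C}, symmetric to Stage A, for each $j$ from $i-1$ down to $1$, I apply Lemma~\ref{lem:simple1}(3) in the reverse direction to the substring $c_{j+1} a_m \cdots a_1 c_j$, with $y = c_{j+1}$ and $x = c_j$; the hypothesis $y < b_1 = a_1$ holds since $c_{j+1} \leq c_i < a_1$. After iteration we reach the target $c_l \cdots c_1 a_m \cdots a_1$. Note that the vacuous cases ($l = i+1$ for Stage A, $i = 1$ for Stage C) pose no issue.

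The remaining work is the base case $l = 2$, where the full ladder is $\{c_1, a_1, \ldots, a_m, c_2\}$ and we must show $a_m \cdots a_1 c_2 c_1 \psim c_2 c_1 a_m \cdots a_1$. I induct on $m$: the case $m = 1$ is a single case~(1) $\cP$-Knuth move of Definition~\ref{def:pknuth}. The inductive step is the main obstacle, since removing any single endpoint from the size-$(m+2)$ ladder destroys the ladder property of the remaining elements (e.g.\ deleting $a_1$ leaves $c_1$ and $a_2$ at ladder distance $2$, so $c_1 \pl a_2$ rather than $\pdl$), foreclosing a naive reduction. Instead, the plan is to construct an explicit sequence of $\cP$-Knuth moves that first migrates $c_2$ from position $m+1$ to the front through a cascade of case~(1) and~(3) moves (temporarily scrambling the internal order of the $a$-block), and then migrates $c_1$ from the tail to position~$2$ through a cascade of case~(1) and~(2) moves (restoring the $a$-block to descending order). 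Each individual move in the sequence acts on three elements whose ladder positions are tightly controlled, so the case of Definition~\ref{def:pknuth} it satisfies can be read off directly from ladder distances; the delicate part is bookkeeping the $a$-block permutation throughout the migration and verifying that every intermediate triple lies within the ladder.
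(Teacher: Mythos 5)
Your reduction stages A, B, C are correct and are exactly the paper's two chains of applications of Lemma~\ref{lem:simple1}(4) and Lemma~\ref{lem:simple1}(3), with the base case sandwiched in between; the hypotheses you check ($c_{j-1} > a_m$ for Stage A, $c_{j+1} < a_1$ for Stage C) are the right ones. Your diagnosis that naive induction on $m$ fails in the base case is also correct, for the reason you give.

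The gap is in the base case itself: you describe \emph{a plan} to construct the explicit move sequence rather than constructing it, and you explicitly defer ``the delicate part,'' which is precisely the substance of the proof. Moreover the two-phase sketch as stated would stall. Migrating $c_2$ from position $m+1$ to position $1$ costs $m-1$ case-(3) moves followed by one case-(1) move, and transposes only the pair $a_m, a_{m-1}$, producing $c_2\,a_{m-1}\,a_m\,a_{m-2}\cdots a_1\,c_1$. At this point the tail triple $a_2\,a_1\,c_1$ is in the pattern $cba$ for a type-(1) triple ($a=c_1$, $b=a_1$, $c=a_2$, with $a\pdl b\pdl c$, $a\pl c$), and $cba$ is not one of the two words admitting a case-(1) move; so no $\cP$-Knuth move exists at the tail, and $c_1$ cannot start migrating. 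The paper resolves this with an entire intermediate sweep of case-(1) moves of the form $a_{j}\,a_{j+1}\,a_{j-1}\pkm a_{j+1}\,a_{j-1}\,a_j$, which bubbles the transposed pair from the front of the $a$-block to its tail, turning the block into $(a_m,\ldots,a_3,a_1,a_2)$; only then is the tail $a_1\,a_2\,c_1 = bca$, a valid case-(1) move, and $c_1$ can migrate via one case-(1) move followed by $m$ case-(2) moves. Your phrase ``temporarily scrambling / restoring the $a$-block'' gestures at this, but the intermediate sweep is a genuine third phase --- it occurs after $c_2$ has stopped moving and before $c_1$ starts --- and identifying and verifying it is the part of the argument that is actually nontrivial. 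Without writing it out and checking each triple against Definition~\ref{def:pknuth}, the base case is unproved.
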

\begin{proof} Since we have
\begin{align*}
&\uwave{a_m\cdots a_1c_lc_{l-1}}c_{l-2} \cdots c_1 \psim c_l\uwave{a_m\cdots a_1c_{l-1}c_{l-2}}c_{l-3}\cdots c_1\psim\cdots 
\\&\psim c_l\cdots c_{i+3}\uwave{a_m\cdots a_1c_{i+2}c_{i+1}}c_i\cdots c_1 \psim c_l\cdots c_{i+2}a_m\cdots a_1c_{i+1}c_i \cdots c_1 \qquad\textnormal{ and}
\\&c_l\cdots c_{i+1}\uwave{c_{i}a_m\cdots a_1c_{i-1}}c_{i-2} \cdots c_1 \psim c_l\cdots c_i\uwave{c_{i-1}a_m\cdots a_1c_{i-2}}c_{i-3} \cdots c_1 \psim \cdots
\\&\psim c_l\cdots c_3\uwave{c_{2}a_m\cdots a_1c_{1}} \psim c_l\cdots c_3c_{2}c_1a_m\cdots a_1,
\end{align*}
it is enough to consider the case when $i=1$ and $l=2$. However, in this case we have
\begin{align*}
&a_m\cdots a_2\und{a_1c_2c_1} \pkm  a_m\cdots a_3\und{a_2c_2a_1}c_1 \pkm \cdots \pkm \und{a_mc_2a_{m-1}} a_{m-2} \cdots a_1 c_1
\\&\pkm c_2\und{a_{m-1}a_ma_{m-2}}a_{m-3} \cdots a_1 c_1 
\pkm \cdots \pkm  c_2a_ma_{m-1}\cdots a_4 \und{a_2a_3a_1} x \pkm  c_2a_ma_{m-1}\cdots a_3\und{a_1a_2 c_1}
\\&\pkm  c_2a_ma_{m-1}\cdots \und{a_3a_2 c_1}a_1\pkm \cdots \pkm  \und{c_2a_mc_1}a_{m-1}\cdots a_1 \pkm  c_2c_1a_m\cdots a_1
\end{align*}
as desired.
\end{proof}

\noindent\und{\csii[(b)].} Suppose that there exist $s,t$ and $0=u(0)<u(1)<\cdots< u(s)=m$ such that $c_{t+i}<a_{u(i-1)+1}<\cdots< a_{u(i)}$ for $i \in [1,s]$. Also we assume that $\{c_{t+1}, \ldots, c_{t+s}, a_1, \ldots, a_{m}\}$ is a ladder in $\cP$. Note that if $t+s<l$ then we may assume that $c_{t+s+1} \pr a_i$ for any $i \in [1,m]$ by maximality in \csii.


We argue by induction on $s\geq 1$. Assume that $s=1$, i.e. $c_{t+1}<a_1<\cdots<a_m$ and $c_{t+2} \pr a_i$ for any $i \in [1,m]$ if $t+1<l$. First if $m=1$, then we have
\begin{align*}
\uwave{a_1 c_l \cdots c_{t+2}}c_{t+1}\cdots c_1 &\psim c_l \cdots c_{t+3}\und{a_1c_{t+2}c_{t+1}}c_t \cdots c_1 
\\&\pkm c_l \cdots c_{t+2}\und{a_1c_{t+1}c_{t}}c_{t-1} \cdots c_1 
\\&\pkm c_l \cdots c_{t+2}a_1\und{c_tc_{t+1}c_{t-1}}c_{t-2} \cdots c_1 \pkm \cdots
\\&\pkm c_l \cdots c_{t+2}a_1c_{t} \cdots \und{c_2 c_{t+1} c_1}
\\&\pkm c_l \cdots c_{t+2}a_1c_{t} \cdots c_2 c_1 c_{t+1}
\end{align*}
which proves the claim. In general, we have
\begin{align*}
a_m \cdots a_1 c_l \cdots c_1 &\psim a_m \cdots a_2 c_l \cdots c_{t+2}a_1c_t\cdots  c_1 c_{t+1}
\\&\psim a_m \cdots a_3 c_l \cdots c_{t+2}a_2c_t\cdots  c_1 a_1c_{t+1} \psim \cdots
\\&\psim c_l \cdots c_{t+2}a_mc_t\cdots  c_1 a_{m-1}\cdots a_1c_{t+1}
\end{align*}
by iterating the above process $m$ times.

Now suppose that the induction step is valid up to $s-1$. Then we have
\begin{align*}
&a_{u(s)}\cdots a_{1}c_l\cdots c_1 
\\&\psim a_{u(s)}\cdots a_{u(1)+1}c_l\cdots c_1 a_{u(1)} \cdots a_1
\\&\psim c_l \cdots c_{t+s+1}a_{u(s)}a_{u(s-1)}\cdots a_{u(2)}c_{t+1} \cdots c_1 + a_{u(s)-1}\cdots a_{u(s-1)+1}c_{t+s}
\\&\qquad + a_{u(s-1)-1} \cdots a_{u(s-2)+1} c_{t+s-1}+\cdots+ a_{u(2)-1}\cdots a_{u(1)+1}c_{t+2}a_{u(1)} \cdots a_1
\\&\psim a_{u(s)-1}\cdots a_{u(s-1)+1}c_{t+s} + a_{u(s-1)-1} \cdots a_{u(s-2)+1} c_{t+s-1}+\cdots
\\&\qquad + a_{u(2)-1}\cdots a_{u(1)+1}c_{t+2}a_{u(1)} \cdots a_1 + c_l \cdots c_{t+s+1}a_{u(s)}a_{u(s-1)}\cdots a_{u(2)}c_{t+1} \cdots c_1
\\&\psim a_{u(s)-1}\cdots a_{u(s-1)+1}c_{t+s} + a_{u(s-1)-1} \cdots a_{u(s-2)+1} c_{t+s-1}+\cdots+ a_{u(2)-1}\cdots a_{u(1)+1}c_{t+2} 
\\&\qquad + c_l \cdots c_{t+s+1}a_{u(s)}a_{u(s-1)}\cdots a_{u(2)}a_{u(1)}c_t \cdots c_1 + a_{u(1)} \cdots a_1c_{t+1}
\\&\psim  c_l \cdots c_{t+s+1}a_{u(s)}a_{u(s-1)}\cdots a_{u(2)}a_{u(1)}c_t \cdots c_1+ a_{u(s)-1}\cdots a_{u(s-1)+1}c_{t+s}
\\&\qquad + a_{u(s-1)-1} \cdots a_{u(s-2)+1} c_{t+s-1}+\cdots+ a_{u(2)-1}\cdots a_{u(1)+1}c_{t+2} + a_{u(1)} \cdots a_1c_{t+1}
\end{align*}
which completes the induction step. (Here, the second $\sim_\cP$ is from induction assumption, the fourth one is from $s=1$ case, and the others are from Lemma \ref{lem:simple2}.)

\subsection{Proof of Proposition \ref{prop:column}\ref{mainprop2}}
 We argue by induction on $l=|c|$. First suppose that $l=1$. If $a_1$ is in \csi{}, then it should be in $\csi[(b)]$ by assumption and the result is trivial. If $a_1$ is in \csii{}, then the only possible case is when $a_1$ is in \csii[(b)] and either $a_1$ is not processed in the same step as $a_2$ or $\alpha=(a_1)$. Again the result is trivial in this case.

From now on suppose that $l\geq 2$ and the result holds up to $l-1$. Note that $a_1 <c_2$ since otherwise $a_1 \pr c_1$ by \cond{}. If $a_1<c_1$, then $a_1$ is in \csi[(b)] and $a_1$ bumps $c_1$. Then $a_1 \pl a_j$ for any $j \in [2,m]$ and $a_i \pl c_j$ for $j \in [2,l]$, and thus we may apply induction assumption on $(a_m,\ldots, a_2)$ and $(c_l,\ldots, c_2)$ to prove the claim. Thus it suffices to assume that $c_1<a_1<c_2$. Since $a_1 \not\pr c_1$, it implies that $a_1 \pdr c_1$, and thus $a_1$ is in \csii{}. Suppose that $a_1$ is in \csii[(b)] and $a_1, \ldots, a_k$ are processed in the same step but $a_{k+1}$ is not. As $a_1 \pl \cdots \pl a_k$, it means that $a_1, \ldots, a_k$ bump $c_1, \ldots, c_k$, respectively, and we may apply induction hypothesis to $(a_m, \ldots, a_{k+1})$ and $(c_l, \ldots, c_{k+1})$ similarly to above.

It remains to assume that $a_1$ is in \csii[(a)]. Since $a_1 \pl a_2$, this only happens when $a_1 \pdl c_2$. However, as $a_1 \pr a_2$ and $a_2 \not \pr c_2$, this forces that $a_1<c_2<a_2$ and $a_2 \pdr c_2$ by \cond{}. In other words, $\{c_1, a_1, c_2, a_2\}$ is a ladder in $\cP$. Thus by the assumption that $a_1$ is in \csii[(a)] we have $a_2 \pdl c_3$. We may iterate this argument and observe that $\{c_1, a_1, c_2, a_2, \ldots, c_l, a_l\}$ is a ladder. However, in this case $a_1$ is in \csii[(b)], which is a contradiction. Thus $a_1$ cannot be in \csii[(a)] and the claim is proved.

\subsection{Proof of Proposition \ref{prop:column}\ref{mainprop3}} 
We assume that the input is given by $(\alpha=(a_m, \ldots, a_1), c=(c_l, \ldots, c_1))$ and the output is given by $(-, \beta)$ where $\beta=(b_m, \ldots, b_1)$. Furthermore, we suppose that $a_i$ is processed in the first step of the algorithm. For example, if $a_i$ is in \csi[(a)] or \csi[(b)] then it means that $a_j=\vn$ for $j<i$.

In order to prove \ref{mainprop3}, we need to show that $b_i \neq \vn$ and $b_i \pl b_{i+1}$ under the assumption that $a_i, a_{i+1} \neq \vn$, $a_i \pl a_{i+1}$, and $b_{i+1}\neq \vn$. 
First we consider the case when $a_i$ and $a_{i+1}$ are processed in the same step. If this step is in \csii[(a)], then $a_i=b_i$ and $a_{i+1}=b_{i+1}$ thus the result is obvious. On the other hand, if this step is in \csii[(b)] then one may easily observe that $b_i<a_i<b_{i+1}<a_{i+1}$ and $\{b_i, a_i, b_{i+1},a_{i+1}\}$ is a ladder in $\cP$. In particular we have $b_i \pl b_{i+1}$ as well.

Therefore, it suffices to assume that $a_i$ and $a_{i+1}$ are processed in different steps. We let $\fd=(d_{l'}, \ldots, d_1)$ be the chain obtained after the first step (which processes $a_i$) is performed. (See Figure \ref{fig:descabdd}.) For example, if $a_i$ is in \csi[(a)] then $l'=l+1, d_{l'}=a_i$, and $d_j=c_j$ for $j \in [1,l]$.
\begin{figure}[!htbp]
\begin{tikzpicture}[baseline=(current bounding box.center)]
	\node at (1.5, 0) {$c_1$};
	\node at (1.5, -.5) {$c_2$};
	\node at (1.5, -1) {$\vup$};
	\node at (1.5, -1.5) {$\vup$};
	\node at (1.5, -2) {$c_l$};
	\node at (0, 0) {$a_1$};
	\node at (0, -.5) {$a_2$};
	\node at (0, -1) {$\vup$};
	\node at (0, -1.5) {$a_i$};
	\node at (0, -2) {$a_{i+1}$};
	\node at (0, -2.5) {$\vup$};
	\node at (0, -3) {$\vup$};
	\node at (0, -3.5) {$a_{m}$};
	\draw[rounded corners] (-.5, 0.5) rectangle (.5, -4);
	\node[fill=white] at (0,0.5) {$\alpha$};
	\draw[rounded corners] (1, 0.5) rectangle (2, -2.5);
	\node[fill=white] at (1.5,0.5) {$c$};
	\draw[rounded corners] (-.3, 0.2) rectangle (.3, -1.7);
	\draw[double distance = 1pt,->] (.3,-.75) -- (1,-.75);
\end{tikzpicture} $\quad \textnormal{\Large $\Rightarrow$} \quad $
\begin{tikzpicture}[baseline=(current bounding box.center)]
	\node at (1.5, 0) {$d_1$};
	\node at (1.5, -.5) {$d_2$};
	\node at (1.5, -1) {$\vup$};
	\node at (1.5, -1.5) {$\vup$};
	\node at (1.5, -2) {$\vup$};
	\node at (1.5, -2.5) {$d_{l'}$};
	\node at (0, 0) {$\cancel{a_1}$};
	\node at (0, -.5) {$\cancel{a_2}$};
	\node at (0, -1) {$\vup$};
	\node at (0, -1.5) {$\cancel{a_i}$};
	\node at (0, -2) {$a_{i+1}$};
	\node at (0, -2.5) {$\vup$};
	\node at (0, -3) {$\vup$};
	\node at (0, -3.5) {$a_{m}$};
	\node at (3, 0) {$b_1$};
	\node at (3, -.5) {$b_2$};
	\node at (3, -1) {$\vup$};
	\node at (3, -1.5) {$b_i$};
	\node at (3, -2) {$\vn$};
	\node at (3, -2.5) {$\vup$};
	\node at (3, -3) {$\vup$};
	\node at (3, -3.5) {$\vn$};
	\draw[rounded corners] (-.5, 0.5) rectangle (.5, -4);
	\node[fill=white] at (0,0.5) {$\alpha$};
	\draw[rounded corners] (1, 0.5) rectangle (2, -3);
	\node[fill=white] at (1.5,0.5) {$\fd$};
	\draw[rounded corners] (2.5, 0.5) rectangle (3.5, -4);
	\node[fill=white] at (3,0.5) {$\fb$};
	\draw[rounded corners] (-.35, -1.8) rectangle (.35, -2.85);
	\draw[double distance = 1pt,->] (.35,-2.325) -- (1,-2.325);
\end{tikzpicture} $\quad \textnormal{\Large $\Rightarrow$} \quad \cdots$
\caption{Description of the calculation}\label{fig:descabdd}
\end{figure}

First, we note that $b_i \neq \vn$; otherwise, $a_i$ is in \csi[(a)] which means that $a_i$ becomes the largest element in the chain $\fd$. Since  $a_i \pl a_{i+1}$, it means that $a_{i+1}$ is also in \csi[(a)], which contradicts the assumption that $b_{i+1}\neq \vn$. In particular, it follows that $l=l'$, i.e. the length of $c$ and $\fd$ should be equal.

It remains to show that $b_i \pl b_{i+1}$. First assume that $a_i$ is in \csi[(b)], i.e. there exists $j \in [0,l-1]$ such that $c_j<a_i<c_{j+1}$ and $c_j\pl a_i$. (Here we put $c_0=-\vn$ as before.) In this case $b_i = c_{j+1}$, $d_{j+1}=a_i$, and $d_k=c_k$ for $k \neq j+1$. Now, if $a_{i+1}$ is in
\begin{enumerate}[label=\textbullet, leftmargin=*]
\item \csi[(a)]: this is impossible as we assumed that $b_{i+1} \neq \vn$.
\item \csi[(b)]: $a_{i+1}$ should bump $d_k=c_k$ for some $k \geq j+2$ since $d_{j+1}=a_i \pl a_{i+1}$. As a result, $b_i=c_{j+1} \pl c_k =b_{i+1}$.
\item \csii[(a)]: since $a_i \pl a_{i+1}$, this is only possible when there exists $d_k=c_k$ for some $k\geq j+2$ such that $c_k \pdl a_{i+1}$. As $c_{j+1} \pl c_k$, we have $c_{j+1} \pl a_{i+1}$ by \cond{} applied to $(c_{j+1}, c_k, a_{i+1})$. Therefore we have $b_i = c_{j+1} \pl  a_{i+1} = b_{i+1}$.
\item \csii[(b)]: it is shown in the same way as \csii[(a)].
\end{enumerate}

Now we assume that $a_i$ is in \csii[(a)], i.e. $a_i=b_i$ and $d=\fd$. Then for any $x\geq a_{i+1}$ we have $a_i=b_i \pl x$ by \cond{} applied to $(a_i, a_{i+1}, x)$, and thus the only nontrivial case occurs when $a_{i+1}$ is in \csii[(b)]. However, this is only possible when there exists $j \in [1,l]$ such that $a_i\pdl c_j \pdl a_{i+1}$, in which case $a_i$ and $a_{i+1}$ should be processed in the same step because of the maximality in \csii. This violates the aforementioned assumption.

Lastly we assume that $a_i$ is in \csii[(b)], i.e. $a_i \pdr b_i$. Again, by \cond{} the only nontrivial case occurs when $a_{i+1}$ is in \csii[(b)] and this is only possible when there exists $j \in [1,l]$ such that $a_i\pl c_j=d_{j} \pdl a_{i+1}$. In this case we have $b_{i+1}=c_j$ and thus $b_i \pl c_j=b_{i+1}$ by \cond{} applied to $(b_i, a_i, c_j)$.

We exhaust all the possibilities and thus completed the proof of \ref{mainprop3}.

\subsection{Proof of Proposition \ref{prop:column}\ref{mainprop4}}
We keep the setup in the proof of\ref{mainprop3} above.
First we consider the case when $a_i$ and $a_{i+1}$ are processed in the same step. If this step is in \csii[(a)], then $a_i=b_i$ and $a_{i+1}=b_{i+1}$ thus the result is obvious. On the other hand, if this step is in \csii[(b)] then one may easily observe that $b_i<a_i=b_{i+1}<a_{i+1}$ and $\{b_i, a_i=b_{i+1},a_{i+1}\}$ is a ladder in $\cP$. In particular we have $b_i \pdl b_{i+1}$ as well. Therefore, it suffices to assume that $a_i$ and $a_{i+1}$ are processed in different steps.

We suppose that $b_{i+1}=\vn$, i.e. $a_{i+1}$ is in \csi[(a)]. Since $a_i \not\pl a_{i+1}$, it follows that $d_{l'} \neq a_i$, i.e. $l=l'$ and $d_l=c_l$ (and also $l\neq 0$). However, it is only possible when $a_i<c_l$ and $c_l \pl a_{i+1}$, which implies $a_i \pl a_{i+1}$ by \cond{} applied to $(a_i, c_l, a_{i+1})$. This violates our assumption, and thus $b_{i+1} \neq \vn$ as expected.

It remains to show that $b_i \not\pl b_{i+1}$ if $b_i\neq \vn$, i.e. the length of $c$ is equal to $\fd$. We first assume that $a_i$ is in \csi[(b)], i.e. there exists $j\in [0,l-1]$ such that $c_j<a_i<c_{j+1}$ and $c_j \pl a_i$. (Here we put $c_0=-\vn$ as before.) In this case we have $b_i =c_{j+1}>a_i$, $d_{j+1}=a_i$, and $d_k=c_k$ for any $k \neq j+1$. As $b_i \not\pl a_{i+1}$ by \cond{} applied to $(a_i, b_i=c_{j+1}, a_{i+1})$, it follows that $b_i \not\pl x$ for any $x \leq a_{i+1}$ again by \cond{} applied to $(b_i, x, a_{i+1})$. Therefore the only nontrivial case occurs when $a_{i+1}$ is in \csi[(b)] as well. This is only possible when either [there exists $k\in[0,j-1]$ such that $c_k<a_{i+1}<c_{k+1}$ and $c_k \pl a_{i+1}$] or [$c_{j}<a_{i+1}<d_{j+1}=a_i$ and $c_j \pl a_{i+1}$]. Thus $b_{i+1}$ is equal to either $c_{k+1}$ for $k<j$ or $a_i$. In either case, we have $b_i=c_{j+1}\not\pl b_{i+1}$.

Let us assume that $a_i$ is in \csii[(a)], i.e. $c=\fd$. As above, the only nontrivial case occurs when $a_{i+1}$ is in \csi[(b)], i.e. there exists $j \in [0,l-1]$ such that $c_j<a_{i+1}<c_{j+1}$ and $c_j \pl a_{i+1}$, in which case $b_{i+1}=c_{j+1}$. Now if $a_i = b_i \pl b_{i+1}=c_{j+1}$, then as $a_i$ is in \csii[(a)] there exists $k <j+1$ such that $a_i\pdl c_k$. But this is contradiction since $c_j \pl a_{i+1}$ implies $c_k \pl a_{i+1}$, which means $a_i \pl a_{i+1}$ by \cond{} applied to $(a_i, c_k, a_{i+1})$.

Lastly, we assume that $a_i$ is in \csii[(b)] so that $a_i \pdr b_i$. As $a_i$ and $a_{i+1}$ are not processed in the same step, $a_i$ should be in $\fd$. If $a_{i+1}$ is in
\begin{enumerate}[label=\textbullet, leftmargin=*]
\item \csi[(a)]: this is impossible as we assumed that $b_{i+1}=\vn$.
\item \csi[(b)]:  since $a_i \not \pl a_{i+1}$, $a_{i+1}$ either bumps $a_i$ or some element above $a_i$ in the chain $\fd$, i.e. $b_{i+1}=a_i$ or $b_{i+1}\pl a_i$. In either case, we should have $b_i \not\pl b_{i+1}$ since otherwise $a_i \pr b_i$.
\item \csii[(a)]: suppose that $b_{i}\pl b_{i+1}=a_{i+1}$. As $b_i \pdl a_i$, we have $a_i<a_{i+1}$ by \cond{} applied to $(b_i, a_i, a_{i+1})$, and thus  $\{b_i, a_i, a_{i+1}\}$ is a ladder in $\cP$. However, this violates the maximality of \csii{} as $a_i$ and $a_{i+1}$ are not processed in the same step. Thus we should have $b_{i}\not\pl b_{i+1}=a_{i+1}$
\item \csii[(b)]: %
$b_{i+1}$ is some element in the chain $d'$ satisfying $a_{i+1} \pdr b_{i+1}$. If $b_i \pl b_{i+1}$, then we should have $b_i<a_i<b_{i+1}<a_{i+1}$ by \cond{} applied to $(b_i, a_i, b_{i+1})$ and also $b_i \pl a_{i+1}$  by \cond{} applied to $(b_i, b_{i+1}, a_{i+1})$.
%
However, in such a case $\{b_i, a_i, a_{i+1}\}$ is a ladder in $\cP$, which violates the maximality in \csii. Thus we have $b_i \not\pl b_{i+1}$
\end{enumerate}

We exhaust all the possibilities and thus completed the proof of \ref{mainprop4}.

\subsection{Proof of Proposition \ref{prop:phiinj}}  \label{sec:proofinj}

Hereafter we write $\hh{\vn}=\vn$ and $\hh{a} \colonequals n+1-a$ for $a \in [1,n]$. ($n$ is assumed to be fixed.) We define a new order $\ccP$ on $[1,n]$ such that $a\cpr b$ if and only if $\hh{a} \pl \hh{b}$. Then one can easily check that $\cP=\cP_{\lambda,m}$ if and only if $\ccP=\cP_{\lambda', m}$. Also, from now on we write $\fC^\cP, \fC^\ccP, \Phi^\cP, \Psi_X^\ccP$, etc. to clarify which partial order is used in their definitions. We define 
\begin{align*}
\hh{\bullet}:&\ \fA \rightarrow \fA: \alpha=(a_m, \ldots, a_1) \mapsto \hh{\alpha}=(\hh{a_1}, \ldots, \hh{a_m})
\\\hh{\bullet}:&\ \fC^\cP \rightarrow \fC^\ccP: c=(c_l, \ldots, c_1) \mapsto \hh{c}=(\hh{c_1}, \ldots, \hh{c_l})
\end{align*}
and $\hh{\bullet}: \fC^\ccP \rightarrow \fC^\cP$ similarly. Also we set 
\begin{align*}
\omega:\fC^\cP\fA \rightarrow \fA\fC^\ccP: (c, \alpha) \mapsto (\hh{\alpha}, \hh{c}),
\end{align*}
and $\omega:\fC^\ccP\fA \rightarrow \fA\fC^\cP$ similarly. (Here we abuse notations and denote both functions by $\omega$.) Note that these functions are well-defined bijections.

Suppose that $\Phi^\cP(\alpha,c) = (d, \beta)$ for some $(\alpha,c) \in \fA\fC^\cP$ and $(d,\beta) \in \fC^\cP\fA$ where $\alpha=(a_m, \ldots, a_1)$ and $\beta=(b_m, \ldots, b_1)$. We set $X=\{m+1-i \mid i\in [1,m], a_i\neq \vn, b_i=\vn\}$. (This set records in which step \csi[(a)] occured when calculating $\Phi^\cP(\alpha, c)$.) 
We claim the following.
\begin{lem}\label{lem:invrel} Keep the assumptions above. Then we have $(\alpha,c) = (\omega\circ\Psi^\ccP_X \circ \omega \circ\Phi^\cP)(\alpha, c)=(\omega\circ\Psi^\ccP_X \circ \omega)(d, \beta)$, i.e. $(\hh{c}, \hh{\alpha}) = \Psi^\ccP_X(\hh{\beta}, \hh{d})$.
\end{lem}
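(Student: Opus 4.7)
The plan is to establish the identity by induction on the number of iterations of the outer \texttt{while}-loop of $\alg_\Phi^\cP$, showing that each iteration of $\alg_\Phi^\cP$ run forward on $(\alpha, c)$ corresponds under the hat operation to the matching iteration of $\alg_\Psi^\ccP$ run in reverse time on $(\hh{\beta}, \hh{d})$. Concretely, if the last iteration of $\alg_\Phi^\cP$ processes positions $p, p+1, \ldots, p+q$ (with $q = 0$ outside \csii{}), then the first iteration of $\alg_\Psi^\ccP$ applied to $(\hh{\beta}, \hh{d})$ should process the positions $m+1-(p+q), \ldots, m+1-p$ of $\hh{\beta}$, after which the remaining data is $(\hh{c'}, \hh{\alpha'})$ where $(\alpha', c')$ is what $\alg_\Phi^\cP$ produces after its first iteration on $(\alpha, c)$. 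The induction hypothesis then finishes the proof.

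The central observation is a case-by-case duality. The hat transports $\cP$ to $\ccP$, reverses the numerical order, but preserves ladders; it sends $a \pl b$ to $\hh{a} \cpr \hh{b}$ and sends $a \pdl b$ (with $a < b$) to a $\ccP$-incomparable pair $\hh{a}, \hh{b}$ with $\hh{a} > \hh{b}$. With this dictionary, the cases of $\alg_\Phi^\cP$ pair as follows with cases of $\alg_\Psi^\ccP$. \csi[(a)], which appends $a_p$ to the end of the chain and sets $b_p = \vn$, matches \csiii[(b)] in $\alg_\Psi^\ccP$, which drags $\hh{a_p}$ off the front of the hatted chain into the output at position $m+1-p$; this is the sole reason the index $m+1-p$ is recorded in $X$. \csiii[(a)] ($a_p = \vn$, pass) is self-dual because $m+1-p \notin X$ in that case. \csi[(b)] is self-dual: the bumping operation $a_p \leftrightarrow d_{r+1}$ in $\cP$ reads as the mirror bumping in $\ccP$ with $\hh{a_p}$ and $\hh{b_p}$ exchanging roles in reversed time. \csii[(a)] is self-dual because the condition $a_{p+q} < d_{r+h}$ becomes $\hh{a_{p+q}} > \hh{d_{r+h}}$, exactly the \csii[(a)] trigger in $\ccP$. \csii[(b)] is self-dual because the ladder $\{d_r, \ldots, d_{r+h}, a_p, \ldots, a_{p+q}\}$ is equally a ladder in $\ccP$ with the roles of chain entries and word entries swapped, and the explicit formulas for $b_{p+j}$ and for the updated $d_{r+i}$ are fixed-point under the combined swap.

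For the inductive step, I would first identify, given the last iteration of $\alg_\Phi^\cP$, which case of $\alg_\Psi^\ccP$ is triggered at the corresponding position of $\hh{\beta}$, verify that its preconditions hold using the dictionary above together with the maximality conditions recorded in the remarks after each case, and then check that the hatted update to the chain and the restoration of $\hh{a_p}, \ldots, \hh{a_{p+q}}$ (or $\hh{\vn}$'s, as appropriate) into the word produce $(\hh{c'}, \hh{\alpha'})$. The main obstacle will be \csii[(b)]: one must verify that the maximality of $(h,q)$ in $\alg_\Phi^\cP$ is equivalent to the corresponding maximality in $\alg_\Psi^\ccP$, which requires careful bookkeeping with the indices $u(r-1) < u(r) < \cdots < u(r+h)$ and their hatted counterparts, together with a case split on the boundary conditions ``$r+h = l$'' and ``$p+q = m$'' to confirm that the chain extensions that would violate maximality on one side are matched by blockages on the other. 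Additionally, one must confirm that no \csiii[(b)] is erroneously triggered within the block processed by \csii[(b)] on the $\ccP$ side, which follows because those positions all have $b_i \neq \vn$ and so do not lie in $X$. Once every case is verified, induction yields $\Psi_X^\ccP(\hh{\beta}, \hh{d}) = (\hh{c}, \hh{\alpha})$.
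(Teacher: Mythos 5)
Your overall strategy is the same as the paper's: show that the last iteration of $\alg_\Phi^\cP$ on $(\alpha,c)$ matches, under the hat, the first iteration of $\alg_\Psi^\ccP$ on $(\hh\beta,\hh d)$, verify the case-by-case duality (\csi[(a)] $\leftrightarrow$ \csiii[(b)], \csi[(b)] self-dual, \csii{} self-dual with $<$ reversed), and induct on the number of steps. That is the right skeleton, and your dictionary of cases is correct. However, your framing of the inductive step has a slip and, more importantly, omits the part of the argument that actually requires work.

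The slip: after the first iteration of $\alg_\Psi^\ccP$ on $(\hh\beta,\hh d)$, the remaining state is not ``$(\hh{c'},\hh{\alpha'})$ where $(\alpha',c')$ is what $\alg_\Phi^\cP$ produces after its \emph{first} iteration.'' It should be the hatted version of $(d^\bullet,\beta^\bullet)=\Phi^\cP(\alpha^\bullet,c)$, where $\alpha^\bullet=(a_{m^\bullet},\ldots,a_1)$ drops the entries processed in $\Phi$'s \emph{last} step and $d^\bullet$ is the chain just before that last step. Only then can the induction hypothesis be applied to $\Psi^\ccP_{X^\bullet}(\hh{\beta^\bullet},\hh{d^\bullet})$.

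The substantive gap: you say ``the induction hypothesis then finishes the proof'' once the single-step duality is established, and you flag \csii[(b)] as the main obstacle. But the single-step duality only settles the base case ($m^\bullet=0$). In the inductive step, the hard part is not re-verifying the self-duality of one step; it is showing that the first $\Psi^\ccP$ iteration stops exactly at position $m-m^\bullet$, i.e.\ that it does \emph{not} absorb $\hh{b_{m^\bullet}}$ into the ladder it is processing. Your phrasing ``maximality on one side is matched by blockages on the other'' names the phenomenon but does not address it: the blockage at the $\hh{b_{m^\bullet}}$ end has to be deduced from knowledge of what kind of step processed $a_{m^\bullet}$ in $\Phi^\cP$, the minimality of $m^\bullet$, and the structural facts from Proposition \ref{prop:column}\ref{mainprop3},\ref{mainprop4} relating $a_{m^\bullet}\lessgtr_\cP a_{m^\bullet+1}$ to $b_{m^\bullet}\lessgtr_\cP b_{m^\bullet+1}$. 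The paper's proof devotes a separate sub-case analysis (on whether $a_{m^\bullet}$ was in \csi[(b)], \csii[(a)], or \csii[(b)]) for each of the two sub-cases of the last step, and these contradiction arguments are where Condition $(\pitchfork)$ and the minimality of $m^\bullet$ are actually used. Without this cross-step boundary analysis, the induction does not close.

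Your parenthetical about \csiii[(b)] not being triggered inside a \csii[(b)] block because those $b_i\neq\vn$ is correct and worth keeping, but it is the easy part; the interesting blockage is at the junction between consecutive $\Phi^\cP$-steps, not inside a step.
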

\begin{rmk} Since the set $X$ depends on $(\alpha, c)$, the lemma above does \emph{not} show that $\omega\circ\Psi^\ccP_X \circ \omega \circ\Phi^\cP$ is the identity. However, it shows that one can recover $(\alpha, c)$ from $(d,\beta)$ and $X$, which essentially proves Proposition \ref{prop:phiinj}.
\end{rmk}
\begin{proof} We may assume that $m \geq 1$ and $a_m\neq \vn$. Suppose that it only needs one step. If it is in
\begin{enumerate}[label=\textbullet]
\item \csi[(a)]: in this case it is clear that $\alpha=(a_1)$, $\beta=(\vn)$, $d=(a_1)+c$, and $X=\{1\}$. Thus $\Psi_X^\ccP(\hh{\beta} ,\hh{d}) = \Psi_{\{1\}}^\ccP((\vn) ,\hh{c}+(\hh{a_1}))=(\hh{c}, (\hh{a_1}))=(\hh{c}, \hh{\alpha})$ as desired. (This corresponds to \csiii[(b)] of $\alg_\Psi$.)
\item \csi[(b)]: we have $\alpha=(a_1)$ and $X=\emptyset$. There exists $r \in [0,l-1]$ such that $c_r < a_1<c_{r+1}$ and $c_r \pl a_1$. (If $a_1 <c_1$ then we set $r=0$.) Then we have $\beta=(c_{r+1})$ and $d=(c_l, \ldots, c_{r+2}, a_1, c_{r}, \ldots, c_1)$. Since $\hh{c_{r+2}}<\hh{c_{r+1}}<\hh{a_1}$ and $\hh{c_{r+2}}\cpl\hh{c_{r+1}}$, we have
$\Psi_X^\ccP(\hh{\beta} ,\hh{d})
= \Phi^\ccP((\hh{c_{r+1}}) ,(\hh{c_1}, \ldots, \hh{c_{r}}, \hh{a_1}, \hh{c_{r+2}}, \ldots, \hh{c_l}))
=((\hh{c_1}, \ldots, \hh{c_{r}}, \hh{c_{r+1}}, \hh{c_{r+2}}, \ldots, \hh{c_l}), (\hh{a_1}))
=(\hh{c}, \hh{\alpha})$ as desired. (This corresponds to \csi[(b)] of $\alg_\Psi$, and it is still valid when $r+2=l+1$.)
\item \csii[(a)]: we have $X=\emptyset$, $\alpha=\beta$, $c=d$, and $a_1<a_2<\cdots<a_m$. There exists $r,h$ such that $\{c_r, \ldots, c_{r+h}\}\cup \und{\alpha}$ is a ladder in $\cP$ and $c_r<a_i<c_{r+h}$ for any $i$. Then it is easy to see that  $\hh{a_m}<\cdots<\hh{a_2}<\hh{a_1}$, $\{\hh{c_{r}}, \ldots, \hh{c_{r+h}}\} \cup \und{\hh{\alpha}}$ is a ladder in $\ccP,$ and $\hh{c_{r+h}}<\hh{a_i}<\hh{c_{r}}$ for any $i$. Thus
$\Psi_X^\ccP(\hh{\beta} ,\hh{d})
= \Phi^\ccP(\hh{\alpha} ,\hh{c})
=(\hh{c}, \hh{\alpha})
$ as desired. (This corresponds to \csii[(a)] of $\alg_\Psi$.)
\item \csii[(b)]: we have $X=\emptyset$. As in the description of $\alg_\Phi$, we choose $r, h$ and $0=u(r-1)< u(r)< u(r+1)< \cdots< u(r+h)=m$ such that $c_{i}<a_{u(i-1)+1}<\cdots<a_{u(i)}$ for $i\in [r, r+h]$ and $\{c_{r}, \ldots, c_{r+h}, a_1, \ldots, a_m\}$ is a ladder in $\cP$. 
Then it follows that $d=(c_l, \ldots, c_{r+h+1}, a_{u(r+h)}, \ldots, a_{u(r)}, c_{r-1}, \ldots, c_1)$ and 
$$b_j =\left\{\begin{aligned} &c_{i} &\textnormal{ if } j=u(i-1)+1 \textnormal{ for some } i \in [r,r+h],
\\ &a_{j-1} &\textnormal{ otherwise}.
\end{aligned}\right.$$ 
However, it implies that $\hh{b_1}>\hh{b_2}>\cdots>\hh{b_m}$, $\{\hh{a_{u(r)}}, \ldots, \hh{a_{u(r+h)}}, \hh{b_1}, \ldots, \hh{b_m}\}$ is a ladder in $\ccP$, and $\hh{b_{u(i-1)+1}}=\hh{c_{i}}>\hh{a_{u(i-1)+1}}>\cdots>\hh{a_{u(i)}}$ for $i\in [r, r+h]$. Also note that $\hh{c_{r-1}} \cpr x$ for any $x \in \{\hh{a_{u(r)}}, \ldots, \hh{a_{u(r+h)}}, \hh{b_1}, \ldots, \hh{b_m}\}$ when $r>1$. Thus
\begin{align*}
\Psi_X^\ccP(\hh{\beta} ,\hh{d})&= \Phi^\ccP((\hh{b_1}, \ldots, \hh{b_m}) ,(\hh{c_1}, \ldots, \hh{c_{r-1}}, \hh{a_{u(r)}},\ldots, \hh{a_{u(r+h)}}, \hh{c_{r+h+1}} \ldots, \hh{c_l}))
\\&=((\hh{c_1}, \ldots, \hh{c_{r-1}}, \hh{c_{r}},\ldots, \hh{c_{r+h}}, \hh{c_{r+h+1}}, \ldots, \hh{c_l}), (\hh{a_1}, \ldots, \hh{a_m}))=(\hh{c}, \hh{\alpha})
\end{align*}
as desired. (This corresponds to \csii[(b)] of $\alg_\Psi$.)
\end{enumerate}

To prove the general case we argue by induction on the number of steps of $\alg_{\Phi}$ for the calculation of $\Phi^\cP(\alpha,c)$. Let $m\pp\in[0,m-1]$ be the smallest element such that $a_{m\pp+1}, \ldots, a_m$ are processed in the same step. Set $\alpha\pp=(a_{m\pp}, \ldots, a_1)$, $\alpha\dg=(a_m, \ldots, a_{m\pp+1})$, $\beta\pp=(b_{m\pp}, \ldots, b_1)$ and $\beta\dg=(b_m, \ldots, b_{m\pp+1})$, so that $\alpha=\alpha\dg+\alpha\pp$ and $\beta=\beta\dg+\beta\pp$. Then there exists $d\pp\in \fC^\cP$ such that $\Phi^\cP(\alpha\pp, c)=(d\pp, \beta\pp)$ and $\Phi^\cP(\alpha\dg, d\pp) = (d, \beta\dg)$ because of the choice of $m\pp$.  Thus by induction assumption we have $(\hh{c}, \hh{\alpha\pp}) = \Psi^\ccP_{X\pp}(\hh{\beta\pp}, \hh{d\pp})$ and $(\hh{d\pp}, \hh{\alpha\dg})=\Psi^\ccP_{X\dg}(\hh{\beta\dg}, \hh{d})$ where $X\pp =\{i+m\pp-m\mid i \in X\cap [m-m\pp+1, m]\}$ and $X\dg=X\cap[1,m-m\pp]$.

Therefore, in order to prove the lemma, it suffices to show that the first step of the calculation of $\Psi^\ccP_X(\hh{\beta}, \hh{d})$ processes $\hh{b_{m\pp+1}}, \ldots, \hh{b_m}$ but not $\hh{b_{m\pp}}$. (See Figure \ref{fig:descinv}.) If $a_m$ is in \csi{} of $\alg_\Phi$ then $\hh{b_m}$ is in \csi{} or \csiii{} of $\alg_\Psi$ as shown above, in which case the statement is trivial. Thus we assume that $a_m$ is in \csii{} of $\alg_\Phi$. Then we have $|d|=|d\pp|$ and $X\dg=\emptyset$. Thus in particular $\Psi^\ccP_{X\dg}(\hh{\beta\dg}, \hh{d})=\Phi^\ccP(\hh{\beta\dg}, \hh{d})$. We set $d=(d_q, \ldots, d_1)$ and $d\pp=(d\pp_{q}, \ldots, d\pp_1)$.

\begin{figure}[!htbp]
\begin{tikzpicture}[baseline=(current bounding box.center)]
	\tikzmath{\x=1.2;\y=0.5;\m=3.8;\l=2.5;\sts=4.6;\tst=10.4;}
	\node at (\x, 0) {$c_1$};
	\node at (\x, -\y) {$c_2$};
	\node at (\x, -2*\y) {$\vup$};
	\node at (\x, -3*\y) {$\vup$};
	\node at (\x, -4*\y) {$c_l$};
	\node at (0, 0) {$a_1$};
	\node at (0, -\y) {$a_2$};
	\node at (0, -2*\y) {$\vup$};
	\node at (0, -3*\y) {$a_{m\pp}$};
	\node at (0, -4*\y) {$a_{m\pp+1}$};
	\node at (0, -5*\y) {$\vup$};
	\node at (0, -6*\y) {$\vup$};
	\node at (0, -7*\y) {$a_{m}$};
	\draw[rounded corners] (-.5*\x, \y) rectangle (.5*\x, -\m);
	\node[fill=white] at (0,0.5) {$\alpha$};
	\draw[rounded corners] (.5*\x, \y) rectangle (1.5*\x, -\l);
	\node[fill=white] at (\x,0.5) {$c$};
	\draw[rounded corners] (-.35, 0.2) rectangle (.35, -1.8);
	\draw[-{Latex[width=3mm]}] (1.7*\x,-.4*\m) -- (3.3*\x,-.4*\m) node [midway, above] {$\Phi^\cP(\alpha\pp, c)$};
	\draw[{Latex[width=3mm]}-] (1.7*\x,-.55*\m) -- (3.3*\x,-.55*\m) node [midway, below] {$\Psi^\ccP_{X\pp}(\hh{\beta\pp}, \hh{d\pp})$};
	
	\node at (\sts+\x, 0) {$d_1\pp$};
	\node at (\sts+\x, -\y) {$d_2\pp$};
	\node at (\sts+\x, -2*\y) {$\vup$};
	\node at (\sts+\x, -3*\y) {$\vup$};
	\node at (\sts+\x, -4*\y) {$d_{q\pp}\pp$};
	\node at (\sts+0, 0) {$\cancel{a_1}$};
	\node at (\sts+0, -\y) {$\cancel{a_2}$};
	\node at (\sts+0, -2*\y) {$\vup$};
	\node at (\sts+0, -3*\y) {$\cancel{a_{m\pp}}$};
	\node at (\sts+0, -4*\y) {$a_{m\pp+1}$};
	\node at (\sts+0, -5*\y) {$\vup$};
	\node at (\sts+0, -6*\y) {$\vup$};
	\node at (\sts+0, -7*\y) {$a_{m}$};
	\node at (\sts+2*\x, 0) {$b_1$};
	\node at (\sts+2*\x, -\y) {$b_2$};
	\node at (\sts+2*\x, -2*\y) {$\vup$};
	\node at (\sts+2*\x, -3*\y) {$b_{m\pp}$};
	\draw[rounded corners] (\sts+-.5*\x, \y) rectangle (\sts+.5*\x, -\m);
	\node[fill=white] at (\sts+0,0.5) {$\alpha$};
	\draw[rounded corners] (\sts+.5*\x, \y) rectangle (\sts+1.5*\x, -\l);
	\node[fill=white] at (\sts+\x,0.5) {$d\pp$};
	\draw[rounded corners] (\sts+1.5*\x, \y) rectangle (\sts+2.5*\x, -1.8);

	\draw[rounded corners] (\sts-.5, -1.8) rectangle (\sts+.5, -3.7);
	\node[fill=white] at (\sts+2*\x,\y) {$\beta\pp$};
	\draw[-{Latex[width=3mm]}] (\sts+2.7*\x,-.4*\m) -- (\sts+4.3*\x,-.4*\m) node [midway, above] {$\Phi^\cP(\alpha\dg, d\pp)$};
	\draw[{Latex[width=3mm]}-] (\sts+2.7*\x,-.55*\m) -- (\sts+4.3*\x,-.55*\m) node [midway, below] {$\Psi^\ccP_{X\dg}(\hh{\beta\dg}, \hh{d})$};

	\node at (\tst+\x, 0) {$d_1$};
	\node at (\tst+\x, -\y) {$d_2$};
	\node at (\tst+\x, -2*\y) {$\vup$};
	\node at (\tst+\x, -3*\y) {$\vup$};
	\node at (\tst+\x, -4*\y) {$d_{q}$};
	\node at (\tst+0, 0) {$\cancel{a_1}$};
	\node at (\tst+0, -\y) {$\cancel{a_2}$};
	\node at (\tst+0, -2*\y) {$\vup$};
	\node at (\tst+0, -3*\y) {$\cancel{a_{m\pp}}$};
	\node at (\tst+0, -4*\y) {$\cancel{a_{m\pp+1}}$};
	\node at (\tst+0, -5*\y) {$\vup$};
	\node at (\tst+0, -6*\y) {$\vup$};
	\node at (\tst+0, -7*\y) {$\cancel{a_{m}}$};
	\node at (\tst+2*\x, 0) {$b_1$};
	\node at (\tst+2*\x, -\y) {$b_2$};
	\node at (\tst+2*\x, -2*\y) {$\vup$};
	\node at (\tst+2*\x, -3*\y) {$b_{m\pp}$};
	\node at (\tst+2*\x, -4*\y) {$b_{m\pp+1}$};
	\node at (\tst+2*\x, -5*\y) {$\vup$};
	\node at (\tst+2*\x, -6*\y) {$\vup$};
	\node at (\tst+2*\x, -7*\y) {$b_m$};
	\draw[rounded corners] (\tst+-.5*\x, \y) rectangle (\tst+.5*\x, -\m);
	\node[fill=white] at (\tst+0,0.5) {$\alpha$};
	\draw[rounded corners] (\tst+.5*\x, \y) rectangle (\tst+1.5*\x, -\l);
	\node[fill=white] at (\tst+\x,0.5) {$d$};
	\draw[rounded corners] (\tst+1.5*\x, \y) rectangle (\tst+2.5*\x, -\m);
	\node[fill=white] at (\tst+2*\x,0.5) {$\beta$};
	\draw[rounded corners] (\tst+2*\x-.5, -1.8) rectangle (\tst+2*\x+.5, -3.7);
	
	\draw[-{Latex[width=3mm]}] (.7*\x,-\m) -- (.7*\x,-1.2*\m) -- node [midway, above] {$\Phi^\cP(\alpha, c)$} (\tst+.8*\x,-1.2*\m)  --  (\tst+.8*\x,-\m);	
	\draw[{Latex[width=3mm]}-] (.3*\x,-\m) -- (.3*\x,-1.3*\m) -- node [midway, below] {$\Psi^\ccP_{X}(\hh{\beta}, \hh{d})$?} (\tst+1.2*\x,-1.3*\m)  --  (\tst+1.2*\x,-\m);	

\end{tikzpicture}
\caption{Strategy of the proof}\label{fig:descinv}
\end{figure}

\noindent{\und{\csii[(a)]}}. Suppose that $a_m$ is in \csii[(a)] of $\alg_\Phi$. Then $d=d\pp$, $\beta\dg=\alpha\dg$ and there exist $r, h$ such that $\cL\colonequals \{d_{r}\pp, \ldots, d_{r+h}\pp, a_{m\pp+1}, \ldots, a_{m}\}$ is a ladder in $\cP$ and $d\pp_r\leq x\leq d\pp_{r+h}$ for any $x\in\cL$.
Here we need to check that $\{\hh{b_{m\pp}}\} \cup \{\hh{x} \mid x \in \cL\}$ is not a ladder in $\hh{\cP}$, i.e. either $m\pp=0$ or $b_{m\pp}$ does not satisfy both $b_{m\pp} \pl b_{m\pp+1}$ and $b_{m\pp} \pdl d_{r}\pp$. For the sake of contradiction we suppose otherwise. By Proposition \ref{prop:column}\ref{mainprop3}, it also means that $a_{m\pp} \pl a_{m\pp+1}$. Also $a_{m\pp}$ cannot be in \csi[(a)] of $\alg_\Phi$.

Suppose that $a_{m\pp}$ is in \csi[(b)] of $\alg_\Phi$. Then $b_m\pp$ is bumped out from the chain $\fd$, and thus $d_r\pp \not\pdr b_{m\pp}$ unless $a_{m\pp}$ bumps the $r$-th element of $\fd$ and $a_{m\pp}=d_{r}\pp$. But this is absurd since it means that $a_{m\pp}<b_{m\pp}$ but $b_{m\pp} \pdl d_{r}\pp=a_{m\pp}$.

Now we assume that $a_{m\pp}$ is in \csii[(a)] of $\alg_\Phi$ and $\cL'\colonequals\{d_{r'}\pp, \ldots, d_{r'+h'}\pp, a_{m'}, \ldots, a_{m\pp}\}$ is the corresponding ladder in $\cP$ where $d_{r'}\pp<\cdots<d_{r'+j}\pp$ and $d_{r'}\pp<a_{m'} < \cdots < a_{m\pp}<d_{r'+h'}\pp$. Then $b_{m\pp}=a_{m\pp}$, and thus $d_r\pp \pdr b_{m\pp}=a_{m\pp}$ which is only possible when $r'+h'=r$, i.e. $\cL\cup\cL'$ is again a ladder in $\cP$. (Note that $a_{m\pp}=b_m\pp \pl a_{m\pp+1}$.) However, it contradicts the minimality of $m\pp$.

Assume that $a_{m\pp}$ is in \csii[(b)] of $\alg_\Phi$. Then $a_{m\pp}=d_i\pp$ for some $i$, and as $a_{m\pp} \pl a_{m\pp+1}$ and $a_{m\pp+1} \pdr d_r\pp$ we should have $i<r$ and $a_{m\pp}=d_i\pp\pl d_r\pp$. On the other hand, we always have $a_{m\pp} \pdr b_{m\pp}$, and thus $d_r\pp \pr b_{m\pp}$ by \cond{}. However, it contradicts the assumption that $d_r\pp \pdr b_{m\pp}$.

\noindent{\und{\csii[(b)]}}. Suppose that $a_m$ is in \csii[(b)] of $\alg_\Phi$ and let 
$\cL\colonequals \{d_{r}\pp, \ldots, d_{r+h}\pp, a_{m\pp+1}, \ldots, a_{m}\}$ be the corresponding ladder so that $d_r\pp\leq x \leq a_m$ for any $x\in \cL$. Then direct calculation as above shows that $\cL=\{d_r,\ldots, d_{r+h},   b_{m\pp+1}, \ldots, b_m\}$. Note that $\cL \cup\{d_{r-1}\pp\}$ cannot be a ladder, and thus in order for $\hh{b_{m\pp}}$ to be processed with $\hh{b_{m\pp+1}}, \ldots, \hh{b_m}$ we should have that $\{\hh{x}\mid x\in \cL\}\cup\{\hh{b_{m\pp}}\}$ is a ladder in $\hh{\cP}$, i.e. $b_{m\pp+1} \pdr b_{m\pp}$ and $x \pr b_{m\pp}$ for any $x\in \cL-\{b_{m\pp+1}\}$ by maximality in \csii. By Proposition \ref{prop:column}\ref{mainprop4}, it also means that $a_{m\pp} \pdl a_{m\pp+1}$.

Assume that $a_{m\pp}$ is in \csi[(b)] of $\alg_\Phi$. Then $a_{m\pp}=d_i\pp$ for some $i$. Since $a_{m\pp} \pdl a_{m\pp+1}$ and $d_{r}\pp \pdl a_{m\pp+1}$, we should have $r=i$ and $a_{m\pp}=d_{r}\pp$. But it contradicts that $b_{m\pp}>a_{m\pp}$ as $b_{m\pp} \pdl b_{m\pp+1}=d_r\pp$.

Assume that $a_{m\pp}$ is in \csii[(a)] of $\alg_\Phi$. Then $b_{m\pp}=a_{m\pp} \pdl a_{m\pp+1}\neq b_{m\pp+1}$ which contradicts that $x \pr b_{m\pp}$ for any $x\in \cL-\{b_{m\pp+1}\}$.

Assume that $a_{m\pp}$ is in \csii[(b)] of $\alg_\Phi$ and $\cL'$ is the corresponding ladder where $a_{m\pp}>b_{m\pp}>x$ for any $x \in \cL'-\{a_{m\pp}, b_{m\pp}\}$. Then $a_{m\pp}=d_i\pp$ for some $i$. Since $a_{m\pp} \pdl a_{m\pp+1}$ and $d_{r} \pdl a_{m\pp+1}$, it follows that $r=i$. Together with the fact that $x \pr b_{m\pp}$ for any $x\in \cL-\{b_{m\pp+1}\}$, it follows that $\cL \cup \cL'$ is again a ladder. However, it contradicts the minimality of $m\pp$.

We exhaust all the cases and finish the proof.
\end{proof}

We are ready to prove Proposition \ref{prop:phiinj}.
\begin{proof}[Proof of Proposition \ref{prop:phiinj}] Recall that we have $(\alpha=(a_m, \ldots, a_1), c), (\alpha'=(a'_m, \ldots, a'_1), c')\in \fA\fC$ such that $a_i =\vn \Leftrightarrow a_i'=\vn$ and $\Phi(\alpha,c) = \Phi(\alpha',c')$. Let $(d, \beta) \in \fC\fA$ be such a result. Let $X=\{m+1-i\mid i \in [1,m], a_i\neq \vn, b_i=\vn\}=\{m+1-i \mid i \in [1,m], a_i'\neq \vn, b_i=\vn\}$. Then by Lemma \ref{lem:invrel} we have $\omega(\Psi^\ccP_X(\hh{\beta}, \hh{d}))=(\alpha,c) = (\alpha',c')$ as desired.
\end{proof}

\section{Proof of Theorem \ref{thm:main} and \ref{thm:mainstrong}} \label{sec:proofthm}
\subsection{Preliminary lemmas}
Before the proof we first state the following series of lemmas which will be frequently used later on.
\begin{lem}\label{lem:extlad}
Suppose that $\{a_1, \ldots, a_k\}$ is a ladder in $\cP$ where $k \geq 2$ and $a_1< \cdots < a_k$.
\begin{enumerate}
\item If $b\pdr a_k$, then either $\{a_1, \ldots, a_{k-1},a_k, b\}$ or $\{a_1, \ldots, a_{k-2}, a_{k-1}, b\}$ is a ladder in $\cP$.
\item If $b\pdl a_{1}$, then either $\{b, a_1, a_2, \ldots, a_{k}\}$ or $\{b, a_2, a_3, \ldots, a_{k}\}$ is a ladder in $\cP$.
\end{enumerate}
\end{lem}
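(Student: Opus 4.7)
The plan is to prove (1) by a short case analysis on whether $b$ is comparable to $a_{k-1}$, and then to deduce (2) by the symmetric argument. From $b \pdr a_k$ we have $a_k < b$ and $a_k \dash_\cP b$, so in particular $a_{k-1} < a_k < b$, and the only thing left to pin down is the relation between $a_{k-1}$ and $b$.

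First I would dispose of the case $a_{k-1} \pl b$ immediately: this is exactly the hypothesis of property (5) of the basic ladder lemma (with $y_i = a_i$ and $z = b$, since $b \not\pr a_k$ follows from $b \pdr a_k$, and $b \pr a_{k-1}$ is what we assumed), so $\{a_1,\ldots,a_k,b\}$ is a ladder and we are done.

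The substantive case is $a_{k-1} \dash_\cP b$ (which, together with $a_{k-1} < b$, means $a_{k-1} \pdl b$). Here I would claim that $\{a_1,\ldots,a_{k-1},b\}$ is a ladder, to be verified by checking each required comparability. The three things needed are: $a_{k-1}$ and $b$ are incomparable (given), $a_{k-2} \pl b$, and $a_i \pl b$ for all $i < k-2$. For $a_{k-2} \pl b$, I would apply \cond{} to the triple $(a_{k-2}, a_k, b)$: we have $a_{k-2} \pl a_k$ and $a_k \dash_\cP b$, so if also $a_{k-2} \dash_\cP b$ then \cond{} forces $a_k < a_{k-2} < b$, contradicting $a_{k-2} < a_k$; hence $a_{k-2}$ and $b$ are comparable, and since $a_{k-2} < b$ we get $a_{k-2} \pl b$. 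For each $i < k-2$, I would likewise apply \cond{} to $(a_i, a_{k-1}, b)$: $a_i \pl a_{k-1}$ and $a_{k-1} \dash_\cP b$, so if $a_i$ were also incomparable to $b$ then \cond{} would force $a_{k-1} < a_i$, a contradiction; hence $a_i$ is comparable to $b$ and $a_i \pl b$. The remaining relations inside $\{a_1,\ldots,a_{k-1}\}$ are inherited from the original ladder, so $\{a_1,\ldots,a_{k-1},b\}$ is indeed a ladder.

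Statement (2) I would prove by the mirror-image argument: analyze whether $a_2 \pr b$; if so, property (4) of the ladder lemma directly gives that $\{b,a_1,\ldots,a_k\}$ is a ladder, while if $a_2 \dash_\cP b$ the same kind of \cond{} reductions (applied to $(b,a_1,a_3)$ to get $b \pl a_3$, and to $(b,a_2,a_j)$ for $j > 2$ to get $b \pl a_j$) show that $\{b,a_2,\ldots,a_k\}$ is a ladder. I do not anticipate a real obstacle here; the only thing to be careful about is consistently using \cond{} in the correct direction when promoting incomparability to $\pl$, and checking that when $k = 2$ the degenerate cases (where the shorter ladder is $\{a_1,b\}$ or $\{b,a_k\}$, automatically a ladder of size $2$) are covered.
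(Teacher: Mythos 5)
Your approach matches the paper's: first pin down the relation of $b$ to $a_{k-1}$, delegate the case $a_{k-1}\pl b$ to property (5) of the earlier basic ladder lemma, and when $a_{k-1}\dash_\cP b$ verify the remaining comparabilities via \cond{}. The conclusions you reach are correct and there is no structural gap. However, both of your explicit invocations of \cond{} misstate its output: \cond{} places the element that is incomparable to both endpoints of a $\pl$-related pair strictly between them in the usual order. Applied to $(a_{k-2},a_k,b)$ with $a_{k-2}\pl a_k$ and $b$ assumed incomparable to both, it yields $a_{k-2}<b<a_k$, not $a_k<a_{k-2}<b$; the contradiction is with $a_k<b$, not with $a_{k-2}<a_k$. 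Likewise for $(a_i,a_{k-1},b)$: \cond{} yields $a_i<b<a_{k-1}$, contradicting $a_{k-1}<a_k<b$, not the impossible ``$a_{k-1}<a_i$'' you assert. The contradiction is still reached, so the argument goes through, but this misremembered form of \cond{} would cause real problems in the more delicate arguments elsewhere in the paper. A smaller slip: in your sketch of (2), ``for $j>2$'' should be ``for $j\geq 4$'' --- $a_2\dash_\cP a_3$, so the triple $(b,a_2,a_3)$ does not have a $\pl$-comparable pair, which is why you correctly treat $j=3$ separately via $(b,a_1,a_3)$.
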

\begin{proof} For (1), first note that  we have $b \pr a_{k-2}$ which follows from \cond{} as $a_{k-2} \pl a_k$. Now if $b \pr a_{k-1}$, then  $\{a_1, \ldots, a_{k-1},a_k, b\}$ is a ladder in $\cP$. Otherwise, we have $b \pdr a_{k-1}$ and thus $\{a_1, \ldots, a_{k-2}, a_{k-1}, b\}$ is a ladder in $\cP$. (2) is proved similarly.
\end{proof}

\begin{lem} \label{lem:ladend}
Suppose that $\{a_1, \ldots, a_k\}$ is a ladder in $\cP$ where $a_1< \cdots < a_k$. If $x \not\in \{a_1, \ldots, a_k\}$, $a_1\pl x$, and no one is climbing a ladder in $\cP$, then $a_{i} \pl x$ and $a_{i+1} < x$ for $i \leq k-3$.
\end{lem}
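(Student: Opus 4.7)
The plan is to prove the two conclusions separately, starting with the usual-order inequality. It suffices to show $a_{k-2}<x$, because then $a_{i+1}\leq a_{k-2}<x$ for every $i\leq k-3$. I would argue this by contradiction: if $x<a_{k-2}$ (strict since $x\notin\{a_1,\ldots,a_k\}$), then the unit-interval coordinates satisfy $y_x<y_{a_{k-2}}$, and combining with $a_{k-2}\pl a_k$ (part of the ladder structure, giving $y_{a_{k-2}}+1<y_{a_k}$) yields $y_x+1<y_{a_k}$, i.e.\ $x\pl a_k$. Together with the hypothesis $a_1\pl x$ this exhibits $x$ climbing the ladder $\{a_1,\ldots,a_k\}$, contradicting the non-climbing assumption.

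For the $\cP$-comparison $a_i\pl x$, I would split on the usual-order position of $x$ relative to $a_{k-1}$. If $a_{k-1}<x$, then for every $i\leq k-3$ the ladder gives $a_i\pl a_{k-1}$ (since $(k-1)-i\geq 2$), and $y_{a_i}+1<y_{a_{k-1}}<y_x$ immediately gives $a_i\pl x$. The remaining situation is $a_{k-2}<x<a_{k-1}$. Here the sub-lemma about elements strictly between consecutive ladder rungs gives $a_{k-2}\pdl x\pdl a_{k-1}$, and the same climbing argument as above (now showing that $x\pl a_k$ would make $x$ climb $\{a_1,\ldots,a_k\}$) forces $x\pdl a_k$. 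For indices $i\leq k-4$ I still have $a_i\pl a_{i+2}$ with $a_{i+2}\leq a_{k-2}<x$, so $a_i\pl x$ follows from $y_{a_i}+1<y_{a_{i+2}}\leq y_{a_{k-2}}<y_x$.

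The main obstacle is the boundary index $i=k-3$ in this second case, which I would handle by a fresh ladder-climbing argument. The case $k=4$ is immediate since $i=k-3=1$ and $a_1\pl x$ is given, so assume $k\geq 5$. Suppose for contradiction that $a_{k-3}\pdl x$. I would verify that $\{a_{k-4},a_{k-3},x,a_k\}$ is a ladder in $\cP$: the three consecutive pairs in this sequence are incomparable---$a_{k-4}\dash_\cP a_{k-3}$ from the original ladder, $a_{k-3}\dash_\cP x$ by the contradiction hypothesis, $x\dash_\cP a_k$ established above---while the distance-$2$ relations $a_{k-4}\pl x$ (from $a_{k-4}\pl a_{k-2}<x$ at the unit-interval level), $a_{k-3}\pl a_k$, and the distance-$3$ relation $a_{k-4}\pl a_k$ all follow from the original ladder structure. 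But then $a_{k-2}$ climbs this new ladder, since $a_{k-4}\pl a_{k-2}\pl a_k$ and $a_{k-2}\notin\{a_{k-4},a_{k-3},x,a_k\}$, contradicting the no-climbing hypothesis. Hence $a_{k-3}\pl x$, completing the proof.
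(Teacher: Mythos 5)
Your argument is correct and follows the same overall strategy as the paper: establish $a_{k-2}<x$ via the no-climbing hypothesis applied to $\{a_1,\ldots,a_k\}$, then rule out $a_{k-3}\pdl x$ (the only possible failure of the $\cP$-comparison once $a_{k-2}<x$ is known) by building a contradiction out of the five elements $a_{k-4},a_{k-3},a_{k-2},x,a_k$. The one place the routes diverge is the terminal contradiction. The paper identifies $\cP$ restricted to $\{a_{k-4},a_{k-3},a_{k-2},x,a_k\}$ as isomorphic to $\cP_{(3,1,1),5}$, which is forbidden. You instead verify that $\{a_{k-4},a_{k-3},x,a_k\}$ is a four-element ladder and show $a_{k-2}$ climbs it, invoking the no-climbing hypothesis directly. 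These are two faces of the same fact---the characterization proposition proves $\cP_{(3,1,1),5}$ is ladder-climbing precisely by exhibiting $3$ climbing $\{1,2,4,5\}$---so yours has the modest advantage of not detouring through the pattern-avoidance reformulation, at the cost of verifying the six pairwise relations of the four-element ladder by hand. Your extra case split on $x$ versus $a_{k-1}$ is also sound; the paper avoids it by deriving $x<a_{k-1}$ as a consequence of the contradiction hypothesis $a_{k-3}\pdl x$ together with $a_{k-3}\pl a_{k-1}$, which folds your two cases into one.
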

\begin{proof} Since no one is climbing a ladder in $\cP$, we have $a_{k}\not\pr x$. Since $a_{k-2} \pl a_k$, this means that $a_{k-2} < x$ by \cond{}. Now suppose that there exists $i \leq k-3$ such that $a_i \not \pl x$ (and thus $k\geq 5$). By \cond{}, it is equivalent to assuming that $a_{k-3} \pdl x$. Since $a_{k-3} \pl a_{k-1}$, this means that $x<a_{k-1}$ by \cond{}. Thus we have $x \pdl a_{k-1}, a_k$ and $x \pdr a_{k-2}, a_{k-3}$. Since $a_{k-4} \pl a_{k-2}$, we also have $x \pr a_{k-4}$ by \cond{}. Now one may check that $\cP$ restricted to $\{a_{k-4}, a_{k-3}, a_{k-2}, x, a_k\}$ is isomorphic to $\cP_{(3,1,1),5}$, which is a contradiction. It remains to check that $a_i<x$ for $i \leq k-2$, and thus suppose otherwise (and thus $k\geq 4$). It implies that $a_{i-1} \pl a_i$ by \cond{} since $a_{i-1}\pl x$ by above (or by assumption if $i=2$), which is a contradiction.
\end{proof}

\begin{lem} \label{lem:ladtail4} Suppose that $\{b_1,\ldots, b_k, a_1, a_2, a_3\}$ is a ladder in $\cP$ where $b_1<\cdots<b_k<a_1<a_2<a_3$, $x \not\in\{b_1,\ldots, b_k, a_1, a_2, a_3\}$, $b_1 \pl x$, and no one is climbing a ladder in $\cP$. Then the relation between $\{a_1, a_2\}$ and $x$ falls into one of the following. (Also see Figure \ref{fig:ladtail4}.)
\begin{enumerate}[label=\textnormal{(\ref{lem:ladtail4}.\arabic*)}]
\item $a_1\pdl x$ and $a_2\pdr x$
\item $a_1\pdl x$ and $a_2\pdl x$
\item $a_1\pl x$ and $a_2\pdl x$
\item $a_1\pl x$ and $a_2\pl x$
\end{enumerate}
\end{lem}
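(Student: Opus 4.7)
The plan is to first pin down the position of $a_1$ relative to $x$, then rule out any comparability of the form $x \pl a_2$, and finally eliminate the two remaining "mixed" configurations by a direct application of Condition $(\pitchfork)$.

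\textbf{Step 1 (locating $a_1$).} I would apply Lemma~\ref{lem:ladend} to the ladder $\{b_1, \ldots, b_k, a_1, a_2, a_3\}$ of size $k+3$, with the outside element being $x$, using the hypotheses $b_1 \pl x$, $x \notin \{b_1,\ldots,b_k,a_1,a_2,a_3\}$, and the non-ladder-climbing assumption. Taking the index $i = k$ in the conclusion of that lemma yields $b_k \pl x$ and, crucially, $a_1 < x$. Hence $a_1$ is related to $x$ either by $a_1 \pl x$ or by $a_1 \pdl x$; in particular the cases $a_1 > x$ or $a_1 = x$ are impossible.

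\textbf{Step 2 (ruling out $x \pl a_2$).} Suppose for contradiction that $a_2 \pr x$, i.e.\ $x \pl a_2$ and $x < a_2$. Since $\{b_1,\ldots,b_k,a_1,a_2\}$ is a subladder of the given ladder and $x$ lies outside it, the hypotheses $b_1 \pl x \pl a_2$ say exactly that $x$ is climbing this ladder, contradicting the assumption that no one is climbing a ladder in $\cP$. Therefore whenever $a_2 > x$, the pair must in fact be incomparable, giving $a_2 \pdr x$.

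\textbf{Step 3 (eliminating the two mixed cases).} After Steps 1 and 2 there are six a priori configurations; I need to kill the two that are not listed. In each I will apply Condition $(\pitchfork)$ to the triple $\{a_1, a_2, x\}$, using $a_1 \pdl a_2$ which holds because $a_1, a_2$ are consecutive in a ladder.
\begin{itemize}
\item If $a_1 \pdl x$ and $a_2 \pl x$: apply $(\pitchfork)$ to the pair $a_2 \pl x$ with outside element $a_1$ (which is incomparable to both). Condition $(\pitchfork)$ forces $a_2 < a_1 < x$, contradicting $a_1 < a_2$.
\item If $a_1 \pl x$ and $a_2 \pdr x$: apply $(\pitchfork)$ to the pair $a_1 \pl x$ with outside element $a_2$ (incomparable to both). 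Condition $(\pitchfork)$ forces $a_1 < a_2 < x$, contradicting $x < a_2$.
\end{itemize}
The remaining four configurations are precisely (\ref{lem:ladtail4}.1)--(\ref{lem:ladtail4}.4), completing the proof.

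The proof is essentially a case analysis and the only non-routine input is Lemma~\ref{lem:ladend}, which does the real work in Step~1; the remaining obstruction is just being careful in Step~2 that the set one exhibits really is a ladder (which is immediate from part (1) of the basic ladder properties) and that one invokes $(\pitchfork)$ on the correct triple in Step~3.
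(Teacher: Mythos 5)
Your proof is correct and takes essentially the same approach as the paper: it uses Lemma~\ref{lem:ladend} to pin down the position of $a_1$ relative to $x$ and the non-ladder-climbing hypothesis to forbid $x \pl a_2$, then finishes with a pitchfork case analysis. The paper phrases the first step as ``reduce to $k=1$'' and invokes $a_3 \not\pr x$ rather than $a_2 \not\pr x$, but these are cosmetic variants of the same argument; your write-up fills in details the paper leaves as ``one may check case-by-case.''
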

\begin{proof} By Lemma \ref{lem:ladend} we may assume that $k=1$. Then one may check case-by-case using \cond{} and $a_3 \not \pr x$ since otherwise $x$ is climbing $\{b_1,\ldots, b_k, a_1, a_2, a_3\}$.
\end{proof}

\begin{figure}[!htbp]
\subcaptionbox*{(\ref{lem:ladtail4}.1)}{
\begin{tikzpicture}[scale=2]
	\node (a2) at (1,-1) {$a_1$};
	\node (a3) at (1,-2) {$a_2$};
	\node (x) at (2,-1.5) {$x$};
	
	\draw[<-, dashed] (a2) to (a3);
	
	\draw[<-, dashed] (a2) to (x);
	\draw[<-, dashed] (x) to (a3);
\end{tikzpicture}
}\subcaptionbox*{(\ref{lem:ladtail4}.2)}{
\begin{tikzpicture}[scale=2]
	\node (a2) at (1,-1) {$a_1$};
	\node (a3) at (1,-2) {$a_2$};
	\node (x) at (2,-2.5) {$x$};
	
	\draw[<-, dashed] (a2) to (a3);
	
	\draw[<-, dashed] (a2) to (x);
	\draw[<-, dashed] (a3) to (x);
\end{tikzpicture}
}\subcaptionbox*{(\ref{lem:ladtail4}.3)}{
\begin{tikzpicture}[scale=2]
	\node (a2) at (1,-1) {$a_1$};
	\node (a3) at (1,-2) {$a_2$};
	\node (x) at (2,-2.5) {$x$};
	
	\draw[<-, dashed] (a2) to (a3);
	
	\draw[<-] (a2) to (x);
	\draw[<-, dashed] (a3) to (x);
\end{tikzpicture}}
\subcaptionbox*{(\ref{lem:ladtail4}.4)}{
\begin{tikzpicture}[scale=2]
	\node (a2) at (1,-1) {$a_1$};
	\node (a3) at (1,-2) {$a_2$};
	\node (x) at (2,-2.5) {$x$};
	
	\draw[<-, dashed] (a2) to (a3);
	
	\draw[<-] (a2) to (x);
	\draw[<-] (a3) to (x);
\end{tikzpicture}
}
\caption{Possible cases in Lemma \ref{lem:ladtail4}}\label{fig:ladtail4}
\end{figure}

\begin{lem} \label{lem:ladmid}
Suppose that $\{a_1, \ldots, a_k\}$ is a ladder in $\cP$ where $a_1< \cdots < a_k$ and $k\geq 4$, $x \not\in \{a_1, \ldots, a_k\}$, and no one is climbing a ladder in $\cP$.
If $a_2\pdl x$, then it falls into one of the following cases. (Also see Figure \ref{fig:ladmid})
\begin{enumerate}[label=\textnormal{(\ref{lem:ladmid}.\arabic*)}]
\item $a_1 \pdl x$, $a_3\pdr x$
\item $k=4$, $a_{1} \pl x$, $a_3 \pdr x$, $a_4\pdr x$
\item $k=4$, $a_{1} \pl x$, $a_3 \pdl x$, $a_4 \pdr x$
\end{enumerate}
\end{lem}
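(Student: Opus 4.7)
The plan is a case analysis on how $x$ sits with respect to $a_1$ and $a_3$, using \cond{} to eliminate the comparable cases and invoking the earlier Proposition that characterizes non-ladder-climbing orders as those avoiding $\cP_{(3,1,1),5}$ and $\cP_{(4,2,1,1),6}$. First I would dispense with the possibilities $a_3 \pl x$ and $a_3 \pr x$. If $a_3 \pl x$, applying \cond{} to $b=a_3, c=x$ with the auxiliary element $a=a_2$ would force either $a_2$ comparable with $x$ (contradicting $a_2 \pdl x$) or $a_3 < a_2 < x$ (contradicting $a_2 < a_3$). The case $a_3 \pr x$ is handled symmetrically by applying \cond{} to $b=x, c=a_3$ with $a=a_2$, using $a_2 \pdl a_3$. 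Since also $a_1 < a_2 < x$ forces $a_1 \in \{\pdl x, \pl x\}$, only four combinations of the pair $(a_1, a_3)$ remain.

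Suppose $a_3 \pdr x$. If $a_1 \pdl x$, this is precisely case (\ref{lem:ladmid}.1). If instead $a_1 \pl x$, I would first observe $a_4 \pdr x$: since $a_4 > a_3 > x$, the only other live option is $a_4 \pr x$, but then $a_1 \pl x \pl a_4$ makes $x$ climb the ladder $\{a_1, \ldots, a_4\}$, a contradiction. To exclude $k \geq 5$, apply \cond{} to $a_3 \pl a_5$ using $x \dash_\cP a_3$: if $x \dash_\cP a_5$, this yields $a_3 < x$, contradicting $a_3 \pdr x$; hence $x$ is comparable with $a_5$, and since $a_5 > x$ the only option is $x \pl a_5$, which makes $x$ climb $\{a_1, \ldots, a_5\}$, again a contradiction. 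This gives case (\ref{lem:ladmid}.2).

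Now suppose $a_3 \pdl x$. The subcase $a_1 \pdl x$ is impossible: applying \cond{} to $a_1 \pl a_3$ with $x \dash_\cP a_1$ and $x \dash_\cP a_3$ would force $a_1 < x < a_3$, contradicting $x > a_3$. Hence $a_1 \pl x$. The same pattern forces $a_4 \pdr x$: the option $a_4 \pl x$ would violate \cond{} applied to $b=a_4, c=x, a=a_3$ (yielding $a_4 < a_3 < x$, impossible), while $a_4 \pr x$ would again make $x$ climb $\{a_1, \ldots, a_4\}$. The final step, which I expect to be the main obstacle of the proof, is excluding $k \geq 5$. Assuming $k \geq 5$, the \cond{}/climbing arguments above give $a_5 \pdr x$; the crux is then verifying by direct inspection that the restriction of $\cP$ to $\{a_1, a_2, a_3, x, a_5\}$, ordered $a_1 < a_2 < a_3 < x < a_5$, has $\pl$-pairs exactly $(a_1, a_3), (a_1, x), (a_1, a_5), (a_2, a_5), (a_3, a_5)$ and no others, making this restriction isomorphic to $\cP_{(3,1,1),5}$ via $a_i \mapsto i$ for $i \in \{1,2,3\}$, $x \mapsto 4$, $a_5 \mapsto 5$. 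This contradicts the non-ladder-climbing hypothesis, so $k = 4$, yielding case (\ref{lem:ladmid}.3).
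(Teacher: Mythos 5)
Your route differs from the paper's: you do a direct four-way case analysis on $(a_1,a_3)$ via \cond{} and the no-climbing hypothesis, whereas the paper leans on Lemma \ref{lem:ladend} to compress the reduction and then rules out the residual configuration by observing $a_3$ climbs $\{a_1,a_2,x,a_5\}$. Your contradiction via the isomorphism to $\cP_{(3,1,1),5}$ is equivalent (by the earlier proposition characterizing non-ladder-climbing orders), and your approach is more self-contained, so this is a legitimate alternative.

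However, there is a gap in the subcase $a_3\pdl x$, $a_1\pl x$. You assert that ``the same pattern forces $a_4\pdr x$'' by eliminating $a_4\pl x$ and $a_4\pr x$, but you have not ruled out $a_4\pdl x$. In the previous paragraph the inequality $a_4>a_3>x$ made only the two comparable/incomparable options with $a_4>x$ available; here $a_3<x$, so you know nothing a priori about the sign of $a_4-x$, and eliminating $a_4\pl x$ and $a_4\pr x$ does not by itself yield $a_4\pdr x$. The missing step: if $a_4\pdl x$, then $a_4\dash_\cP x$, and applying \cond{} to $a_2\pl a_4$ (a distance-two pair in the ladder) with $a_2\dash_\cP x$ (from $a_2\pdl x$) forces $a_2<x<a_4$, contradicting $a_4<x$. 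With that sentence inserted, the argument goes through; note also that your derivation of $a_5\pdr x$ in the $k\geq 5$ exclusion implicitly uses $a_4>x$ to get $a_5>x$, so the fix is needed there as well (or replace it by the same \cond{} argument on $a_3\pl a_5$ directly).
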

\begin{proof} Using Lemma \ref{lem:ladend}, \cond{}, and the assumption that $x$ is not climbing $\{a_1, \ldots, a_k\}$ in $\cP$, one may deduce that the only possibilities are the above three cases and possibly [$k=5$, $a_{1} \pl x$, $a_3 \pdl x$, $a_4 \pdr x$, $a_5 \pdr x$] (see the last diagram in Figure \ref{fig:ladmid}). However, the latter is impossible since in this case $a_3$ is climbing the ladder $\{a_1, a_2, x, a_5\}$.
\end{proof}

\begin{figure}[!htbp]
\subcaptionbox*{(\ref{lem:ladmid}.1)}{
\begin{tikzpicture}[scale=1.3]
	\node (a1) at (0,0) {$a_1$};
	\node (a2) at (1,-1) {$a_2$};
	\node (a3) at (0,-2) {$a_3$};
	\node (a4) at (1,-3) {$\vup$};
	\node (x) at (2,-1.5) {$x$};
	
	\draw[<-] (a1) to (a3);
	\draw[<-] (a2) to (a4);
	\draw[<-] (a1) to (a4);
	\draw[<-,dashed,out=0,in=90] (a1) to (x);
	\draw[<-, dashed] (a1) to (a2);
	\draw[<-, dashed] (a2) to (a3);
	\draw[<-, dashed] (a3) to (a4);
	\draw[<-, dashed] (a2) to (x);
	\draw[<-, dashed] (x) to (a3);
\end{tikzpicture}
}\subcaptionbox*{(\ref{lem:ladmid}.2)}{
\begin{tikzpicture}[scale=1.3]
	\node (a1) at (0,0) {$a_1$};
	\node (a2) at (1,-1) {$a_2$};
	\node (a3) at (0,-2) {$a_3$};
	\node (a4) at (1,-3) {$a_4$};
	\node (x) at (2,-1.5) {$x$};
	
	\draw[<-] (a1) to (a3);
	\draw[<-] (a2) to (a4);
	\draw[<-] (a1) to (a4);
	\draw[<-,out=0,in=90] (a1) to (x);
	\draw[->, dashed, out=0,in=270] (a4) to (x);
	\draw[<-, dashed] (a1) to (a2);
	\draw[<-, dashed] (a2) to (a3);
	\draw[<-, dashed] (a3) to (a4);
	
	\draw[<-, dashed] (a2) to (x);
	\draw[<-, dashed] (x) to (a3);
\end{tikzpicture}
}\subcaptionbox*{(\ref{lem:ladmid}.3)}{
\begin{tikzpicture}[scale=1.3]
	\node (a1) at (0,0) {$a_1$};
	\node (a2) at (1,-1) {$a_2$};
	\node (a3) at (0,-2) {$a_3$};
	\node (a4) at (1,-3) {$a_4$};
	\node (x) at (2,-2.5) {$x$};
	
	\draw[<-] (a1) to (a3);
	\draw[<-] (a2) to (a4);
	\draw[<-] (a1) to (a4);
	\draw[<-,out=0,in=90] (a1) to (x);
	\draw[->, dashed] (a4) to (x);
	\draw[<-, dashed] (a1) to (a2);
	\draw[<-, dashed] (a2) to (a3);
	\draw[<-, dashed] (a3) to (a4);
	
	\draw[<-, dashed] (a2) to (x);
	\draw[<-, dashed] (a3) to (x);
\end{tikzpicture}
}\subcaptionbox*{not possible}{
\begin{tikzpicture}[xscale=1.3]
	\node (a1) at (0,0) {$a_1$};
	\node (a2) at (1,-1) {$a_2$};
	\node (a3) at (0,-2) {$a_3$};
	\node (a4) at (1,-3) {$a_4$};
	\node (a5) at (0,-4) {$a_5$};
	\node (x) at (2,-2.5) {$x$};
	
	\draw[<-] (a1) to (a3);
	\draw[<-] (a2) to (a4);

	\draw[<-] (a1) to (a4);

	\draw[<-,out=0,in=90] (a1) to (x);
	\draw[<-,dashed,out=270,in=0] (x) to (a5);
	\draw[->, dashed] (a4) to (x);
	\draw[<-, dashed] (a1) to (a2);
	\draw[<-, dashed] (a2) to (a3);
	\draw[<-, dashed] (a3) to (a4);
	
	\draw[<-] (a3) to (a5);
	\draw[<-] (a2) to (a5);
	\draw[<-, dashed] (a4) to (a5);
	
	\draw[<-, dashed] (a2) to (x);
	\draw[<-, dashed] (a3) to (x);
\end{tikzpicture}
}
\caption{(Im)Possible cases in Lemma \ref{lem:ladmid}}\label{fig:ladmid}
\end{figure}

\subsection{Proof of Theorem \ref{thm:main}\ref{mainthm1}}
We are ready to prove Theorem \ref{thm:main}\ref{mainthm1}.
Since each column of $PT$ (resp. $QT$) is a chain with respect to $\cP$ (resp. the usual order), we may restrict our attention to comparing elements in two adjacent columns of $PT$ and $QT$, respectively. To this end, we set $\alpha, \beta \in \fA$ to be $\alpha=(a_m, a_{m-1}, \ldots, a_1)$, $\beta=(b_m, \ldots, b_1)$ and set $PT=(PT_1, PT_2)$ where $PT_1, PT_2 \in \fC$ are defined to be $PT_1=(d_p, \ldots, d_1)$ and $PT_2=(e_q, \ldots, e_1)$. We assume that $\Phi(\alpha, \emptyset) = (PT_1, \beta)$, $\Phi(\beta, \emptyset) = (PT_2, \gamma)$ for some $\gamma \in \fA$. We define $QT=(QT_1, QT_2)$ to be as in the algorithm of $\prs$.

\ytableausetup{nosmalltableaux}
\begin{figure}[!htbp]
\begin{tikzpicture}
	\tikzmath{\x = 0.5; \y=3; \z=2.5;\w=-4;\xx=4;}
	\node at (0,-1-\x) {$a_1$};
	\node at (0,-1-2*\x) {$a_2$};
	\node at (0,-1-3*\x) {$\vup$};
	\node at (0,-1-4*\x) {$a_{m\pp}$};
	\node at (0,-1-5*\x) {$a_{m\pp+1}$};
	\node at (0,-1-6*\x) {$\vup$};
	\node at (0,-1-7*\x) {$a_{m}$};
	\draw[rounded corners] (-.5, -1) rectangle (.5, -1-4.5*\x);
	\draw[rounded corners] (-.7, \x-1) rectangle (.7, -1-7.5*\x);
	\node[fill=white] at (0,-1+\x) {$\alpha$};
	\node[fill=white] at (0,-1) {$\alpha\pp$};
	\draw[ddarr] (.5,-2*\x-1) -- (\z,0);
	\draw[ddarr] (.7,-2.5*\x-1-1) -- (\z,\w-0.5);
	
	\node[below] at (\z+.5,.5) {$\begin{ytableau}d\pp_1\\d\pp_2\\\vdots\\d_{p\pp}\end{ytableau}$};
	\node[above] at (\z+.5,.5) {$PT_1\pp$};
	\draw[dashed] (\z, 1+.5) rectangle (\z+2.25, -2);
	\node at (\z+1.5,1-\x) {$b_1\pp$};
	\node at (\z+1.5,1-2*\x) {$b_2\pp$};
	\node at (\z+1.5,1-3*\x) {$\vup$};
	\node at (\z+1.5,1-4*\x) {$b_{m\pp}\pp$};
	\draw[rounded corners] (\z+1, 1) rectangle (\z+2, 1-4*\x-.5 );
	\node[fill=white] at (\z+1.5,1) {$\beta\pp$};

	\node[below] at (\z+.5,\w+.5) {$\begin{ytableau}d_1\\d_2\\\vdots\\d_{p}\end{ytableau}$};
	\node[above] at (\z+.5,\w+.5) {$PT_1$};
	\draw[dashed] (\z, \w+1+.5) rectangle (\z+2.25, \w-2.5);
	\node at (\z+1.5,\w+1-\x) {$b_1$};
	\node at (\z+1.5,\w+1-2*\x) {$b_2$};
	\node at (\z+1.5,\w+1-3*\x) {$\vup$};
	\node at (\z+1.5,\w+1-4*\x) {$b_{m-1}$};
	\node at (\z+1.5,\w+1-5*\x) {$b_{m}$};
	\draw[rounded corners] (\z+1, \w+1) rectangle (\z+2, \w+1-5*\x-.5 );
	\node[fill=white] at (\z+1.5,\w+1) {$\beta$};
	\draw[ddarr] (\z+2,0) -- (\z+\xx,0);
	\draw[ddarr] (\z+2,\w-0.5) -- (\z+\xx,\w-0.5);
	
	\node[below] at (\xx+\z+.5,.5) {$\begin{ytableau}e\pp_1\\e\pp_2\\\vdots\\e_{q\pp}\end{ytableau}$};
	\node[above] at (\xx+\z+.5,.5) {$PT_2\pp$};
	\draw[dashed] (\xx+\z, 1+.5) rectangle (\xx+\z+1, -2);
	
	\node[below] at (\xx+\z+.5,\w+.5) {$\begin{ytableau}e_1\\e_2\\\vdots\\e_{q}\end{ytableau}$};
	\node[above] at (\xx+\z+.5,\w+.5) {$PT_2$};
	\draw[dashed] (\xx+\z, \w+1+.5) rectangle (\xx+\z+1, \w-2.5);
	
\end{tikzpicture}
\caption{Setup for induction argument}\label{fig:setupind}
\end{figure}

We argue by induction on $m=|\alpha|$. There is nothing to prove when its length is 0, and thus suppose that $m\geq 1$ and the statement is true up to $m-1$. We set $m\pp<m$ (specified later), and define $\alpha\pp=(a_{m\pp}, \ldots, a_1)$, $\beta\pp=(b_{m\pp}\pp, \ldots, b_2\pp, b_1\pp)$, $PT\pp=(PT_1\pp, PT_2\pp)$ where $PT_1\pp=(d_{p\pp}\pp, \ldots, d_{2}\pp, d_1\pp)$, $PT_2\pp=(e_{q\pp}\pp, \ldots, e_2\pp, e_1\pp)$, $QT\pp=(QT_1\pp, QT_2\pp)$, and $\gamma\pp\in \fA$ analogously. (See Figure \ref{fig:setupind}.) 

It suffices to assume that $a_m\neq\vn$. For $a_m$ in \csi[(a)], we set $m\pp=m-1$. Then we have $p=p\pp+1$, $q=q\pp$, $PT_1=(a_m)+PT_1\pp$, $QT_1=(m)+QT_1\pp$, $PT_2=PT_2\pp$, and $QT_2=QT_2\pp$. In this case there is nothing to prove. From now on, we divide all the remaining possibilities into the following cases. (Note that we have $p=p\pp$ and $QT_1=QT_1\pp$ in these cases.)

\begin{enumerate}[label=\Roman*., leftmargin=*]
\item $a_m$ is in \csi[(b)] (see Figure \ref{fig:amc1}): we set $m\pp\colonequals m-1$. There exists $\varrho \in [1,p]$ such that $d\pp_{\varrho-1} < a_m<d\pp_{\varrho}$ and $d\pp_{\varrho-1} \pl a_m$. (Here we set $d\pp_0=-\vn$.) Here it is easy to observe that $\Phi((a_m), PT_1\pp)=(PT_1, (b_m))$. Also we have $d_\varrho=a_m<d_\varrho\pp$, $d_k=d_k\pp$ if $k\neq \varrho$, $\beta\pp=(b_{m-1}, \ldots, b_2, b_1)$, and $b_m=d_\varrho\pp$.

\begin{figure}[!htbp]
\begin{tikzpicture}[baseline=(current bounding box.center)]
	\tikzmath{\x=0.7;\y=1.2;}
	\node (1) at (\y, 0) {$\vup$};
	\node (2) at (\y, -\x) {$d_{\varrho-1}\pp$};
	\node (4) at (\y, -3*\x) {$d_\varrho\pp$};
	\node (5) at (\y, -4*\x) {$\vup$};
	
	\node (3) at (0, -2*\x) {$a_m$};

	\node at (2*\y, -0) {$\vup$};
	\node at (2*\y, -\x) {$b_{m-1}$};
	
	\draw [->] (3) to (2);
	\draw [->] (4) to (2);
	\draw[rounded corners] (\y-0.5, 0.5) rectangle (\y+.5, -3.5);
	\node[fill=white] at (\y,0.5) {$PT_1\pp$};
	\draw[rounded corners] (2*\y-.5, 0.5) rectangle (2*\y+.5, -1.2);
	\node[fill=white] at (2*\y,0.5) {$\beta\pp$};
	\draw[dashed] (\y-0.7, 1) rectangle (2*\y+0.7, -3.7);
	\node at (\y-1.7,0) {$\Phi(\alpha\pp,\emptyset)=$};
\end{tikzpicture} 
$\quad \textnormal{\Large $\Rightarrow$} \quad $
\begin{tikzpicture}[baseline=(current bounding box.center)]
	\tikzmath{\x=0.7;\y=1.2;}
	\node (1) at (\y, 0) {$\vup$};
	\node (2) at (\y, -\x) {$d_{\varrho-1}\pp$};
	\node (4) at (\y, -3*\x) {$a_m$};
	\node (5) at (\y, -4*\x) {$\vup$};
	

	\node at (2*\y, -0) {$\vup$};
	\node at (2*\y, -\x) {$b_{m-1}$};
	\node at (2*\y, -2*\x) {$d_{\varrho}\pp$};
	
	\draw [->] (4) to (2);
	\draw[rounded corners] (\y-0.5, 0.5) rectangle (\y+.5, -3.5);
	\node[fill=white] at (\y,0.5) {$PT_1$};
	\draw[rounded corners] (2*\y-.5, 0.5) rectangle (2*\y+.5, -2);
	\node[fill=white] at (2*\y,0.5) {$\beta$};
\end{tikzpicture} 
\caption{I. $a_m$ is in \csi[(b)]}\label{fig:amc1}
\end{figure}

\item $a_m$ is in \csii[(a)] (see Figure \ref{fig:amc2}): we set $m\pp \in [0,m-1]$ to be the smallest integer such that $a_{m\pp+1}, \ldots, a_m$ are processed in the same step and $a_i \pdl a_{i+1}$ if $i\in [m\pp+1,m-1]$. Then,
\begin{enumerate}[label=--]
\item $\Phi((a_m, \ldots, a_{m\pp+1}), PT_1\pp)=(PT_1, (b_m, \ldots, b_{m\pp+1}))$,
\item there exists $\varrho\in [1, p-1]$ such that $d_{\varrho}< a_{m\pp+1}<\cdots< a_{m}<d_{\varrho+1}$, and
\item $\cL_{ad}\colonequals \{d_{\varrho}, a_{m\pp+1},\cdots, a_{m},d_{\varrho+1}\}$ is a ladder in $\cP$.
\end{enumerate}
Then we have $PT_1=PT_1\pp$, $\beta\pp=(b_{m\pp}, \ldots, b_2, b_1)$,  and $b_i=a_i$ for $i \in [m\pp+1, m]$.

\begin{figure}[!htbp]
\begin{tikzpicture}[baseline=(current bounding box.center)]
	\tikzmath{\x=0.7;\y=1.2;}
	\node (1) at (\y, 0) {$\vup$};
	\node (2) at (\y, -\x) {$d_{\varrho}\pp$};
	\node (8) at (\y, -6*\x) {$d_{\varrho+1}\pp$};
	\node at (\y, -7*\x) {$\vup$};

	\node (3) at (-\y, -1.5*\x) {$a_{m\pp+1}$};
	\node (4) at (0, -2.5*\x) {$a_{m\pp+2}$};
	\node (5) at (-\y, -3.5*\x) {$\vup$};
	\node (6) at (0, -4.5*\x) {$\vup$};
	\node (7) at (-\y, -5.5*\x) {$a_m$};

	\node at (2*\y, -0) {$\vup$};
	\node at (2*\y, -\x) {$b_{m\pp}$};
	
	\draw [->] (8) to (2);
	\draw [->] (4) to (2);
	\draw [->] (5) to (3);
	\draw [->] (6) to (4);
	\draw [->] (7) to (5);
	\draw [->] (8) to (6);
	\draw [->,dashed] (3) to (2);
	\draw [->,dashed] (4) to (3);
	\draw [->,dashed] (5) to (4);
	\draw [->,dashed] (6) to (5);
	\draw [->,dashed] (7) to (6);
	\draw [->,dashed] (8) to (7);
	\draw[rounded corners] (\y-0.5, 0.5) rectangle (\y+.5, -5.5);
	\node[fill=white] at (\y,0.5) {$PT_1\pp$};
	\draw[rounded corners] (2*\y-.5, 0.5) rectangle (2*\y+.5, -1.2);
	\node[fill=white] at (2*\y,0.5) {$\beta\pp$};
	\draw[dashed] (\y-0.7, 1) rectangle (2*\y+0.7, -5.7);
	\node at (\y-1.7,0) {$\Phi(\alpha\pp,\emptyset)=$};
\end{tikzpicture} 
$\quad \textnormal{\Large $\Rightarrow$} \quad $
\begin{tikzpicture}[baseline=(current bounding box.center)]
	\tikzmath{\x=0.7;\y=1.2;}
	\node (1) at (\y, 0) {$\vup$};
	\node (2) at (\y, -\x) {$d_{\varrho}\pp$};
	\node (8) at (\y, -6*\x) {$d_{\varrho+1}\pp$};
	\node at (\y, -7*\x) {$\vup$};


	\node at (2*\y, -0) {$\vup$};
	\node at (2*\y, -\x) {$b_{m\pp}$};
	\node at (2*\y, -2*\x) {$a_{m\pp+1}$};
	\node at (2*\y, -3*\x) {$a_{m\pp+2}$};
	\node at (2*\y, -4*\x) {$\vup$};
	\node at (2*\y, -5*\x) {$a_{m}$};
	
	\draw [->] (8) to (2);
	\draw[rounded corners] (\y-0.5, 0.5) rectangle (\y+.5, -5.5);
	\node[fill=white] at (\y,0.5) {$PT_1$};
	\draw[rounded corners] (2*\y-.5, 0.5) rectangle (2*\y+.5, -4);
	\node[fill=white] at (2*\y,0.5) {$\beta$};
\end{tikzpicture} 
\caption{II. $a_m$ is in \csii[(a)]}\label{fig:amc2}
\end{figure}

\item $a_m$ is in \csii[(b)] (see Figure \ref{fig:amc3}): we set $m\pp \in [0,m-1]$ to be the smallest integer such that $a_{m\pp+1}, \ldots, a_m$ are processed in the same step and $a_i \pl a_{i+1}$ if $i\in [m\pp+1,m-1]$. Then,
\begin{enumerate}[label=--]
\item $\Phi((a_m, \ldots, a_{m\pp+1}), PT_1\pp)=(PT_1, (b_m, \ldots, b_{m\pp+1}))$,
\item there exists $\varrho \in [m-m\pp,p]$ such that $d_{\sigma+1}\pp< a_{m\pp+1}<d_{\sigma+2}\pp<a_{m\pp+2}<\cdots< d_\varrho\pp<a_{m}$ where $\sigma\colonequals \varrho-m+m\pp$,
\item either $\varrho=q\pp$ or $d_{\varrho+1} \pr a_m$ (by maximality in \csii{}), and
\item $\cL_{ad}\colonequals\{d_{\sigma+1}\pp, a_{m\pp+1},d_{\sigma+2}\pp,a_{m\pp+2},\ldots, d_\varrho\pp,a_{m}\}$ is a ladder in $\cP$.
\end{enumerate}
Then we have $d_i=d_i\pp$ for $i \in [1,\sigma]\cup[\varrho+1, p]$ and $d_i=a_{i+m-\varrho}$ for $i \in [\sigma+1, \varrho]$, $\beta\pp=(b_{m\pp}, \ldots, b_2, b_1)$, and $b_i=d_{i+\varrho-m}\pp$ for $i \in [m\pp+1, m]$.

\begin{figure}[!htbp]
\begin{tikzpicture}[baseline=(current bounding box.center)]
	\tikzmath{\x=0.7;\y=1.2;}
	\node (1) at (\y, 0) {$\vup$};
	\node (2) at (\y, -\x) {$d_{\sigma+1}\pp$};
	\node (4) at (\y, -3*\x) {$d_{\sigma+2}\pp$};
	\node (6) at (\y, -5*\x) {$\vup$};
	\node (8) at (\y, -7*\x) {$d_{\varrho}\pp$};
	\node at (\y, -8*\x) {$\vup$};

	\node (3) at (-.5*\y, -2*\x) {$a_{m\pp+1}$};
	\node (5) at (-.5*\y, -4*\x) {$a_{m\pp+2}$};
	\node (7) at (-.5*\y, -6*\x) {$\vup$};
	\node (9) at (-.5*\y, -8*\x) {$a_m$};

	\node at (2*\y, -0) {$\vup$};
	\node at (2*\y, -\x) {$b_{m\pp}$};

	\draw [->] (4) to (2);
	\draw [->] (5) to (3);
	\draw [->] (6) to (4);
	\draw [->] (7) to (5);
	\draw [->] (8) to (6);
	\draw [->] (9) to (7);
	\draw [->,dashed] (3) to (2);
	\draw [->,dashed] (4) to (3);
	\draw [->,dashed] (5) to (4);
	\draw [->,dashed] (6) to (5);
	\draw [->,dashed] (7) to (6);
	\draw [->,dashed] (8) to (7);
	\draw [->,dashed] (9) to (8);
	\draw[rounded corners] (\y-0.5, 0.5) rectangle (\y+.5, -6.5);
	\node[fill=white] at (\y,0.5) {$PT_1\pp$};
	\draw[rounded corners] (2*\y-.5, 0.5) rectangle (2*\y+.5, -1.2);
	\node[fill=white] at (2*\y,0.5) {$\beta\pp$};
	\draw[dashed] (\y-0.7, 1) rectangle (2*\y+0.7, -6.7);
	\node at (\y-1.7,0) {$\Phi(\alpha\pp,\emptyset)=$};
\end{tikzpicture} 
$\quad \textnormal{\Large $\Rightarrow$} \quad $
\begin{tikzpicture}[baseline=(current bounding box.center)]
	\tikzmath{\x=0.7;\y=1.2;}
	\node (1) at (\y, 0) {$\vup$};
	\node (2) at (\y, -\x) {$a_{m\pp+1}$};
	\node (4) at (\y, -3*\x) {$a_{m\pp+2}$};
	\node (6) at (\y, -5*\x) {$\vup$};
	\node (8) at (\y, -7*\x) {$a_m$};
	\node at (\y, -8*\x) {$\vup$};

	\node at (2*\y, -0) {$\vup$};
	\node at (2*\y, -\x) {$b_{m\pp}$};
	\node at (2*\y, -2*\x) {$d_{\sigma+1}\pp$};
	\node at (2*\y, -3*\x) {$d_{\sigma+2}\pp$};
	\node at (2*\y, -4*\x) {$\vup$};
	\node at (2*\y, -5*\x) {$d_{\varrho}\pp$};

	\draw [->] (4) to (2);
	\draw [->] (6) to (4);
	\draw [->] (8) to (6);

	\draw[rounded corners] (\y-0.5, 0.5) rectangle (\y+.5, -6.5);
	\node[fill=white] at (\y,0.5) {$PT_1\pp$};
	\draw[rounded corners] (2*\y-.5, 0.5) rectangle (2*\y+.5, -4);
	\node[fill=white] at (2*\y,0.5) {$\beta\pp$};
\end{tikzpicture} 
\caption{III. $a_m$ is in \csii[(b)]}\label{fig:amc3}
\end{figure}

\end{enumerate}
The condition $\Phi((a_m, \ldots, a_{m\pp+1}), PT_1\pp)=(PT_1, (b_m, \ldots, b_{m\pp+1}))$ implies that in each case $QT_2$ is obtained from $QT_2\pp$ by adding some entries in $[m\pp+1,m]$ in order, which in turn means that $QT$ satisfies the standard Young tableau condition if and only if $p\geq q$. Thus in each case it suffices to prove that $p\geq q$ and $d_i \not \pr e_i$ for $i\in [1,q]$. Now we also divide possibilities into following cases. (Here we use Lemma \ref{lem:smallelt} in each case. Note that we always have $b_{m\pp+1}<\cdots< b_m$.)

\begin{enumerate}[label=\textbullet, leftmargin=*]
\item $b_{m\pp+1}$ is in \csi[(a)]: we have $e_{q\pp} \pl b_i$ for $i \in [m\pp+1, m]$. Thus $e_i = e_i\pp$ for $i \in [1, q\pp]$
\item $b_{m\pp+1}$ is in \csi[(b)]: there exists $r\in[1,q]$ such that $e\pp_{r-1}<b_{m\pp+1}<e\pp_{r}$ and  $e\pp_{r-1} \pl b_{m\pp+1}$. (Here we set $e_0\pp=-\vn$.) Then we have $e_{i}=e_i\pp$ for $i \in [1, r-1]$.

\item $b_{m\pp+1}$ is in \csii: let $m' \in [1,{m\pp+1}]$ be the smallest integer such that $b_{m'}, \ldots, b_{m\pp+1}$ are processed in the same step (thus in particular $b_{m'}<\cdots<b_{m\pp+1}$), and we set $PT_2\ppp=(e_{q\pp}\ppp, \ldots, e_2\ppp, e_1\ppp) \in \fC$ to be ``$PT_2$ right before the step processing $b_{m'}, \ldots, b_{m}$'', i.e. such that $\Phi((b_m, \ldots, b_{m'}), PT_2\ppp) = (PT_2, -)$ and also $\Phi((b_{m\pp}, \ldots, b_{m'}), PT_2\ppp) = (PT_2\pp, -)$. (As \csii{} does not increase the length of the chain, $PT_2\ppp$ is of length $q\pp$. See Figure \ref{fig:bmc2}.)

\begin{figure}[!htbp]
\begin{tikzpicture}
	\tikzmath{\x = 0.5; \y=3; \z=5.5;\w=1;\zz=8;\ww=0;}
	\node at (-4,-1) {$\emptyset$};
	\draw[ddarr] (-3.7,-1) -- (-.7,-1) node[midway,above]{$\Phi((b_{m'-1},\ldots, b_1),\emptyset)$};
	\node at (0,-\x) {$e_1\ppp$};
	\node at (0,-2*\x) {$e_2\ppp$};
	\node at (0,-3*\x) {$\vup$};
	\node at (0,-4*\x) {$e_{q\pp}\ppp$};
	\draw[rounded corners] (-.5, 0) rectangle (.5, -4*\x-.5);
	\node[fill=white] at (0,0) {$PT_2\ppp$};
	\draw[ddarr] (.7,-.5) -- (4.8,-.5)  node[midway,above]{$\Phi((b_{m\pp},\ldots, b_{m'}),PT_2\ppp)$};
	\draw[ddarr] (.7,-2.2) -- (7.3,-2.2)  node[midway,above]{$\Phi((b_m,\ldots,b_{m\pp+1}, b_{m\pp},\ldots, b_{m'}),PT_2\ppp)$};

	\node at (\z,-\x+\w) {$e_1\pp$};
	\node at (\z,-2*\x+\w) {$e_2\pp$};
	\node at (\z,-3*\x+\w) {$\vup$};
	\node at (\z,-4*\x+\w) {$e_{q\pp}\pp$};
	\draw[rounded corners] (-.5+\z, +\w) rectangle (.5+\z, -4*\x-.5+\w);
	\node[fill=white] at (\z,0+\w) {$PT_2\pp$};
	
	\node at (\zz,-\x+\ww) {$e_1$};
	\node at (\zz,-2*\x+\ww) {$e_2$};
	\node at (\zz,-3*\x+\ww) {$\vup$};
	\node at (\zz,-4*\x+\ww) {$e_{q}$};
	\draw[rounded corners] (-.5+\zz, +\ww) rectangle (.5+\zz, -4*\x-.5+\ww);
	\node[fill=white] at (\zz,0+\ww) {$PT_2$};
	
\end{tikzpicture}
\caption{Definition of $PT_2\ppp, PT_2\pp$, and $PT_2$ when $b_{m\pp+1}$ is in \csii{}}\label{fig:bmc2}
\end{figure}

Also, we set $r,s\geq 1$ and $0\leq m'-1=u(r)<\cdots<u(r+s)={m\pp+1}$ to be such that $e_{r+j}\ppp<b_{u(r+j-1)+1}<\dots< b_{u(r+j)}$ for $j \in [1,s]$ and
$$\cL_{be}\colonequals\{e_{r+1}\ppp, \ldots, e_{r+s}\ppp, b_{m'},b_{m'+1}, \ldots, b_{m\pp+1}\}$$
is a ladder in $\cP$. Note that we always have $e_i=e_i\pp=e_i\ppp$ for $i \in [1, r]$ by Lemma \ref{lem:smallelt} because $e_r\ppp\pl b_{j}$ for $j \in [m', m]$ by assumption. Also one may easily check that $e_i\pp=e_i\ppp$ for $i \geq r+s+1$. Here we list all the possibilities. (Also see Figure \ref{fig:bmcase}.) 

\begin{figure}[!htbp]
\captionsetup[subfigure]{justification=centering}
\subcaptionbox*{$u(r+s-1)=m\pp$\\$b_{m\pp}$ was in \csii[(a)]}{
\begin{tikzpicture}[baseline=(current bounding box.center)]
	\tikzmath{\x=0.7;\y=1.2;}
	\node (1) at (\y, 0) {$\vup$};
	\node (2) at (\y, -2*\x) {$e_{r+s}\ppp$};
	\node (3) at (\y, -4*\x) {$\vup$};

	\node (5) at (-.5*\y, -1*\x) {$b_{m\pp}$};
	\node (6) at (-.5*\y, -3*\x) {$b_{m\pp+1}$};

	\draw [->] (2) to (1);
	\draw [->] (3) to (2);
	\draw [->] (6) to (5);

	\draw [->,dashed] (2) to (5);
	\draw [->,dashed] (6) to (2);

	\draw[rounded corners] (\y-0.6, 0.5) rectangle (\y+.6, -3.7);
	\node[fill=white] at (\y,0.5) {$PT_2\ppp$};
\end{tikzpicture} 
}\qquad\quad\subcaptionbox*{$u(r+s-1)<m\pp$\\$b_{m\pp}$ was in \csii[(b)]}{
\begin{tikzpicture}[baseline=(current bounding box.center)]
	\tikzmath{\x=0.7;\y=1.2;}
	\node (1) at (\y, 0) {$\vup$};
	\node (2) at (\y, -2*\x) {$e_{r+s}\ppp$};
	\node (3) at (\y, -4*\x) {$\vup$};

	\node (5) at (-.5*\y, -3*\x) {$b_{m\pp}$};
	\node (6) at (-.5*\y, -5*\x) {$b_{m\pp+1}$};

	\draw [->] (2) to (1);
	\node[label={[rotate=30]$>$}] at (.25*\y,-2.9*\x)  {};
	\draw [->] (3) to (2);
	\draw [->] (6) to (2);
	\draw [->,dashed] (6) to (5);

	\draw[rounded corners] (\y-0.6, 0.5) rectangle (\y+.6, -3.7);
	\node[fill=white] at (\y,0.5) {$PT_2\ppp$};
\end{tikzpicture} 
}
\caption{Two possible cases of $b_{m\pp}$ on the calculation of $\Phi((b_{m\pp},\ldots, b_{m'}),PT_2\ppp)$} \label{fig:bmcase}
\end{figure}

\begin{enumerate}[label=$\circ$]
\item \csii[(a)], $u(r+s-1)={m\pp}$. (See Figure \ref{fig:ptsa=}.) This case includes $s=1$ and $u(r)+1=u(r+1)={m\pp+1}$. This means that $b_{m'}, \ldots, b_{m\pp}$ were originally in \csii{(a)} as well. We have $r+s+1\leq q\pp \leq q$, $e_i\pp=e_i\ppp$ for all $i$ and also $e_i=e_i\pp$ for $i \in [r+1, r+s-1]$ by Lemma \ref{lem:smallelt} because $e_{r+s-1}\pp\pl b_{j}$ for $j \in [m\pp+1, m]$.

\begin{figure}[!htbp]
\begin{tikzpicture}[baseline=(current bounding box.center)]
	\tikzmath{\x=0.5;\y=2.8;\z=\y-1.8;\yy=1.6;}

	\node (1) at (\z, 0) {$\vup$};
	\node (2) at (\z, -\x) {$r$};
	\node (3) at (\z, -2*\x) {$r+1$};
	\node (4) at (\z, -3*\x) {$\vup$};
	\node (-) at (\z, -4*\x) {$r+s-1$};
	\node (5) at (\z, -5*\x) {$r+s$};
	\node (6) at (\z, -6*\x) {$r+s+1$};
	\node (7) at (\z, -7*\x) {$\vup$};
	\node (8) at (\z, -8*\x) {$q\pp$};
	\node (9) at (\z, -9*\x) {$\vup$};
	\node (0) at (\z, -10*\x) {$q$};

	\node (11) at (\y, 0) {$\vup$};
	\node (12) at (\y, -\x) {$e_r$};
	\node (13) at (\y, -2*\x) {$e_{r+1}$};
	\node (14) at (\y, -3*\x) {$\vup$};
	\node (1-) at (\y, -4*\x) {$e_{r+s-1}$};
	\node (15) at (\y, -5*\x) {$e_{r+s}\pp$};
	\node (16) at (\y, -6*\x) {$e_{r+s+1}\pp$};
	\node (17) at (\y, -7*\x) {$\vup$};
	\node (18) at (\y, -8*\x) {$e_{q\pp}\pp$};

	\node (21) at (2*\y, 0) {$\vup$};
	\node (22) at (2*\y, -\x) {$e_r$};
	\node (23) at (2*\y, -2*\x) {$e_{r+1}$};
	\node (24) at (2*\y, -3*\x) {$\vup$};
	\node (2-) at (2*\y, -4*\x) {$e_{r+s-1}$};
	\node (25) at (2*\y, -5*\x) {$e_{r+s}\pp$};
	\node (26) at (2*\y, -6*\x) {$e_{r+s+1}\pp$};
	\node (27) at (2*\y, -7*\x) {$\vup$};
	\node (28) at (2*\y, -8*\x) {$e_{q\pp}\pp$};

	\node (31) at (3*\y, 0) {$\vup$};
	\node (32) at (3*\y, -\x) {$e_r$};
	\node (33) at (3*\y, -2*\x) {$e_{r+1}$};
	\node (34) at (3*\y, -3*\x) {$\vup$};
	\node (3-) at (3*\y, -4*\x) {$e_{r+s-1}$};
	\node (35) at (3*\y, -5*\x) {$e_{r+s}$};
	\node (36) at (3*\y, -6*\x) {$e_{r+s+1}$};
	\node (37) at (3*\y, -7*\x) {$\vup$};
	\node (38) at (3*\y, -8*\x) {$e_{q\pp}$};
	\node (39) at (3*\y, -9*\x) {$\vup$};
	\node (30) at (3*\y, -10*\x) {$e_{q}$};

	\node (41) at (4*\y, 0) {$\vup$};
	\node (42) at (4*\y, -\x) {$e_r$};
	\node (43) at (4*\y, -2*\x) {$e_{r+1}$};
	\node (44) at (4*\y, -3*\x) {$\vup$};
	\node (4-) at (4*\y, -4*\x) {$e_{r+s-1}$};
	\node (45) at (4*\y, -5*\x) {$e_{r+s}\pp$};
	\node (46) at (4*\y, -6*\x) {$e_{r+s+1}\pp$};
	\node (47) at (4*\y, -7*\x) {$\vup$};
	\node (48) at (4*\y, -8*\x) {$e_{q\pp}\pp$};

	\eee{11}{21}\eee{21}{31}\eee{31}{41}
	\eee{12}{22}\eee{22}{32}\eee{32}{42}
	\eee{13}{23}\eee{23}{33}\eee{33}{43}
	\eee{14}{24}\eee{24}{34}\eee{34}{44}
	\eee{1-}{2-}\eee{2-}{3-}\eee{3-}{4-}
	\eee{15}{25}
	\eee{16}{26}
	\eee{17}{27}
	\eee{18}{28}

	\draw[rounded corners] (\y-0.5*\yy, 0.5) rectangle (\y+0.5*\yy, -4.3);
	\node[fill=white] at (\y,0.5) {$PT_2\ppp$};
	\draw[rounded corners] (2*\y-0.5*\yy, 0.5) rectangle (2*\y+0.5*\yy, -4.3);
	\node[fill=white] at (2*\y,0.5) {$PT_2\pp$};
	\draw[rounded corners] (3*\y-0.5*\yy, 0.5) rectangle (3*\y+0.5*\yy, -5.3);
	\node[fill=white] at (3*\y,0.5) {$PT_2$};
	\draw[rounded corners] (4*\y-0.5*\yy, 0.5) rectangle (4*\y+0.5*\yy, -4.3);
	\node[fill=white] at (4*\y,0.5) {$PT_2\ppp$};
	
	\node[fill=white] at (\z,0.5) {index};
\end{tikzpicture}
\caption{$PT_2\ppp$, $PT_2\pp$, and $PT_2$ when $b_{m\pp+1}$ in \csii[(a)], $u(r+s-1)={m\pp}$}\label{fig:ptsa=}
\end{figure}

\item \csii[(a)], $u(r+s-1)< {m\pp}$. (See Figure \ref{fig:ptsa<}.) This means that $b_{m'}, \ldots, b_{m\pp}$ were originally in \csii{(b)}. We have $r+s+1\leq q\pp \leq q$, $e_{r+i}\pp=b_{u(r+i)}$ for $i \in [1, s-1]$, $e_{r+s}\pp=b_{m\pp}$,  and $e_i=e_i\ppp$ for $i \in [r+1, r+s]$ by Lemma \ref{lem:smallelt} because \csii{(a)} does not alter the given chain and $e_{r+s}\ppp\pl b_{j}$ for $j \in [m\pp+1, m]$ by assumption.

\begin{figure}[!htbp]
\begin{tikzpicture}[baseline=(current bounding box.center)]
	\tikzmath{\x=0.5;\y=2.8;\z=\y-1.8;\yy=1.6;}

	\node (1) at (\z, 0) {$\vup$};
	\node (2) at (\z, -\x) {$r$};
	\node (3) at (\z, -2*\x) {$r+1$};
	\node (4) at (\z, -3*\x) {$\vup$};
	\node (-) at (\z, -4*\x) {$r+s-1$};
	\node (5) at (\z, -5*\x) {$r+s$};
	\node (6) at (\z, -6*\x) {$r+s+1$};
	\node (7) at (\z, -7*\x) {$\vup$};
	\node (8) at (\z, -8*\x) {$q\pp$};
	\node (9) at (\z, -9*\x) {$\vup$};
	\node (0) at (\z, -10*\x) {$q$};

	\node (11) at (\y, 0) {$\vup$};
	\node (12) at (\y, -\x) {$e_r$};
	\node (13) at (\y, -2*\x) {$e_{r+1}$};
	\node (14) at (\y, -3*\x) {$\vup$};
	\node (1-) at (\y, -4*\x) {$e_{r+s-1}$};
	\node (15) at (\y, -5*\x) {$e_{r+s}$};
	\node (16) at (\y, -6*\x) {$e_{r+s+1}\pp$};
	\node (17) at (\y, -7*\x) {$\vup$};
	\node (18) at (\y, -8*\x) {$e_{q\pp}\pp$};

	\node (21) at (2*\y, 0) {$\vup$};
	\node (22) at (2*\y, -\x) {$e_r$};
	\node (23) at (2*\y, -2*\x) {$b_{u(r+1)}$};
	\node (24) at (2*\y, -3*\x) {$\vup$};
	\node (2-) at (2*\y, -4*\x) {$b_{u(r+s-1)}$};
	\node (25) at (2*\y, -5*\x) {$b_{m\pp}$};
	\node (26) at (2*\y, -6*\x) {$e_{r+s+1}\pp$};
	\node (27) at (2*\y, -7*\x) {$\vup$};
	\node (28) at (2*\y, -8*\x) {$e_{q\pp}\pp$};

	\node (31) at (3*\y, 0) {$\vup$};
	\node (32) at (3*\y, -\x) {$e_r$};
	\node (33) at (3*\y, -2*\x) {$e_{r+1}$};
	\node (34) at (3*\y, -3*\x) {$\vup$};
	\node (3-) at (3*\y, -4*\x) {$e_{r+s-1}$};
	\node (35) at (3*\y, -5*\x) {$e_{r+s}$};
	\node (36) at (3*\y, -6*\x) {$e_{r+s+1}$};
	\node (37) at (3*\y, -7*\x) {$\vup$};
	\node (38) at (3*\y, -8*\x) {$e_{q\pp}$};
	\node (39) at (3*\y, -9*\x) {$\vup$};
	\node (30) at (3*\y, -10*\x) {$e_{q}$};
	
	\node (41) at (4*\y, 0) {$\vup$};
	\node (42) at (4*\y, -\x) {$e_r$};
	\node (43) at (4*\y, -2*\x) {$e_{r+1}$};
	\node (44) at (4*\y, -3*\x) {$\vup$};
	\node (4-) at (4*\y, -4*\x) {$e_{r+s-1}$};
	\node (45) at (4*\y, -5*\x) {$e_{r+s}$};
	\node (46) at (4*\y, -6*\x) {$e_{r+s+1}\pp$};
	\node (47) at (4*\y, -7*\x) {$\vup$};
	\node (48) at (4*\y, -8*\x) {$e_{q\pp}\pp$};
	
	\eee{11}{21}\eee{21}{31}\eee{31}{41}
	\eee{12}{22}\eee{22}{32}\eee{32}{42}
	\eee{33}{43}
	\eee{34}{44}
	\eee{3-}{4-}
	\eee{35}{45}
	\eee{16}{26}
	\eee{17}{27}
	\eee{18}{28}

	\draw[rounded corners] (\y-0.5*\yy, 0.5) rectangle (\y+0.5*\yy, -4.3);
	\node[fill=white] at (\y,0.5) {$PT_2\ppp$};
	\draw[rounded corners] (2*\y-0.5*\yy, 0.5) rectangle (2*\y+0.5*\yy, -4.3);
	\node[fill=white] at (2*\y,0.5) {$PT_2\pp$};
	\draw[rounded corners] (3*\y-0.5*\yy, 0.5) rectangle (3*\y+0.5*\yy, -5.3);
	\node[fill=white] at (3*\y,0.5) {$PT_2$};
	\draw[rounded corners] (4*\y-0.5*\yy, 0.5) rectangle (4*\y+0.5*\yy, -4.3);
	\node[fill=white] at (4*\y,0.5) {$PT_2\ppp$};
	
	\node[fill=white] at (\z,0.5) {index};
\end{tikzpicture} 
\caption{$PT_2\ppp$, $PT_2\pp$, and $PT_2$ when $b_{m\pp+1}$ in \csii[(a)], $u(r+s-1)< {m\pp}$}\label{fig:ptsa<}
\end{figure}

\item \csii[(b)], $u(r+s-1)< {m\pp}$. (See Figure \ref{fig:ptsb<}.)  This means that $b_{m'}, \ldots, b_{m\pp}$ were originally in \csii{(b)} as well. We have $e_{i}=e_i\pp$ for $i \in [r+1,r+s-1]$ and $e_{r+s}\pp=b_{m\pp}$ by Lemma \ref{lem:smallelt} since $e_{r+s-1}\pp\pl b_{j}$ for $j \in [m\pp+1, m]$ by assumption.

\begin{figure}[!htbp]
\begin{tikzpicture}[baseline=(current bounding box.center)]
	\tikzmath{\x=0.5;\y=2.8;\z=\y-1.8;\yy=1.6;}

	\node (1) at (\z, 0) {$\vup$};
	\node (2) at (\z, -\x) {$r$};
	\node (3) at (\z, -2*\x) {$r+1$};
	\node (4) at (\z, -3*\x) {$\vup$};
	\node (-) at (\z, -4*\x) {$r+s-1$};
	\node (5) at (\z, -5*\x) {$r+s$};
	\node (6) at (\z, -6*\x) {$r+s+1$};
	\node (7) at (\z, -7*\x) {$\vup$};
	\node (8) at (\z, -8*\x) {$q\pp$};
	\node (9) at (\z, -9*\x) {$\vup$};
	\node (0) at (\z, -10*\x) {$q$};

	\node (11) at (\y, 0) {$\vup$};
	\node (12) at (\y, -\x) {$e_r$};
	\node (13) at (\y, -2*\x) {$e_{r+1}\ppp$};
	\node (14) at (\y, -3*\x) {$\vup$};
	\node (1-) at (\y, -4*\x) {$e_{r+s-1}\ppp$};
	\node (15) at (\y, -5*\x) {$e_{r+s}\ppp$};
	\node (16) at (\y, -6*\x) {$e_{r+s+1}\pp$};
	\node (17) at (\y, -7*\x) {$\vup$};
	\node (18) at (\y, -8*\x) {$e_{q\pp}\pp$};

	\node (21) at (2*\y, 0) {$\vup$};
	\node (22) at (2*\y, -\x) {$e_r$};
	\node (23) at (2*\y, -2*\x) {$e_{r+1}$};
	\node (24) at (2*\y, -3*\x) {$\vup$};
	\node (2-) at (2*\y, -4*\x) {$e_{r+s-1}$};
	\node (25) at (2*\y, -5*\x) {$b_{m\pp}$};
	\node (26) at (2*\y, -6*\x) {$e_{r+s+1}\pp$};
	\node (27) at (2*\y, -7*\x) {$\vup$};
	\node (28) at (2*\y, -8*\x) {$e_{q\pp}\pp$};

	\node (31) at (3*\y, 0) {$\vup$};
	\node (32) at (3*\y, -\x) {$e_r$};
	\node (33) at (3*\y, -2*\x) {$e_{r+1}$};
	\node (34) at (3*\y, -3*\x) {$\vup$};
	\node (3-) at (3*\y, -4*\x) {$e_{r+s-1}$};
	\node (35) at (3*\y, -5*\x) {$e_{r+s}$};
	\node (36) at (3*\y, -6*\x) {$e_{r+s+1}$};
	\node (37) at (3*\y, -7*\x) {$\vup$};
	\node (38) at (3*\y, -8*\x) {$e_{q\pp}$};
	\node (39) at (3*\y, -9*\x) {$\vup$};
	\node (30) at (3*\y, -10*\x) {$e_{q}$};
	
	\node (41) at (4*\y, 0) {$\vup$};
	\node (42) at (4*\y, -\x) {$e_r$};
	\node (43) at (4*\y, -2*\x) {$e_{r+1}\ppp$};
	\node (44) at (4*\y, -3*\x) {$\vup$};
	\node (4-) at (4*\y, -4*\x) {$e_{r+s-1}\ppp$};
	\node (45) at (4*\y, -5*\x) {$e_{r+s}\ppp$};
	\node (46) at (4*\y, -6*\x) {$e_{r+s+1}\pp$};
	\node (47) at (4*\y, -7*\x) {$\vup$};
	\node (48) at (4*\y, -8*\x) {$e_{q\pp}\pp$};
	
	\eee{11}{21}\eee{21}{31}\eee{31}{41}
	\eee{12}{22}\eee{22}{32}\eee{32}{42}
	\eee{23}{33}
	\eee{24}{34}
	\eee{2-}{3-}
	\eee{16}{26}
	\eee{17}{27}
	\eee{18}{28}

	\draw[rounded corners] (\y-0.5*\yy, 0.5) rectangle (\y+0.5*\yy, -4.3);
	\node[fill=white] at (\y,0.5) {$PT_2\ppp$};
	\draw[rounded corners] (2*\y-0.5*\yy, 0.5) rectangle (2*\y+0.5*\yy, -4.3);
	\node[fill=white] at (2*\y,0.5) {$PT_2\pp$};
	\draw[rounded corners] (3*\y-0.5*\yy, 0.5) rectangle (3*\y+0.5*\yy, -5.3);
	\node[fill=white] at (3*\y,0.5) {$PT_2$};
	\draw[rounded corners] (4*\y-0.5*\yy, 0.5) rectangle (4*\y+0.5*\yy, -4.3);
	\node[fill=white] at (4*\y,0.5) {$PT_2\ppp$};
	
	\node[fill=white] at (\z,0.5) {index};
\end{tikzpicture} 
\caption{$PT_2\ppp$, $PT_2\pp$, and $PT_2$ when $b_{m\pp+1}$ in \csii[(b)], $u(r+s-1)< {m\pp}$}\label{fig:ptsb<}
\end{figure}

\item \csii[(b)], $u(r+s-1) = {m\pp}$. (See Figure \ref{fig:ptsb=}.) This case includes $s=1$ and $u(r)+1=u(r+1)={m\pp+1}$. This means that $b_{m'}, \ldots, b_{m\pp}$ were originally in \csii{(a)}. We have $e_i\pp=e_i\ppp$ for any $i$ and $e_{r+i}=b_{u(r+i)}$ for $i \in [1, s-1]$ by Lemma \ref{lem:smallelt} because $b_{u(r+s-1)}=b_{m\pp}\pl b_{j}$ for $j \in [m\pp+1, m]$. Also, it is easy to observe that $e_{r+i}\pp <e_{r+i}$ for $i \in [1,s-1]$.

\begin{figure}[!htbp]
\begin{tikzpicture}[baseline=(current bounding box.center)]
	\tikzmath{\x=0.5;\y=2.8;\z=\y-1.8;\yy=1.6;}

	\node (1) at (\z, 0) {$\vup$};
	\node (2) at (\z, -\x) {$r$};
	\node (3) at (\z, -2*\x) {$r+1$};
	\node (4) at (\z, -3*\x) {$\vup$};
	\node (-) at (\z, -4*\x) {$r+s-1$};
	\node (5) at (\z, -5*\x) {$r+s$};
	\node (6) at (\z, -6*\x) {$r+s+1$};
	\node (7) at (\z, -7*\x) {$\vup$};
	\node (8) at (\z, -8*\x) {$q\pp$};
	\node (9) at (\z, -9*\x) {$\vup$};
	\node (0) at (\z, -10*\x) {$q$};

	\node (11) at (\y, 0) {$\vup$};
	\node (12) at (\y, -\x) {$e_r$};
	\node (13) at (\y, -2*\x) {$e_{r+1}\pp$};
	\node (14) at (\y, -3*\x) {$\vup$};
	\node (1-) at (\y, -4*\x) {$e_{r+s-1}\pp$};
	\node (15) at (\y, -5*\x) {$e_{r+s}\pp$};
	\node (16) at (\y, -6*\x) {$e_{r+s+1}\pp$};
	\node (17) at (\y, -7*\x) {$\vup$};
	\node (18) at (\y, -8*\x) {$e_{q\pp}\pp$};

	\node (21) at (2*\y, 0) {$\vup$};
	\node (22) at (2*\y, -\x) {$e_r$};
	\node (23) at (2*\y, -2*\x) {$e_{r+1}\pp$};
	\node (24) at (2*\y, -3*\x) {$\vup$};
	\node (2-) at (2*\y, -4*\x) {$e_{r+s-1}\pp$};
	\node (25) at (2*\y, -5*\x) {$e_{r+s}\pp$};
	\node (26) at (2*\y, -6*\x) {$e_{r+s+1}\pp$};
	\node (27) at (2*\y, -7*\x) {$\vup$};
	\node (28) at (2*\y, -8*\x) {$e_{q\pp}\pp$};

	\node (31) at (3*\y, 0) {$\vup$};
	\node (32) at (3*\y, -\x) {$e_r$};
	\node (33) at (3*\y, -2*\x) {$b_{u(r+1)}$};
	\node (34) at (3*\y, -3*\x) {$\vup$};
	\node (3-) at (3*\y, -4*\x) {$b_{u(r+s-1)}$};
	\node (35) at (3*\y, -5*\x) {$e_{r+s}$};
	\node (36) at (3*\y, -6*\x) {$e_{r+s+1}$};
	\node (37) at (3*\y, -7*\x) {$\vup$};
	\node (38) at (3*\y, -8*\x) {$e_{q\pp}$};
	\node (39) at (3*\y, -9*\x) {$\vup$};
	\node (30) at (3*\y, -10*\x) {$e_{q}$};
	
	\node (41) at (4*\y, 0) {$\vup$};
	\node (42) at (4*\y, -\x) {$e_r$};
	\node (43) at (4*\y, -2*\x) {$e_{r+1}\pp$};
	\node (44) at (4*\y, -3*\x) {$\vup$};
	\node (4-) at (4*\y, -4*\x) {$e_{r+s-1}\pp$};
	\node (45) at (4*\y, -5*\x) {$e_{r+s}\pp$};
	\node (46) at (4*\y, -6*\x) {$e_{r+s+1}\pp$};
	\node (47) at (4*\y, -7*\x) {$\vup$};
	\node (48) at (4*\y, -8*\x) {$e_{q\pp}\pp$};
	
	\eee{11}{21}\eee{21}{31}\eee{31}{41}
	\eee{12}{22}\eee{22}{32}\eee{32}{42}
	\eee{13}{23}
	\eee{14}{24}
	\eee{1-}{2-}
	\eee{15}{25}
	\eee{16}{26}
	\eee{17}{27}
	\eee{18}{28}

	\draw[rounded corners] (\y-0.5*\yy, 0.5) rectangle (\y+0.5*\yy, -4.3);
	\node[fill=white] at (\y,0.5) {$PT_2\ppp$};
	\draw[rounded corners] (2*\y-0.5*\yy, 0.5) rectangle (2*\y+0.5*\yy, -4.3);
	\node[fill=white] at (2*\y,0.5) {$PT_2\pp$};
	\draw[rounded corners] (3*\y-0.5*\yy, 0.5) rectangle (3*\y+0.5*\yy, -5.3);
	\node[fill=white] at (3*\y,0.5) {$PT_2$};
	\draw[rounded corners] (4*\y-0.5*\yy, 0.5) rectangle (4*\y+0.5*\yy, -4.3);
	\node[fill=white] at (4*\y,0.5) {$PT_2\ppp$};
	
	\node[fill=white] at (\z,0.5) {index};
\end{tikzpicture} 
\caption{$PT_2\ppp$, $PT_2\pp$, and $PT_2$ when $b_{m\pp+1}$ in \csii[(b)], $u(r+s-1) = {m\pp}$}\label{fig:ptsb=}
\end{figure}
\end{enumerate}

\end{enumerate}

From now on we verify the conditions $p\geq q$ and $d_i \not \pr e_i$ for $i\in [1,q]$ in each case. Note that $p=p\pp\geq q\pp$ and $d_{i}\pp \not\pr e_i\pp$ for any $i$ by induction assumption, and thus $p\geq q$ if $q=q\pp$ and $d_i \not \pr e_i$ if $d_i=d_i\pp$, $e_i=e_i\pp$.

\subsubsection*{\textup{\bfseries I.} \und{$a_m$ is in \textup{\csi[(b)]}}} We have $m\pp=m-1$. Here it suffices to check $d_k \not \pr e_k$ only for $k$ satisfying $k\leq q$ and either $k> q\pp$ or $e_k < e_k\pp$, which follows from \cond{}, induction assumption, and the fact that $d_k \leq d_k\pp$ for $k \in [1,p]$.

\noindent {\bfseries I.i.} $b_m$ is in \csi[(a)]: we have $q=q\pp+1$ and $PT_2=(b_m)+PT\pp_2$. First in this case $p \geq \varrho \geq q$ since $e\pp_{q-1}=e\pp_{q\pp}  \pl b_{m}=d\pp_{\varrho}$. Now it suffices to check $d_q \not \pr e_q$. Now if $\varrho>q$, then $e_{q}=d\pp_{\varrho} \not\pl d_{q}\pp =d_{q}$. If $\varrho=q$, then $e_{q}=d\pp_{\varrho} \not\pl a_m=d_{\varrho}$.

\noindent {\bfseries I.ii.}  $b_m$ is in \csi[(b)]: we have $q=q\pp$, and $e_i\neq e_i\pp$ only when $i=r$ in which case $e_r=b_{m}=d_\varrho\pp$. Thus it suffices to check $d_r \not \pr e_r$. Note that $r\leq \varrho$ since $d\pp_{\varrho} =b_m\pr e\pp_{r-1}$. If $r<{\varrho}$ then $d_{r}=d_{r}\pp\not\pr d\pp_{\varrho}=e_{r}$. If $r={\varrho}$ then $d_{\varrho}=a_m \not\pr d\pp_{\varrho}=e_{r}$ thus the condition still holds. 

\noindent {\bfseries I.iii.}  $b_m$ is in \csii[(a)], $u(r+s-1)=m-1$: we have $PT_2\pp=PT_2$, and thus there is nothing to check.

\noindent {\bfseries I.iv.}  $b_m$ is in \csii[(a)], $u(r+s-1)< m-1$: in this case we have $q=q\pp$,  $e_{r+s}\pp = b_{m-1}$, $e\pp_{r+j} = b_{u(r+j)}$ for $j \in [1,s-1]$, and $e\pp_{k}=e_k$ if $k\not\in[r+1, r+s]$. Thus it suffices to check $d_k \not \pr e_k$ for $k\in [r+1, r+s]$. Note that $r+s\leq \varrho$ since $e_{r+s}\pp=b_{m-1}<b_m=d_{\varrho}\pp $.
First suppose that $k\in [r+1,r+s]$, $k< \varrho$, and $d_k \pr e_k$. Then as $d_k =d_k\pp\pl d_{\varrho}\pp=b_m$ it implies that $d_k$ is climbing $\cL_{be}$ in $\cP$, which is a contradiction. 
Now suppose $k\in [r+1,r+s]$, $k\geq \varrho$ and $d_{k}\pr e_{k}$ (which forces that $k=\varrho=r+s$). Note that $\cL_{be}\cup \{e_{r+s+1}\}$ is a ladder in $\cP$ since $b_m$ is in \csii[(a)] and $\cL_{be}\cup \{e_{r+s+1}, d_{\varrho+1}\}$ is also a ladder as $d_{\varrho+1}=d_{\varrho+1} \pp \pr d_{\varrho}\pp=b_m$ and $d_{\varrho+1}=d_{\varrho+1}\pp\not \pr e_{r+s+1}\pp=e_{r+s+1}$. Since $d_\varrho \pl d_{\varrho+1}$, we see that $d_{\varrho}$ is climbing the ladder $\cL_{be}\cup \{e_{r+s+1}, d_{\varrho+1}\}$ which is a contradiction. (See Figure \ref{fig:Iiv}.)

\begin{figure}[!htbp]
\begin{tikzpicture}[baseline=(current bounding box.center)]
	\tikzmath{\x=0.9;\y=1.2;}

	\node (2) at (\y, -\x) {$e_{r+s}$};
	\node (8) at (\y, -6*\x) {$e_{r+s+1}$};

	\node (3) at (-\y, -1.5*\x) {$b_{u(r+s-1)+1}$};
	\node (4) at (0, -2.5*\x) {$\vup$};
	\node (5) at (-\y, -3.5*\x) {$\vup$};
	\node (6) at (0, -4.5*\x) {$b_{m-1}$};
	\node (7) at (-\y, -5.5*\x) {$b_m$};

	\node (1) at (-2*\y, -3*\x) {$d_{\varrho}$};
	\node (9) at (-2*\y, -6.5*\x) {$d_{\varrho+1}$};

	\draw [->] (8) to (2);
	\draw [->] (4) to (2);
	\draw [->] (5) to (3);
	\draw [->] (6) to (4);
	\draw [->] (7) to (5);
	\draw [->] (8) to (6);
	\draw [->] (9) to (7);
	\draw [->] (9) to (1);
	\draw [->,dashed] (3) to (2);
	\draw [->,dashed] (4) to (3);
	\draw [->,dashed] (5) to (4);
	\draw [->,dashed] (6) to (5);
	\draw [->,dashed] (7) to (6);
	\draw [->,dashed] (8) to (7);
	\draw [->,dashed] (9) to (8);
\end{tikzpicture} 
\caption{$a_m$ in \csi[(b)],  $b_m$ in \csii[(a)], $u(r+s-1)< m-1$, $\varrho=r+s$}\label{fig:Iiv}
\end{figure}

\noindent {\bfseries I.v.}  $b_m$ is in \csii[(b)], $u(r+s-1)< m-1$: we have  $q=q\pp$, $e_{r+s}=b_m>b_{m-1}=e_{r+s}\pp$ and $e_{k}=e_k\pp$ for $k \neq r+s$, and thus there is nothing to check.

\noindent {\bfseries I.vi.}  $b_m$ is in \csii[(b)], $u(r+s-1) = m-1$: we have $q=q\pp$, $e_{r+i}\pp<b_{u(r+i)}=e_{r+i}$ for $i \in [1,s]$ and $e_k\pp=e_k$ for $k \not\in [r+1,r+s]$, and thus there is nothing to check.

\subsubsection*{\textup{\bfseries II.} \und{$a_m$ is in \textup{\csii[(a)]}}}
Recall that $b_i=d_{i+\varrho-m}\pp$ for $i \in [m\pp+1, m]$. Since $d_i=d_i\pp$ for all $i$, similarly to above it suffices to check $d_k \not \pr e_k$ only for $k$ satisfying $k\leq q$ and either $k> q\pp$ or $e_k < e_k\pp$. 

\noindent {\bfseries II.i.} $b_{m\pp+1}$ is in \csi[(a)]: direct calculation shows that $q=q\pp+1$ and $PT_2=(a_m)+PT_2\pp$. First we have $p\geq \varrho+1\geq q\pp+1=q$ since $d_{\varrho+1}>a_{m\pp+1}=b_{m\pp+1} \pr e_{q\pp}$. It remains to check $d_q \not \pr e_q$. If $\varrho+1=q$, then $d_{\varrho+1}\pdr a_m=e_q$, and thus the result holds. Otherwise, i.e. if $\varrho+1>q$, then $d_{q} \leq d_{\varrho}<a_m=e_q$, and thus again the result holds.


\noindent {\bfseries II.ii.} $b_{m\pp+1}$ is in \csi[(b)]:  Note that $\varrho+1 \geq r$ since $d_{\varrho+1}\pp>a_{m\pp+1}=b_{m\pp+1} \pr e_{r-1}\pp$. First assume that $r=q$ or $e_{r+1}\pp\pr b_m= a_m$. (This includes the case $m=m\pp+1$.) Then we have $q=q\pp$, $e_r=a_m$, and $e_k =e_k\pp$ for $k \neq r$, and thus it suffices to show that $d_r \not \pr e_r = a_m$. However, if $d_r \pr a_m$ then $d_{\varrho+1} \geq d_r \pr a_m$ thus $d_{\varrho+1} \pr a_m$, which contradicts the assumption.

From now on we suppose that $e_{r+1}\pp\not\pr a_m$ (so that $m\geq m\pp+2$). In this case $a_m>e_r\pp$ by \cond{} since $e_{r+1}\pp\not\pr a_m$ and $e_{r+1}\pp \pr e_r\pp$. First we assume $m=m\pp+2$. Then we have $a_m\pdl e_{r+1}\pp$; otherwise we should have $a_m>e_{r+1}\pp$, but it is impossible by \cond{} since $e_{r+1}\pp \pr e_r\pp >a_{m-1}$ but $a_m \pdr a_{m-1}$. Now direct calculation shows that $q=q\pp$, $e_r=a_{m-1}$, and $e_k=e_k\pp$ for $k\neq r$, and thus it suffices to show that $d_r \not\pr e_r=a_{m-1}$. If $\varrho+1>r$, then we have $d_r \leq d_{\varrho}<a_{m-1}=e_r$, and thus we are done. Now we claim that $\varrho+1\not=r$; first note that $d_\varrho > e_{r-1}$ since $a_{m-1} \pr e_{r-1}\pp=e_{r-1}$ and $a_{m-1} \pdr d_\varrho$. Together with $d_\varrho =d_{\varrho}\pp \not\pr e_{\varrho}\pp=e_{r-1}$, this means that $d_\varrho \pdr e_{r-1}$. Also $e_{r+1} \pr a_{m-1}$ since $e_{r+1}=e_{r+1}\pp \pr e_r\pp >a_{m-1}$. But this means that $\{e_{r-1}, d_\varrho, a_{m-1}, a_m, e_{r+1}\}$ is a ladder in $\cP$ which $e_r\pp$ is climbing ($e_{r+1} \pr e_r\pp \pr e_{r-1})$, which is a contradiction. (See Figure \ref{fig:IIii}.)

\begin{figure}[!htbp]
\begin{tikzpicture}[baseline=(current bounding box.center)]
	\tikzmath{\x=0.9;\y=1.2;}

	\node (1) at (0,0) {$d_\varrho$};
	\node (2) at (\y,-.5*\x) {$a_{m-1}$};
	\node (3) at (\y,-2.5*\x) {$a_m$};
	\node (4) at (0,-3*\x) {$d_{\varrho+1}$};
	
	\node (5) at (2*\y,.5*\x) {$e_{r-1}$};
	\node (6) at (2*\y,-3*\x) {$e_{r+1}$};

	\node (7) at (2*\y,-1.53*\x) {$e_{r}\pp$};
	
	\draw [->] (3) to (1);
	\draw [->] (4) to (2);
	\draw [->] (2) to (5);
	\draw [->] (6) to (2);
	\draw [->] (6) to (7);
	\draw [->] (7) to (5);

	\draw [->, dashed] (1) to (5);
	\draw [->, dashed] (6) to (3);
	\draw [->,dashed] (2) to (1);	
	\draw [->,dashed] (3) to (2);
	\draw [->,dashed] (4) to (3);
\end{tikzpicture} 
\caption{$a_m$ in \csii[(a)],  $b_m$ in \csi[(b)], $m=m\pp+2$, $\varrho+1=r$}\label{fig:IIii}
\end{figure}

It remains to assume that $m\geq m\pp+3$. We claim that $e_r\pp \not\pr a_{m\pp+1}$. Otherwise, $a_{m-1} <e_r\pp$ by Lemma \ref{lem:ladend} which in turn implies that $a_{m-1} \pl e_{r+1}\pp$ and $a_m \pdl e_{r+1}\pp$ by assumption. However, it means that $\{a_{m\pp+1}, \ldots, a_m, e_{r+1}\pp\}$ is a ladder that $e_r\pp$ is climbing, which is a contradiction. Since $e_r \pp > a_{m\pp+1}$ by assumption, it follows that $e_r\pp \pdr a_{m\pp+1}$.  Now we apply Lemma \ref{lem:ladmid} to $\cL_{ad}$ (of length $m-m\pp+2\geq 5$) and $e_r\pp$ and conclude that $d_{\varrho} \pdl e_{r}\pp$ and $a_{m\pp+2} \pdr e_r\pp$.

Note that $e_{r+1}\pp \pr a_{m\pp+1}$ by \cond{} as $e_r\pp \pl e_{r+1}\pp$ and $e_{r}\pp>a_{m\pp+1}$. Thus by Lemma \ref{lem:ladend} we have $e_r\pp \pr a_{j}$ for $j\in[m\pp+1,m-2]$ and we may apply Lemma \ref{lem:ladtail4} to $\{a_{m-1}, a_m\}$ and $e_{r+1}\pp$. As $a_m \not\pl e_{r+1}\pp$ by assumption, case (\ref{lem:ladtail4}.4) is excluded. In other cases, direct calculations shows that $e_r=a_{m\pp+1}$, $e_k=e_k\pp$ if $k \not \in \{r, r+1\}$, and
\begin{enumerate}[label=$\circ$]
\item (\ref{lem:ladtail4}.1): either $e_{r+1}=e_{r+1}\pp$ or $e_{r+1}=a_m>e_{r+1}\pp$.
\item (\ref{lem:ladtail4}.2): $e_{r+1}=a_m$.
\item (\ref{lem:ladtail4}.3): $e_{r+1}=e_{r+1}\pp$.
\end{enumerate}
(See Figure \ref{fig:IIii2}.) In any case we have $q=q\pp$. Thus it suffices to check that $d_r \not \pr e_r$, and $d_{r+1} \not \pr e_{r+1}$ in case (\ref{lem:ladtail4}.2).
Note that here we have $\varrho+1>r$ since $d_{\varrho+1}\pr a_{m-1} \geq a_{m\pp+2}>e_r\pp$. If $d_r \pr e_r = a_{m\pp+1}$, then since $d_r \pl d_{\varrho+1}$ it means that $d_r$ is climbing $\cL_{ad}$, which is a contradiction. Also in case (\ref{lem:ladtail4}.2) we have $d_{r+1}\not\pr e_{r+1}=a_m$ by \cond{} since $d_{r+1}\leq d_{\varrho+1}$ and $d_{\varrho+1} \pdr a_m$.

\begin{figure}[!htbp]
\subcaptionbox*{(\ref{lem:ladtail4}.1)}{
\begin{tikzpicture}[baseline=(current bounding box.center)]
	\tikzmath{\x=0.9;\y=1.2;}

	\node (1) at (0,0) {$a_{m\pp+1}$};
	\node (2) at (\y,-1*\x) {$a_{m\pp+2}$};
	\node (3) at (0.5*\y,-2*\x) {$\vup$};
	\node (4) at (1.5*\y,-3*\x) {$a_{m-1}$};
	\node (5) at (1*\y,-4*\x) {$a_m$};
	\node (6) at (0,-4.5*\x) {$d_{\varrho+1}$};

	\node (10) at (2.*\y,0.5*\x) {$e_{r-1}\pp$};
	\node (11) at (2.25*\y,-0.5*\x) {$e_{r}\pp$};
	\node (12) at (2.75*\y,-3.5*\x) {$e_{r+1}\pp$};

	\draw [->] (3) to (1);
	\draw [->] (4) to (2);
	\draw [->] (5) to (3);
	\draw [->] (12) to (11);
	\draw [->] (11) to (10);
	\draw [->, out=120,in=0] (12) to (3);
	\draw [->] (1) to (10);
	\draw [->,out=90,in=180] (6) to (4);
%

	\draw [->,dashed] (2) to (1);	
	\draw [->,dashed] (3) to (2);
	\draw [->,dashed] (4) to (3);
	\draw [->,dashed] (5) to (4);
	\draw [->,dashed] (6) to (5);
	\draw [->,dashed] (11) to (1);	
	\draw [->,dashed] (2) to (11);	
	\draw [->,dashed] (12) to (4);
	\draw [->,dashed] (5) to (12);	
\end{tikzpicture} 
}\quad\subcaptionbox*{(\ref{lem:ladtail4}.2)}{
\begin{tikzpicture}[baseline=(current bounding box.center)]
	\tikzmath{\x=0.9;\y=1.2;}

	\node (1) at (0,0) {$a_{m\pp+1}$};
	\node (2) at (\y,-1*\x) {$a_{m\pp+2}$};
	\node (3) at (0.5*\y,-2*\x) {$\vup$};
	\node (4) at (1.5*\y,-3*\x) {$a_{m-1}$};
	\node (5) at (1*\y,-4*\x) {$a_m$};
	\node (6) at (0,-4.5*\x) {$d_{\varrho+1}$};

	\node (10) at (2.*\y,0.5*\x) {$e_{r-1}\pp$};
	\node (11) at (2.25*\y,-0.5*\x) {$e_{r}\pp$};
	\node (12) at (2.75*\y,-4.5*\x) {$e_{r+1}\pp$};

	\draw [->] (3) to (1);
	\draw [->] (4) to (2);
	\draw [->] (5) to (3);
	\draw [->] (12) to (11);
	\draw [->] (11) to (10);
	\draw [->, out=120,in=0] (12) to (3);
	\draw [->] (1) to (10);
	\draw [->,out=90,in=180] (6) to (4);
%

	\draw [->,dashed] (2) to (1);	
	\draw [->,dashed] (3) to (2);
	\draw [->,dashed] (4) to (3);
	\draw [->,dashed] (5) to (4);
	\draw [->,dashed] (6) to (5);
	\draw [->,dashed] (11) to (1);	
	\draw [->,dashed] (2) to (11);	
	\draw [->,dashed] (12) to (4);
	\draw [->,dashed] (12) to (5);	
\end{tikzpicture} 
}\quad\subcaptionbox*{(\ref{lem:ladtail4}.3)}{
\begin{tikzpicture}[baseline=(current bounding box.center)]
	\tikzmath{\x=0.9;\y=1.2;}

	\node (1) at (0,0) {$a_{m\pp+1}$};
	\node (2) at (\y,-1*\x) {$a_{m\pp+2}$};
	\node (3) at (0.5*\y,-2*\x) {$\vup$};
	\node (4) at (1.5*\y,-3*\x) {$a_{m-1}$};
	\node (5) at (1*\y,-4*\x) {$a_m$};
	\node (6) at (0,-4.5*\x) {$d_{\varrho+1}$};

	\node (10) at (2.*\y,0.5*\x) {$e_{r-1}\pp$};
	\node (11) at (2.25*\y,-0.5*\x) {$e_{r}\pp$};
	\node (12) at (2.75*\y,-4.5*\x) {$e_{r+1}\pp$};

	\draw [->] (3) to (1);
	\draw [->] (4) to (2);
	\draw [->] (5) to (3);
	\draw [->] (12) to (11);
	\draw [->] (11) to (10);
	\draw [->, out=120,in=0] (12) to (3);
	\draw [->] (1) to (10);
	\draw [->,out=90,in=180] (6) to (4);
%

	\draw [->,dashed] (2) to (1);	
	\draw [->,dashed] (3) to (2);
	\draw [->,dashed] (4) to (3);
	\draw [->,dashed] (5) to (4);
	\draw [->,dashed] (6) to (5);
	\draw [->,dashed] (11) to (1);	
	\draw [->,dashed] (2) to (11);	
	\draw [->] (12) to (4);
	\draw [->,dashed] (12) to (5);
\end{tikzpicture} 
}
\caption{$a_m$ in \csii[(a)],  $b_m$ in \csi[(b)], $m\geq m\pp+3$}\label{fig:IIii2}
\end{figure}

\noindent {\bfseries II.iii.} $b_{m\pp+1}$ is in \csii[(a)], $u(r+s-1)=m\pp$: when $m=m\pp+1$, then $PT_2=PT_2\pp$, and thus there is nothing to check. Thus suppose that $m\geq m\pp+2$. Note that $e_{r+s}\pp \pdl a_{m\pp+1}=b_{m\pp+1}$ since $b_{m\pp+1}$ is in \csii[(a)] and $u(r+s-1)+1=m\pp$, and $\varrho\geq r+s$ since $d_{\varrho+1}\pp \pr a_{m\pp+1}>e_{r+s}\pp$. First assume that $e_{r+s}\pp \pl a_{m\pp+2}$. Then $\cL_{ad}\cup\{e_{r+s}\pp\}-\{d_\varrho\}$ is a ladder and $e_{r+s}\pp \pl e_{r+s+1}\pp$, and thus by Lemma \ref{lem:ladend} we have $e_{r+s}\pp \pr a_{j}$ for $j\in[m\pp+1,m-2]$ and we may apply Lemma \ref{lem:ladtail4} to $\{a_{m-1}, a_m\}$ and $e_{r+s+1}\pp$. (\ref{lem:ladtail4}.4) is impossible since $b_{m\pp+1}$ is in \csii[(a)], i.e. there exists $j \in [m\pp+1, m]$ such that $a_j=b_{j} \pdl e_{r+s+1}\pp$. In other cases, direct calculation shows that $e_k=e_k\pp$ if $k \neq r+s+1$ and
\begin{enumerate}[label=$\circ$]
\item (\ref{lem:ladtail4}.1): either $e_{r+s+1}=e_{r+s+1}\pp$ or $e_{r+s+1}=a_m>e_{r+s+1}\pp$.
\item (\ref{lem:ladtail4}.2): $e_{r+s+1}=a_m$.
\item (\ref{lem:ladtail4}.3): $e_{r+s+1}=e_{r+s+1}\pp$.
\end{enumerate}
(See Figure \ref{fig:IIiii}.) In any case we have $q=q\pp$. Thus we only need to check that $d_{r+s+1} \not\pr e_{r+s+1}=a_m$ in case (\ref{lem:ladtail4}.2). But this holds by \cond{} since $d_{r+s+1}\leq d_{\varrho+1}$ and $d_{\varrho+1} \pdr a_m=e_{r+s+1}$. 

\begin{figure}[!htbp]
\subcaptionbox*{(\ref{lem:ladtail4}.1)}{
\begin{tikzpicture}[baseline=(current bounding box.center)]
	\tikzmath{\x=0.9;\y=1.2;}

	\node (1) at (0,0) {$a_{m\pp+1}$};
	\node (2) at (\y,-1*\x) {$a_{m\pp+2}$};
	\node (3) at (0.5*\y,-2*\x) {$\vup$};
	\node (4) at (1.5*\y,-3*\x) {$a_{m-1}$};
	\node (5) at (1*\y,-4*\x) {$a_m$};
	\node (6) at (0,-4.5*\x) {$d_{\varrho+1}$};

	\node (10) at (2.*\y,0.5*\x) {$e_{r+s}\pp$};
	\node (12) at (2.75*\y,-3.5*\x) {$e_{r+s+1}\pp$};

	\draw [->] (3) to (1);
	\draw [->] (2) to (10);
	\draw [->] (4) to (2);
	\draw [->] (5) to (3);
	\draw [->] (12) to (10);
	\draw [->, out=120,in=0] (12) to (3);
	\draw [->,dashed] (1) to (10);
	\draw [->,out=90,in=180] (6) to (4);

	\draw [->,dashed] (2) to (1);	
	\draw [->,dashed] (3) to (2);
	\draw [->,dashed] (4) to (3);
	\draw [->,dashed] (5) to (4);
	\draw [->,dashed] (6) to (5);
	\draw [->,dashed] (12) to (4);
	\draw [->,dashed] (5) to (12);	
\end{tikzpicture} 
}\quad\subcaptionbox*{(\ref{lem:ladtail4}.2)}{
\begin{tikzpicture}[baseline=(current bounding box.center)]
	\tikzmath{\x=0.9;\y=1.2;}

	\node (1) at (0,0) {$a_{m\pp+1}$};
	\node (2) at (\y,-1*\x) {$a_{m\pp+2}$};
	\node (3) at (0.5*\y,-2*\x) {$\vup$};
	\node (4) at (1.5*\y,-3*\x) {$a_{m-1}$};
	\node (5) at (1*\y,-4*\x) {$a_m$};
	\node (6) at (0,-4.5*\x) {$d_{\varrho+1}$};

	\node (10) at (2.*\y,0.5*\x) {$e_{r+s}\pp$};
	\node (12) at (2.75*\y,-4.5*\x) {$e_{r+s+1}\pp$};

	\draw [->] (3) to (1);
	\draw [->] (2) to (10);
	\draw [->] (4) to (2);
	\draw [->] (5) to (3);
	\draw [->] (12) to (10);
	\draw [->, out=120,in=0] (12) to (3);
	\draw [->,dashed] (1) to (10);
	\draw [->,out=90,in=180] (6) to (4);
%

	\draw [->,dashed] (2) to (1);	
	\draw [->,dashed] (3) to (2);
	\draw [->,dashed] (4) to (3);
	\draw [->,dashed] (5) to (4);
	\draw [->,dashed] (6) to (5);
	\draw [->,dashed] (12) to (4);
	\draw [->,dashed] (12) to (5);	
\end{tikzpicture} 
}\quad\subcaptionbox*{(\ref{lem:ladtail4}.3)}{
\begin{tikzpicture}[baseline=(current bounding box.center)]
	\tikzmath{\x=0.9;\y=1.2;}

	\node (1) at (0,0) {$a_{m\pp+1}$};
	\node (2) at (\y,-1*\x) {$a_{m\pp+2}$};
	\node (3) at (0.5*\y,-2*\x) {$\vup$};
	\node (4) at (1.5*\y,-3*\x) {$a_{m-1}$};
	\node (5) at (1*\y,-4*\x) {$a_m$};
	\node (6) at (0,-4.5*\x) {$d_{\varrho+1}$};

	\node (10) at (2.*\y,0.5*\x) {$e_{r+s}\pp$};

	\node (12) at (2.75*\y,-4.5*\x) {$e_{r+s+1}\pp$};

	\draw [->] (3) to (1);
	\draw [->] (2) to (10);
	\draw [->] (4) to (2);
	\draw [->] (5) to (3);
	\draw [->] (12) to (10);
	\draw [->, out=120,in=0] (12) to (3);
	\draw [->,dashed] (1) to (10);
	\draw [->,out=90,in=180] (6) to (4);
%

	\draw [->,dashed] (2) to (1);	
	\draw [->,dashed] (3) to (2);
	\draw [->,dashed] (4) to (3);
	\draw [->,dashed] (5) to (4);
	\draw [->,dashed] (6) to (5);
	\draw [->] (12) to (4);
	\draw [->,dashed] (12) to (5);
\end{tikzpicture} 
}
\captionsetup{justification=centering}
\caption{ $a_m$ in \csii[(a)],  $b_m$ in \csii[(a)], $u(r+s-1)=m\pp$,\\ $m\geq m\pp+2$, $e_{r+s}\pp \pl a_{m\pp+2}$}\label{fig:IIiii}
\end{figure}

It remains to assume that $e_{r+s}\pp \pdl a_{m\pp+2}$. Since $e_{r+s}\pp \pl e_{r+s+1}\pp$, this means that $a_{m\pp+2}<e_{r+s+1}\pp$ by \cond{}. 
On the other hand,  in this case $b_{m\pp+1}$ is in \csii[(a)] only if $a_{m\pp+1}\pdl e_{r+s+1}\pp$. Thus we may apply Lemma \ref{lem:ladmid} to $\cL_{ad}$ and $e_{r+s+1}\pp$. Here (\ref{lem:ladmid}.3) is the only one satisfying $a_{m\pp+2}<e_{r+s+1}\pp$, in which case $m=m\pp+2$ and $PT_2=PT_2\pp$. (See Figure \ref{fig:IIiii2}.) Thus there is nothing to prove.

\begin{figure}[!htbp]
\begin{tikzpicture}[baseline=(current bounding box.center)]
	\tikzmath{\x=0.9;\y=1.2;}

	\node (1) at (0,0) {$d_\varrho$};
	\node (2) at (\y,-.5*\x) {$a_{m-1}$};
	\node (3) at (\y,-2*\x) {$a_m$};
	\node (4) at (0,-3*\x) {$d_{\varrho+1}$};

	\node (5) at (2.5*\y,0*\x) {$e_{r+s}\pp$};

	\node (6) at (2.5*\y,-2.5*\x) {$e_{r+s+1}\pp$};

	\draw [->] (3) to (1);
	\draw [->] (4) to (2);
	\draw [->] (6) to (5);

	\draw [->,dashed] (2) to (5);
	\draw [->,dashed] (3) to (5);
	\draw [->,dashed] (2) to (1);	
	\draw [->,dashed] (3) to (2);
	\draw [->,dashed] (4) to (3);
	\draw [->,dashed] (6) to (2);
	\draw [->,dashed] (6) to (3);
	\draw [->,dashed] (4) to (6);
\end{tikzpicture} 
\captionsetup{justification=centering}
\caption{$a_m$ in \csii[(a)],  $b_m$ in \csii[(a)], \\$u(r+s-1)=m\pp$, $m\geq m\pp+2$, $e_{r+s}\pp \pdl a_{m\pp+2}$}\label{fig:IIiii2}
\end{figure}

\noindent {\bfseries II.iv.} $b_{m\pp+1}$ is in \csii[(a)], $u(r+s-1) < m\pp$:  first assume that $m=m\pp+1$. Then $q=q\pp$, $e_{r+s}\pp = b_{m-1}$, $e\pp_{r+j} = b_{u(r+j)}$ for $j \in [1,s-1]$, and $e_j=e_j\pp$ if $j \not\in[r+1, r+s]$. Thus it suffices to check $d_k \not \pr e_k$ for $k\in [r+1, r+s]$. Since $d_{\varrho+1}\pp=d_{\varrho+1}> a_m=b_m>b_{m-1}=e_{r+s}\pp$, we have $r+s\leq \varrho+1$. Also since $d_{\varrho+1} \pdr a_m=b_m$, by Lemma \ref{lem:extlad} either $\cL_{be}\cup\{d_{\varrho+1}\}$ or $\cL_{be}\cup\{d_{\varrho+1}\}-\{b_m\}$ is a ladder. Now if $d_k \pr e_k$ for some $k \in [r+1,r+s]$, $k<\varrho+1$ then as $d_k \pl d_{\varrho+1}$ it implies that $d_k$ is climbing a ladder, which is impossible. It remains to check the case when $k=r+s=\varrho+1$. We claim that this is impossible. First, $\varrho+2 = r+s+1 \geq q \geq p$, and thus $d_{\varrho+2}$ is well-defined. Furthermore, $d_{\varrho+2} = d_{\varrho+2}\pp \not \pr e_{r+s+1}\pp=e_{r+s+1}$, and thus $d_{\varrho+1}<e_{r+s+1}$ by \cond{} since $d_{\varrho+2}\pr d_{\varrho+1}$. On the other hand, if $d_{\varrho+2}<e_{r+s+1}$ then $a_m <d_{\varrho+1}\pl d_{\varrho+2}<e_{r+s+1}$ which implies $b_m = a_m \pl e_{r+s+1}$, but it contradicts the assumption that $b_{m}$ is in \csii[(a)]. Therefore, we have $d_{\varrho+1}<e_{r+s+1}<d_{\varrho+2}$ and also $d_{\varrho+2} \pdr e_{r+s+1}$. Now one may check that $\cP$ restricted to $\{d_\varrho, a_m, d_{\varrho+1},e_{r+s+1}, d_{\varrho+2}\}$ is isomorphic to $\cP_{(3,1,1),5}$, which is a contradiction. (See Figure \ref{fig:IIiv}.)

\begin{figure}[!htbp]
\begin{tikzpicture}[baseline=(current bounding box.center)]
	\tikzmath{\x=0.9;\y=1.2;}

	\node (1) at (0,0) {$d_\varrho$};
	\node (2) at (0,-2*\x) {$d_{\varrho+1}$};
	\node (3) at (0,-4*\x) {$d_{\varrho+2}$};

	\node (4) at (2*\y,-1*\x) {$a_m$};
	\node (5) at (2.5*\y,-3*\x) {$e_{r+s+1}$};
	
	\draw [->] (2) to (1);
	\draw [->] (3) to (2);
	\draw [->] (5) to (1);
	\draw [->] (3) to (4);

	\draw [->,dashed] (4) to (1);	
	\draw [->,dashed] (2) to (4);
	\draw [->,dashed] (5) to (2);
	\draw [->,dashed] (5) to (4);
	\draw [->,dashed] (3) to (5);
\end{tikzpicture} 
\caption{$a_m$ in \csii[(a)],  $b_m$ in \csii[(a)], $m=m\pp+1$, $\varrho+1=r+s$}\label{fig:IIiv}
\end{figure}

Now assume that $m\geq m\pp+2$. First $d_{\varrho+1}\pp=d_{\varrho+1}\pr a_{m\pp+1}>b_{m\pp}=e_{r+s}\pp$, and thus $\varrho \geq r+s$. As $\cL_{be}$ is a ladder of size $\geq 3$ by assumption, $\cL_{be} \cup(\cL_{ad}-\{d_\varrho\})$ is also a ladder by Lemma \ref{lem:ladjoin}. Now by Lemma \ref{lem:ladmid} we have $e_{r+s+1}\pp\pr x$ for any $x\in \cL_{be} \cup(\cL_{ad}-\{d_\varrho\})-\{a_{m-1}, a_m, d_{\varrho+1}\}$, and we may apply \ref{lem:ladtail4} to $\{a_{m-1}, a_m\}$ and $e_{r+s+1}\pp$. Keep in mind that $b_{m\pp+1}$ is in \csii[(a)] which implies that (\ref{lem:ladtail4}.4) is impossible. Direct calculation shows that $e\pp_{r+j} = b_{u(r+j)}$ for $j \in [1,s-1]$, $e\pp_{r+s} = b_{m\pp}$,  $e_k=e_k\pp$ if $k\not\in[r+1, r+s+1\}$ and
\begin{enumerate}[label=$\circ$]
\item (\ref{lem:ladtail4}.1): either $e_{r+s+1}=e_{r+s+1}\pp$ or $e_{r+s+1}=a_m>e_{r+s+1}\pp$.
\item (\ref{lem:ladtail4}.2): $e_{r+s+1}=a_m$.
\item (\ref{lem:ladtail4}.3): $e_{r+s+1}=e_{r+s+1}\pp$.
\end{enumerate}
(See Figure \ref{fig:IIiv2}.) In any case we have $q=q\pp$. 
Thus it suffices to show that $d_k \not \pr e_k$ if $k \in [r+1,r+s]$ and $d_{r+s+1} \not \pr e_{r+s+1}$ in case (\ref{lem:ladtail4}.2). The former is clear. Indeed, if $k \in [r+1,r+s]$ then $k<\varrho+1$, and thus $d_k \pl d_{\varrho+1}$. Therefore, if $d_k \pr e_k$ then $d_k$ is climbing $\cL_{be} \cup (\cL_{ad}-\{d_{\varrho}\})$ which is a contradiction. Now if we are in case (\ref{lem:ladtail4}.2) then $d_{\varrho+1}\geq d_{r+s+1}$ and $d_{\varrho+1} \pdr a_m = e_{r+s+1}$, and thus $d_{r+s+1} \not \pr e_{r+s+1}$.

\begin{figure}[!htbp]
\subcaptionbox*{(\ref{lem:ladtail4}.1)}{
\begin{tikzpicture}[baseline=(current bounding box.center)]
	\tikzmath{\x=0.9;\y=1.1;\z=0.15;}
	\node (1) at (0,0) {$b_{u(r+s-1)+1}$};
	\node (2) at (\y-1*\z,-1*\x) {$b_{u(r+s-1)+2}$};
	\node (3) at (0-2*\z,-2*\x) {$\vup$};
	\node (4) at (\y-3*\z,-3*\x) {$b_{m\pp}$};
	\node (5) at (0-4*\z,-4*\x) {$a_{m\pp+1}$};
	\node (6) at (\y-5*\z,-5*\x) {$a_{m\pp+2}$};
	\node (7) at (0-6*\z,-6*\x) {$\vup$};
	\node (8) at (\y-7*\z,-7*\x) {$a_{m-1}$};
	\node (9) at (0-8*\z,-8*\x) {$a_m$};

	\node (10) at (2*\y,1*\x) {$e_{r+s}$};
	\node (11) at (2*\y,-7.5*\x) {$e_{r+s+1}\pp$};

	\draw [->] (3) to (1);
	\draw [->] (4) to (2);
	\draw [->] (5) to (3);
	\draw [->] (6) to (4);
	\draw [->] (7) to (5);
	\draw [->] (8) to (6);
	\draw [->] (9) to (7);

	\draw [->] (2) to (10);
	\draw [->] (11) to (10);

	\draw [->, out=120,in=0] (11) to (7);

	\draw [->,dashed] (2) to (1);	
	\draw [->,dashed] (3) to (2);
	\draw [->,dashed] (4) to (3);
	\draw [->,dashed] (5) to (4);
	\draw [->,dashed] (6) to (5);	
	\draw [->,dashed] (7) to (6);
	\draw [->,dashed] (8) to (7);
	\draw [->,dashed] (9) to (8);

	\draw [->,dashed] (1) to (10);
	\draw [->,dashed] (11) to (8);
	\draw [->,dashed] (9) to (11);
\end{tikzpicture} 
}\subcaptionbox*{(\ref{lem:ladtail4}.2)}{
\begin{tikzpicture}[baseline=(current bounding box.center)]
	\tikzmath{\x=0.9;\y=1.1;\z=0.15;}
	\node (1) at (0,0) {$b_{u(r+s-1)+1}$};
	\node (2) at (\y-1*\z,-1*\x) {$b_{u(r+s-1)+2}$};
	\node (3) at (0-2*\z,-2*\x) {$\vup$};
	\node (4) at (\y-3*\z,-3*\x) {$b_{m\pp}$};
	\node (5) at (0-4*\z,-4*\x) {$a_{m\pp+1}$};
	\node (6) at (\y-5*\z,-5*\x) {$a_{m\pp+2}$};
	\node (7) at (0-6*\z,-6*\x) {$\vup$};
	\node (8) at (\y-7*\z,-7*\x) {$a_{m-1}$};
	\node (9) at (0-8*\z,-8*\x) {$a_m$};

	\node (10) at (2.*\y,1*\x) {$e_{r+s}$};
	\node (11) at (2*\y,-8.5*\x) {$e_{r+s+1}\pp$};

	\draw [->] (3) to (1);
	\draw [->] (4) to (2);
	\draw [->] (5) to (3);
	\draw [->] (6) to (4);
	\draw [->] (7) to (5);
	\draw [->] (8) to (6);
	\draw [->] (9) to (7);

	\draw [->] (2) to (10);
	\draw [->] (11) to (10);
	\draw [->, out=120,in=0] (11) to (7);


	\draw [->,dashed] (2) to (1);	
	\draw [->,dashed] (3) to (2);
	\draw [->,dashed] (4) to (3);
	\draw [->,dashed] (5) to (4);
	\draw [->,dashed] (6) to (5);	
	\draw [->,dashed] (7) to (6);
	\draw [->,dashed] (8) to (7);
	\draw [->,dashed] (9) to (8);

	\draw [->,dashed] (1) to (10);
	\draw [->,dashed] (11) to (8);
	\draw [->,dashed] (11) to (9);
\end{tikzpicture} 
}\subcaptionbox*{(\ref{lem:ladtail4}.3)}{
\begin{tikzpicture}[baseline=(current bounding box.center)]
	\tikzmath{\x=0.9;\y=1.1;\z=0.15;}
	\node (1) at (0,0) {$b_{u(r+s-1)+1}$};
	\node (2) at (\y-1*\z,-1*\x) {$b_{u(r+s-1)+2}$};
	\node (3) at (0-2*\z,-2*\x) {$\vup$};
	\node (4) at (\y-3*\z,-3*\x) {$b_{m\pp}$};
	\node (5) at (0-4*\z,-4*\x) {$a_{m\pp+1}$};
	\node (6) at (\y-5*\z,-5*\x) {$a_{m\pp+2}$};
	\node (7) at (0-6*\z,-6*\x) {$\vup$};
	\node (8) at (\y-7*\z,-7*\x) {$a_{m-1}$};
	\node (9) at (0-8*\z,-8*\x) {$a_m$};

	\node (10) at (2.*\y,1*\x) {$e_{r+s}$};
	\node (11) at (2*\y,-8.5*\x) {$e_{r+s+1}\pp$};

	\draw [->] (3) to (1);
	\draw [->] (4) to (2);
	\draw [->] (5) to (3);
	\draw [->] (6) to (4);
	\draw [->] (7) to (5);
	\draw [->] (8) to (6);
	\draw [->] (9) to (7);

	\draw [->] (2) to (10);
	\draw [->] (11) to (10);
	\draw [->, out=120,in=0] (11) to (7);	


	\draw [->,dashed] (2) to (1);	
	\draw [->,dashed] (3) to (2);
	\draw [->,dashed] (4) to (3);
	\draw [->,dashed] (5) to (4);
	\draw [->,dashed] (6) to (5);	
	\draw [->,dashed] (7) to (6);
	\draw [->,dashed] (8) to (7);
	\draw [->,dashed] (9) to (8);

	\draw [->,dashed] (1) to (10);
	\draw [->] (11) to (8);
	\draw [->,dashed] (11) to (9);
\end{tikzpicture} 
}
\caption{$a_m$ in \csii[(a)],  $b_m$ in \csii[(a)], $u(r+s-1) < m\pp$, $m\geq m\pp+2$}\label{fig:IIiv2}
\end{figure}

\noindent {\bfseries II.v.} $b_{m\pp+1}$ is in \csii[(b)], $u(r+s-1)< m\pp$: note that $\cL_{be} \owns a_{m\pp+1}, b_{m\pp}, e_{r+s}\ppp$, and thus $\cL_{be}$ is of length $\geq 3$. If $m>m\pp+1$ then $\cL_{be} \cup (\cL_{ad}-\{d_{\varrho}\})$ is a ladder by Lemma \ref{lem:ladjoin}, and thus so is $\cL_{be} \cup (\cL_{ad}-\{d_{\varrho},d_{\varrho+1}\})$. If $m=m\pp+1$ then $\cL_{be} \cup (\cL_{ad}-\{d_{\varrho},d_{\varrho+1}\})=\cL_{be}$ is clearly a ladder. Then direct calculation shows that $q=q\pp$, $e_{r+s}=b_m>b_{m-1}=e_{r+s}\pp$, and $e_i = e_i\pp$ otherwise, and thus there is nothing to check here.

\noindent {\bfseries II.vi.} $b_{m\pp+1}$ is in \csii[(b)], $u(r+s-1)= m\pp$: first assume that $\cL_{be}\cup(\cL_{ad}-\{d_\varrho, d_{\varrho+1}\}) =\cL_{be}\sqcup \{a_{m\pp+2}, \ldots, a_{m}\}$ is a ladder. Then direct calculation shows that $e_{r+i}=b_{u(r+i)}>e_{r+i}\pp$ for $i \in [1,s-1]$, $e_{r+s} = b_m > e_{r+s}\pp$, and $e_i=e_i\pp$ if $i \not\in[r+1, r+s]$, and thus there is nothing to check. Thus suppose otherwise, i.e. $\cL_{be} \cup \{a_{m\pp+2}, \ldots, a_{m}\}$ is not a ladder and in particular $m\geq m\pp+2$. Since $\{a_{m\pp+1}, \ldots, a_m, d_{\varrho+1}\}$ is a ladder of length $\geq 3$, by Lemma \ref{lem:ladjoin} the length of $\cL_{be}$ should be at most 2, i.e. $\cL_{be} = \{e_{r+1}\pp, a_{m\pp+1}\}$ and also $e_{r+1}\pp \pdl a_{m\pp+2}$. Also we have either $q\pp=r+1$ or $e_{r+2}\pp \pr a_{m\pp+1}=b_{m\pp+1}$ since $b_{m\pp+1}$ is in \csii[(b)]. If $q\pp>r+1$ then we may apply Lemma \ref{lem:ladtail4} to $\{a_{m-1}, a_m\}$ and $e_{r+2}\pp$. Again since $b_{m\pp+1}$ is in \csii[(b)], the only possible cases is (\ref{lem:ladtail4}.4), i.e. $a_m \pl e_{r+2}\pp$. Now direct calculation shows that $q=q\pp$, $e_{r+1}=a_m>e_{r+1}\pp$ and $e_i=e_i\pp$ otherwise. Thus there is nothing to check. (See Figure \ref{fig:IIvi}.)

\begin{figure}[!htbp]
\begin{tikzpicture}[baseline=(current bounding box.center)]
	\tikzmath{\x=0.9;\y=1.2;}

	\node (1) at (0,0) {$e_{r+1}\pp$};
	\node (2) at (-2*\y,-0.5*\x) {$a_{m\pp+1}$};
	\node (3) at (-1*\y,-1.5*\x) {$a_{m\pp+2}$};
	\node (4) at (-2*\y,-2.5*\x) {$\vup$};
	\node (5) at (-1*\y,-3.5*\x) {$a_m$};
	\node (6) at (0,-4*\x) {$e_{r+2}\pp$};

	\draw [->] (4) to (2);
	\draw [->] (5) to (3);
	\draw [->] (6) to (5);
	\draw [->] (6) to (1);

	\draw [->, dashed] (3) to (1);
	\draw [->,dashed] (2) to (1);	
	\draw [->,dashed] (3) to (2);
	\draw [->,dashed] (4) to (3);
	\draw [->,dashed] (5) to (4);

\end{tikzpicture} 
\captionsetup{justification=centering}
\caption{$a_m$ in \csii[(a)],  $b_m$ in \csii[(b)], $u(r+s-1)= m\pp$, \\$\cL_{be}\sqcup \{a_{m\pp+2}, \ldots, a_{m}\}$ is not a ladder}\label{fig:IIvi}
\end{figure}

\subsubsection*{\textup{\bfseries III.} \und{$a_m$ is in \textup{\csii[(b)]}}}
Recall that $b_i=a_i$ for $i \in [m\pp+1, m]$. Here, we have $d_i=d_i\pp$ if $i\in[1,\sigma]\cup[\varrho+1,p]$. Thus here we only need to check that $p\geq q$ when $q\neq q\pp$ and $d_k \not\pr e_k$ if $k\leq q$ and either $k> q\pp$, $e_k < e_k\pp$, or $k \in [\sigma+1, \varrho]$.

\noindent {\bfseries III.i.} $b_{m\pp+1}$ is in \csi[(a)]: direct calculation shows that $q=q\pp+m-m\pp$ and $PT_2=(d_{\varrho}\pp, \ldots, d_{\sigma+1}\pp)+PT_2\pp$. Note that $\sigma\geq q\pp$ since $d_{\sigma+1}\pp=b_{m\pp+1} \pr e_{q\pp}$, and thus in particular $q\leq \sigma+m-m\pp=\varrho\leq p$. It remains to check $d_k \not\pr e_k$ for $k\in [q\pp+1,q]$. However, since $e_{q\pp+i}=b_{m\pp+i}=d\pp_{\sigma+i}\pdl a_{m\pp+i}= d_{\sigma+i}$ and $d_{\sigma+i}\geq d_{q\pp+i}$ for $i \in [1, q-q\pp]$, we have $d_{q\pp+i} \not \pr e_{q\pp+i}$ by \cond{} as desired.

\noindent {\bfseries III.ii.} $b_{m\pp+1}$ is in \csi[(b)]: we claim that
\begin{enumerate}[label=--]
\item $q=\max\{q\pp, r+m-m\pp-1\}$,
\item $e_{r+i}=b_{m\pp+i+1}=d\pp_{\sigma+i+1}$ for $i \in [0, m-m\pp-1]$ and $e_{k}=e_k\pp$ otherwise, and
\item $b_{m\pp+i+1}$ are in \csi (either (a) or (b)) if $i<m-m\pp-1$.
\end{enumerate}
(See Figure \ref{fig:IIIii}.) Note that the first part follows from the other parts. To this end, we use induction on $i$. If $i=0$ then it is obvious by assumption. Now assume that the result is true up to $i-1$. First suppose that $i<m-m\pp-1$. If $b_{m\pp+i+1} >e_{r+i}\pp$, then we have $b_m \pr b_{m\pp+i+1} >e_{r+i}\pp\pr e_{r+i-1}=b_{m\pp+i}$ which means that $e_{r+i}\pp$ is climbing $\cL_{ad}$, a contradiction. Thus $b_{m\pp+i+1} <e_{r+i}\pp$, and direct calculation shows that $b_{m\pp+i+1}$ is in \csi{} and $e_{r+i}=b_{m\pp+i+1}$.

Finally, we assume $i=m-m\pp-1>0$. If $q\pp<r+m-m\pp-1$ or $b_m<e_{r+m-m\pp-1}\pp$ then $b_m$ is in \csi{} and the result is obvious, and thus suppose otherwise. If $e_{r+m-m\pp-1}\pp \pl b_m$, then $b_m \pr e_{r+m-m\pp-1}\pp\pr b_{m-1}$, $a_{m-1} \pdl b_m$, and $a_{m-1} \pdr b_{m-1}$, which contradicts Lemma \ref{lem:3122}(1). Thus we have $e_{r+m-m\pp-1}\pp \pdl b_m$. If $q\pp<r+m-m\pp$ or $b_m \pl e_{r+m-m\pp}\pp$, then $b_m$ is in \csii[(b)] and $e_{r+m-m\pp-1}=b_m$, and thus we are done. It remains to consider the case when $q\pp \geq r+m-m\pp$ and $e_{r+m-m\pp}\pp \pdr b_m$, so that $b_m$ is in \csii[(a)]. But then $a_{m-1} \pl e_{r+m-m\pp}\pp$ by Lemma \ref{lem:3122}(1) applied to $b_{m-1}, e_{r+m-m\pp-1}\pp, e_{r+m-m\pp}\pp$, and $a_{m-1}$. This means that $\{b_{m-1}, a_{m-1}, b_m, e_{r+m-m\pp}\pp\}$ is a ladder in $\cP$ which $e_{r+m-m\pp-1}\pp$ is climbing, which is a contradiction. (See Figure \ref{fig:IIIii2}.)

\begin{figure}[!htbp]
\begin{tikzpicture}[baseline=(current bounding box.center)]
	\tikzmath{\x=1.2;\y=1;}
	\node at (\y, 0) {$\vup$};
	\node (1) at (\y, -.5*\x) {$e_{r}$};
	\node (2) at (\y, -1.5*\x) {$e_{r+1}\pp$};
	\node (3) at (\y, -4.5*\x) {$e_{r+2}\pp$};
	\node at (\y, -5*\x) {$\vup$};

	\node (6) at (-.5*\y, -1*\x) {$b_{m\pp+1}$};
	\node (7) at (-.5*\y, -2*\x) {$b_{m\pp+2}$};
	\node (8) at (-.5*\y, -3*\x) {$\vup$};
	\node (9) at (-.5*\y, -4*\x) {$b_{m-1}$};
	\node (10) at (-.5*\y, -5*\x) {$b_m$};

	\draw [->] (2) to (1);
	\draw [->] (6) to (1);

	\draw [->] (7) to (6);
	\draw [->] (8) to (7);
	\draw [->] (9) to (8);
	\draw [->] (10) to (9);

	\draw [->] (3) to (2);

	\draw[rounded corners] (\y-0.5, 0.5) rectangle (\y+.5, -6.5);
	\node[fill=white] at (\y,0.5) {$PT_2\pp$};
\end{tikzpicture} 
$\textnormal{\Large $\Rightarrow$}$
\begin{tikzpicture}[baseline=(current bounding box.center)]
	\tikzmath{\x=1.2;\y=1;}
	\node at (\y, 0) {$\vup$};
	\node (1) at (\y, -.5*\x) {$e_{r}$};
	\node (2) at (\y, -1.5*\x) {$b_{m\pp+1}$};
	\node (3) at (\y, -4.5*\x) {$e_{r+2}\pp$};
	\node at (\y, -5*\x) {$\vup$};

	\node (7) at (-.5*\y, -2*\x) {$b_{m\pp+2}$};
	\node (8) at (-.5*\y, -3*\x) {$\vup$};
	\node (9) at (-.5*\y, -4*\x) {$b_{m-1}$};
	\node (10) at (-.5*\y, -5*\x) {$b_m$};

	\draw [->] (2) to (1);
	
	\draw [->] (7) to (2);
	\draw [->] (8) to (7);
	\draw [->] (9) to (8);
	\draw [->] (10) to (9);

	\draw [->] (3) to (2);

	\draw[rounded corners] (\y-0.5, 0.5) rectangle (\y+.5, -6.5);
\end{tikzpicture} 
$\textnormal{\Large $\Rightarrow\cdots\Rightarrow$}$
\begin{tikzpicture}[baseline=(current bounding box.center)]
	\tikzmath{\x=1.2;\y=1;}
	\node at (\y, 0) {$\vup$};
	\node (1) at (\y, -.5*\x) {$e_{r}$};
	\node (2) at (\y, -1.5*\x) {$b_{m\pp+1}$};
	\node (3) at (\y, -2.5*\x) {$\vup$};
	\node (4) at (\y, -3.5*\x) {$b_{m-1}$};
	\node (5) at (\y, -4.5*\x) {$e_{r+m-m\pp-1}\pp$};
	\node at (\y, -5*\x) {$\vup$};

	\node (10) at (-.5*\y, -4.5*\x) {$b_m$};

	\draw [->] (2) to (1);
	\draw [->] (3) to (2);
	\draw [->] (4) to (3);
	\draw [->] (5) to (4);
	
	\draw [->] (10) to (4);

	\draw [->] (3) to (2);

	\draw[rounded corners] (\y-1, 0.5) rectangle (\y+1, -6.5);
\end{tikzpicture} 
$\textnormal{\Large $\Rightarrow$}$
\begin{tikzpicture}[baseline=(current bounding box.center)]
	\tikzmath{\x=1.2;\y=1;}
	\node at (\y, 0) {$\vup$};
	\node (1) at (\y, -.5*\x) {$e_{r}$};
	\node (2) at (\y, -1.5*\x) {$b_{m\pp+1}$};
	\node (3) at (\y, -2.5*\x) {$\vup$};
	\node (4) at (\y, -3.5*\x) {$b_{m-1}$};
	\node (5) at (\y, -4.5*\x) {$b_m$};
	\node at (\y, -5*\x) {$\vup$};


	\draw [->] (2) to (1);
	\draw [->] (3) to (2);
	\draw [->] (4) to (3);
	\draw [->] (5) to (4);
	

	\draw [->] (3) to (2);

	\draw[rounded corners] (\y-0.5, 0.5) rectangle (\y+.5, -6.5);
	\node[fill=white] at (\y,0.5) {$PT_2$};
\end{tikzpicture} 
\caption{$a_m$ in \csii[(b)], $b_{m\pp+1}$ in \csi[(b)]}\label{fig:IIIii}
\end{figure}

\begin{figure}[!htbp]
\begin{tikzpicture}[baseline=(current bounding box.center)]
	\tikzmath{\x=0.9;\y=1.4;}

	\node (1) at (0,0) {$b_{m-1}$};
	\node (2) at (0,-2.5*\x) {$b_m$};
	
	\node (3) at (\y,-1.5*\x) {$e_{r+m-m\pp-1}\pp$};
	\node (4) at (\y,-3.5*\x) {$e_{r+m-m\pp}\pp$};
	
	\node (5) at (-1*\y,-1*\x) {$a_{m-1}$};

	\draw [->] (2) to (1);	
	\draw [->] (3) to (1);
	\draw [->] (4) to (3);
	\draw [->,out=180, in=270] (4) to (5);

	\draw [->,dashed] (2) to (3);
	\draw [->,dashed] (4) to (2);
	\draw [->,dashed] (5) to (1);	
	\draw [->,dashed] (2) to (5);	

\end{tikzpicture} 
\captionsetup{justification=centering}
\caption{$a_m$ in \csii[(b)], $b_{m\pp+1}$ in \csi[(b)], \\$e_{r+m-m\pp} \pdr b_m$, $b_m \pdr e_{r+m-m\pp-1}$}\label{fig:IIIii2}
\end{figure}

Note that $\sigma+1 \geq r$ as $d_{\sigma+1}\pp=b_{m\pp+1}\pr e_{r-1}\pp$. Thus $r+m-m\pp-1\leq \sigma+m-m\pp=\varrho\leq p$, which means that $q\leq p$. It remains to check that $d_k \not \pr e_k$ for $k \in [\sigma+1, \varrho] \cup [r, r+m-m\pp-1]$, $k\leq q$. If $k \in [\sigma+1, \varrho]$ then we have $d_{k}=a_{k+m-\varrho}\pdr b_{k+m-\varrho}=e_{r+k+m-\varrho-m\pp-1}=e_{r+k-\sigma-1}$ and $e_{r+k-\sigma-1}\leq e_k$, which means that $d_k \not \pr e_k$ by \cond{}. Similarly, if $k \in [r, r+m-m\pp-1]$ then $d_{k+\sigma+1-r}\pdr e_{k}$ and $d_{k+\sigma+1-r} \geq d_k$, and thus $d_k \not \pr e_k$ by \cond{}.

\noindent {\bfseries III.iii.} $b_{m\pp+1}$ is in \csii[(a)], $u(r+s-1)=m\pp$: note that $\sigma+1 \geq r+s$ since $d_{\sigma+1}\pp=b_{m\pp+1} >e_{r+s}\pp$. First we assume that $m=m\pp+1$, in which case $PT_2=PT_2\pp$. Here it suffices to check that $d_k \not \pr e_k$ for $k =\sigma+1=\varrho$, $k\leq q$.
It is obvious when $\sigma+1=\varrho>r+s$ by \cond{} since $d_{\varrho}=a_m\pdr b_m$ and $b_m<e_{r+s+1}\leq e_\varrho$. Now we assume that $k=\varrho=r+s$. Then $\varrho+1\leq p$ since $\varrho+1=r+s+1\leq q \leq p$, and $d_{\varrho+1} > e_{r+s+ 1}$ by \cond{} since $d_{\varrho+1}=d_{\varrho+1}\pp \pr d_{\varrho}\pp=b_m$ and $b_m\pdl e_{r+s+1}$. As $d_{\varrho}=d_{\varrho+1}\pp  \not \pr e_{r+s+1}\pp=e_{r+s+1}$ by assumption, it means that $d_{\varrho+1}   \pdr e_{r+s+1}$, i.e. $\{e_{r+s}, b_m, e_{r+s+1}, d_{\varrho+1}\}$ is a ladder. Now if $d_\varrho \pr e_{r+s}$ then $d_\varrho$ is climbing $\{e_{r+s}, b_m, e_{r+s+1}, d_{\varrho+1}\}$, which is a contradiction. (See Figure \ref{fig:IIIiii}.)

\begin{figure}[!htbp]
\begin{tikzpicture}[baseline=(current bounding box.center)]
	\tikzmath{\x=1.2;\y=1.4;}

	\node (1) at (0,0) {$b_m$};
	
	\node (2) at (\y,0.5*\x) {$e_{r+s}$};
	\node (3) at (\y,-1*\x) {$e_{r+s+1}$};
	
	\node (4) at (-1*\y,-.5*\x) {$d_{\varrho}$};
	\node (5) at (-1*\y,-1.5*\x) {$d_{\varrho+1}$};

	\draw [->] (5) to (4);	
	\draw [->] (3) to (2);
	\draw [->] (5) to (1);


	\draw [->,dashed] (1) to (2);
	\draw [->,dashed] (3) to (1);
	\draw [->,dashed] (5) to (3);	
	

\end{tikzpicture} 
\captionsetup{justification=centering}
\caption{$a_m$ in \csii[(b)], $b_{m\pp+1}$ in \csii[(a)], \\$u(r+s-1)=m\pp$, $m=m\pp+1$, $\varrho=r+s$ }\label{fig:IIIiii}
\end{figure}

Now we show that the case $m>m\pp+1$ is impossible. For the sake of contradiction we assume this condition.
Here $b_{m\pp+1}$ is in \csii[(a)] only if $e_{r+s+1}\pp \pdr b_{m\pp+1}$, which in turn implies that $e_{r+s+1}\pp <b_{m\pp+2}$ by \cond{}. Since $b_{m\pp+1} \pdr e_{r+s}\pp$, either $\cL_{ad}\cup \{e_{r+s}\pp\}$ or $\cL_{ad}\cup \{e_{r+s}\pp\}- \{b_{m\pp+1}\}$ is a ladder by Lemma \ref{lem:extlad}. If $m>m\pp+2$ or $\cL_{ad} \cup \{e_{r+s}\pp\}$ is a ladder then $b_{m\pp+1}\pl e_{r+s+1}\pp$ by Lemma \ref{lem:ladend} since $e_{r+s}\pp \pl e_{r+s+1}\pp$, but this is a contradiction. Thus $m=m\pp+2$ and $\cL_{ad}\cup \{e_{r+s}\pp\}- \{b_{m-1}\}$ is a ladder, i.e. $e_{r+s}\pp \pdl a_{m-1}$. Now we apply Lemma \ref{lem:ladtail4} to $\{a_{m-1}, b_m\}$ and $e_{r+s+1}\pp$; the only possible case is (\ref{lem:ladtail4}.1) since $e_{r+s+1}\pp <b_{m}$. However, in this case $b_{m-1}$ is in \csii[(b)] (together with $b_m$), which is a contradiction. (See Figure \ref{fig:IIIiii2}.)

\begin{figure}[!htbp]
\begin{tikzpicture}[baseline=(current bounding box.center)]
	\tikzmath{\x=1.2;\y=1.6;}

	\node (1) at (0,0) {$b_{m-1}$};
	\node (2) at (0,-1.5*\x) {$b_{m}$};
	
	\node (3) at (\y,0.5*\x) {$e_{r+s}\pp$};
	\node (4) at (\y,-1*\x) {$e_{r+s+1}\pp$};
	
	\node (5) at (-1*\y,-.5*\x) {$a_{m-1}$};
	\node (6) at (-1*\y,-2*\x) {$a_m$};

	\draw [->] (2) to (1);	
	\draw [->] (4) to (3);
	\draw [->] (6) to (5);


	\draw [->,dashed] (1) to (3);
	\draw [->,dashed] (4) to (1);
	\draw [->,dashed] (4) to (5);
	\draw [->,dashed] (2) to (4);
	\draw [->,dashed] (6) to (2);	
	\draw [->,dashed] (2) to (5);	
	\draw [->,dashed] (5) to (1);	
	\draw [->,dashed,out=60, in=180] (5) to (3);	
	

\end{tikzpicture} 
\captionsetup{justification=centering}
\caption{$a_m$ in \csii[(b)], $b_{m\pp+1}$ in \csii[(a)], \\$u(r+s-1)=m\pp$, $m>m\pp+1$}\label{fig:IIIiii2}
\end{figure}

\noindent {\bfseries III.iv.} $b_{m\pp+1}$ is in \csii[(a)], $u(r+s-1)< m\pp$:  first we assume that $m=m\pp+1$, in which case $q=q\pp$, $e_{r+i}\pp=b_{u(r+i)}$ for $i \in [1,s-1]$, $e_{r+s}\pp=b_{m-1}$, and $e_i=e_i\pp$ otherwise. Thus here it suffices to check that $d_k \not \pr e_k$ for $k\in \{\varrho\}\cup [r+1, r+s]$, $k \leq q$. Note that $\varrho \leq  r+s$ since $d_{\varrho}\pp=b_{m} > b_{m-1}=e_{r+s}\pp$. For $k <\varrho$,  first note that either $\cL_{be}\cup\{d_\varrho\}$ or $\cL_{be}\cup\{d_\varrho\}-\{b_m\}$ is a ladder by Lemma \ref{lem:extlad} since $b_m \pdl a_m=d_\varrho$. Thus if $d_k \pr e_k$ then $d_k$ is climbing a ladder as $d_k \pl d_{\varrho}$, which is a contradiction. It remains to consider the case $k=\varrho$. If $\varrho<r+s$, then it follows from \cond{} since $d_\varrho =a_m \pdr b_m$ and $b_m <e_{r+s+1}\pp=e_{r+s+1}\leq e_\varrho$. If $k=\varrho=r+s$, then first $\varrho+1=r+s+1 \leq q\leq p$, and $d_{\varrho+1} =d_{\varrho+1}\pp \pr d_{\varrho}\pp=b_m$. Since $d_{\varrho+1} \not \pr e_{r+s+1}$ and $e_{r+s+1}\pdr b_m$, by \cond{} it follows that $d_{\varrho+1} \pdr e_{r+s+1}$, i.e. $\cL_{be}\cup \{e_{r+s+1}, d_{\varrho+1}\}$ is a ladder in $\cP$. Now if $d_\varrho  \pr e_{r+s}$ then as $d_\varrho \pl d_{\varrho+1}$ it implies that $d_\varrho$ is climbing a ladder, which is contradiction. (See Figure \ref{fig:IIIiv}.)

\begin{figure}[!htbp]
\begin{tikzpicture}[baseline=(current bounding box.center)]
	\tikzmath{\x=1.2;\y=1.2;}

	\node (2) at (\y, -\x) {$e_{r+s}$};
	\node (8) at (\y, -4.5*\x) {$e_{r+s+1}$};

	\node (3) at (-\y, -1.5*\x) {$b_{u(r+s-1)+1}$};
	\node (4) at (0, -2*\x) {$\vup$};
	\node (5) at (-\y, -2.5*\x) {$\vup$};
	\node (6) at (0, -3*\x) {$b_{m-1}$};
	\node (7) at (-\y, -3.5*\x) {$b_m$};

	\node (1) at (-2*\y, -4*\x) {$d_{\varrho}$};
	\node (9) at (-2*\y, -5*\x) {$d_{\varrho+1}$};

	\draw [->] (8) to (2);
	\draw [->] (4) to (2);
	\draw [->] (5) to (3);
	\draw [->] (6) to (4);
	\draw [->] (7) to (5);
	\draw [->] (8) to (6);
	\draw [->] (9) to (7);
	\draw [->] (9) to (1);
	\draw [->,dashed] (3) to (2);
	\draw [->,dashed] (4) to (3);
	\draw [->,dashed] (5) to (4);
	\draw [->,dashed] (6) to (5);
	\draw [->,dashed] (7) to (6);
	\draw [->,dashed] (8) to (7);
	\draw [->,dashed] (9) to (8);
\end{tikzpicture} 
\captionsetup{justification=centering}
\caption{$a_m$ in \csii[(b)],  $b_m$ in \csii[(a)], $u(r+s-1)< m\pp$, 
\\$m=m\pp+1$, $\varrho=r+s$}\label{fig:IIIiv}
\end{figure}

Now assume that $m>m\pp+1$, and we prove that this is impossible. Indeed,  $b_{m\pp+1}\pdl e_{r+s+1}\ppp =e_{r+s+1}\pp$ since $b_{m\pp+1} \pl b_{m\pp+2}$ and $b_{m\pp+1}$ is in \csii[(a)]. However, since $\cL_{be}\cup\cL_{ad}$ is a ladder by Lemma \ref{lem:ladjoin} and $e_{r+s}=e_{r+s}\ppp \pl e_{r+s+1}\ppp=e_{r+s+1}\pp$, we should have $b_{m\pp+1} \pl e_{r+s+1}\pp$ by Lemma \ref{lem:ladend}. This is a contradiction. (See Figure \ref{fig:IIIiv2}.)

\begin{figure}[!htbp]
\begin{tikzpicture}[baseline=(current bounding box.center)]
	\tikzmath{\x=1.2;\y=1.6;}

	\node (2) at (0,0) {$b_{m\pp}$};
	\node (3) at (0,-1*\x) {$b_{m\pp+1}$};
	\node (4) at (0,-2*\x) {$\vup$};
	\node (5) at (0,-3*\x) {$b_m$};
	
	\node (6) at (\y,.5*\x) {$e_{r+s}$};	
	\node (7) at (\y,-1.5*\x) {$e_{r+s+1}\pp$};
	
	\node (9) at (-\y,-1.5*\x) {$a_{m\pp+1}$};
	\node (10) at (-\y,-2.5*\x) {$\vup$};
	\node (11) at (-\y,-3.5*\x) {$a_m$};

	\draw [->] (3) to (6);
	\draw [->] (9) to (2);

	\draw [->] (7) to (6);
	\draw [->] (4) to (3);
	\draw [->] (5) to (4);
	\draw [->] (10) to (9);
	\draw [->] (11) to (10);


	\draw [->,dashed] (3) to (2);
	\draw [->,dashed] (9) to (3);
	\draw [->,dashed] (4) to (9);
	\draw [->,dashed] (10) to (4);
	\draw [->,dashed] (5) to (10);
	\draw [->,dashed] (11) to (5);


\end{tikzpicture} 
\captionsetup{justification=centering}
\caption{$a_m$ in \csii[(b)], $b_{m\pp+1}$ in \csii[(a)], \\$u(r+s-1)<m\pp$, $m>m\pp+1$}\label{fig:IIIiv2}
\end{figure}

\noindent {\bfseries III.v.} $b_{m\pp+1}$ is in \csii[(b)], $u(r+s-1)< m\pp$: first we assume that $m=m\pp+1$, in which case $q=q\pp$, $e_{r+s}=b_m>b_{m-1}=e_{r+s}\pp$, and $e_i=e_i\pp$ otherwise. Thus here it suffices to check that $d_k \not \pr e_k$ for $k=\varrho$, $k \leq q$. Note that $\varrho \geq r+s$ since $d_{\varrho}\pp=b_{m} > b_{m-1}=e_{r+s}\pp$, which implies $e_{\varrho}\geq e_{r+s}$. As $d_\varrho=a_m \pdr b_m=e_{r+s}$, the result follow from \cond{}.


Now we assume that $m> m\pp+1$. Then $\cL_{be}\cup\cL_{ad}$ is a ladder by Lemma \ref{lem:ladjoin}, and thus if $q\leq r+s+1$ then $b_{m\pp+1}\pl e_{r+s+1}\ppp=e_{r+s+1}\pp$ by Lemma \ref{lem:ladend} as $e_{r+s+1}\pp=e_{r+s+1}\ppp \pr e_{r+s}\ppp$. Now almost the same argument as in \mbox{III.ii.} applies here and one can show that $q=\max\{q\pp, r+s+m-m\pp-1\}$, $e_{r+s+i}=b_{m\pp+i+1}=d_{\sigma+i+1}\pp$ for $i \in [0,m-m\pp-1]$, and $e_i = e_i\pp$ otherwise. First, we have $\sigma+1\geq r+s$ since $d_{\sigma+1}\pp=b_{m\pp+1}>b_{m\pp}=e_{r+s}\pp$. Thus $r+s+m-m\pp-1\leq \sigma+m-m\pp=\varrho\leq p$, which means that $q\leq p$. It remains to verify that $d_k \not \pr e_k$ for $k \in [\sigma+1, \varrho] \cup [r+s, r+s+m-m\pp-1]$, $k \leq q$. If $k \in [\sigma+1, \varrho]$ then we have $d_{k}=a_{k+m-\varrho}\pdr b_{k+m-\varrho}=e_{r+s+k+m-\varrho-m\pp-1}=e_{r+s+k-\sigma-1}$ and $e_{r+s+k-\sigma-1}\leq e_k$, which means that $d_k \not \pr e_k$ by \cond{}. Similarly, if $k \in [r+s, r+s+m-m\pp-1]$ then $d_{k+\sigma+1-r-s}\pdr e_{k}$ and $d_{k+\sigma+1-r-s} \geq d_k$, and thus $d_k \not \pr e_k$ by \cond{}.

%

\noindent {\bfseries III.vi.} $b_{m\pp+1}$ is in \csii[(b)], $u(r+s-1)= m\pp$: note that $\sigma+1 \geq r+s$ as $d_{\sigma+1}\pp=b_{m\pp+1}>e_{r+s}\pp$. First suppose that $r+s+1\leq q\pp$ and $e_{r+s+1}\pp\pdr b_{m\pp+1}$. Since $b_{m\pp+1}$ is in \csii[(b)], this forces that $m >m\pp+1$ and $e_{r+s+1}\pp\pdl b_{m\pp+2}$. On the other hand, by Lemma \ref{lem:extlad} either $\cL_{ad}\cup \{e_{r+s}\pp\}$ or $\cL_{ad}\cup \{e_{r+s}\pp\}-\{b_{m\pp+1}\}$ is a ladder, 
thus we may apply Lemma \ref{lem:ladtail4} to $\{a_{m-1}, b_m\}$ and $e_{r+s+1}\pp$. The only possible case is when $m=m\pp+2$ and we are in (\ref{lem:ladtail4}.1), i.e. $a_{m-1} \pdl e_{r+s+1}\pp$ and $e_{r+s+1}\pp \pdl b_m$.

Now direct calculation shows that $q=q\pp$, $e_{r+i}=b_{u(r+i)}>e_{r+i}\pp$ for $i \in [1,s-1]$, $e_{r+s}=b_{m-1}>e_{r+s}\pp$, $e_{r+s+1}=b_m >e_{r+s+1}\pp$, and $e_i=e_i\pp$ otherwise, and thus it suffices to check $d_k \not \pr e_k$ for $k=\{\sigma+1=\varrho-1,\varrho\}$, $k \leq q$. We have $d_{\varrho-1} \not \pr e_{\varrho-1}$ by \cond{} since $d_{\varrho-1}=a_{m-1} \pdr b_{m-1}=e_{r+s}$ and $e_{\varrho-1}\geq e_{r+s}$ and similarly $d_{\varrho} \not \pr e_{\varrho}$  since $d_{\varrho}=a_{m} \pdr b_{m}=e_{r+s+1}$ and $e_{\varrho}\geq e_{r+s+1}$. (See Figure \ref{fig:IIIvi}.)

\begin{figure}[!htbp]
\begin{tikzpicture}[baseline=(current bounding box.center)]
	\tikzmath{\x=1.2;\y=1.6;}

	\node (1) at (0,0) {$b_{m-1}$};
	\node (2) at (0,-1.5*\x) {$b_{m}$};
	
	\node (3) at (\y,0.5*\x) {$e_{r+s}\pp$};
	\node (4) at (\y,-1*\x) {$e_{r+s+1}\pp$};
	
	\node (5) at (-1*\y,-.5*\x) {$a_{m-1}$};
	\node (6) at (-1*\y,-2*\x) {$a_m$};

	\draw [->] (2) to (1);	
	\draw [->] (4) to (3);
	\draw [->] (6) to (5);


	\draw [->,dashed] (1) to (3);
	\draw [->,dashed] (4) to (1);
	\draw [->,dashed] (4) to (5);
	\draw [->,dashed] (2) to (4);
	\draw [->,dashed] (6) to (2);	
	\draw [->,dashed] (2) to (5);	
	\draw [->,dashed] (5) to (1);	
	

\end{tikzpicture} 
\captionsetup{justification=centering}
\caption{$a_m$ in \csii[(b)], $b_{m\pp+1}$ in \csii[(b)], \\$u(r+s-1)=m\pp$, $e_{r+s+1}\pp\pdr b_{m\pp+1}$}\label{fig:IIIvi}
\end{figure}

This time suppose that either $r+s+1>q\pp$ or $e_{r+s+1}\pp\pr b_{m\pp+1}$. Then almost the same argument as in \mbox{III.ii.} applies here and one can show that $q=\max\{q\pp, r+s+m-m\pp-1\}$, $e_{r+i}=b_{u(r+i)}>e_{r+i}\pp$ for $i \in [1,s-1]$, $e_{r+s}=b_{m\pp+1}>e_{r+s}\pp$, $e_{r+s+i}=b_{m\pp+i+1}=d_{\sigma+i+1}\pp$ for $i \in [1,m-m\pp-1]$, and $e_i = e_i\pp$ otherwise. First, we have $\sigma+1\geq r+s$ since $d_{\sigma+1}\pp=b_{m\pp+1}>b_{m\pp}=e_{r+s}\pp$. Thus $r+s+m-m\pp-1\leq \sigma+m-m\pp=\varrho\leq p$, which means that $q\leq p$.


 It remains to verify that $d_k \not \pr e_k$ for $k \in [\sigma+1, \varrho] \cup [r+s+1, r+s+m-m\pp-1]$, $k \leq q$. If $k \in [\sigma+1, \varrho]$ then we have $d_{k}=a_{k+m-\varrho}\pdr b_{k+m-\varrho}=e_{r+s+k+m-\varrho-m\pp-1}=e_{r+s+k-\sigma-1}$ and $e_{r+s+k-\sigma-1}\leq e_k$, which means that $d_k \not \pr e_k$ by \cond{}. Similarly, if $k \in [r+s+1, r+s+m-m\pp-1]$ then $d_{k+\sigma+1-r-s}\pdr e_{k}$ and $d_{k+\sigma+1-r-s} \geq d_k$, and thus $d_k \not \pr e_k$ by \cond{}.


We exhausted all the possibilities and thus proved Theorem \ref{thm:main}\ref{mainthm1}.

\subsection{Proof of Theorem \ref{thm:main}\ref{mainthm2}}
Let us write $w=(a_n, \ldots, a_1)$. Then $n-i \in \des_\cP(w)$ if and only if $a_i \pl a_{i+1}$. We first show that $a_i \pl a_{i+1}$ if and only if $i \in \des(QT)$. The result is trivial if $i$ and $i+1$ are in the same column of $QT$, and thus suppose otherwise. Then by Proposition \ref{prop:column}, $i+1$ is in the former column than that of $i$. Since $QT$ is a standard Young tableau by part \ref{mainthm1}, it follows that $i$ is in the upper row than $i+1$, and thus $i\in \des(QT)$. Now suppose that $a_i \not\pl a_{i+1}$. By Proposition \ref{prop:column}, $i$ should be in the former column than that of $i+1$. In this case it is easy to see that $i+1$ cannot be in the lower row than that of $i$ because of the standard Young tableau condition, and thus $i\not\in \des(QT)$.

\subsection{Proof of Theorem \ref{thm:main}\ref{mainthm3}}
For $w\in \sym_n$, let $\prs(w)=(PT=(PT_1, \ldots, PT_p), QT)$ and choose $w=w_0, w_1, \ldots, w_p =(\vn, \ldots, \vn) \in \fA$ such that $\Phi(w_i, \emptyset) = (PT_{i+1}, w_{i+1})$ for $i \in [0,p-1]$. Then by Proposition \ref{prop:column}\ref{mainprop1},  we have (recall that $\alpha^f$ is the word obtained from $\alpha$ by removing $\vn$)
$$w\psim PT_1+w_1^f\psim PT_1+PT_2+w_2^f\psim\cdots \psim PT_1+PT_2+\cdots PT_p=\rw(PT)$$
as desired.

\subsection{Proof of Theorem \ref{thm:main}\ref{mainthm3.5}}
By Theorem \ref{thm:main}\ref{mainthm3} if $\prs(w) = (PT, QT)$ then $w \sim_\cP \rw(PT)$. Thus by Proposition \ref{prop:ght} we have $\ght_\cP(w) = \ght_\cP(\rw(PT))$. Furthermore, if $w \sim_\cP w'$ and $\prs(w') = (PT', QT')$ then by the same reason $\ght_\cP(\rw(PT)) \sim_\cP \ght_\cP(w) \sim_\cP \ght_\cP(w') \sim_\cP\ght_\cP(\rw(PT'))$. 

Thus it suffices to show that $\ght_\cP(\rw(T))$ equals the length of the first column of $T$ for any $T \in \ptab$. Let us denote by $l$ the length of the first column of $T$. Since the first column of $T$ is a subword of genuine $\cP$-inversions in $\rw(T)$, it follows that $\ght_\cP(\rw(T))\geq l$. Now for the sake of contradiction suppose $\ght_\cP(\rw(T))> l$. Then by pigeonhole principle there exists $a, b \in [1,n]$ such that $(a,b) \in \ght_\cP(\rw(T))$, the column containing $a$ is on the left of that of $b$, and $b$ is not in the upper row than $a$.

Let $c$ be the element located in the intersection of the row of $a$ and the column of $b$. We claim that $(a,c) \in \ght_\cP(\rw(T))$. Indeed, if $b=c$ then we are done, and thus suppose otherwise. As $b$ and $c$ are in the same column we have $b\pr c$. Since $a\pr b$ by assumption, if $(a,c)\not\in\ght_\cP(\rw(T))$ then by Lemma \ref{lem:ght} there exists a subword $ae_1\cdots e_k c$ in $\rw(T)$ such that $a \pdr e_1 \pdr \cdots \pdr e_k \pdr c$ and $\{a, e_1, \ldots, e_k, c\}$ is a ladder in $\rw(T)$. Note that $b$ cannot be any of $e_i$ since any element between $b$ and $c$ in $\rw(T)$ is bigger than $c$ with respect to $\cP$. However, this means that $b$ is climbing the ladder $\{a, e_1, \ldots, e_k, c\}$ which contradicts the assumption.

Now let $d_1, d_2, \ldots, d_s$ be the elements between $a$ and $c$ in the row of $T$ containing $a$ and $c$. We also set $d_0=a$ and $d_{s+1}=c$ for simplicity. (Note that $d_0d_1\cdots d_s d_{s+1}$ is a subword of $\rw(T)$.) Then by the condition of $\cP$-tableaux, we have  $d_i \not\pr d_{i+1}$ for $i \in [0,s]$. (In particular, we have $s\geq 1$.) We claim that there exists $e_1\in \{d_1, \ldots, d_{s}\}$ such that $d_0 \dash_\cP e_1$. Suppose otherwise, then as $d_0 \pr d_{s+1}$ and $d_0 \not\pr d_1$, there exists $j \in [1,k]$ such that $d_0 \pl d_j$ and $d_0 \pr d_{j+1}$. However, this is impossible since  $d_j \not\pr d_{j+1}$.

Since $d_0 \dash_\cP e_1$ and $d_0 \pr d_{s+1}$, we have $e_1 >d_{s+1}$ by \cond{}. If $e_1 \pdr d_{s+1}$ then it contradicts that $(a,c) = (d_0,d_{s+1}) \in \ght_\cP(\rw(T))$, and thus we should have $e_1 \pr d_{s+1}$. Now we can iterate this process forever and find $e_1, e_2, \ldots  \in \{d_1, \ldots, d_s\}$ such that $e_1 \dash_\cP e_2 \dash_\cP \cdots$. This is clearly a contradiction, and thus we conclude that $\ght_\cP(\rw(T))= l$ which is what we want to prove.

\subsection{Proof of Theorem \ref{thm:main}\ref{mainthm4}}
Let $\prs(w)=(PT', QT')$. We first show that $PT'=PT$. Let $PT=(PT_1, \ldots, PT_k, PT_{k+1}, \ldots, PT_p)$ where the length of $PT_i$ equals 1 if and only if $i>k$. Then direct calculation shows that $\Phi(PT_k+PT_{k+1}+\cdots+PT_p, \emptyset) = ((PT_k), (\vn,\ldots,\vn)+PT_{k+1}+\cdots+PT_p+(\vn))$. Now if we assume $PT_{i}=(\ldots, a)$ and $PT_{i+1} = (b, \ldots, c)$ for $i \in [1,k-1]$, then we have $a<b$ since otherwise $a\pr c$ by \cond{} which contradicts the assumption that $PT$ is a $\cP$-tableau. (Here it is crucially used that the length of $PT_{i+1}$ is $\geq 2$.) In other words, if $x$ and $y$ are processed in the same step then $x, y$ should be in the same column of $PT$. Thus by Proposition \ref{prop:column}\ref{mainprop2}, we have $\Phi(w, \emptyset) = (PT_1, (\vn,\cdots,\vn)+PT_{2}+(\vn, \cdots, \vn)+PT_{3}+\cdots+(\vn, \cdots, \vn)+PT_{k+1}+\cdots+PT_p+(\vn))$, i.e. the first column of $PT'$ is equal to that of $PT$. Now we iterate this argument to conclude that $PT=PT'$ as desired.

It remains to show that $QT'=\omega(T_\lambda)$. By part \ref{mainthm2} of the theorem, if we set $l_1, \ldots, l_p$ to be the column lengths of $\lambda$ then we have $\des(QT') = \{n-x \mid x\in [1,n-1], x\neq \sum_{i=1}^k l_i \textup{ for some } k \in [1, p]\}$. By the property of evacuation, it follows that $\des(\omega(QT'))= [1,n-1]-\{\sum_{i=1}^k l_i \mid k \in [1, p]\}$. Now the result follows from the fact that $T_\lambda$ is the only standard Young tableau of shape $\lambda$ that satisfies this property.

\subsection{Proof of Theorem \ref{thm:main}\ref{mainthm5}} Suppose that $\prs(\alpha) = \prs(\alpha')=(PT, QT)$ where $PT=(PT_1, \ldots, PT_p) \in \ptab_{\lambda}$ and $QT=(QT_1, \ldots, QT_p) \in \SYT_\lambda$.  Also, let $\alpha=\alpha_0, \alpha_1, \ldots, \alpha_p=(\vn, \ldots, \vn)$ and $\alpha'=\alpha_0', \alpha_1', \ldots, \alpha_p'=(\vn, \ldots, \vn)$ be the elements in $\fA$ such that $\Phi(\alpha_i, \emptyset) = (PT_{i+1}, \alpha_{i+1})$ and $\Phi(\alpha_i', \emptyset) = (PT_{i+1}, \alpha_{i+1}')$ for $i \in [0,p-1]$. Clearly $\alpha_p=\alpha'_p$, and thus it suffices to show that $\alpha_{i+1}=\alpha'_{i+1} \Rightarrow \alpha_i=\alpha'_i$ for $i \in [0,p-1]$. However it follows from Lemma \ref{lem:invrel} since $(\emptyset, \hh{\alpha_{i}})= \Psi^\ccP_{X_{i+1}}(\hh{\alpha_{i+1}}, PT_{i+1})$ and $(\emptyset, \hh{\alpha_{i}'})= \Psi^\ccP_{X_{i+1}}(\hh{\alpha_{i+1}'}, PT_{i+1})$ where $X_{i+1} = \{|\alpha|+1-j\mid j \in QT_{i+1}\}$.

\subsection{Proof of Theorem \ref{thm:main}\ref{mainthm6}} By part \ref{mainthm5} of the theorem, it suffices to show that $\prs: \sym_n \rightarrow \bigsqcup_{\lambda \vdash n} \ptab_\lambda \times \SYT_\lambda$ is surjective. 
We have (here $\omega$ is Sch\"utzenberger's evacuation)
\begin{align*}
&\sum_{\lambda \vdash n}\sum_{PT\in \ptab_\lambda}\sum_{QT\in \SYT_\lambda} F_{\des(\omega(QT))}
\\&=\sum_{\lambda \vdash n}\sum_{PT\in \ptab_\lambda}\sum_{QT\in \SYT_\lambda} F_{\des(QT)} &(\because \omega \textup{ is an involution on } \SYT_\lambda)
\\&=\sum_{\lambda \vdash n} |\ptab_\lambda| \cdot s_\lambda&(\because \textnormal{\cite{ges84}})
\\&=\sum_{w \in \sym_n} F_{\des_\cP(w)} &(\because \textnormal{\cite[Theorem 4]{gas96}})
\\&=\sum_{(PT, QT) \in \im \prs} F_{\des(\omega(QT))} &(\because \textnormal{part \ref{mainthm2} and injectivity of $\prs$})
\\&=\sum_{\lambda \vdash n}\sum_{PT \in \ptab_\lambda} \sum_{(PT, QT) \in \im \prs} F_{\des(\omega(QT))}.
\end{align*}
This equality holds only when $\prs$ is surjective, from which the result follows.

\begin{rmk} It is possible to prove surjectivity of $\prs$ without relying on Gasharov's result. Indeed, instead one may argue similarly to the proof of Proposition \ref{prop:phiinj} and use properties of $\cP$-tableaux for $\cP$ which avoids $\cP_{(3,1,1),5}$ and $\cP_{(4,2,1,1),6}$. Or conversely, from the theorem above we obtain the following bi-product.
\end{rmk}
\begin{thm} Suppose that $\cP$ avoids $\cP_{(3,1,1),5}$ and $\cP_{(4,2,1,1),6}$. Let $PT=(PT_1, \ldots,PT_l) \in \ptab_\lambda$ and $QT=(QT_1, \ldots, QT_l)\in \SYT_\lambda$ for some $\lambda\vdash n$. Define $\alpha_0, \alpha_1, \ldots, \alpha_l\in \fA$, $c_0,c_1, \ldots, c_l\in \fC$ successively so that $\alpha_l=(\vn,\ldots,\vn)$, $|\alpha_l|=n$, and $\Psi_{X_i}^{\hh{\cP}}(\hh{\alpha_{i}}, \hh{PT_i}) = (\hh{c_{i-1}}, \hh{\alpha_{i-1}})$ for $i \in [1,l]$ where $X_i=\{n+1-x\mid x\in \und{QT_i}\}$. Then we have $c_0=c_1=\cdots=c_l=\emptyset$, $\prs(\alpha_0) = (PT, QT)$, and $\alpha_0\in \sym_n$.
\end{thm}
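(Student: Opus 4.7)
The plan is to leverage two results already established: the bijectivity of $\prs$ (Theorem \ref{thm:main}\ref{mainthm6}) and the step-by-step inversion relation between $\Phi^\cP$ and $\Psi^\ccP$ (Lemma \ref{lem:invrel}). By bijectivity, there exists a unique $w \in \sym_n$ with $\prs(w) = (PT, QT)$; the goal is to show that the recursion in the statement, which inverts $\alg_\prs$ column by column via $\alg_\Psi$, recovers precisely this $w$ as $\alpha_0$.

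First I would unwind $\alg_\prs$ applied to $w$, producing a sequence $w = \beta_0, \beta_1, \ldots, \beta_l = (\vn, \ldots, \vn)$ with $\Phi^\cP(\beta_{i-1}, \emptyset) = (PT_i, \beta_i)$ for each $i \in [1,l]$. By the definition of $QT_i$ inside $\alg_\prs$, the underlying set $\und{QT_i}$ records exactly those positions where \csi[(a)] occurs when processing $\beta_{i-1}$, namely $\{j \in [1,n] \mid (\beta_{i-1})_j \neq \vn,\ (\beta_i)_j = \vn\}$. Consequently, the set $X$ appearing in Lemma \ref{lem:invrel} applied to $\Phi^\cP(\beta_{i-1}, \emptyset)$ coincides with the set $X_i = \{n+1-x \mid x \in \und{QT_i}\}$ from the statement.

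Next, applying Lemma \ref{lem:invrel} to $\Phi^\cP(\beta_{i-1}, \emptyset) = (PT_i, \beta_i)$ with $c = \emptyset$ yields $\Psi^\ccP_{X_i}(\hh{\beta_i}, \hh{PT_i}) = (\hh{\emptyset}, \hh{\beta_{i-1}}) = (\emptyset, \hh{\beta_{i-1}})$. Therefore the sequence $(\beta_i)_{i=0}^l$ together with $c_{i-1} = \emptyset$ satisfies the recursion defining $(\alpha_i, c_{i-1})$ in the statement, with the correct terminal condition $\beta_l = (\vn, \ldots, \vn)$. Since this recursion is a deterministic backward induction on $i$, I conclude that $\alpha_i = \beta_i$ and $c_{i-1} = \emptyset$ for all relevant $i$. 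In particular $\alpha_0 = \beta_0 = w \in \sym_n$, and $\prs(\alpha_0) = \prs(w) = (PT, QT)$, as required.

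The argument is essentially a repackaging of previously established results; the main technical work was already carried out in Lemma \ref{lem:invrel}, which does the heavy lifting of showing that $\alg_\Psi$ is the column-by-column inverse of $\alg_\Phi$ once the positions of \csi[(a)] are remembered. The only mild subtlety is the bookkeeping that identifies the set $X$ produced by Lemma \ref{lem:invrel} with the set $X_i$ read off from $QT_i$; this will follow immediately from how $QT_i$ is constructed in $\alg_\prs$, and no new difficulty should arise beyond this.
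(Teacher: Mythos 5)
Your proof is correct and follows essentially the same route as the paper's: both invoke the surjectivity of $\prs$ from Theorem \ref{thm:main}\ref{mainthm6} to obtain a $w$ with $\prs(w) = (PT, QT)$, then successively apply Lemma \ref{lem:invrel} to the sequence $w = w_0, w_1, \ldots, w_l = (\vn, \ldots, \vn)$ produced by $\alg_\prs$ to conclude that the recursion in the statement reproduces this sequence. Your explicit identification of $X$ from the lemma with $X_i$ via the definition of $QT_i$ inside $\alg_\prs$ is exactly the bookkeeping the paper's terse ``successively applying Lemma \ref{lem:invrel}'' implicitly requires.
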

\begin{proof} By Theorem \ref{thm:main}\ref{mainthm6}, there exists $w\in \sym_n$ such that $\prs(w) = (PT, QT)$. It means that there exists $w=w_0, w_1, \ldots, w_l=(\vn,\ldots, \vn)$ such that $\Phi(w_{i-1}, \emptyset)=(PT_i, w_i)$ and $\und{QT_i}=\{j \in [1,n] \mid (w_{i-1})_j\neq\vn, (w_i)_j=\vn\}$ for $i \in [1,l]$ where $(w_{i-1})_j$ and $(w_i)_j$ are the $j$-th coordinates of $w_{i-1}$ and $w_i$, respectively. Now the result follows from successively applying Lemma \ref{lem:invrel}.
\end{proof}

\subsection{Proof of Theorem \ref{thm:mainstrong}}
Suppose that $\Gamma=(V, \des_\cP, \{E_i\})$ is a connected $\cP$-Knuth equivalence graph. We claim that $PT\in \{PT_1, \ldots, PT_k\}$ if $\prs(w)=(PT, QT)$ for some $w\in V$. Indeed, we have $\rw(PT) \psim w\psim w_i \psim \rw(PT_i)$ for any $i \in [1,k]$ by Theorem \ref{thm:main}\ref{mainthm3}. By assumption, it means that $\rw(PT) \in \{\rw(PT_1), \ldots, \rw(PT_k)\}$. Since a $\cP$-tableau is completely determined by its reading word, it means that $PT \in \{PT_1, \ldots, PT_k\}$ as desired.

It is clear that $\sym_n$ is a disjoint union of connected $\cP$-Knuth equivalence graphs. Since $\prs$ is a bijection between $\sym_n$ and $\bigsqcup_{\lambda \vdash n} \ptab_\lambda \times \SYT_\lambda$, it follows that $\prs(V) = \bigsqcup_{i=1}^k\{PT_i\}\times \SYT_{\lambda_i}$. Thus we have (here $|\finv_\cP(V)|$ is a fake $\cP$-inversion number of any $w\in V$ and $\omega$ is Sch\"utzenberger's evacuation)
\begin{align*}
\gamma_V&=t^{|\finv_\cP(V)|}\sum_{w \in V} F_{\des_\cP(w)}
\\&=t^{|\finv_\cP(V)|}\sum_{(PT, QT) \in \prs(V)} F_{\des(\omega(QT))} & (\because \textnormal{Theorem \ref{thm:main}\ref{mainthm2} and injectivity of $\prs$})
\\&=t^{|\finv_\cP(V)|}\sum_{i=1}^k\sum_{T \in \SYT_{\lambda_i}} F_{\des(\omega(T))} & (\because \prs(V) = \bigsqcup_{i=1}^k\{PT_i\}\times \SYT_{\lambda_i})
\\&=t^{|\finv_\cP(V)|}\sum_{i=1}^k\sum_{T \in \SYT_{\lambda_i}} F_{\des(T)} &(\because \omega \textup{ is an involution on each } \SYT_{\lambda_i})
\\&=t^{|\finv_\cP(V)|}\sum_{i=1}^ks_{\lambda_i} &(\because \textnormal{\cite{ges84}})
\end{align*}
as desired.

Finally, the last sentense of the theorem follows directly from Theorem \ref{thm:main}\ref{mainthm3.5}.

\bibliographystyle{amsalpha}
\bibliography{PRS}

\providecommand{\bysame}{\leavevmode\hbox to3em{\hrulefill}\thinspace}
\providecommand{\MR}{\relax\ifhmode\unskip\space\fi MR }
\providecommand{\MRhref}[2]{%
  \href{http://www.ams.org/mathscinet-getitem?mr=#1}{#2}
}
\providecommand{\href}[2]{#2}
\begin{thebibliography}{SWW97}

\bibitem[AB12]{ab12}
Sami~H. Assaf and Sara~C. Billey, \emph{Affine dual equivalence and $k$-{S}chur
  functions}, J. Comb. \textbf{3} (2012), no.~3, 343--399.

\bibitem[Ass15]{ass15}
Sami~H. Assaf, \emph{Dual equivalence graphs {I}: A new paradigm for {S}chur
  positivity}, Forum Math. Sigma \textbf{3} (2015), no.~12, 1--33.

\bibitem[Ass17]{ass17}
\bysame, \emph{Dual equivalence graphs {I}{I}: {T}ransformations on locally
  {S}chur positive graphs}, Available at \url{https://arxiv.org/abs/1704.07039}
  (2017).

\bibitem[BC18]{bc18}
Patrick Brosnan and Timothy~Y. Chow, \emph{Unit interval orders and the dot
  action on the cohomology of regular semisimple {H}essenberg varieties}, Adv.
  Math. \textbf{329} (2018), 955--1001. \MR{3783432}

\bibitem[BF17]{bf17}
Jonah Blasiak and Sergey Fomin, \emph{Noncommutative {S}chur functions,
  switchboards, and {S}chur positivity}, Selecta Math. (N.S.) \textbf{23}
  (2017), no.~1, 727--766. \MR{3595905}

\bibitem[Bla16]{bl16}
Jonah Blasiak, \emph{What makes a {$D_0$} graph {S}chur positive?}, J.
  Algebraic Combin. \textbf{44} (2016), no.~3, 677--727. \MR{3552904}

\bibitem[CH19a]{ch19}
Soojin Cho and Jaehyun Hong, \emph{Positivity of chromatic symmetric functions
  associated with {H}essenberg functions of bounce number 3}, Available at
  \url{https://arxiv.org/abs/1910.07308} (2019).

\bibitem[CH19b]{ch19_2}
Soojin Cho and Jisun Huh, \emph{On $e$-positivity and $e$-unimodality of
  chromatic quasi-symmetric functions}, SIAM J. Discrete Math. \textbf{33}
  (2019), no.~4, 2286--2315.

\bibitem[Chm15]{chm15}
Michael Chmutov, \emph{Type {$A$} molecules are {K}azhdan-{L}usztig}, J.
  Algebraic Combin. \textbf{42} (2015), no.~4, 1059--1076. \MR{3417258}

\bibitem[Cho99]{ch99}
Timothy~Y. Chow, \emph{Descents, quasi-symmetric functions,
  {R}obinson-{S}chensted for posets, and the chromatic symmetric function}, J.
  Algebraic Combin. \textbf{10} (1999), no.~3, 227--240. \MR{1723185}

\bibitem[DvW18]{dw18}
Samantha Dahlberg and Stephanie van Willigenburg, \emph{Lollipop and lariat
  symmetric functions}, SIAM J. Discrete Math. \textbf{32} (2018), no.~2,
  1029--1039.

\bibitem[Gas96]{gas96}
Vesselin Gasharov, \emph{Incomparability graphs of $(3+1)$-free posets are
  $s$-positive}, Discrete Math. \textbf{157} (1996), 193--197.

\bibitem[Ges84]{ges84}
Ira~M. Gessel, \emph{Multipartite {P}-partitions and inner products of skew
  {S}chur functions}, Contemp. Math. \textbf{34} (1984), 289--301.

\bibitem[GP13]{g13}
Mathieu Guay-Paquet, \emph{A modular relation for the chromatic symmetric
  functions of (3+1)-free posets}, Available at
  \url{https://arxiv.org/abs/1306.2400} (2013).

\bibitem[GP16]{g16}
\bysame, \emph{A second proof of the {S}hareshian--{W}achs conjecture, by way
  of a new {H}opf algebra}, Available at \url{https://arxiv.org/abs/1601.05498}
  (2016).

\bibitem[GS01]{gs01}
David~D. Gebhard and Bruce~E. Sagan, \emph{A chromatic symmetric function in
  noncommuting variables}, J. Algebraic Combin. \textbf{13} (2001), 227--255.

\bibitem[HP19]{hp19}
Megumi Harada and Martha~E. Precup, \emph{The cohomology of abelian
  {H}essenberg varieties and the {S}tanley-{S}tembridge conjecture}, Algebr.
  Comb. \textbf{2} (2019), no.~6, 1059--1108, Revised edition of [ MR3940624].
  \MR{4049838}

\bibitem[Rob14]{rob14}
Austin Roberts, \emph{Dual equivalence graphs revisited and the explicit
  {S}chur expansion of a family of {LLT} polynomials}, J. Algebraic Combin.
  \textbf{39} (2014), no.~2, 389--428. \MR{3159257}

\bibitem[SS58]{ss58}
Dana Scott and Patrick Suppes, \emph{Foundational aspects of theories of
  measurement}, J. Symb. Log. \textbf{23} (1958), no.~2, 113--128.

\bibitem[SS93]{stst93}
Richard~P. Stanley and John~R. Stembridge, \emph{On immanants of
  {J}acobi-{T}rudi matrices and permutations with restricted position}, J.
  Combin. Theory Ser. A \textbf{62} (1993), no.~2, 261--279. \MR{1207737}

\bibitem[SW16]{sw16}
John Shareshian and Michelle~L. Wachs, \emph{Chromatic quasisymmetric
  functions}, Adv. Math. \textbf{295} (2016), 497--551.

\bibitem[SWW97]{sww97}
Thomas~S. Sundquist, David~G. Wagner, and Julian West, \emph{A
  {R}obinson-{S}chensted algorithm for a class of partial orders}, J. Combin.
  Theory Ser. A \textbf{79} (1997), no.~1, 36--52. \MR{1449748}

\bibitem[Tym08]{ty08}
Julianna~S. Tymoczko, \emph{Permutation actions on equivariant cohomology of
  flag varieties}, Toric topology, Contemp. Math., vol. 460, Amer. Math. Soc.,
  Providence, RI, 2008, pp.~365--384. \MR{2428368}

\end{thebibliography}

\end{document}